\documentclass[12pt,a4paper]{article}

\usepackage[dvips]{graphicx}
\usepackage{epsfig}
\usepackage{graphics,color}
\usepackage{enumerate}

\usepackage{amssymb,amsmath,amsthm,amscd,latexsym}

\usepackage[utf8]{inputenc}

\newtheorem{theo}{Theorem}[section]
\newtheorem{corol}[theo]{Corollary}
\newtheorem{prop}[theo]{Proposition}

\newtheorem{lem}[theo]{Lemma}

\theoremstyle{definition}
\newtheorem*{defi}{Definition}

\newtheorem*{claim}{Claim}

\newtheorem*{question}{Question}

\newtheorem*{propA}{Proposition A}
\newtheorem*{propB}{Proposition B}
\newtheorem*{propC}{Proposition C}

\newtheorem{etape}{Step}

\newenvironment{rem}{\bigskip\emph{Remark. }}{}
\newenvironment{rems}{\bigskip\emph{Remarks. }\begin{itemize}}{\end{itemize}}

\def\Rr{\mathbf{R}}
\let\RR\Rr

\def\Zz{\mathbf{Z}}

\let\NN\Nn

\def\cals{\mathcal{S}}
\def\calb{\mathcal{B}}
\def\calM{\mathcal{M}}

\let\bord\partial
\let\bydef\emph
\let\epsi\varepsilon

\def\var{mani\-fold}

\def\sing{\mathrm{sing}}
\def\reg{\mathrm{reg}}
\def\Ric{\mathrm{Ric}}
\def\Rm{\mathrm{Rm}}
\def\vol{\mathrm{vol}}

\def\Rmin{R_\mathrm{min}}
\def\Rmax{R_\mathrm{max}}
\def\cyl{\mathrm{cyl}}
\def\Int{\mathrm{Int}}
\def\bmax{b_\mathrm{max}}

\def\diam{\mathop{\rm diam}\nolimits}

\def\Ric{\mathop{\rm Ric}\nolimits}
\def\Rm{\mathop{\rm Rm}\nolimits}

\def\constst{\mathrm{const_{st}}}


\begin{document}

\title{Ricci flow on open 3-manifolds and positive scalar curvature}
\author{Laurent Bessi\`eres, G\'erard Besson and Sylvain Maillot}

\maketitle

\begin{abstract}
We show that an orientable $3$-dimensional manifold $M$ admits a complete riemannian metric of bounded geometry and uniformly positive scalar curvature if and only if there exists a finite collection $\mathcal F$ of spherical space-forms such that $M$ is a (possibly infinite) connected sum where each summand is diffeomorphic to $S^2\times S^1$ or to some member of $\mathcal F$. This result generalises G.~Perelman's classification theorem for compact $3$-manifolds of positive scalar curvature. The main tool is a variant of Perelman's surgery construction for Ricci flow.
\end{abstract}

\section{Introduction}
Thanks to G.~Perelman's proof~\cite{Per1,Per2,Per3} of W.~Thurston's Geometrisation Conjecture, the topological structure of compact $3$-manifolds is now well understood in terms of the canonical geometric decomposition. The first step of this decomposition, which goes back to H.~Kneser~\cite{Kneser}, consists in splitting such a manifold as a connected sum of \bydef{prime} $3$-manifolds, i.e.~$3$-manifolds which are not nontrivial connected sums themselves.

It has been known since early work of J.~H.~C.~Whitehead~\cite{whitehead:unity} that the topology of \emph{open} $3$-manifolds is much more complicated. Directly relevant to the present paper are counterexamples of P.~Scott~\cite{st:exotic} and the third author~\cite{maillot:examples} which show that Kneser's theorem fails to generalise to open manifolds, even if one allows infinite connected sums.

Of course, we need to explain what we mean by a possibly infinite connected sum. If $\mathcal X$ is a class of $3$-manifolds, we will say that a $3$-manifold $M$ is a \bydef{connected sum} of members of $\mathcal X$ if there exists a locally finite graph $G$ and a map $v\mapsto X_v$ which associates to each vertex of $G$ a copy of some manifold in $\mathcal X$, such that by removing from each $X_v$ as many $3$-balls as vertices incident to $v$ and gluing the thus punctured $X_v$'s to each other along the edges of $G$, one obtains a $3$-manifold diffeomorphic to $M$. This is equivalent to the requirement that $M$ should contain a locally finite collection of pairwise disjoint embedded $2$-spheres $\cals$ such that the operation of cutting $M$ along $\cals$ and capping-off $3$-balls yields a disjoint union of $3$-manifolds which are diffeomorphic to members of $\mathcal X$.\footnote{See below for the precise definition of \emph{capping-off}.}

Note that restricting this definition to finite graphs and compact manifolds yields a slightly nonstandard definition of a connected sum. In the usual definition of a finite connected sum, one has a \emph{tree} rather than a graph. It is well-known, however, that the graph of a finite connected sum (in the sense of the previous paragraph) can be made into a tree at the expense of adding extra $S^2\times S^1$ factors. The more general definition we have chosen for this paper seems more natural in view of the surgery theory for Ricci flow. It can be shown that the two definitions are equivalent even when the graph is infinite;
however, having a tree rather than a graph is only important for issues of \emph{uniqueness}, which will not be tackled here. 

The above-quoted articles~\cite{st:exotic,maillot:examples} provide examples of badly behaved open $3$-manifolds, which are not connected sums of prime $3$-manifolds.
From the point of view of Riemannian geometry, it is natural to look for sufficient conditions for a riemannian metric on an open $3$-manifold $M$ that rule out such exotic behaviour. One such condition was given by the third author in the paper~\cite{maillot:spherical}. Here we shall consider riemannian manifolds of positive scalar curvature. This class of manifolds has been extensively studied since the seminal work of A.~Lichnerowicz, M.~Gromov, B.~Lawson, R.~Schoen, S.-T.~Yau and others (see e.g.~the survey articles~\cite{gromov:sign,rosenberg:report}.)

Let $(M,g)$ be a riemannian manifold.  We denote by $\Rmin(g)$ the infimum of the scalar curvature of $g$.
We say that $g$ has
\bydef{uniformly positive scalar curvature} if $\Rmin(g)>0$. Of course, if $M$ is compact, then this amounts to insisting that $g$ should have positive scalar curvature at each point of $M$. 

A $3$-manifold is \bydef{spherical} if it admits a metric of positive constant sectional curvature. M.~Gromov and B.~Lawson~\cite{gl:spin} have shown that any compact, orientable $3$-manifold which is a connected sum of spherical manifolds and copies of $S^2\times S^1$ carries a metric of positive scalar curvature. Perelman~\cite{Per2}, completing pioneering work of Schoen-Yau~\cite{schoenyau} and Gromov-Lawson~\cite{GromovLawson}, proved the converse.

In this paper, we are mostly interested in the noncompact case.
 We say that a riemannian metric $g$ on $M$ has \bydef{bounded geometry} if it has bounded sectional curvature
and injectivity radius bounded away from zero. It follows from the Gromov-Lawson construction that if $M$ is a (possibly infinite) connected sum of spherical manifolds and copies of $S^2\times S^1$ such that there are finitely many summands up to diffeomorphism, then $M$ admits a complete metric  of bounded geometry and uniformly positive scalar curvature. We show that the converse holds, generalising  Perelman's theorem:
\begin{theo}\label{thm:positive scalar general}
Let $M$ be a connected, orientable $3$-manifold which carries a complete riemannian metric of bounded
geometry and uniformly positive scalar curvature. Then there is a finite collection $\mathcal F$ of spherical manifolds such that $M$ is a connected sum of copies of $S^2\times S^1$ or members of $\mathcal F$.
\end{theo}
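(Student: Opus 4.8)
The plan is to adapt Perelman's proof of the compact case to open manifolds of bounded geometry. After rescaling we may assume $|\Rm(g_0)|\leq 1$ and $\Rmin(g_0)\geq\epsi_0>0$, and by hypothesis $g_0$ is $\kappa_0$-noncollapsed for some $\kappa_0>0$. The first step is to run a Ricci flow starting from $(M,g_0)$: since $g_0$ has bounded curvature, Shi's short-time existence theorem gives a smooth complete solution on a maximal interval, and when the curvature blows up one performs surgery. The heart of the matter --- the ``variant of Perelman's surgery construction'' announced in the abstract --- is to produce a Ricci flow with surgery $\{(M_t,g(t))\}_{t\geq 0}$ on all of $[0,\infty)$ such that at each time $g(t)$ has bounded geometry (with bounds allowed to depend on $t$), the $\kappa$-noncollapsing and canonical neighbourhood properties hold with uniform parameters on compact time intervals, and every surgery cuts $M_t$ along a $2$-sphere contained in an $\epsi$-horn and glues in a standard cap.

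I expect this construction to be the main obstacle. In Perelman's compact setting the surgery parameters are fixed by an induction over a discrete sequence of surgery times, and compactness is used crucially --- to bound the scalar curvature from above before each surgery and to prevent surgery times from accumulating. For open $M$ neither holds: the scalar curvature need not be bounded at a given time, and infinitely many surgeries may be needed before a finite time. I would choose the surgery parameters $r(t),\kappa(t),\delta(t),h(t)$ as functions of time alone (the bounds on $g_0$ providing the initial normalisation), carry out surgery on a space-time region rather than at discrete instants, and --- this is where bounded geometry is essential --- prove that only finitely many surgeries meet any fixed compact subset of $M$ before any given time, so that the flow, and in particular its set of surgery $2$-spheres, is locally finite.

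Granting the construction, positivity of the scalar curvature forces finite-time extinction. From the evolution inequality $\partial_t R\geq\Delta R+\tfrac23 R^2$ and the minimum principle, $\Rmin(g(t))\geq\bigl(\Rmin(g_0)^{-1}-\tfrac23 t\bigr)^{-1}$, which tends to $+\infty$ as $t\to T_0:=\tfrac32\Rmin(g_0)^{-1}\leq\tfrac{3}{2\epsi_0}$; since surgery is performed where the scalar curvature is large and the standard caps also have large scalar curvature, $\Rmin$ is never decreased by surgery, hence $M_t=\emptyset$ for $t\geq T_0$. Thus every point of $M$ is ultimately removed by a surgery or by the disappearance of a connected component, and by the canonical neighbourhood property each disappearing component is covered by $\epsi$-necks, $\epsi$-caps and positively curved or $\epsi$-round pieces; in the orientable case such a component is diffeomorphic to $S^3$, to $S^2\times S^1$, to a spherical space-form, or to $\RR P^3\#\RR P^3$ --- in every case a connected sum of copies of $S^2\times S^1$ and spherical manifolds --- possibly together with trivial pieces (balls, punctured $\RR^3$'s, half-cylinders) whose removal only amounts to capping-off.

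Finally I would reassemble $M$. Each surgery is the inverse of a connected sum: it cuts along an embedded $2$-sphere and caps off the two sides. Running the flow with surgery backwards from $T_0$ and collecting all surgery spheres yields a locally finite collection $\cals$ of pairwise disjoint embedded $2$-spheres in $M$ such that cutting $M$ along $\cals$ and capping-off produces the disjoint union of the disappearing components; hence $M$ is a connected sum, in the sense of the introduction, of copies of $S^2\times S^1$ and spherical manifolds. It remains to bound the number of diffeomorphism types: each spherical summand $S^3/\Gamma$ appears, at the scale at which it is a canonical neighbourhood, as a manifold $\epsi$-close to a round quotient and $\kappa$-noncollapsed, and $\kappa$-noncollapsing bounds $|\Gamma|$ from above; therefore only finitely many spherical summands occur up to diffeomorphism, which gives the desired finite collection $\mathcal F$.
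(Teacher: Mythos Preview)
Your overall strategy---run Ricci flow with surgery, use the evolution of $\Rmin$ to force extinction in finite time, then reassemble $M$ from the disappearing components---is exactly the paper's. The extinction argument and the topological reassembly are essentially correct, and your argument for finiteness of the spherical summands via $\kappa$-noncollapsing is a reasonable alternative to the paper's route (which uses uniform curvature and injectivity-radius bounds together with Bonnet--Myers and Cheeger's finiteness theorem).

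The genuine gap is in the surgery construction itself. You propose to let the flow run to its maximal time and then perform surgery on $\epsi$-horns, exactly as Perelman does in the compact case. The paper points out explicitly why this fails for open manifolds: the limiting metric at the singular time may have unbounded curvature \emph{without any horns to cut}---think of an infinite string of spheres of fixed radius joined by necks whose curvature tends to infinity. In that situation there is no open set $\Omega$ with well-behaved $\epsi$-horn ends on which to perform surgery, and there is no existence theory for restarting the flow from a metric of unbounded curvature. Your suggestion to ``carry out surgery on a space-time region rather than at discrete instants'' and to prove local finiteness of the surgery locus does not address this specific obstruction.

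The paper's remedy is to perform surgery \emph{before} any singularity forms: one fixes a curvature threshold $\Theta$ and, as soon as $\Rmax$ reaches $\Theta$, cuts along $\delta$-necks in \emph{tubes} (not horns) so that afterwards $\Rmax\le\Theta/2$. Non-accumulation of surgery times then follows from the estimate $|\partial_t R|\le C R^2$ in canonical neighbourhoods, which bounds from below the time needed for $\Rmax$ to climb from $\Theta/2$ back to $\Theta$; Perelman's volume argument is never invoked. This is the missing idea in your proposal.
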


In fact, the collection $\mathcal F$ depends only on bounds on the geometry and a lower bound for the scalar curvature (cf.~Corollary~\ref{corol:finiteness}.)

Our main tool is R.~Hamilton's Ricci flow. Let us give a brief review of the analytic theory of Ricci flow on complete manifolds.
The basic short time existence result is due to W.-X.~Shi~\cite{shi:complete2}: if $M$ is a $3$-manifold and $g_0$ is a complete riemannian metric on $M$ which has bounded sectional curvature, then there exists $\epsi>0$ and a Ricci flow $g(\cdot)$ defined on $[0,\epsi)$ such that $g(0)=g_0$, and for each $t$, $g(t)$ is also complete of bounded sectional curvature. 

For brevity, we say that a Ricci flow $g(\cdot)$ has a given property $\mathcal{P}$ if for each time $t$, the riemannian metric $g(t)$ has property $\mathcal{P}$. Hence the solutions constructed by Shi are complete Ricci flows with bounded sectional curvature. This seems to be a natural setting for the analytical theory of Ricci flow.\footnote{However, there have been attempts to generalise the theory beyond this framework, see e.g.~\cite{xu:existence,simon:noncollapsed}.}

Uniqueness of complete Ricci flows with bounded sectional curvature is due to B.-L.~Chen and X.-P.~Zhu~\cite{Che-Zhu2}.

We shall provide a variant of Perelman's surgery construction for Ricci flow, which has the advantage of being suitable for generalisations to open manifolds.
Perelman's construction can be summarised as follows. Let $M$ be a closed, orientable $3$-manifold. Start with an arbitrary metric $g_0$ on $M$. Consider a maximal Ricci flow solution $\{g(t)\}_{t\in [0,T_{max})}$  with initial condition $g_0$. If $T_{max}=+\infty$, there is nothing to do. Otherwise, one analyses the behaviour of $g(t)$ as $t$ goes to $T_{max}$ and finds an open subset $\Omega\subset M$ where a limiting metric can be obtained. If $\Omega$ is empty, then the construction stops. Otherwise the ends of $\Omega$ have a special geometry: they are so-called $\epsi$-horns. Removing neighbourhoods of those ends and capping-off $3$-balls with nearly standard geometry, one obtains a new closed, possibly disconnected riemannian $3$-manifold. Then one restarts Ricci flow from this new metric and iterates the construction. In order to prove that finitely many surgeries occur in any compact time interval, Perelman makes crucial use of the finiteness of the volume of the various riemannian manifolds involved.

When trying to generalise this construction to open manifolds, one encounters several difficulties. First, the above-mentioned volume argument breaks down. Second,  a singularity with $\Omega=M$ could occur, i.e.~there may exist a complete Ricci flow with bounded sectional curvature defined on some interval $[0,T)$ and maximal \emph{among  complete Ricci flows with bounded sectional curvature}, such that when $t$ tends to $T$, $g(t)$ converges to, say, a metric of unbounded curvature $\bar g$. Then it is not known whether Ricci flow with initial condition $\bar g$ exists at all, and even if it does, the usual tools like the maximum principle may no longer be available. One can imagine, for example, an infinite sequence of spheres of the same radius glued together by necks  whose curvature is going to infinity. In this situation $(M,\bar g)$ would have no horns to do surgery on.

In order to avoid those difficulties, we shall perform surgery \emph{before} a singularity appears. To this end, we introduce a new parameter $\Theta$, which determines when surgery must be done (namely when the supremum $\Rmax$ of the scalar curvature reaches $\Theta$.) We do surgery on  tubes rather than horns. Furthermore, we replace the volume argument for non-accumulation of surgeries by a \emph{curvature} argument: the key point is that at each surgery time, $\Rmax$ drops by a definite factor (which for convenience we choose equal to $1/2$.) This, together with an estimate on the rate of curvature blow-up, is sufficient to bound from below the elapsed time between two consecutive surgeries.

The idea of doing surgery before singularity time is not new: it was  introduced by R.~Hamilton in his paper~\cite{hamilton:isotropic} on $4$-manifolds of positive isotropic curvature.  Our construction should also be compared with that of G.~Huisken and C.~Sinestrari~\cite{hs:surgeries} for Mean Curvature Flow, where in particular there is a similar argument for non-accumulation of surgeries. Needless to say, we rely heavily on Perelman's work, in particular the notions of $\kappa$-noncollapsing and canonical neighbourhoods.

Our construction should have other applications. In fact, it has already been adapted by H.~Huang~\cite{huang:isotropic} to complete $4$-dimensional manifolds of positive isotropic curvature, using work of B.-L.~Chen, S.-H.~Tang and X.-P.~Zhu~\cite{ctz:complete} in the compact case.

Remaining informal for the moment, we provisionally define a \bydef{surgical solution} as a sequence of Ricci flow solutions $\{(M_i,g_i(t))\}_{t \in [t_i,t_{i+1}]}$, with \linebreak 
$0=t_0< \cdots < t_i< \cdots\leqslant +\infty$ is discrete in $\Rr$, such that $M_{i+1}$ is obtained from $M_i$ by splitting along a locally finite collection of pairwise disjoint 
embedded $2$-spheres, capping-off $3$-balls and removing components which are 
spherical or diffeomorphic to $\Rr^3$, $S^2\times S^1$, $S^2\times\Rr$, $RP^3\# RP^3$  or a punctured $RP^3$. 
If $M_{i+1}$ is nonempty, we further require that  $\Rmin(g_{i+1}) \geqslant \Rmin(g_i)$ at time $t_{i+1}$. The formal definition of surgical solutions will be given in Section~\ref{sec:surgical}.

The components that are removed at time $t_{i+1}$ are said to \emph{disappear}. If all 
components disappear, that is if $M_{i+1}=\emptyset$, we shall say that the surgical 
solution becomes \bydef{extinct} at time $t_{i+1}$. In that case, it is straightforward to reconstruct the topology of the original manifold $M_0$ as a connected sum of the disappearing components (cf.~Proposition~\ref{prop:topo} below.) Since $\Rr^3$, $S^2\times\Rr$ and punctured $RP^3$'s are themselves connected sums of spherical manifolds (in fact infinite copies of $S^3$ and $RP^3$,) the upshot is that $M_0$ is a connected sum of spherical manifolds and copies of $S^2\times S^1$.

A simplified version of our main technical result follows. 
\begin{theo}\label{thm:existence surg}
Let $M$ be an orientable $3$-manifold. Let $g_0$ be a complete riemannian metric on $M$
which has bounded geometry. Then there exists a complete
surgical solution of bounded geometry defined on $[0,+\infty)$, with initial condition $(M,g_0)$.
\end{theo}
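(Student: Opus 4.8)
The plan is to construct the surgical solution by an iterative procedure that performs surgery before each singularity, controlled by the auxiliary parameter $\Theta$ described in the introduction. First I would fix suitable surgery parameters, depending only on the bounds on the geometry of $g_0$ and on $\Rmin(g_0)$: a canonical neighbourhood scale $r>0$, a noncollapsing constant $\kappa>0$, a curvature threshold $\Theta$, and the various quantities controlling the precision of the cutoff (how close to the standard cylinder a $\delta$-neck must be before we cut along it, and how close to the Bryant/round geometry the cap we glue in must be). Starting from $(M,g_0)$, apply Shi's short-time existence theorem to get a complete Ricci flow with bounded curvature on a maximal interval $[0,T_0)$. Two cases arise: either the flow exists forever, in which case we are done, or $\Rmax(g(t))$ reaches $\Theta$ at some time $t_1\le T_0$. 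The core geometric input — which I would quote from the canonical neighbourhood theorem for surgical solutions, to be established earlier in the paper following Perelman — is that at the first time $\Rmax=\Theta$, every point of high scalar curvature lies in a canonical neighbourhood ($\epsi$-neck, $\epsi$-cap, or closed component of positive curvature). This lets me select a maximal locally finite family of disjoint, sufficiently precise $\delta$-necks ``straddling'' the region where the curvature is about to blow up, cut along their middle spheres, cap off nearly standard $3$-balls, and discard the components that are entirely covered by canonical neighbourhoods (these are spherical, or $S^2\times S^1$, $S^2\times\Rr$, $\Rr^3$, $RP^3\#RP^3$, or a punctured $RP^3$). The resulting metric $g_1^+$ on the new manifold $M_1$ again has bounded geometry — this is where one checks that the cutoff is performed at controlled scale and that the glued caps have controlled curvature and injectivity radius — and, crucially, $\Rmax(g_1^+)\le \tfrac12\,\Theta$ while $\Rmin(g_1^+)\ge \Rmin(g_1)\ge \Rmin(g_0)$ by the surgery recipe and the fact that surgery is done in regions of large positive scalar curvature. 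Then restart Shi's flow from $(M_1,g_1^+)$ and iterate.

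The surgical solution thus produced is defined on a half-open interval $[0,T_\infty)$ where $T_\infty=\lim t_i$, and the remaining point is to prove $T_\infty=+\infty$, i.e.~that surgery times do not accumulate in finite time. This is the curvature (rather than volume) argument advertised in the introduction: between $t_i$ and $t_{i+1}$ the scalar curvature maximum grows from at most $\tfrac12\Theta$ up to $\Theta$, and the evolution equation for $R$ under Ricci flow, $\partial_t R = \Delta R + 2|\Ric|^2 \ge \Delta R + \tfrac23 R^2$, together with the maximum principle on complete flows of bounded curvature, gives an explicit lower bound $t_{i+1}-t_i \ge c/\Theta$ for a universal constant $c>0$. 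Hence at most $O(\Theta \cdot T)$ surgeries occur on any interval $[0,T]$, so $T_\infty=+\infty$ and the set of surgery times is discrete. Completeness and bounded geometry on each flow interval then follow from Shi's theorem applied at each restart, using the uniform control of $g_i^+$ established at the previous step, and the global lower scalar curvature bound $\Rmin\ge\Rmin(g_0)$ propagates through the whole construction. One also records that each $M_{i+1}$ is obtained from $M_i$ by the allowed operations, so the sequence is a bona fide surgical solution in the sense defined in Section~\ref{sec:surgical}.

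The main obstacle — and the part that requires the bulk of the earlier sections rather than the argument above — is establishing the canonical neighbourhood property and the $\kappa$-noncollapsing property \emph{uniformly along the entire surgical solution}, with constants that do not degrade as $i\to\infty$. In Perelman's compact setting this is proved by an intricate simultaneous induction on the surgery times, carefully choosing the surgery parameters $\delta_i$ to shrink fast enough that the a priori assumptions needed to run the next step remain valid; in the open, bounded-geometry setting one must redo this induction while also keeping track of the bounded-geometry bounds and handling the possibility of infinitely many components and non-compact limits when extracting $\kappa$-solutions as blow-up limits. Concretely, the hard lemmas are: (1) a persistence-of-canonical-neighbourhoods statement saying that if the property holds up to time $t_i$ with the right constants, one can choose the surgery at $t_i$ fine enough that it still holds just after, and then on $[t_i,t_{i+1}]$; and (2) the noncollapsing estimate, proved via Perelman's $\call$-geometry and reduced volume, adapted so that the basepoints and the noncollapsing scale are controlled by the initial bounded geometry and are uniform in $i$. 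Granting these — which is exactly the content of the technical heart of the paper preceding this theorem — the construction and the non-accumulation argument sketched above assemble into the proof of Theorem~\ref{thm:existence surg}.
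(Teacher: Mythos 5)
Your overall outline is the right one, but there are two real problems.

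First, the non-accumulation argument as you wrote it does not work. You invoke the evolution inequality $\partial_t R = \Delta R + 2|\Ric|^2 \ge \Delta R + \tfrac23 R^2$ together with the maximum principle to get $t_{i+1}-t_i \ge c/\Theta$. But that inequality, via the maximum principle, gives a \emph{lower} bound for $\Rmin(t)$ (this is exactly Proposition~\ref{prop:R explose} in the paper, used for the extinction argument). It gives no upper bound on the rate at which $\Rmax$ can climb, which is what you need to prevent two surgery times from being too close. The paper's Lemma~\ref{lem:no acc} uses a different mechanism: fix a point $x$ with $R(x,t_{i+1})\ge\Theta-1$, let $t_+$ be the last time $R(x,\cdot)$ crossed $\Theta/2$, note that on $(t_+,t_{i+1}]$ the pair $(x,t)$ has an $(\epsi_0,C_0)$-canonical neighbourhood because $R(x,t)\ge\Theta/2\gg r^{-2}$, and then integrate the \emph{pointwise} time-derivative estimate $|\partial_t R| < C_0 R^2$ (inequality~\eqref{eq:dR dt}, which is a consequence of item~(v) of Definition~\ref{def:VC}) along that worldline. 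So the lower bound on the elapsed time comes from the canonical neighbourhood gradient/time-derivative estimates, not from the maximum principle.

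Second, you treat the surgery parameters $r,\delta,\kappa$ (hence $\Theta$) as if they can be fixed once and for all on $[0,+\infty)$; this is not the case, because the Hamilton--Ivey pinching and the noncollapsing/canonical-neighbourhood constants degrade with time. The paper does not prove the infinite-time statement directly: Theorem~\ref{thm:existence surg} is deduced from the finite-interval quantitative Theorem~\ref{thm:existence surg quant} by iteration combined with parabolic rescalings (and the more precise Theorem~\ref{thm:existence 2 precis} makes the resulting $k$-dependent parameters $r_k,\delta_k,\kappa_k$ explicit). What you have sketched is, in effect, the content of the proof of the compact-interval Theorem~\ref{thm:existence 1 precis} via Propositions~A, B, C, but the passage from a finite interval to $[0,+\infty)$ --- with $\Theta$ necessarily growing --- is a separate, if lighter, step that your sketch does not address.
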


When in addition we assume that $g_0$ has uniformly positive scalar curvature, we get (from the maximum principle and the condition that surgeries do not decrease $\Rmin$) an \emph{a priori} lower bound for $\Rmin$ which goes to infinity in finite time. This implies that surgical solutions given by Theorem~\ref{thm:existence surg} are automatically extinct. As a consequence, any $3$-manifold satisfying the hypotheses of Theorem~\ref{thm:positive scalar general} is a connected sum of spherical manifolds and copies of $S^2\times S^1$. However, we also need to prove finiteness of the summands up to diffeomorphism. Below we state a more precise result, which will suffice for our needs.

We say that a riemannian metric $g_1$ is $\epsi$-\bydef{homothetic} to some riemannian metric $g_2$ if there 
exists $\lambda>0$ such that $\lambda g_1$ is $\epsi$-close to $g_2$ in 
the $C^{[\epsi^{-1}]+1}$-topology. 
A riemannian metric which is $\epsi$-homothetic to a round metric (i.e.~a metric of constant sectional curvature $1$) is said to be $\epsi$-\bydef{round}.

\begin{theo}\label{thm:existence surg quant}
For all $\rho_0,T>0$ there exist $Q,\rho>0$ such that
if $(M_0,g_0)$ is a complete riemannian orientable $3$-manifold which has sectional curvature bounded in absolute value by $1$ and injectivity radius greater than or equal to $\rho_0$, then there exists a complete
surgical solution defined on $[0,T]$, with initial condition $(M_0,g_0)$, sectional curvature bounded in absolute value by $Q$ and injectivity radius greater than or equal to $\rho$ and such that all spherical disappearing components have scalar curvature at least $1$, and are $10^{-3}$-round or diffeomorphic to $S^3$ or $RP^3$.
\end{theo}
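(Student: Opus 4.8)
The plan is to run Perelman's surgery machinery with two structural modifications: surgery is triggered when the supremum $\Rmax$ of the scalar curvature first reaches a prescribed threshold $\Theta$, rather than performed at a singular time; and the non-accumulation of surgeries is forced by the fact that $\Rmax$ drops by a definite factor at each surgery, rather than by a volume bound. The construction is organised as a finite induction over a dyadic subdivision $0=\tau_0<\tau_1<\cdots<\tau_N=T$ of $[0,T]$: on each slab $[\tau_j,\tau_{j+1}]$ one prescribes four parameter functions — a canonical-neighbourhood scale $r_j>0$, a noncollapsing constant $\kappa_j>0$, a surgery-fineness parameter $\delta_j>0$, and the curvature threshold $\Theta_j$ — and one carries Ricci flow with $(r,\kappa,\delta,\Theta)$-surgery across the slab. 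As in Perelman, the delicate point is the order of quantifiers: $\delta_j$ must be chosen small \emph{after} $r_{j+1}$ and $\kappa_{j+1}$ are fixed, so that the $2$-spheres along which we cut lie deep inside long $\delta_j$-necks and the caps we glue in are $\delta_j$-close to a fixed standard cap; this is what makes the a priori estimates survive each surgery.

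Two a priori estimates drive everything, and I would establish them first for a flow with surgery satisfying the inductive hypotheses up to a given time. The first is $\kappa$-noncollapsing on scales $\leqslant\rho$: this is Perelman's entropy and reduced-volume monotonicity argument, and the extra work in the open, surgical setting is to check that the point-picking and comparison geometry are unaffected by the surgeries — which only remove points and insert caps of controlled geometry — and by noncompactness, where one uses the bounded-geometry hypothesis on $g_0$ together with the standing bound $\Rmax\leqslant\Theta$ to keep all blow-up limits complete of bounded curvature. The second is the canonical-neighbourhood property: every point with $R\geqslant r^{-2}$ has, after rescaling $R$ to $1$, a neighbourhood $\epsi$-close to a corresponding piece of a three-dimensional $\kappa$-solution — hence an $\epsi$-neck, an $\epsi$-cap, or a closed component $\epsi$-close to a round quotient. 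This is proved by contradiction using compactness of the space of $\kappa$-solutions; the open case requires care that a blow-up limit is again a $\kappa$-solution of bounded curvature, which is where the threshold $\Theta$ and the noncollapsing estimate re-enter.

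Granting these estimates, the surgery step at a time $t_0$ where $\Rmax$ reaches $\Theta_j$ proceeds as follows: the set $\{R\geqslant\Theta_j/2\}$ is then covered by canonical neighbourhoods, so using the neck structure one selects a locally finite family of pairwise disjoint central $2$-spheres of long $\delta_j$-necks, cuts along them, discards the high-curvature side of each, and glues in standard caps. Because $\delta_j$ is small relative to $r_{j+1}$ and $\kappa_{j+1}$, the post-surgery metric again obeys the noncollapsing and canonical-neighbourhood estimates, $\Rmin$ does not decrease, and $\Rmax$ drops to at most $\Theta_j/2$ — one may normalise the drop to an exact factor $2$. Restarting and iterating produces a surgical solution; that only finitely many surgeries occur on $[0,T]$ is the curvature argument: between consecutive surgery times $\Rmax$ must climb from $\leqslant\Theta_j/2$ back to $\Theta_j$, while the evolution inequality for $R$ — or the sharper derivative bound from the canonical-neighbourhood property — controls the blow-up rate and hence bounds the elapsed time from below by a constant depending only on $\Theta_j$; since the $\Theta_j$ are finitely many and bounded on $[0,T]$, finiteness follows. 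The solution then has sectional curvature bounded by some $Q$ (essentially $\Theta_N$, via the derivative estimates) and, by $\kappa$-noncollapsing together with $|\Rm|\leqslant Q$, injectivity radius bounded below by some $\rho>0$.

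Finally, a component disappears exactly when, just after a surgery, it is closed and every point lies in a canonical neighbourhood; decomposing such a component as a chain of $\epsi$-necks and $\epsi$-caps shows it is $\epsi$-round or diffeomorphic to $S^3$, $RP^3$, $RP^3\#RP^3$, $S^2\times S^1$, a punctured $RP^3$, $\Rr^3$ or $S^2\times\Rr$ — precisely the list allowed in a surgical solution (cf.\ the informal definition above). Choosing $\epsi<10^{-3}$ from the outset makes the spherical pieces $10^{-3}$-round, apart from the two non-round spherical models $S^3$ and $RP^3$ produced directly by the neck-and-cap decomposition, and rescaling the thresholds so that spherical components are removed only once their scalar curvature exceeds $1$ gives the remaining clause. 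The principal obstacle is not any single step but the self-consistent simultaneous choice of $(r_j,\kappa_j,\delta_j,\Theta_j)$ and the verification that the two a priori estimates are genuinely preserved across surgeries — the technical core inherited from Perelman, made harder here by the loss of the volume bound and the need to work with metrics that are merely complete of bounded geometry rather than compact.
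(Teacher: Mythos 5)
Your plan captures the paper's two structural innovations — surgery triggered when $\Rmax$ first reaches a curvature threshold $\Theta$ (so that surgery acts on necks of controlled geometry, before any singularity appears), and non-accumulation of surgeries via the definite drop of $\Rmax$ at each surgery together with the derivative bound $|\partial R/\partial t|\leqslant C_0R^2$ coming from the canonical-neighbourhood property — and it organises the argument, as the paper does, around the pair of a priori estimates (noncollapsing from a restricted reduced volume, and the canonical-neighbourhood property from a blow-up argument) plus a metric-surgery step on a locally finite family of cut necks. So you are on the paper's route. Two points of implementation differ. First, you import Perelman's dyadic subdivision of $[0,T]$ with slab-varying parameters $(r_j,\kappa_j,\delta_j,\Theta_j)$; the paper instead proves directly (Theorem~\ref{thm:existence 1 precis}, reduced to Propositions A, B, C and a Zorn's lemma extension argument backed by the non-accumulation lemma) that a single triple $(r,\delta,\kappa)$ and single threshold $\Theta$ work on the whole compact interval, which dispenses with the slab bookkeeping; your version works but is more machinery than the statement needs. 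Second, in the surgery step the paper caps off $3$-balls on both sides of each cut middle sphere and then discards whole components that are entirely covered by canonical neighbourhoods (Proposition~\ref{prop:caps tubes} and Theorem~\ref{thm:reconnait topo} give their topology); your phrase ``discards the high-curvature side of each'' is slightly imprecise, since the cut spheres sit in the moderate-curvature region and both resulting half-necks are capped. Finally, you should say explicitly where Hamilton--Ivey pinching enters: it is needed to make the blow-up limits have nonnegative curvature operator and to convert scalar-curvature bounds into full curvature bounds, and the Metric Surgery Theorem is precisely the statement that the gluing preserves $\phi$-almost nonnegative curvature — your appeal to ``a priori estimates surviving surgery'' should be pinned to that.
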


Let us explain why this stronger conclusion implies that there are only finitely many disappearing components up to diffeomorphism. By definition, nonspherical disappearing components belong to a finite number of diffeomorphism classes. Now by the Bonnet-Myers theorem, $10^{-3}$-round  components with scalar curvature at least $1$ have diameter bounded above by some universal constant. Putting this together with the bounds on sectional curvatures and injectivity radius, the assertion then follows from Cheeger's finiteness theorem.

Remark that there is an apparent discrepancy between Theorems~\ref{thm:existence surg} and~\ref{thm:existence surg quant} in that in the former, the surgical solution is defined on $[0,+\infty)$ whereas in the latter it is only defined on a compact interval. However, Theorem~\ref{thm:existence surg}  can be formally deduced from Theorem~\ref{thm:existence surg quant} via iteration and rescalings (cf.~remark at the end of Section~\ref{sec:surgical}.)

Throughout the paper, we use the following convention:

\bigskip
{\em All $3$-manifolds considered here are orientable.}
\bigskip

Here is a concise description of the content of the paper: in Section~\ref{sec:surgical}, we give some definitions, in particular the formal definition of surgical solutions, and show how to deduce Theorem~\ref{thm:positive scalar general} from Theorem~\ref{thm:existence surg quant}. The remainder of the article (except the last section) is devoted to the proof of Theorem~\ref{thm:existence surg quant}.  In Section~\ref{sec:metric surgery}, we discuss Hamilton-Ivey curvature pinching, the standard solution, and prove the Metric Surgery Theorem, which allows to perform surgery. In Section~\ref{sec:cn}, we recall some definitions and results on $\kappa$-noncollapsing, $\kappa$-solutions, and canonical neighbourhoods, and fix some constants that will appear throughout the rest of the proof.

In Section~\ref{sec:rdelta}, we introduce the important notion of $(r,\delta,\kappa)$-surgical solutions. These are special surgical solutions satisfying various estimates, and with surgery performed in a special way, according to the construction of Section~\ref{sec:metric surgery}. We state an existence theorem for those solutions, Theorem~\ref{thm:existence 1 precis},
which implies Theorem~\ref{thm:existence surg quant}. Then we reduce Theorem~\ref{thm:existence 1 precis} to three propositions, called A, B, and C. Sections~\ref{sec:coupure} through~\ref{sec:proofC} are devoted to the proofs of Propositions A, B, C, together with some technical results that are needed in these proofs.

Section~\ref{sec:general} deals with generalisations of Theorem~\ref{thm:existence surg quant}. One of them is an equivariant version, Theorem~\ref{thm:existence surg equi}, which implies a classification of $3$-manifolds admitting metrics of uniformly positive scalar curvature whose universal cover has bounded geometry. We note that equivariant Ricci flow with surgery in the case of finite group actions on closed $3$-manifolds has been studied by J.~Dinkelbach and B.~Leeb~\cite{dl:equi}. We follow in part their discussion; however, things are much simpler in our case, since we are mainly interested in the case of \emph{free} actions. We also give a version of Theorem~\ref{thm:existence 1 precis} with extra information on the long time behaviour. This may be useful for later applications. Finally, we review some global and local compactness results for Ricci flows in two appendices.

\paragraph{Acknowledgements} The authors wish to thank the Agence Nationale de la Recherche for its support under the programs F.O.G.~(ANR-07-BLAN-0251-01) and GROUPES (ANR-07-BLAN-0141). The third author thanks the Institut de Recherche Mathématique Avancée, Strasbourg who appointed him while this work was done. 

We warmly thank Joan Porti for numerous fruitful exchanges. The idea of generalising Perelman's work to open manifolds was prompted by conversations with O.~Biquard and T.~Delzant. We also thank Ch. Boehm, J. Dinkelbach, B. Leeb, T. Schick, B. Wilking and H. Weiss.

\section{Surgical solutions}\label{sec:surgical}
Let $M$ be a possibly noncompact, possibly disconnected (orientable) $3$-manifold. 

\subsection{Definitions}

\begin{defi}
Let $I\subset\Rr$ be an interval. An \bydef{evolving Riemannian manifold} is a pair
$\{(M(t),g(t))\}_{t\in I}$ where for each $t$, $M(t)$ is a (possibly empty, possibly disconnected) manifold and $g(t)$ a
riemannian metric on $M(t)$. We say that it is \bydef{piecewise $\mathcal C^1$-smooth}
 if there exists  $J\subset I$, which is discrete as a subset of $\Rr$, such that the following conditions are satisfied:

\begin{enumerate}
\item  On each
connected component of $I\setminus J$, $t\mapsto M(t)$ is constant, and $t\mapsto g(t)$ is
$\mathcal C^1$-smooth.
\item For each $t_0\in J$, $M(t_{0})=M(t)$ for any $t<t_{0}$ sufficiently close to $t_{0}$ and $t\to g(t)$ is left continuous at $t_{0}$. 

\item For each $t_0\in J\setminus \{\sup I\}$, $t \to (M(t),g(t))$ has a right limit at $t_{0}$, denoted $(M_{+}(t_{0}),g_{+}(t_{0}))$
\end{enumerate}
\end{defi}

A time $t\in I$ is \bydef{regular} if $t$ has a neighbourhood in $I$ where $M(\cdot)$ is constant
and $g(\cdot)$ is $\mathcal C^1$-smooth. Otherwise it is \bydef{singular}.

\begin{defi}
A piecewise $\mathcal{C}^1$-smooth evolving Riemannian $3$-manifold $\{(M(t),g(t))\}_{t\in I}$ is a
\bydef{surgical solution} of the Ricci Flow equation
\begin{equation}\label{eq:fdr}
\frac{dg}{dt} = -2\Ric_{g(t)}
\end{equation}

if the following statements hold:
\begin{enumerate}
\item Equation~(\ref{eq:fdr}) is satisfied at all regular times;
\item  For each singular time $t$ we have $\Rmin(g_+(t))\ge \Rmin(g(t))$;
\item For each singular time $t$ there is a locally finite collection $\mathcal S$ of disjoint embedded $2$-spheres in $M(t)$ and a manifold $M'$ such that
\begin{enumerate}
\item \label{item:capoff} $M'$ is obtained from $M(t)\setminus\mathcal S$ by capping-off $3$-balls;
\item $M_+(t)$ is a union of connected components of $M'$ and 
$g(t)=g_+(t)$ on $M(t) \cap M_+(t)$; 
\item Each component of $M'\setminus M_+(t)$ is spherical, or diffeomorphic to $\Rr^3$,
$S^2\times S^1$, $S^2\times\Rr$, $RP^3\# RP^3$  or a punctured $RP^3$.
\end{enumerate}
\end{enumerate}
\end{defi}

A component of $M'\setminus M_+(t)$ is said to \bydef{disappear at time $t$}. 

\begin{center}
 \input{surgical_solution.pstex_t}
\end{center}

An evolving riemannian manifold $\{(M(t),g(t))\}_{t\in I}$ is
\bydef{complete} (resp.~\bydef{has bounded geometry}) if for each $t\in I$ such that $M(t)\ne \emptyset$, the 
riemannian manifold $(M(t),g(t))$ is complete  (resp.~\bydef{has bounded geometry}).

\subsection{Deduction of Theorem~\ref{thm:positive scalar general} from Theorem~\ref{thm:existence surg quant}}

The purpose of this subsection is to explain how to deduce Theorem~\ref{thm:positive scalar general} from Theorem~\ref{thm:existence surg quant}. For this, we need a result about the evolution of $\Rmin$ in a surgical solution. For convenience, we take the convention that $\Rmin(t)$ is $+\infty$ if $M(t)$ is empty.

\begin{prop}\label{prop:R explose}
Let $(M(\cdot),g(\cdot))$ be a complete $3$-dimensional surgical solution with bounded sectional curvature
defined on an interval $[0,T)$. Assume that $\Rmin(0)\ge 0$. Then
$$\Rmin(t) \ge \frac{\Rmin(0)}{1-2t \Rmin(0)/3}.$$
\end{prop}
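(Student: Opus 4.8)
The plan is to combine the standard maximum-principle estimate for the evolution of $\Rmin$ under smooth Ricci flow with the surgery condition (2) in the definition of a surgical solution, which guarantees that $\Rmin$ does not decrease across singular times. On each connected component of $[0,T)\setminus J$ the flow is smooth, and the evolution equation for the scalar curvature reads $\partial_t R = \Delta R + 2|\Ric|^2$. In dimension $3$ one has the pointwise inequality $|\Ric|^2 \ge \tfrac13 R^2$, so $\partial_t R \ge \Delta R + \tfrac23 R^2$. Applying the maximum principle (in the form valid for complete solutions with bounded sectional curvature, which applies here since $\Rmin(0)\ge0$ so $R$ stays nonnegative and a barrier argument works) one compares $\Rmin(t)$ with the solution $\phi(t)$ of the ODE $\phi' = \tfrac23\phi^2$, $\phi(t_0)=\Rmin(t_0^+)$, giving on each smooth interval the bound
\[
\Rmin(t) \;\ge\; \frac{\Rmin(t_0^+)}{1 - \tfrac23(t-t_0)\Rmin(t_0^+)}.
\]

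First I would verify this ODE comparison carefully on the initial smooth interval $[0,t_1)$, noting that $\Rmin(0)\ge0$ makes the right-hand side of the target inequality well-defined and nondecreasing in $t$ (as long as the denominator stays positive), and that the map $a \mapsto \tfrac{a}{1-2t a/3}$ is monotone nondecreasing in $a$ for $a\ge0$. Then I would run an induction over the (discrete, hence in any compact subinterval finite) set $J$ of singular times: suppose the desired bound holds on $[0,t_k)$; at the singular time $t_k$ the surgery condition gives $\Rmin(g_+(t_k)) \ge \Rmin(g(t_k))$, which by left-continuity and the inductive bound is $\ge \Rmin(0)/(1-2t_k\Rmin(0)/3)$; feeding this as the new initial value into the smooth estimate on the next interval $[t_k,t_{k+1})$ and using monotonicity of the comparison function in the initial value propagates the bound. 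Since $J$ has no accumulation point in $[0,T)$, this covers all of $[0,T)$.

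The main technical point — the step I expect to require the most care — is justifying the maximum principle on a possibly noncompact manifold. The hypothesis of bounded sectional curvature (hence bounded $R$ on compact time intervals) together with completeness is exactly what licenses the scalar maximum principle for supersolutions of the heat equation on noncompact manifolds; one should cite the appropriate version (e.g. the Ecker–Huisken / Ni–Tam type statement, or the formulation in Chow et al.) rather than the compact one. A secondary subtlety is that $\Rmin(t)$ need not be attained, so the comparison should be phrased as $\inf_M R(\cdot,t) \ge \phi(t)$ directly, or via an $\epsi$-perturbation; but this is routine. Everything else is bookkeeping: tracking that the denominators remain positive on $[0,T)$ (which they do, since if $\Rmin(0)>0$ the lower bound blows up at $t = 3/(2\Rmin(0))$ and the solution would have gone extinct or the interval $[0,T)$ ends before then; if $\Rmin(0)=0$ the bound is simply $\Rmin(t)\ge0$, which is immediate from the maximum principle plus condition (2)).
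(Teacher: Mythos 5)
Your proof is correct and follows exactly the same route as the paper: combine the evolution inequality $\partial_t R \ge \Delta R + \tfrac23 R^2$ with the maximum principle for complete Ricci flows of bounded curvature on each smooth interval, and chain across singular times using the surgery condition $\Rmin(g_+(t)) \ge \Rmin(g(t))$. The paper states this in one sentence (citing \cite[Corollary 7.45]{cln} for the noncompact maximum principle); you have simply written out the ODE comparison and the induction over the discrete set of surgery times that the paper leaves implicit.
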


\begin{proof}
Follows from the evolution equation for scalar curvature, the maximum principle for complete Ricci flows of bounded curvature~\cite[Corollary 7.45]{cln} and the assumption that the minimum of scalar curvature is nondecreasing at singular times of surgical solutions.
\end{proof}

\begin{corol}\label{corol:extinction}
For every $R_0>0$ there exists $T=T(R_0)$ such that the following holds. Let  $(M(\cdot),g(\cdot))$ be a complete $3$-dimensional surgical solution defined on $[0,T]$, with bounded sectional curvature, and such that $\Rmin(0)\ge R_0$. Then  $(M(\cdot),g(\cdot))$ is extinct.
\end{corol}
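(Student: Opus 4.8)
The statement to prove is Corollary~\ref{corol:extinction}: for every $R_0>0$ there is a time $T=T(R_0)$ so that any complete surgical solution with bounded sectional curvature on $[0,T]$ with $\Rmin(0)\ge R_0$ must be extinct (i.e.\ $M(T)$ is empty). The plan is to invoke Proposition~\ref{prop:R explose} directly: since $\Rmin(0)\ge R_0>0$, the bound
$$\Rmin(t)\ \ge\ \frac{\Rmin(0)}{1-2t\Rmin(0)/3}\ \ge\ \frac{R_0}{1-2tR_0/3}$$
holds as long as the right-hand side is finite and the solution is nonempty, where for the second inequality I would note that $x\mapsto x/(1-2tx/3)$ is increasing on the relevant range. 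The right-hand side blows up as $t\uparrow 3/(2R_0)$, so one simply sets $T(R_0):=3/(2R_0)$ (or any value $>3/(2R_0)$, e.g.\ $T(R_0)=2/R_0$ for definiteness). If the solution were not extinct on $[0,T]$, then $M(T)\ne\emptyset$, and moreover $M(t)\ne\emptyset$ for all $t\in[0,T]$ (nonemptiness propagates backward: components only disappear, they are never created), so Proposition~\ref{prop:R explose} applies on all of $[0,T]$ — but then $\Rmin(T)$ would have to be bounded below by a quantity that is $+\infty$ (or negative, past the blow-up time), a contradiction. Hence $M(t)=\emptyset$ for some $t\le T$, and once the solution is empty it stays empty, so it is extinct on $[0,T]$.

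A small technical point I would be careful about: Proposition~\ref{prop:R explose} is stated for solutions defined on a half-open interval $[0,T')$, so to apply it cleanly I would work on $[0,T)$ with $T=3/(2R_0)$, or more robustly, for each $t_1<3/(2R_0)$ apply it on $[0,t_1]$ to conclude that if the solution is nonempty at $t_1$ then $\Rmin(t_1)\ge R_0/(1-2t_1R_0/3)$; letting $t_1\uparrow 3/(2R_0)$ forces emptiness at some time $\le 3/(2R_0)$. I would also remark that by convention $\Rmin(t)=+\infty$ when $M(t)=\emptyset$, which is consistent with the solution having become extinct, and that the hypothesis $\Rmin(0)\ge R_0\ge 0$ needed in Proposition~\ref{prop:R explose} is satisfied.

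\textbf{Main obstacle.} Honestly, there is no serious obstacle here — this is a short deduction from the already-established Proposition~\ref{prop:R explose}, which itself packages the maximum principle for complete bounded-curvature Ricci flows together with the defining property that $\Rmin$ does not decrease across surgeries. The only thing requiring a moment's thought is the bookkeeping at singular times — one must make sure the lower bound on $\Rmin$ survives each surgery (which is exactly condition (2) in the definition of a surgical solution, already used in Proposition~\ref{prop:R explose}) and that "extinct" is correctly interpreted, namely that once $M(t_i)=\emptyset$ the solution stays empty for all later times. Given all of this is in place, the proof is essentially one line: take $T(R_0)=3/(2R_0)$ and read off the contradiction from the blow-up of the comparison function.
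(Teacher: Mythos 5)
Your proof is correct and is exactly the deduction the paper has in mind: the corollary is stated immediately after Proposition~\ref{prop:R explose} without a separate proof precisely because one reads off $T(R_0)=3/(2R_0)$ from the blow-up of the comparison function. Your care about the half-open interval and about nonemptiness propagating backward is reasonable but not a point of divergence from the intended argument.
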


We now recall the definition of \bydef{capping-off $3$-balls} to a $3$-manifold.

\begin{defi}
 Let $M,M'$ be $3$-manifolds. Let $\cals$ be a locally finite collection of embedded $2$-spheres in $M$. One says that $M'$ is obtained from $M \backslash \cals$ by \bydef{capping-off $3$-balls} if there exists a collection $\calb$ of 
$3$-balls such that $M'$ is the disjoint union 
$$M' = M \setminus \cals \bigsqcup \calb,$$
where each $S \in \cals$ has a tubular neighbourhood $V\subset M$ such that 
$V \setminus S$ has two connected components $V_-,V_+$ and there exists
 $B_-,B_+ \in \calb$ such that $V_- \bigsqcup B_-$ and $V_+ \bigsqcup B_+$ are 
$3$-balls in $M'$. Conversely, each $B \in \calb$ is included in such a $3$-ball 
of $M'$.  
\end{defi}

Note that it is implicit in the above definition that there is an orientation preserving  diffeomorphism, say $\phi_- : \partial B_- \to S\subset \partial V_-$, such that identifying $\partial B_-$ to the corresponding boundary of $V_-$ one obtains a $3$-ball. From \cite{smale:diff}, the differentiable structure of $M'$ does not 
depend of the above diffeomorphisms. Moreover, if $M'$ and $M''$ are obtained 
from $M \setminus \cals$ by capping off $3$-balls, one can choose the diffeomorphism 
from $M'$ to $M''$ to be the identity on $M \cap M' = M \cap M''$.

We shall need the following topological lemma:
\begin{prop}\label{prop:topo}
Let $\mathcal X$ be a class of closed $3$-manifolds.
Let $M$ be a $3$-manifold. Suppose that there exists a finite sequence of $3$-manifolds
\linebreak $M_0,M_1,\ldots, M_p$ such that $M_0=M$, $M_p=\emptyset$, and for each $i$, $M_i$ is
obtained from $M_{i-1}$ by splitting along a locally finite collection of pairwise disjoint, embedded $2$-spheres, capping off
$3$-balls, and removing some components which are connected
sums of members of $\mathcal X$.
Then each component of $M$ is a connected sum of members of $\mathcal X$.
 \end{prop}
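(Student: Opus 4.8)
\textbf{Proof plan for Proposition~\ref{prop:topo}.}

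The plan is to proceed by induction on $p$, the number of steps in the sequence, proving the slightly stronger statement: if there is such a sequence of length $p$ starting at $M$, then every component of $M$ is a connected sum of members of $\mathcal X$. The base case $p=0$ forces $M=\emptyset$ and there is nothing to prove. For the inductive step, assume the claim holds for sequences of length $p-1$, and consider $M=M_0, M_1,\dots,M_p=\emptyset$. The key observation is that the passage from $M_0$ to $M_1$ is reversible at the level of connected sums: $M_1$ is obtained from $M_0$ by cutting along a locally finite family $\cals$ of disjoint embedded $2$-spheres, capping off $3$-balls to get some $M_0'$, and then discarding a collection $\mathcal D$ of components of $M_0'$ each of which is a connected sum of members of $\mathcal X$. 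Since $M_1, M_2,\dots, M_p=\emptyset$ is a sequence of length $p-1$ starting at $M_1$, the induction hypothesis tells us that every component of $M_1$ is a connected sum of members of $\mathcal X$. Because $M_1$ is precisely the union of the non-discarded components of $M_0'$, and the discarded components are connected sums of members of $\mathcal X$ by hypothesis, we conclude that \emph{every} component of $M_0'$ is a connected sum of members of $\mathcal X$.

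It then remains to reconstruct $M_0$ from $M_0'$ and $\cals$, keeping track of connected-sum structure. Fix a component $N$ of $M_0$. The spheres of $\cals$ lying in $N$ form a locally finite family; cutting $N$ along them and capping off balls yields the components of $M_0'$ that met $N$, call them $N_1', N_2',\dots$ (finitely or infinitely many, but locally finitely arranged). Each $N_j'$, being a component of $M_0'$, is a connected sum of members of $\mathcal X$, so each $N_j'$ carries its own locally finite collection of $2$-spheres witnessing this. Now I form the graph $G$ whose vertices are the $N_j'$ together with extra vertices carrying copies of the summands of $\mathcal X$ appearing in the decomposition of each $N_j'$, and whose edges record both the original spheres of $\cals \cap N$ (each such sphere reconnecting the two sides it separates, which either lie in the same $N_j'$ — giving an $S^2\times S^1$-type edge, absorbed by the standard trick of adding an $S^2\times S^1$ vertex — or in two different ones) and the spheres internal to the decompositions of the $N_j'$. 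One checks this graph is locally finite and that performing the connected sum it prescribes recovers $N$; this is essentially unwinding the definition of connected sum given in the introduction, using the fact that capping off and then re-gluing along a sphere is the inverse operation of cutting along that sphere.

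The main obstacle is bookkeeping rather than conceptual: one must verify that the combined collection of $2$-spheres (those from $\cals$ together with those internal to each $N_j'$) is genuinely \emph{locally finite} in $N$, and that the resulting graph is locally finite. Local finiteness of $\cals$ in $M_0$ restricts to $N$; the internal sphere families are locally finite in each $N_j'$; the potential difficulty is near the "seams" where a capped-off ball in some $N_j'$ sits, since a point there could a priori have infinitely many spheres accumulating from the internal families of different components. This is handled by choosing the tubular neighbourhoods $V$ of the spheres of $\cals$ (from the capping-off definition) small enough and disjoint from the internal sphere systems away from a compact core, which is possible because each $N_j'$ has only finitely many of its internal spheres meeting any compact set, and only finitely many caps $B\in\calb$ meeting any compact set of $N$. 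Once local finiteness is secured, the identification of $N$ as the prescribed connected sum is formal, and the induction closes.
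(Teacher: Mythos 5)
Your overall strategy — induction on $p$, then combining the splitting spheres $\cals$ with the decomposition spheres internal to the components of $M_0'$ — is the same as the paper's. You also correctly identify the crux: the combined sphere system must live in $M_0$ and be locally finite there. But the mechanism you propose for this does not work, and it is where your argument has a real gap.

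The decomposition spheres of $M_1$ (call them $\mathcal S'$, as the paper does) are given objects in $M_1 \subset M_0'$; they may well pass through the caps $\calb$, and those caps are not part of $M_0$. Shrinking the tubular neighbourhoods $V$ of the spheres of $\cals$ does nothing about this: the caps are fixed balls in $M_0'$ and the spheres of $\mathcal S'$ are fixed too, so an intersection $\mathcal S' \cap \calb$ is unaffected by how you choose $V$. What is actually needed — and what the paper does — is to move $\mathcal S'$ by an \emph{ambient isotopy of $M_1$} so that afterwards $\mathcal S' \cap \calb = \emptyset$. The isotopy is constructed using exactly the local finiteness you invoke, but in the right place: inside each cap $B_i$ one chooses a smaller ball $B_i'$ disjoint from $\mathcal S'$ (possible since $\mathcal S'$ is locally finite), together with a collar $U_i$ of $\partial B_i$ in $M_1\setminus B_i$, and then one pushes $B_i'$ onto $B_i$ and $B_i$ onto $B_i\cup U_i$ simultaneously for all $i$ (the pairwise-disjointness of the $U_i$, again from local finiteness of $\{B_i\}$, makes this a well-defined global isotopy). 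Afterwards $\mathcal S'$ avoids every cap, hence lies in $M_0$, and $\cals\cup\mathcal S'$ is a locally finite sphere system in $M_0$ splitting it into members of $\mathcal X$. Your explicit graph construction at the end is fine once this is in place; it is a presentational expansion of what the paper leaves implicit, but it does not substitute for the missing isotopy.
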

 
\begin{proof}
We prove the result by induction on $p$. The case $p=1$ is immediate from the definition of a connected sum.

Supposing that the proposition is true for some $p$, we consider a sequence \linebreak $M_0,M_1,\ldots, M_{p+1}$ such that $M_0=M$, $M_{p+1}=\emptyset$, and for each $i$, $M_i$ is
obtained from $M_{i-1}$ by splitting along a locally finite collection of $2$-spheres, capping off
$3$-balls, and removing some components which are connected
sums of members of $\mathcal X$. By the induction hypothesis, $M_1$ is a connected sum of members of $\mathcal X$.

Let $\mathcal{S}$ be the collection of $2$-spheres involved in the process of turning $M_0$ into $M_1$. Let $\mathcal{B}$ be the collection of capped-off $3$-balls. Let $\mathcal{S}'$ be the collection of $2$-spheres involved in the connected sum decomposition of $M_1$. If $\mathcal{B}\cap\mathcal{S}'$ is empty, then the spheres of $\mathcal{S}'$ actually live in $M_0$, and the union of $\mathcal{S}$ and $\mathcal{S}'$ splits $M_0$ into prime summands homeomorphic to members of $X$. This observation reduces our proof to the following claim:

\begin{claim}
 $\mathcal{S}'$ can be made disjoint from $\mathcal{B}$ by an ambient isotopy.
\end{claim}

Let us prove the claim.  For each component $B_i$ of $\mathcal B$, we fix a $3$-ball $B'_i$ contained in the interior of $B_i$ and disjoint from $\mathcal{S}'$, and a collar neighbourhood $U_i$ of $\bord B_i$ in $M_1\setminus B_i$.
 Since $\{B_i\}$ is locally finite, we may ensure that the $U_i$'s are pairwise disjoint. Choose an ambient isotopy of $M_1$ which takes $B'_i$ onto $B_i$ and $B_i$ onto $B_i\cup U_i$ for each $i$. Then after this ambient isotopy, $\mathcal{S'}$ is still locally finite, and is now disjoint from $\mathcal{B}$.
\end{proof}

To see why these results imply Theorem~\ref{thm:positive scalar general}, take a $3$-manifold
$M$ and a complete metric $g_0$ of bounded geometry and uniformly positive
scalar curvature on $M$. By rescaling if necessary we can assume that the bound on the curvature is $1$. From the positive lower bound on $\Rmin(g_0)$ we get an \emph{a priori} upper bound $T$ for the extinction time of a surgical solution, using Corollary~\ref{corol:extinction}. Applying Theorem~\ref{thm:existence surg quant}, we get  numbers $Q,\rho$ and a surgical solution
$(M(\cdot),g(\cdot))$ with initial condition $(M,g_0)$ defined on $[0,T]$ satisfying the two additional conditions. This solution is extinct, and as we have already explained in the introduction, the disappearing components are connected sums of spherical manifolds and copies of $S^2\times S^1$, the summands belonging to some finite collection which depends only on the bounds on the geometry.
Let $\mathcal X$ be the collection of prime factors of the disappearing components. Let $0=t_0<t_1<t_2<\cdots < t_p=T$ be a set of regular times of
$(M(\cdot),g(\cdot))$ such that there is exactly one singular time between each pair
of consecutive $t_i$'s. The conclusion of  Theorem~\ref{thm:positive scalar general} now follows from Proposition~\ref{prop:topo} applied with $M_{i}=M(t_{i})$.

\subsection{More definitions}

\paragraph{Notation}
Let $n\ge 2$ be an integer and $(M,g)$ be a riemannian $n$-manifold.

For any $x\in M$, we denote by $\Rm(x):\Lambda^2T_{x}M \rightarrow \Lambda^2T_{x}M$ the curvature operator, and   $|\Rm (x)|$ its norm, which is also the maximum of the absolute values of the sectional curvatures at $x$. We let $R(x)$ denote the scalar curvature of $x$. The infimum (resp.~supremum) of the scalar curvature of $g$ on $M$ is denoted by $\Rmin(g)$ (resp.~$\Rmax(g)$).

We  write $d:M\times M\to [0,\infty)$ for the distance function associated to $g$. For $r>0$ we denote by $B(x,r)$ the open ball of radius $r$ around $x$. 
Finally, if $x,y$ are points of $M$, we denote by $[x y]$ a geodesic segment connecting $x$ to $y$. This is a (common) abuse of notation, since such a segment is not unique in general.

For closeness of metrics we adopt the conventions of \cite[Section 0.6.1]{B3MP}.

Let $\{(M(t),g(t))\}_{t\in I}$ be a surgical solution and $t\in I$. If $t$ is 
singular, one sets $M_\reg(t) :=M(t)\cap M_+(t)$ and denotes by $M_\sing(t)$ its complement, i.e.~$M_\sing(t):=M(t) \setminus M_\reg(t)=M(t)\setminus M_+(t)$. 
If $t$ is regular, then $M_\reg(t) = M(t)$ and $M_\sing(t)=\emptyset$.

At a singular time, connected components of $M_\sing(t)$ belong to three types:
\begin{enumerate}
\item components of  $M(t)$ which are disappearing components of $M'$,
\item  closures of components of $M(t) \setminus \cals$ which give, after being capped-off, disappearing components of $M'$, and
\item embedded $2$-spheres of $\cals$.
\end{enumerate}
In particular, the boundary of $M_\sing(t)$ is contained in $\cals$.

 We say that a pair $(x,t)\in
M\times I$ is \bydef{singular} if $x\in M_\sing(t)$; otherwise we call $(x,t)$ \bydef{regular}.

\begin{defi}
Let $t_0$ be a time, $[a,b]$ be an interval containing $t_0$ and $X$ be a subset of $M(t_0)$ such that for every $t\in [a,b)$, we have $X\subset M_\reg(t)$. Then the set $X\times [a,b]$ is
\bydef{unscathed.} Otherwise, we say that $X$ is \bydef{scathed}.
\end{defi}

\begin{rems} 
\item[1)] In the definition of `unscathed', we allow the final time slice to contain singular points, i.e.~we may have $X \cap M_\sing(b) \ne \emptyset$. The point 
is that if $X \times [a,b]$ is unscathed, then $t \mapsto g(t)$ evolves smoothly 
by the Ricci flow equation on all of $X \times [a,b]$.\\
\item[2)] Assume that $X \times [a,b]$ is scathed. Then there is $t\in[a,b)$ and 
$x \in X$ such that $x \notin M_\reg(t)$. Assume that $t$ is closest to 
$t_0$ with this property. If $t >t_0$ then $x \in M_\sing(t)$ and disappears at time 
$t$ unless $x \in \partial M_\sing(t)$ or if the component of $M_\sing(t)$ which 
contains $x$ is a sphere $ S \in \cals$.
If $t<t_0$, then $x \in M_+(t) \setminus M_\reg(t)$ is in one of the $3$-balls that are added at time $t$.

\end{rems}

\paragraph{Notation}
For $t\in I$ and $x\in M(t)$ we use the notation $\Rm(x,t)$,
$R(x,t)$ to denote the curvature operator and the scalar curvature respectively. For brevity we set $\Rmin(t):=\Rmin(g(t))$ and $\Rmax(t):=\Rmax(g(t))$.

We use $d_t(\cdot,\cdot)$ for the distance function associated to $g(t)$. The ball of radius $\rho$ around $x$ for $g(t)$ is denoted by $B(x,t,\rho)$.

For the definition of closeness of evolving riemannian manifolds, we refer to \cite[ Section 0.6.2.]{B3MP}.

\begin{defi}
Let  $t_0\in I$ and $Q>0$. The 
\bydef{parabolic rescaling} with factor $Q$ at time $t_0$ is the evolving manifold $\{(\bar M(t),\bar g(t))\}$ where
$\bar M(t)= M(t_0+t/Q)$, and
$$\bar g(t) = Q\,g(t_0+\frac{t}{Q}).$$
\end{defi}

Finally we remark that Theorem \ref{thm:existence surg} follows by iteration of Theorem \ref{thm:existence surg quant} via parabolic rescalings. Hence in the sequel, we focus on proving Theorem \ref{thm:existence surg quant}.

\section{Metric surgery}\label{sec:metric surgery}

\subsection{Curvature pinched toward positive}\label{sec:courbure 3}
Let $(M,g)$ be a $3$-manifold and $x\in M$ be a point. We denote by $\lambda(x)\ge \mu(x) \ge \nu(x)$ the eigenvalues of the curvature operator $\Rm (x)$. By our definition, all sectional curvatures lie in the interval $[\nu(x),\lambda(x)]$. Moreover, $\lambda(x)$ (resp.~$\nu(x)$) is the maximal (resp.~minimal) sectional curvature at $x$. If $C$ is a real number, we sometimes write $\Rm (x)\ge C$ instead of $\nu(x)\ge C$. Likewise, $\Rm(x) \le C$ means $\lambda(x)\le C$.

It follows that the eigenvalues of the Ricci tensor are equal to $\lambda+\mu$, $\lambda+\nu$, and $\mu+\nu$; as a consequence, the scalar curvature $R(x)$ is equal to $2(\lambda(x)+ \mu(x)+ \nu(x))$.

For evolving metrics, we use the notation $\lambda(x,t)$, $\mu(x,t)$, and $\nu(x,t)$, and correspondingly write $\Rm (x,t)\ge C$ for $\nu(x,t)\ge C$, and $\Rm(x,t) \le C$ for $\lambda(x,t)\le C$.

Let $\phi$ be a nonnegative function. A metric $g$ on $M$ has $\phi$-\bydef{almost nonnegative curvature} if  $\Rm \ge -\phi(R)$.

Now we consider a familly of positive functions $(\phi_{t})_{t\geqslant 0}$ defined as follows. Set  $s_{t}:=\frac{e^2}{1+t}$ and define
$\phi_{t} : [-2s_{t},+\infty) \longrightarrow [s_{t},+\infty)$
as the reciprocal of the function $s \mapsto 2s(\ln(s) + \ln(1+t) -3)$.

Following \cite{Mor-Tia}, we use the following definition. 
\begin{defi}
Let $I\subset [0,\infty)$ be an interval, $t_0\in I$ and $\{g(t)\}_{t\in I}$ be an evolving metric on $M$. We say that $g(\cdot)$ has \bydef{curvature
pinched toward positive at time $t_0$ }  if for all $x \in M$ we have
\begin{gather}
R(x,t_0)  \geqslant -\frac{6}{4t_0+1}, \label{eq:pinching 1} \\
\Rm (x,t_0) \geqslant -\phi_{t_0}(R(x,t_0)). \label{eq:pinching 2}
\end{gather}
We say that $g(\cdot)$ has  \bydef{curvature
pinched toward positive} if it has curvature
pinched toward positive at each $t \in I$. 
\end{defi}

Remark that if $|\Rm(g(0))| \le 1$, then $g(\cdot)$ has curvature
pinched toward positive at time $0$. Next we state a result due to Hamilton and Ivey in the compact case. For a proof of the general case, see~\cite[Section 5.1]{Cho-Kno2}.
\begin{prop}[Hamilton-Ivey pinching estimate]\label{prop:ham ivey}
Let $a,b$ be two real numbers such that $0\le a<b$. Let $(M,\{g(t)\}_{t\in [a,b]})$ be a complete Ricci flow with bounded curvature. If $g(\cdot)$ has  curvature pinched toward positive at time $a$, 
then $\{g(t)\}_{t\in [a,b]}$ has curvature pinched toward positive.
\end{prop}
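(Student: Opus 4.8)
The statement is that curvature being pinched toward positive is preserved under the Ricci flow; it has two parts, inequality~(\ref{eq:pinching 1}) for the scalar curvature and inequality~(\ref{eq:pinching 2}) for the least eigenvalue $\nu$ of the curvature operator. Both are preserved by a maximum-principle argument, and the subtlety in the complete noncompact case is only that one must invoke the maximum principle valid for complete flows of bounded curvature rather than the compact one. So the plan is to cite the vector-valued (or tensor) maximum principle for complete Ricci flows of bounded curvature, exactly as in Proposition~\ref{prop:R explose} for the scalar part, and check that the two pinching inequalities define a region in the space of curvature operators that is preserved by the associated system of ODEs.

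First I would dispose of (\ref{eq:pinching 1}). Under Ricci flow in dimension~$3$ the scalar curvature satisfies $\partial_t R = \Delta R + 2|\Ric|^2 \ge \Delta R + \tfrac{2}{3} R^2$, so by the scalar maximum principle for complete flows of bounded curvature (\cite[Corollary 7.45]{cln}, as already used above) $R_{\min}(t)$ dominates the solution $\varphi(t)$ of $\varphi' = \tfrac{2}{3}\varphi^2$ with $\varphi(a) = R_{\min}(a) \ge -\tfrac{6}{4a+1}$. A direct computation shows that $t \mapsto -\tfrac{6}{4t+1}$ is exactly a (super)solution of this ODE with the matching initial value, whence $R(x,t) \ge -\tfrac{6}{4t+1}$ for all $t\in[a,b]$; this is (\ref{eq:pinching 1}) at every time.

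For (\ref{eq:pinching 2}) I would work with the ODE–PDE comparison for the curvature operator. Writing the evolution $\partial_t \Rm = \Delta \Rm + \Rm^2 + \Rm^\#$ and, in the orthonormal eigenframe, the associated system for $\lambda\ge\mu\ge\nu$, the quantity $\nu$ satisfies (in the barrier/support sense) $\partial_t \nu \ge \Delta \nu + \nu^2 + \mu\lambda \ge \Delta\nu + \nu^2$ since $\mu\lambda \ge \nu^2$ is automatic once one controls signs; the standard Hamilton–Ivey algebra then shows that the region $\{\nu \ge -\phi_t(R)\}\cap\{R\ge -6/(4t+1)\}$ in the space of curvature operators is convex, $O(3)$-invariant and invariant under the ODE flow $\tfrac{d}{dt}\Rm = \Rm^2 + \Rm^\#$ — this is precisely the content of the pinching function $\phi_t$ being built from the inverse of $s\mapsto 2s(\ln s + \ln(1+t) - 3)$, i.e. it is tailored so that $X(t) := -\phi_t(R(t))$ is a subsolution along trajectories where $R$ is large. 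Having checked invariance of this region for the ODE, the tensor maximum principle for complete bounded-curvature Ricci flows (the noncompact analogue of Hamilton's theorem, as in \cite{Cho-Kno2} or \cite{Che-Zhu2}) upgrades it to invariance along the PDE, giving (\ref{eq:pinching 2}) for all $t\in[a,b]$.

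The only genuine obstacle is the noncompactness: the classical Hamilton–Ivey estimate is proved on closed manifolds where the maximum principle is elementary, whereas here $M$ may be open. The resolution is exactly that $g(\cdot)$ is assumed complete with bounded curvature on $[a,b]$, which is the hypothesis under which the maximum principle for systems is known to hold (see the references above); granting that, the algebraic verification of ODE-invariance of the pinching region is identical to the compact case and I would simply refer to \cite[Section 5.1]{Cho-Kno2} for it rather than reproduce the computation.
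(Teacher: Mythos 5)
Your overall strategy matches the paper exactly: the paper gives no proof of its own, simply citing \cite[Section 5.1]{Cho-Kno2} for the extension of Hamilton--Ivey to complete bounded-curvature flows, and you likewise reduce to the scalar and tensor maximum principles for such flows and defer the ODE-invariance algebra to that same reference. Your treatment of inequality~(\ref{eq:pinching 1}) is fine: $-6/(4t+1)$ is indeed an exact solution of $\varphi'=\tfrac23\varphi^2$, and the scalar maximum principle for complete bounded-curvature flows gives the comparison.

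However, the sketch of the algebra behind~(\ref{eq:pinching 2}) contains a genuinely false claim. You write that $\partial_t\nu \ge \Delta\nu + \nu^2 + \mu\lambda \ge \Delta\nu + \nu^2$ ``since $\mu\lambda\ge\nu^2$ is automatic once one controls signs.'' Neither $\mu\lambda\ge\nu^2$ nor even $\mu\lambda\ge 0$ is automatic: the case $\lambda>0>\mu\ge\nu$ with $\mu\lambda$ large and negative is perfectly possible and is precisely the case Hamilton--Ivey has to deal with. (If that inequality held, one could preserve the far simpler condition $\nu\ge -C$ with a constant barrier, and the logarithmic pinching function $\phi_t$ would be unnecessary.) The actual argument distinguishes cases on the sign of $\lambda+\nu$ and exploits the coupling with the evolution of $R$: roughly, one shows that along the ODE the quantity $\nu + \phi_t(R)$ cannot cross zero by observing that at a putative first crossing one has $R = -\nu(\ln(-\nu)+\ln(1+t)-3)\cdot 2$ in terms of the inverse of the defining relation of $\phi_t$, and then verifying directly from $\tfrac{d}{dt}\nu=\nu^2+\lambda\mu$ and $\tfrac{d}{dt}R=2(\lambda^2+\mu^2+\nu^2+\lambda\mu+\lambda\nu+\mu\nu)$ that the relevant derivative has the right sign. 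Since you ultimately hand off the computation to \cite[Section 5.1]{Cho-Kno2} (as the paper does), the wrong intermediate inequality does not sink the proof plan; but as written it could mislead the reader into thinking the preserved region is something much more naive than it is, so it should either be corrected or removed.
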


The following easy lemmas will be useful.  
\begin{lem} \label{lem:phi decreasing}
\begin{enumerate}[i)]
\item  $\phi_{t}(s)=\frac{\phi_{0}((1+t)s)}{1+t}$.
\item   $\phi_t(s)\over s$ decreases to $0$ as $s$ tends to $+\infty$.
\item   ${\phi_0( s)\over s}={1\over 4}$ if $s = 4e^5$.
\end{enumerate}
\end{lem}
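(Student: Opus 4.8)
The plan is to verify the three identities in Lemma~\ref{lem:phi decreasing} directly from the definition of $\phi_t$ as the reciprocal of $\psi_t(s) := 2s(\ln(s) + \ln(1+t) - 3)$, using only elementary calculus. Throughout, the key observation is that $\psi_t(s) = 2s\bigl(\ln((1+t)s) - 3\bigr)$, so $\psi_t$ is built from the single-variable function $\psi_0(u) = 2u(\ln u - 3)$ by the substitution $u = (1+t)s$ followed by a rescaling.

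For part (i), I would argue as follows. Fix $t$ and set $y = \phi_t(s)$, i.e.\ $s = \psi_t(y) = 2y(\ln((1+t)y) - 3)$. I want to show $y = \frac{1}{1+t}\phi_0((1+t)s)$, equivalently $(1+t)y = \phi_0((1+t)s)$, i.e.\ $\psi_0((1+t)y) = (1+t)s$. But $\psi_0((1+t)y) = 2(1+t)y(\ln((1+t)y) - 3) = (1+t)\cdot 2y(\ln((1+t)y)-3) = (1+t)\psi_t(y) = (1+t)s$, which is exactly what was needed. (One should note in passing that $\psi_t$ is a strictly increasing bijection from $[s_t,\infty)$ onto $[-2s_t,\infty)$, with $s_t = e^2/(1+t)$ being the point where $\ln((1+t)s)-3 = -1$, i.e.\ where $\psi_t$ attains its minimum, so that $\phi_t$ is well-defined as a genuine inverse on the stated domain; this is the same check that justifies the definition in the text, so I would only remark on it briefly.)

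For part (ii), write $s = \psi_t(\phi_t(s))$ and set $u = \phi_t(s)$, so $\frac{\phi_t(s)}{s} = \frac{u}{\psi_t(u)} = \frac{u}{2u(\ln((1+t)u)-3)} = \frac{1}{2(\ln((1+t)u)-3)}$. As $s \to +\infty$ we have $u = \phi_t(s) \to +\infty$ (since $\psi_t$ is increasing to $+\infty$), hence $\ln((1+t)u) - 3 \to +\infty$ and the whole expression decreases to $0$; monotonicity follows because $u \mapsto \frac{1}{2(\ln((1+t)u)-3)}$ is decreasing on the range where $\ln((1+t)u) - 3 > 0$ (which holds for $s$ large) and $s \mapsto \phi_t(s)$ is increasing. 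Part (iii) is a one-line computation: one checks $\psi_0(4e^5) = 2\cdot 4e^5(\ln(4e^5) - 3) = 8e^5(\ln 4 + 5 - 3) = 8e^5(\ln 4 + 2)$, hmm — this does not obviously equal $4e^5\cdot 4 = 16e^5$, so I would instead solve $\frac{\phi_0(s)}{s} = \frac14$ directly: by the formula from part (ii) with $t=0$, $\frac{\phi_0(s)}{s} = \frac{1}{2(\ln(\phi_0(s)) - 3)}$, and setting $u = \phi_0(s)$ this is $\frac14$ iff $\ln u - 3 = 2$, i.e.\ $u = e^5$, i.e.\ $\phi_0(s) = e^5$, i.e.\ $s = \psi_0(e^5) = 2e^5(\ln e^5 - 3) = 2e^5\cdot 2 = 4e^5$. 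So $\frac{\phi_0(s)}{s} = \frac14$ precisely when $s = 4e^5$, as claimed.

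I do not anticipate a genuine obstacle here — all three statements reduce to the substitution $u = (1+t)\phi_t(s)$ and elementary properties of $\ln$. The only point requiring a modicum of care is making sure the domains match up in part (i) (that $(1+t)s$ lies in the domain $[-2s_0,\infty) = [-2e^2,\infty)$ of $\phi_0$ exactly when $s$ lies in the domain $[-2s_t,\infty)$ of $\phi_t$, which is immediate since $(1+t)\cdot(-2s_t) = -2e^2 = -2s_0$), and noting the monotonicity and surjectivity of $\psi_t$ that make $\phi_t$ a well-defined inverse; I would dispatch both with a sentence each rather than a formal sublemma.
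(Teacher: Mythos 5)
Your proof is correct, and it is the natural direct verification from the definition of $\phi_t$ as the inverse of $\psi_t(s) = 2s(\ln((1+t)s)-3)$; the paper itself delegates the proof to \cite[Lemma 2.4.6]{B3MP}, and what you have written is presumably the same short calculation. The one point worth tidying before you hand this in is the false start in part (iii): computing $\psi_0(4e^5)$ is the wrong check --- you want $\psi_0(\phi_0(4e^5)) = 4e^5$, i.e.\ $\psi_0(e^5) = 4e^5$, which you do verify a sentence later --- so the ``hmm'' digression should simply be deleted, leaving the clean argument that $\frac{\phi_0(s)}{s} = \frac{1}{2(\ln\phi_0(s)-3)} = \frac14$ iff $\phi_0(s) = e^5$ iff $s = \psi_0(e^5) = 4e^5$. (You might also note in (ii) that the threshold $\ln((1+t)u)-3>0$ holds for \emph{all} $s>0$, not merely $s$ large, since $\phi_t(0) = e^3/(1+t)$ and $\phi_t$ is increasing; this makes the monotonicity statement uniform over the physically relevant range.)
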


\begin{proof}\cite[Lemma 2.4.6]{B3MP}
\end{proof}
 
The main purpose of Property ii) is to ensure that limits of suitably rescaled evolving metrics 
with curvature pinched toward positive have nonnegative curvature operator 
(see Proposition \ref{prop:limite positive}). In the sequel we set $\bar s:=4e^5$.

\begin{lem}[Pinching Lemma]\label{lem:pinching}
 Assume that $g(\cdot)$ has \bydef{curvature
pinched toward positive} and let  $t \geqslant 0$, $ r>0$ be such that $(1+t)r^{-2}  \geqslant \bar s $. 
If  $R(x,t)\leqslant r^{-2}$ then \ $ |\Rm (x,t)|\leqslant r^{-2}\,.$
\end{lem}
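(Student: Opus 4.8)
The plan is to use the curvature pinching hypothesis together with the monotonicity property in Lemma~\ref{lem:phi decreasing}~ii) to control the most negative eigenvalue $\nu(x,t)$, while the positive eigenvalues are automatically bounded by the scalar curvature assumption. First I would note that $\lambda(x,t)\le R(x,t)\le r^{-2}$: indeed $R=2(\lambda+\mu+\nu)$ and by pinching $\mu\ge\nu\ge-\phi_t(R)\ge-\phi_t(R)$; more simply, since $\lambda\ge\mu\ge\nu$ and $R=2(\lambda+\mu+\nu)$, if all sectional curvatures were as large as $\lambda$ we would need to rule out $\lambda$ being huge, but here $2\lambda\le R+2(-\nu-\mu)\le R+4\phi_t(R)$, so I need a bound on $\phi_t(R)$ anyway. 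So the real content is bounding $\nu(x,t)$ from below by $-r^{-2}$, and then $\lambda$ follows.

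Second, I would bound $\nu(x,t)\ge-\phi_t(R(x,t))$ and show $\phi_t(R(x,t))\le r^{-2}$. Since $\phi_t(s)/s$ is decreasing in $s$ (Lemma~\ref{lem:phi decreasing}~ii)) and, combining i) and iii), $\phi_t(s)/s = \phi_0((1+t)s)/((1+t)s)$, the condition $(1+t)r^{-2}\ge\bar s=4e^5$ gives, for any $s\le r^{-2}$,
$$\frac{\phi_t(s)}{s}=\frac{\phi_0((1+t)s)}{(1+t)s}\le\frac{\phi_0((1+t)r^{-2})}{(1+t)r^{-2}}\le\frac{\phi_0(\bar s)}{\bar s}=\frac14,$$
using that $\phi_0(u)/u$ is decreasing and $(1+t)s\le(1+t)r^{-2}$, together with iii). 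Hence $\phi_t(R(x,t))\le\frac14 R(x,t)\le\frac14 r^{-2}$ (here also using $R(x,t)\ge 0$ wherever needed; if $R(x,t)<0$ one uses the pinching bound \eqref{eq:pinching 1} or argues directly that then $|R|$ is small, but in the regime of interest $R\le r^{-2}$ and the estimate $\phi_t(R)\le r^{-2}$ still holds by monotonicity of $\phi_t$ since $\phi_t$ is increasing and $\phi_t(r^{-2})\le\frac14 r^{-2}$). Therefore $\nu(x,t)\ge-\tfrac14 r^{-2}\ge-r^{-2}$.

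Third, with $\nu\ge-\tfrac14 r^{-2}$ in hand, I bound $\lambda$: from $R=2(\lambda+\mu+\nu)$ and $\mu\ge\nu\ge-\tfrac14 r^{-2}$ we get $2\lambda=R-2\mu-2\nu\le R+ r^{-2}\le 2r^{-2}$, so $\lambda(x,t)\le r^{-2}$. Combining, $|\Rm(x,t)|=\max(|\lambda|,|\nu|)\le r^{-2}$, which is the claim. I expect the only delicate point to be the bookkeeping around the sign of $R(x,t)$ and making sure the monotonicity of $\phi_t(s)/s$ is applied to the right quantity $(1+t)s$ rather than $s$; once Lemma~\ref{lem:phi decreasing} is invoked correctly, everything reduces to the inequality $\phi_t\le\tfrac14 r^{-2}$ on $(-\infty,r^{-2}]$, which is immediate from $(1+t)r^{-2}\ge\bar s$. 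This is the main (minor) obstacle; the rest is the elementary eigenvalue arithmetic above.
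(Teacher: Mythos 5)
Your overall strategy is the right one and matches the paper's proof (cited from \cite[Lemma 2.4.7]{B3MP}): bound $\nu$ below via the pinching inequality and the monotonicity from Lemma~\ref{lem:phi decreasing}, then deduce the bound on $\lambda$ by elementary eigenvalue arithmetic. That arithmetic is correct: $\nu\ge-\phi_t(R)\ge-\tfrac14 r^{-2}$ and $2\lambda=R-2\mu-2\nu\le R+4\phi_t(R)\le 2r^{-2}$.

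However, the displayed chain
$$\frac{\phi_t(s)}{s}=\frac{\phi_0((1+t)s)}{(1+t)s}\le\frac{\phi_0((1+t)r^{-2})}{(1+t)r^{-2}}\le\frac{\phi_0(\bar s)}{\bar s}=\frac14\quad\text{(for }s\le r^{-2}\text{)}$$
has the first inequality going the wrong way. Since $\phi_0(u)/u$ is \emph{decreasing} in $u$ and $(1+t)s\le(1+t)r^{-2}$, one gets $\phi_0((1+t)s)/((1+t)s)\ge\phi_0((1+t)r^{-2})/((1+t)r^{-2})$, not $\le$. The claim ``$\phi_t(s)/s\le\tfrac14$ for all $s\le r^{-2}$'' is in fact false for small positive $s$ (the ratio $\phi_t(s)/s$ blows up as $s\to 0^+$), and the intermediate estimate ``$\phi_t(R)\le\tfrac14 R$'' is likewise unjustified (and meaningless when $R\le 0$, which is permitted). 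What you actually need is only the single value $s=r^{-2}$, where the chain degenerates to an equality followed by the valid $\le$ from $(1+t)r^{-2}\ge\bar s$; this gives $\phi_t(r^{-2})\le\tfrac14 r^{-2}$. Combine this with the monotonicity of $\phi_t$ itself (which is increasing on its domain, and the domain contains $R(x,t)$ by the first pinching inequality) to get $\phi_t(R)\le\phi_t(r^{-2})\le\tfrac14 r^{-2}$. You do say exactly this in the parenthetical, but it should be the main argument, not a patch --- as written, the display contains a false assertion that the parenthetical then quietly bypasses. Rewrite so that the monotonicity of $\phi_t$ is invoked first and the $\phi_0(u)/u$ monotonicity is applied only at $u=(1+t)r^{-2}$ versus $u=\bar s$.
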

\begin{proof} \cite[Lemma 2.4.7]{B3MP}
\end{proof}

\subsection{The standard solution}\label{sub:standard solution}

We recall the definition we used in \cite{B3MP} as initial condition of the standard solution. The functions $f,u$ below are chosen in \cite[Section 5.1]{B3MP}.

%

\begin{defi}
Let $d\theta^2$ denote the round metric of scalar curvature $1$ on $S^2$.

The \bydef{initial condition of the standard solution} is the riemannian manifold $\cals_0=(\textbf{R}^3,\bar g_0)$, where the metric $\bar g_0$ is given in polar coordinates by:
$$\bar g_0=e^{-2f(r)}g_{u}\,,$$
where
$$ g_{u} = dr^2+u(r)^2d\theta^2\,.$$

We also define $\cals_u:=(\Rr^3,g_u)$. The origin of $\Rr^3$, which is also the centre of spherical symmetry, will  be denoted by $p_0$.
\end{defi}

In particular, $(B(0,5),\bar g_{0})$ has positive sectional curvatures (see \cite[Lemma 5.1.2]{B3MP}), and the complement of $B(0,5)$ is isometric to $S^2 \times [0,+\infty)$.

Ricci flow with initial condition $\cals_0$ has a maximal solution defined
on $[0,1)$, which is unique among complete flows of bounded sectional curvature~\cite{Per2}. This solution is called the \bydef{standard solution}.

The \bydef{standard} $\epsi$-\bydef{neck}
is the riemannian product $S^2\times (-\epsi^{-1},\epsi^{-1})$,
where the $S^2$ factor is round of scalar curvature $1$. Its metric is denoted by $g_\cyl$. We fix a basepoint $*$ in $S^2\times \{0\}$.

\begin{defi}
Let $(M,g)$ be a riemannian $3$-\var\ and $x$ be a point of $M$.
A neighbourhood $N\subset M$ of $x$ is called an $\epsi$-\bydef{neck centred at} $x$ if $(N,g,x)$ is $\epsi$-homothetic to $(S^2\times (-\epsi^{-1},\epsi^{-1}),g_\cyl,*)$.
\end{defi}

If $N$ is an $\epsi$-neck and $\psi:N_\epsi \to N$ is a \bydef{parametrisation}, i.e.~a diffeomorphism such that some rescaling of $\psi^*(g)$ is $\epsi$-close to $g_\cyl$, then the sphere $\psi(S^2\times\{0\})$ is called a \bydef{middle sphere} of $N$.

\begin{defi} Let $\delta,\delta'$ be positive numbers. Let $g$ be a riemannian metric on $M$. Let $(U,V,p,y)$ be a $4$-tuple such that $U$ is an open subset of $M$, $V$ is a compact 
subset of $U$, $p \in \Int V$, $y\in \partial V$. Then $(U,V,p,y)$ is called a 
 \bydef{marked $(\delta,\delta')$-almost standard cap} if there exists a $\delta'$-isometry  $\psi : B(p_{0},5+\delta^{-1}) \rightarrow (U,R(y)g)$, 
sending $B(p_{0},5)$ to $\Int V$ and  $p_{0}$ to $p$. One calls $V$ the \emph{core} and $p$ the tip.
\end{defi}

\subsection{The metric surgery theorem}

\begin{theo}[Metric surgery]\label{thm:chirurgie metrique}
There exist $\delta_0>0$ and a function $\delta' : (0,\delta_{0}] \ni \delta
\mapsto \delta'(\delta) \in (0,\epsi_0/10]$ tending to zero as $\delta\to 0$,
with the following property:

Let $\phi$ be a nondecreasing, positive function; let $\delta\le \delta_0$; let $(M,g)$ be a riemannian $3$-manifold with $\phi$-almost
nonnegative curvature, and  $\{N_i\}$ be a locally finite collection of pairwise disjoint $\delta$-necks in $M$. Let $M'$ be a manifold obtained by cutting $M$ along the middle spheres of the $N_i$'s and capping off $3$-balls.

Then there exists a riemannian metric $g_+$ on $M'$ such that:
\begin{enumerate}
\item  $g_+=g$ on $M'\cap M$;
\item For each component $B$ of $M'\setminus M$, there exist $p \in \Int B$ and $y\in \partial B$ such that $(N' \cup B,B,p,y)$ is a marked $(\delta,\delta'(\delta))$-almost standard cap
with respect to $g_+$, where $N'$ is the `half' of $N$ adjacent to $B$ in $M'$; 
\item $g_+$ has $\phi$-almost nonnegative curvature.
\end{enumerate}
\end{theo}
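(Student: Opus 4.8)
The plan is to reduce the theorem to a \emph{local} surgery construction on a single $\delta$-neck, because the $N_i$ are pairwise disjoint and locally finite, so the metric $g$ is unchanged away from the necks and the modifications in different necks do not interact. Thus it suffices to produce, for a single $\delta$-neck $N\cong S^2\times(-\delta^{-1},\delta^{-1})$, a metric on $N'\cup B$ (where $N'$ is one half of $N$, say $S^2\times(-\delta^{-1},0]$, and $B$ is a capped $3$-ball) which agrees with $g$ near the outer boundary of $N'$, which on $B\cup(\text{a piece of }N')$ is a marked $(\delta,\delta'(\delta))$-almost standard cap, and which retains $\phi$-almost nonnegative curvature. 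Then one performs this independently in each $N_i$.

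First I would rescale so that $R(y)=1$ at the chosen boundary point $y$; after rescaling, the relevant half-neck is $\delta'$-close to the standard cylinder of scalar curvature $1$ (up to the conventions of \cite[Section 0.6.1]{B3MP}), and the standard initial metric $\bar g_0=e^{-2f(r)}g_u$ is designed precisely so that outside $B(0,5)$ it is \emph{exactly} the round cylinder $S^2\times[0,\infty)$ of scalar curvature $1$. So the construction is: parametrise $N'$ as $S^2\times I$, pick two radii, interpolate, and on a fixed-length collar $S^2\times[a,b]$ linearly interpolate (in a suitable sense — interpolating the warping function $u$ and the conformal factor, or equivalently interpolating between $g_\cyl$ and $\bar g_0$ via a cutoff) from the given metric to the standard-cap model, and then glue in the fixed compact piece $(B(0,5),\bar g_0)$. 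The diffeomorphism so produced is by construction a $\delta'(\delta)$-isometry onto its image with $\delta'(\delta)\to 0$ as $\delta\to0$, because as $\delta\to0$ the half-neck is arbitrarily close to the cylinder and the interpolation region has fixed size; this yields conclusions (1) and (2), with $g_+=g$ outside the interpolation collar.

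The main obstacle — and the real content of the theorem — is conclusion (3): showing the interpolated metric still has $\phi$-almost nonnegative curvature. On the part where $g_+=g$ there is nothing to prove; on the fixed standard-cap piece $B(0,5)$ the metric $\bar g_0$ has \emph{positive} sectional curvature (\cite[Lemma 5.1.2]{B3MP}), and rescaling only helps since $\phi\ge0$ and the curvature pinching inequality $\Rm\ge-\phi(R)$ is scale-compatible in the favorable direction for large curvature; so the danger zone is the interpolation collar. Here one must compute the curvature operator of the interpolated warped-product metric and check $\Rm\ge-\phi(R)$. The key points are: (i) because the neck is very thin ($\delta$ small), $R$ is large on the collar, $\phi(R)$ is small relative to $R$ by Lemma~\ref{lem:phi decreasing}(ii), and the curvature of a metric $C^{[\delta^{-1}]}$-close to a round cylinder is close to that of the cylinder, whose curvature operator is nonnegative (flat in the axial directions, positive on the sphere); (ii) the only curvatures that can go slightly negative under interpolation are of size controlled by the $C^2$-size of the interpolating cutoff times the discrepancy between the two metrics being interpolated, which tends to $0$ as $\delta\to0$; hence for $\delta\le\delta_0$ small enough these errors are dominated by $\phi(R)\ge\phi_{\cdot}$ evaluated on the large scalar curvature present, i.e. $\Rm\ge-\phi(R)$ holds. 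Formally I would fix the model interpolation once and for all, bound its curvature operator from below by an explicit small negative multiple of $R$ times a quantity $\to0$ with $\delta$, and then choose $\delta_0$ so this beats $\phi$; monotonicity of $\phi$ is used to pass from the model computation (done for the standard-cap $R$) to the actual $R$, which differs by a bounded factor. This is exactly the place where one invokes that $\phi$ is \emph{nondecreasing} and positive, and where the $\delta'(\delta)\to0$ statement is genuinely needed.
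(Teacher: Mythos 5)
Your overall approach matches the paper's: surgery is performed neck-by-neck (justified by local finiteness and disjointness), the half-neck is pulled back to a cylindrical annulus $C(3,5+\delta^{-1})\subset\cals_u$, the rescaled pulled-back metric $\bar g$ is interpolated against $g_u$ by a cutoff $\chi$ on a fixed collar, and then the conformal factor $e^{-2f}$ and the inner ball $B(0,5)$ from $\bar g_0$ are glued in; conclusions (i) and (ii) follow essentially as you sketch. The paper's interpolation formula is $\bar g_+ = e^{-2f}\bigl(\chi g_u + (1-\chi)\bar g\bigr)$, and the crucial conclusion (iii) is not re-proved here; it is delegated to a cited result, Proposition~\ref{prop:recollement} (that is, \cite[Proposition 5.2.2]{B3MP}), applied to the rescaled pinching function $s\mapsto\lambda^{-1}\phi(\lambda s)$.

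Your attempted argument for (iii) has a genuine gap. First, you invoke Lemma~\ref{lem:phi decreasing}(ii), but that lemma concerns the specific Hamilton--Ivey functions $\phi_t$; the $\phi$ in the Metric Surgery Theorem is an \emph{arbitrary} nondecreasing positive function, and no control of the form $\phi(s)/s\to 0$ is available for it. Second, and more seriously, the heuristic ``the interpolation error is $O(\delta)$, hence eventually smaller than $\phi(R)>0$'' cannot deliver a $\delta_0$ independent of $\phi$, which the theorem requires and Proposition~\ref{prop:recollement} asserts: the same $\delta_1$ works for every nondecreasing positive $\phi$. After normalising by $\lambda\approx R(y)$, the pinching function that $\bar g_+$ must satisfy is $s\mapsto\lambda^{-1}\phi(\lambda s)$, whose value at $s\approx 1$ can be made arbitrarily small (for instance if $\phi$ is bounded and $\lambda\to\infty$); there is no fixed positive threshold for the interpolation error to beat. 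The mechanism you do not identify is that the conformal factor $e^{-2f}$, with $f$ chosen as in \cite[Section 5.1]{B3MP}, contributes a fixed, strictly positive gain to the curvature operator of $\bar g_+$ over that of $\chi g_u + (1-\chi)\bar g$, and it is this gain --- independent of $\delta$, of $\lambda$, and of $\phi$ --- that dominates the $O(\delta)$ interpolation error once $\delta\le\delta_1$. Monotonicity of $\phi$ enters in comparing the scalar curvatures before and after the conformal change, not (as you state) in passing from a ``model $R$'' to the actual one. Without the conformal boost your estimate does not close.
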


\rem \ In the application of the above theorem, $M_+$ will be a submanifold of $M'$.

\begin{proof}
On $M'\cap M$ we set $g_{+}:=g$. On the added $3$-balls  we  define $g_{+}$ as follows.  
Let $N\subset M$ be one of the $\delta$-necks of the collection, and let $S$ be its middle sphere. By definition there exists a diffeomorphism
$ \psi: S^2\times (-\delta^{-1},\delta^{-1})\longrightarrow N$
and a real number $\lambda >0$ such that 
$ \vert\vert \psi^{\ast}\lambda g - g_{\cyl} \vert\vert < \delta$
in the $C^{[\delta^{-1}]+1}$-norm (see \cite{B3MP} for the details). 
Note that for each $y\in N$ we have that $\lambda /R(y)$ is $\delta'$-close to $1$ for some universal  $\delta'(\delta)$ tending to zero with $\delta$. 

Define $N_{+} := \psi (S^2\times [0,\delta^{-1}))$, i.e.~$N_+$ is the right half of the neck. Let $\Sigma \subset M'\setminus M $ be the $3$-ball that is capped-off  to it and 
$\Phi : \partial \Sigma \longrightarrow \partial N_{+}$ be the corresponding diffeomorphism. Our goal is to define $g_{+}$ on 
$\Sigma$ in such a way that $(N_{+} \cup_{\Phi} \Sigma,g_{+})$ is a $(\delta, \delta'(\delta ))$-almost standard cap with $\phi$-almost nonnegative curvature.

Let us introduce more notation. For $0\leq r_1 \leq r_2$, we let
$C[r_{1},r_{2}]$ denote the annular region of $\Rr^3$ defined by
the inequations $r_1\le r\le r_2$ in polar coordinates.
Observe that for all $3\le r_1<r_2$, the restriction of $g_{u}$
to $C[r_1,r_2]$ is isometric to the cylinder $S^2\times[r_{1},r_{2}]$
with scalar curvature $1$. We consider $B:=B(0,5) \subset \cals_{u}$. 

Set  $V_{-}:= \psi(S^2\times (-2,0])$ and $V_{+}:= \psi(S^2\times [0,2))$. Restrict $\psi$ on $S^2 \times (-2,\delta^{-1})$ to $V_{-}\cup N_{+}$, where 
$S^2\times  (-2,\delta^{-1})$ is now considered as the annulus $C(3,5+\delta^{-1}) \subset \cals_{u}$.  

\begin{center}
\input{surgery1.pstex_t}
\end{center}

Let $\bar g:=\psi^{\ast}(\lambda g)$ be the pulled-back rescaled metric on $C(3,5+\delta^{-1})$. Note that $\vert\vert \bar g - g_{u}\vert\vert < \delta$ on this set and that 
$\bar g$ has $\phi$-almost nonnegative curvature. 

On $B(0,5+\delta^{-1})$ we define in polar coordinates 
$$ \bar g_{+} := e^{-2f}\left(\chi g_{u} + (1-\chi)\bar g \right)=\chi \bar g_{0} + (1-\chi)e^{-2f}\bar g$$
where 
 $\chi :  [0,5+\delta^{-1}] \rightarrow [0,1]$ is a  smooth function satisfying
 $$\left\{\begin{array}{ll}
\chi\equiv 1 & \textrm{ on } [0,3]\\
\chi'<0 & \textrm{ on } (3,4)\\
\chi\equiv 0 & \textrm{ on } [4,5+\delta^{-1}].
\end{array}\right.$$
Note that 
$$\left\{\begin{array}{ll}
\bar g_{+} = \bar g_{0} & \textrm{ on } B(0,3)\\
\bar g_{+} = e^{-2f}\bar g & \textrm{ on } C[4,5]\\
\bar g_{+}  = \bar g & \textrm{ on } C[5,5+\delta^{-1}).
\end{array}\right.$$
Finally set 

$$\left\{\begin{array}{ll}
 g_{+} := (\psi^{-1})^{\ast}(\lambda^{-1} \bar g)=g & \textrm{ on } N_{+}\\
 g_{+}  := \lambda^{-1}\bar g_{+} & \textrm{ on } B(0,5).
\end{array}\right.$$

\begin{center}
\input{surgery2.pstex_t} 
\end{center}

Let $p$ be the origin and $y$ be an arbitrary point of $\partial B$.
There remains to show that $((N_{+} \cup_{\psi_{\vert \partial B}} B,B,p,y),g_{+})$ is a $(\delta,\delta'(\delta ))$-almost standard cap, and has $\phi$-almost nonnegative curvature. 
It suffices clearly to consider $g_{+}$ on $B$, or $\bar g_{+}$ on $B(0,5)$. This is tackled by the following proposition (\cite[Proposition 5.2.2]{B3MP}), applied 
to $\bar g$ with the rescaled pinching function $s \mapsto \lambda^{-1} \phi(\lambda s)$:
\begin{prop}\label{prop:recollement}
There exists $\delta_1 >0$ and a function $\delta' : (0,\delta_{1}] \longrightarrow 
(0,\frac{\epsi_0}{10}]$ with limit zero at zero, having the following property: let $\phi$ be a nondecreasing positive function,  $0<\delta\leq \delta_1$ and $\bar g$ be a metric on $C(3,5) \subset \Rr^3$, with $\phi$-almost nonnegative curvature, such that $||\bar g-g_{u}||_{\mathcal C^{[{1\over\delta} ]+1}}<\delta$ on $C(3,5)$. Then
the metric
$$\bar g_+ = e^{-2f}\left(\chi g_{u} + (1-\chi)\bar g \right)$$  
has $\phi$-almost nonnegative curvature, and is $\delta'(\delta )$-close to  $\bar g_{0}$ on $B(0,5)$. 
\end{prop}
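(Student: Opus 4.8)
The statement to prove is Proposition~\ref{prop:recollement}. Here is how I would approach it.

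\medskip

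The plan is to treat $\bar g_+$ as a perturbation of $\bar g_0$ and to track separately the two claims: $C^{[\delta^{-1}]+1}$-closeness to $\bar g_0$ on $B(0,5)$, and preservation of $\phi$-almost nonnegative curvature. First I would establish the closeness. Since $\chi$ is a fixed smooth cutoff and $f,u$ are fixed, the map $\bar g\mapsto \bar g_+ = e^{-2f}(\chi g_u+(1-\chi)\bar g)$ is affine in $\bar g$ with fixed coefficients; on the region $\{\chi\equiv 1\}=B(0,3)$ we have $\bar g_+=\bar g_0$ exactly, and on $C[3,5]$ we have $\bar g_+-\bar g_0 = (1-\chi)e^{-2f}(\bar g-g_u)$, whose $C^{[\delta^{-1}]+1}$-norm is controlled by $\|\bar g-g_u\|_{C^{[\delta^{-1}]+1}}<\delta$ times a constant depending only on $\chi$ and $f$ (and on how many derivatives we differentiate, but here the derivative order is itself $[\delta^{-1}]+1$, so some care with the constant is needed — see below). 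This gives $\delta'$-closeness with $\delta'$ a fixed multiple of $\delta$ on this part.

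\medskip

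The substantive point is the curvature condition. The idea is a continuity/compactness argument. The metric $g_u$ has nonnegative curvature operator on $C[3,5]$ (it is a round cylinder there), and $\bar g_0$ has positive sectional curvature on $B(0,5)$ by \cite[Lemma 5.1.2]{B3MP}. When $\bar g$ is $\delta$-close to $g_u$, the interpolated metric $\chi g_u+(1-\chi)\bar g$ is $\delta$-close to $g_u$ in $C^2$ (hence its curvature operator is $O(\delta)$-close to that of $g_u$), and after multiplying by $e^{-2f}$ it is $C^2$-close to $\bar g_0$; in particular on $B(0,4)$, where $\bar g_0$ has curvature operator bounded below by some fixed positive constant $c_0>0$, we get $\Rm(\bar g_+)\ge c_0/2>0\ge -\phi(R_{\bar g_+})$ for $\delta$ small, using only that $\phi\ge 0$. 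On $C[4,5]$ we have $\bar g_+=e^{-2f}\bar g$ exactly; since $\bar g$ has $\phi$-almost nonnegative curvature and $e^{-2f}$ is a fixed conformal factor close to $1$ on this compact annulus, one checks that the conformal change distorts the curvature operator and the scalar curvature by controlled amounts, so $\Rm(\bar g)\ge -\phi(R_{\bar g})$ together with monotonicity of $\phi$ yields $\Rm(e^{-2f}\bar g)\ge -\phi(R_{e^{-2f}\bar g})$, possibly after shrinking $\delta_1$; this is exactly where we invoke that $\phi$ is nondecreasing. The delicate region is the transition annulus $C[3,4]$ where both $\chi'\ne 0$ and the conformal factor act simultaneously; here I would argue by contradiction and compactness: if no uniform $\delta_1$ worked, take $\delta_k\to 0$ and $\bar g_k\to g_u$ in $C^\infty_{loc}$ on $C[3,4]$ with $\bar g_{k,+}$ violating the pinching at some $x_k\in C[3,4]$; pass to a limit to get that $\bar g_{0}=e^{-2f}g_u$ itself violates $\Rm\ge -\phi(R)$ somewhere on $C[3,4]$, contradicting the strict positivity of the curvature of $\bar g_0$ on $B(0,5)\supset C[3,4]$ (again only $\phi\ge0$ is needed in the limit). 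Combining the three regions and taking $\delta_1$ to be the minimum of the thresholds, and $\delta'$ the corresponding closeness function (which one can arrange to tend to $0$), finishes the proof.

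\medskip

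The main obstacle I anticipate is the bookkeeping in the closeness estimate on $C[3,5]$: because the number of derivatives we must control, $[\delta^{-1}]+1$, grows as $\delta\to 0$, the naive Leibniz-rule constant $C(\chi,f,k)$ for $k=[\delta^{-1}]+1$ derivatives is not uniform in $\delta$, so one cannot simply say $\|\bar g_+-\bar g_0\|_{C^k}\le C\delta$. The way around this — and presumably what is done in \cite{B3MP} — is that the statement only requires $\delta'(\delta)$-closeness for \emph{some} function $\delta'\to 0$, not $\delta'=O(\delta)$; since $\chi$ and $f$ are fixed smooth functions, $\|\bar g_+-\bar g_0\|_{C^{[\delta^{-1}]+1}}$ on $C[3,5]$ is bounded by a function of $\delta$ that still tends to $0$ (one can, for instance, first fix the derivative order $N$ and let $\delta\to 0$, then let $N\to\infty$, extracting a diagonal $\delta'$). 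The curvature argument, by contrast, only ever uses $C^2$-control, so it is insensitive to this issue. Since the result is quoted verbatim from \cite[Proposition 5.2.2]{B3MP}, in the paper itself it suffices to cite that reference; the sketch above is the content of that proof.
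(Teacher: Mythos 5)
The paper itself does not prove this proposition: it is quoted verbatim from \cite[Proposition 5.2.2]{B3MP}, and the paper's "proof" is that citation. So there is no proof in this paper to compare your sketch against. On its own terms, your treatment of the closeness estimate (including the diagonal-extraction remark about the growing derivative order $[\delta^{-1}]+1$) is fine, and your argument on $C[3,4]$ works, either via $C^2$-closeness to $\bar g_0$, whose curvature operator has a uniform positive lower bound on the compact set $\overline{C[3,4]}\subset B(0,5)$, or via the compactness-and-contradiction argument.

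The treatment of $C[4,5]$, however, has a genuine gap. You claim that $e^{-2f}$ is "close to $1$" on that annulus and that the conformal change "distorts the curvature operator and the scalar curvature by controlled amounts," so that monotonicity of $\phi$ preserves the pinching. Both halves of this are unjustified. There is no reason for $f$ to be small on $[4,5]$ (it vanishes only for $r\ge5$; its values near $r=4$ come from the explicit construction in \cite{B3MP} and may be far from $0$). More importantly, a small but sign-indefinite perturbation of $\nu$ and $R$ does not preserve $\nu\ge -\phi(R)$ even when $\phi$ is nondecreasing: if $\nu$ drops slightly while $R$ also drops, then $-\phi(R)$ rises (monotonicity cuts the wrong way), and since $\phi$ need not be continuous one cannot absorb small losses. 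The mechanism that actually makes the conformal factor safe --- and the whole point of the careful choice of $f$ in \cite[Section 5.1]{B3MP} --- is that multiplying by $e^{-2f}$ \emph{does not decrease} either $\nu$ or $R$ for metrics sufficiently $C^2$-close to $g_u$. Given that sign-definiteness, the argument closes cleanly: $\nu(\bar g_+)\ge\nu(\bar g)\ge -\phi(R(\bar g))\ge -\phi(R(\bar g_+))$, and monotonicity of $\phi$ is invoked exactly in the last step, together with $R(\bar g_+)\ge R(\bar g)$. Without identifying this improvement property, "monotonicity of $\phi$" carries no weight. Note also that your compactness argument cannot be transplanted to $\overline{C[4,5]}$: the curvature of $\bar g_0$ degenerates to that of the cylinder as $r\to 5$, so the limiting statement $\nu(\bar g_0)(x_\infty)\le 0$ is not a contradiction if $x_\infty$ lies on the sphere $r=5$.
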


Setting $\delta_{0}:=\delta_{1}$ completes the proof of Theorem~\ref{thm:chirurgie metrique}.
\end{proof}

\section{$\kappa$-noncollapsing and canonical neighbourhoods}\label{sec:cn}

\subsection{$\kappa$-noncollapsing}
Let $\{(M(t),g(t))\}_{t\in I}$ be an evolving riemannian manifold. 
We say that a pair $(x,t)$ is a \bydef{point in spacetime} if $t\in I$ and $x\in M(t)$. For convenience we denote by $\calM$ the set of all such points. A (backward) \bydef{parabolic
neighbourhood} of a point $(x,t)$  in spacetime is a set of the form
$$P(x,t,r,-\Delta t) := \{(x',t') \in \calM \mid x'\in B(x,t,r),t' \in [t-\Delta t,t]\}.$$
In particular, the set $P(x,t,r,-r^2)$ is called a \bydef{parabolic ball}
of radius $r$. 

A parabolic neighbourhood $P(x,t,r,-\Delta t)$ is \bydef{unscathed} if 
$B(x,t,r) \times [t-\Delta t,t]$ is unscathed. In this case 
$P(x,t,r,-\Delta t)= B(x,t,r) \times [t-\Delta t,t]$. 

\begin{defi}
Fix $\kappa,r>0$. We say that  $(M(\cdot),g(\cdot))$ is $\kappa$-\bydef{collapsed} at $(x,t)$ on the scale $r$ if for all $(x',t')\in P(x,t,r,-r^2)$ one has $|\Rm (x',t') | \le r^{-2}$, and $\vol B(x,t,r) < \kappa r^n$.
Otherwise,  $(M(\cdot),g(\cdot))$ is $\kappa$-\bydef{noncollapsed} at $(x,t)$ on the scale $r$.

We say that $(M(\cdot),g(\cdot))$ is $\kappa$-\bydef{noncollapsed} on the scale $r$ if it is $\kappa$-non\-col\-lap\-sed on this scale at every point of $\calM$. 
\end{defi}

\subsection{Canonical neighbourhoods}

\begin{defi}
Let $(M,g)$ be a riemannian $3$-\var\ and $x$ be a point of $M$.
We say that $U$ is an $\epsi$-\bydef{cap centred at} $x$ if $U$ is the union of two sets $V,W$ such that $x\in \Int V$, $V$ is
diffeomorphic to $B^3$ or $RP^3\setminus B^3$,  $\overline{W} \cap V=\bord V$, and $W$ is an $\epsi$-neck. A subset $V$ as
above is called a \bydef{core} of $U$.
\end{defi}

Let $\epsi>0$, $C>>\epsi^{-1}$,  $\{(M(t),g(t))\}_{t \in I}$ be an evolving riemannian manifold and $(x_0,t_0)$ be a point in spacetime. \\

We call  \bydef{cylindrical flow} the manifold $S^2\times \Rr$
together with the product flow on $(-\infty,0]$, where the first factor is round, normalised so that the scalar curvature at time $0$ is identically $1$. We denote this evolving metric by $g_\cyl (t)$.

\begin{defi}
 An open subset $N\subset M(t_0)$ is called a \bydef{strong $\epsi$-neck}\footnote{We use `strong neck' to denote a subset of $M(t_0)$, rather than a subset of spacetime as other authors do.} 
 centred at $(x_0,t_0)$ if there is a number $Q>0$ such that $(N,\{g(t)\}_{t\in [t_0-Q^{-1},t_0]},x_0)$ is unscathed, and  after parabolic rescaling with factor $Q$ at time $t_0$, $\epsi$-close to $(S^2\times (-\epsi^{-1},\epsi^{-1}),\{g_\cyl (t)\}_{t\in [-1,0]},*)$.
\end{defi} 

\begin{rem} 
Let $Q>0$, and consider the flow $(S^2\times\Rr,Q g_\cyl (tQ^{-1}))$ restricted to $(-Q,0]$. Then for every $x\in S^2\times \Rr$ and every $\epsi>0$,  the point $(x,0)$ is centre of a strong $\epsi$-neck.\\
\end{rem}

We recall that \bydef{$\epsi$-round} means $\epsi$-homothetic to a metric of positive constant sectional curvature.

\begin{defi} \label{def:VC}
Let $U \subset M(t)$ be an open subset and $x\in U$ such that $R(x):=R(x,t)>0$. One says that $U$ is an 
\bydef{$(\epsi,C)$-canonical neighbourhood} centred at $(x,t)$ if: 
\begin{enumerate}
\item[A.] $U$ is of a strong $\epsi$-neck centred at $(x,t)$, or
\item[B.] $U$ is an $\epsi$-cap centred at $x$ for $g(t)$, or
\item[C.] $(U,g(t))$ has sectional curvatures $>C^{-1}R(x)$ and is diffeomorphic to $S^3$ or $RP^3$, or
\item[D.] $(U,g(t))$ is $\epsi$-round,
\end{enumerate}
and if moreover, the following estimates hold in cases A, B, C for $(U,g(t))$: 
There exists $r\in (C^{-1} R(x)^{-\frac12},C R(x)^{-\frac12})$ such that:
\begin{enumerate}
\item $\overline{B(x,r)} \subset U \subset B(x,2r)$; 
\item The scalar curvature function restricted to $U$ has values in a compact
subinterval of $(C^{-1} R(x),C  R(x))$;
\item If $B(y,r) \subset U$ and if $|\Rm| \leq r^{-2}$ on $B(y,r)$ then
\begin{equation}
C^{-1} < \frac{\vol B(y,r)}{r^3} ~; \label{eq:vol}
\end{equation}
\item \begin{equation}
|\nabla R(x)|< C R(x)^{\frac32}~, \label{eq:nabla R}
\end{equation}
\item 
\begin{equation}
|\Delta R(x) + 2|\Ric (x)|^2|< C R(x)^2 ~, \label{eq:Delta R}
\end{equation}
\item \begin{equation}
       |\nabla \Rm(x)|< C |\Rm(x)|^{\frac32}~, \label{eq:nabla Rm}
      \end{equation}
\end{enumerate}
\end{defi}

\begin{rems}
\item In case D, Estimates (i)-(vi) hold except maybe (iii) (consider e.g.~lens spaces.)
\item (i) implies that $\diam\ U$ is at most $4 r$, which in turn is bounded above by a function of $C$ and $R(x)$.
\item (iii) implies that $\vol\ U$ is bounded from below by $C^{-1}R(x)^{-3/2}$.  
\item Estimate (v) implies the following scale-invariant bound on the time-derivative of $R$ (at a regular time):
\begin{equation}
\vert \frac{\partial R}{\partial t} (x,t) \vert < C R(x,t)^2. \label{eq:dR dt}
\end{equation}
\item  We call  \bydef{($\epsi$,C)}-cap any $\epsi$-cap of $(M,g)$ which satisfies (i)-(vi),
\item In cases C and D, $U$ is diffeomorphic to a spherical manifold.
\item Cases C and D are not mutually exclusive.
\item Being the centre of an $(\epsi,C)$-canonical neighbourhood is an open property in spacetime: if $U \subset M(t)$ is unscathed on $(t-\alpha, t+\alpha)$ for some $\alpha>0$, then there exists a neighbourhood $\Omega$ of $(x,t)$ 
such that any $(x',t') \in \Omega$ is centre of an $(\epsi,C)$-canonical neighbourhood. In case A, one can use the same set $N=U$ and factor $Q$, but change the parametrisation so that the basepoint 
$*$ is sent to $x$ rather than $x_0$. Case B is similar.  Cases C, D are obvious. 
\item The same argument shows that being the centre of an $(\epsi,C)$-canonical neighbourhood is also an open property with respect to a change of metric in the $C^{[{\epsi}^{-1}]}$-topology. 
\end{rems}

\subsection{Fixing the constants}\label{sub:constantes}

In order to fix the constants, we recall some results of Perelman on $\kappa$-solutions and the standard solution.

\begin{theo}\label{thm:kappa VC}
For all $\epsi>0$ there exists $C_\mathrm{sol}=C_\mathrm{sol}(\epsi)$ such that if \linebreak $(M,\{g(t)\}_{t\in (-\infty,0]})$ is a $3$-dimensional $\kappa$-solution, then every  $(x,t)\in M\times (-\infty,0]$ is centre of an $(\epsi,C_\mathrm{sol})$-canonical neighbourhood.
\end{theo}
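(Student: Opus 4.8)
The statement is Theorem~\ref{thm:kappa VC}, which asserts that $\kappa$-solutions have the canonical neighbourhood property with a \emph{uniform} constant $C_\mathrm{sol}(\epsi)$. The plan is to combine Perelman's structural results on the space of $3$-dimensional $\kappa$-solutions with a compactness argument. The key input is that, for a fixed $\kappa>0$, the class of $\kappa$-solutions is compact modulo scaling; one normalises each $\kappa$-solution $(M,g(t))$ so that $R(x_0,0)=1$ at a chosen basepoint, and then the pointed solutions subconverge (smoothly, in the Cheeger--Gromov sense, on compact subsets of spacetime) to a limit which is again a $\kappa$-solution. This is where the bounded curvature, $\kappa$-noncollapsing, and nonnegative curvature operator assumptions in the definition of $\kappa$-solution are used. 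A second ingredient is the universality of $\kappa$: by Perelman there is a $\kappa_0>0$ such that every $3$-dimensional $\kappa$-solution that is not a shrinking round quotient is in fact a $\kappa_0$-solution, so without loss of generality one works with a single value of $\kappa$.

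\textbf{Key steps.} First I would reduce to showing: for each $\epsi>0$ there is $C=C(\epsi)$ such that every point $(x,0)$ with $R(x,0)=1$ (after normalisation) in any $\kappa_0$-solution is the centre of an $(\epsi,C)$-canonical neighbourhood; the case of general $(x,t)$ follows by translating time and parabolically rescaling, since $\kappa$-solutions are ancient and scale-invariant as a class. Second, I would invoke Perelman's qualitative classification of the geometry near a point of a $\kappa$-solution: either the point lies in a part that is $\epsi$-close (after rescaling by $R$) to a round cylinder $S^2\times\Rr$ or its $\Zz_2$-quotient (giving case A, a strong $\epsi$-neck, using that $\kappa$-solutions have a backward time interval of controlled curvature so the neck is \emph{strong}), or it lies in an $\epsi$-cap (case B), or the whole manifold is compact with positively pinched curvature and is a quotient of $S^3$ (cases C, D). Third, for the quantitative estimates (i)--(vi) I would argue by contradiction: if for some $\epsi$ no finite $C$ works, take a sequence of $\kappa_0$-solutions and normalised basepoints violating estimate number $(k)$ with constant $C_j\to\infty$; extract a smooth pointed limit $\kappa$-solution; on the limit all the estimates hold with some finite constant (the derivative bounds (iv)--(vi) from interior Shi-type estimates and the evolution equations, the volume lower bound (iii) from $\kappa$-noncollapsing, the diameter/curvature-ratio bounds (i)--(ii) from the structure theory), and the smooth convergence then contradicts the violation for large $j$. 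The finitely many estimates are handled one at a time, or simultaneously since there are only six of them.

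\textbf{Main obstacle.} The genuinely hard part is not any one of these steps individually but the compactness theorem for $\kappa$-solutions itself — establishing that a sequence of pointed, normalised $3$-dimensional $\kappa$-solutions subconverges to a $\kappa$-solution. This requires a priori curvature bounds on the limit, which come from Hamilton--Ivey pinching together with Perelman's curvature bound for $\kappa$-solutions (derived from the no-local-collapsing theorem and the geometry of shrinking cylinders), plus the fact that the limit is again ancient, noncollapsed, and has nonnegative curvature operator. In the write-up this is properly cited to Perelman~\cite{Per2} rather than reproved; indeed, since the excerpt places this in the ``fixing the constants'' subsection and explicitly says ``we recall some results of Perelman on $\kappa$-solutions and the standard solution,'' I expect the actual proof to be a short deduction from Perelman's statements, with the compactness and classification theory taken as black boxes, and the only real content being the contradiction-and-limit bookkeeping to get the \emph{uniform} $C_\mathrm{sol}(\epsi)$.
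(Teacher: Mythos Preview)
Your anticipation in the final paragraph is exactly right: the paper does not prove Theorem~\ref{thm:kappa VC} at all. It is stated in Subsection~\ref{sub:constantes} under the heading ``we recall some results of Perelman on $\kappa$-solutions and the standard solution,'' and no argument is supplied; the theorem is treated as a known input from~\cite{Per1,Per2}. Your sketch of the standard proof via compactness of the space of $\kappa_0$-solutions, the universal $\kappa_0$, and a contradiction argument for the uniform constant is the correct outline of how this result is established in the literature, but it goes well beyond what the paper itself does here.
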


\begin{prop}\label{prop:standard kappa}
There exists $\kappa_\mathrm{st}>0$ such
that the standard solution is $\kappa_\mathrm{st}$-noncollapsed on all scales.
\end{prop}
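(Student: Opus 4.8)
The claim is that the standard solution is $\kappa_{\mathrm{st}}$-noncollapsed on all scales, for some universal $\kappa_{\mathrm{st}}>0$. The plan is to combine the basic structural facts about the standard solution with Perelman's $\kappa$-noncollapsing machinery. The standard solution $(\Rr^3,\{\bar g(t)\}_{t\in[0,1)})$ is a complete Ricci flow of bounded curvature on a fixed finite time interval, with initial metric $\bar g_0$ that agrees with $S^2\times[0,\infty)$ of scalar curvature $1$ outside a compact set, and has bounded geometry with a definite positive lower bound on the injectivity radius at $t=0$. The first step is to record these quantitative facts: there are universal constants $C_0$ and $v_0$ such that $|\Rm|\le C_0$ on $\Rr^3\times[0,1-\eta]$ for each fixed $\eta$ (and in fact a uniform bound up to some time, say $t\le 3/4$, suffices for the argument), and $\vol \bar g_0\, B(p_0,1)\ge v_0$; moreover the flow has nonnegative curvature operator (it arises as a limit in Perelman's analysis / is shown directly from the rotational symmetry and the structure of $\bar g_0$), so in particular it satisfies the hypotheses needed for the monotonicity of Perelman's reduced volume.

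Second, I would appeal to Perelman's no-local-collapsing theorem in the form valid on a fixed time interval: a complete Ricci flow of bounded curvature on $[0,T]$ that is not collapsed at the initial time (i.e.\ has a definite volume lower bound for unit balls at $t=0$, relative to its curvature bound there) is $\kappa$-noncollapsed on all scales $\le \sqrt{T}$ at all times in $[0,T]$, where $\kappa$ depends only on $T$, the dimension, the initial curvature bound and the initial volume bound. Since the standard solution has all of these data fixed by universal constants, this yields a universal $\kappa_1$ such that the flow is $\kappa_1$-noncollapsed on all scales $r\le \sqrt{3/4}$, say, on the time interval $[0,3/4]$. The mechanism is the usual one: given a point $(x,t)$ with the curvature bound on $P(x,t,r,-r^2)$ and $r$ small, one uses the reduced length/reduced volume based at $(x,t)$, the monotonicity of reduced volume under the flow, and a comparison at an earlier time where the geometry is controlled, to get a lower bound on $\vol B(x,t,r)/r^n$; the fact that we only need scales bounded by a fixed number and times in a fixed compact interval is exactly what makes $\kappa$ universal.

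Third, to upgrade ``on all scales $r\le\sqrt{3/4}$'' to ``on all scales'' one observes that on the fixed time interval $[0,3/4]$ the curvature is bounded below (it is nonnegative) and bounded above by a universal constant, and the manifold is $\Rr^3$ which is noncompact with, at each time, at least one Euclidean-type end of scalar curvature tending to the cylindrical value; hence for $r$ large the curvature hypothesis $|\Rm|\le r^{-2}$ on the parabolic ball forces $r$ to be bounded (once $r^{-2}$ drops below the curvature bound somewhere in the parabolic ball, the hypothesis fails), so large scales are vacuous. For times $t\in(3/4,1)$ one reduces to the previous case: given $(x,t)$ and $r$ with $|\Rm|\le r^{-2}$ on $P(x,t,r,-r^2)$, either $t-r^2\ge 1/4$ or not; if not then $r^2\ge t-1/4 \ge 1/2$ and again the scale is large hence essentially vacuous by the curvature bound; if $t-r^2\ge 1/4$ then the whole parabolic ball sits over times in $[1/4,1)$, and one can translate the argument to a flow based at time $t-r^2\ge 1/4$, or simply note that Perelman's argument applies with the base point at time $t$ and comparison time anywhere in $[0,t]$, so the same reduced-volume comparison goes through with $\kappa$ depending only on $t\le 1$, i.e.\ still universal. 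Taking $\kappa_{\mathrm{st}}$ to be the minimum of the finitely many universal constants produced gives the statement.

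\textbf{Main obstacle.} The technical heart, and the step I expect to require the most care, is the second one: setting up Perelman's reduced-volume monotonicity argument correctly for a \emph{noncompact} flow and extracting a lower bound on $\kappa$ that is genuinely independent of the point and the (small) scale, using only the bounded-geometry data of $\bar g_0$. One must be careful that the curvature is bounded on all of spacetime $[0,1-\eta]$ (not just near the base point), that the $\call$-geodesics used in the reduced length do not escape the region of curvature control too fast, and that the comparison at the earlier time uses the uniform volume lower bound of $\bar g_0$. All of this is standard and is carried out in the references the paper relies on (e.g.\ the treatments of Perelman's no-local-collapsing theorem for the standard solution in \cite{B3MP} and \cite{Per2}), so in practice this Proposition is quoted from there; the role of this proof is to check that the hypotheses of that black box are met by the standard solution, which they are by construction.
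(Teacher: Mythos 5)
The paper does not give a proof of this proposition: it is stated under the heading ``we recall some results of Perelman on $\kappa$-solutions and the standard solution,'' so it is a quoted black box, not something re-derived. Your sketch is nonetheless a reasonable reconstruction of the argument and gets Steps 1 and 2 right. However, Step 3, which is exactly where the real difficulty lies, has a genuine gap.

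The point you gloss over is that the standard solution does \emph{not} have bounded curvature on $[0,1)$; it only has bounded curvature on compact subintervals $[0,1-\eta]$, and $\sup_M R(\cdot,t)$ blows up like $(1-t)^{-1}$ as $t\to 1$ (indeed, even the cylindrical part at spatial infinity has scalar curvature $(1-t)^{-1}$). So your final sentence of Step 1 (``has bounded curvature'') is false as stated for the whole time interval, and the reduction in Step 3 does not repair it. If one applies the no-local-collapsing theorem on $[0,t]$ as a black box, the resulting $\kappa$ depends on the curvature bound on $[0,t]$, which diverges as $t\to 1$; so the claim that ``$\kappa$ depends only on $t\le 1$, i.e.\ still universal'' does not follow. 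The reduced-volume comparison based at $(x,t)$ requires one to show that minimizing $\mathcal L$-geodesics from $(x,t)$ back to a time near $0$ have bounded reduced length, and since those geodesics can pass through regions of curvature comparable to $(1-s)^{-1}$ at intermediate times $s$, this needs the a priori blow-up rate estimate $R\lesssim(1-t)^{-1}$ (and some control on distances) as an additional input. The careful treatments that this paper implicitly relies on (e.g.\ \cite{B3MP}, Morgan--Tian, Kleiner--Lott) either feed this curvature-rate estimate into the $\mathcal L$-geometry, or bypass it entirely for high-curvature points by using the canonical-neighbourhood structure of the standard solution (Proposition~\ref{prop:standard VC} here) together with the volume lower bound~\eqref{eq:vol} built into canonical neighbourhoods, reserving the reduced-volume argument for the bounded-curvature regime. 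Either way, an extra ingredient beyond ``apply no-local-collapsing on $[0,t]$'' is needed, and your write-up should name it rather than assert that the hypotheses of the black box ``are met by construction.''
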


\begin{prop}\label{prop:standard VC}
For every $\epsi>0$ there exists $C_\mathrm{st}(\epsi)>0$ such that
if $(x,t)$ is a point in the standard solution such that $t>3/4$ or $x\not \in B(p_0,0,\epsi^{-1})$, then $(x,t)$ has an $(\epsi,C_\mathrm{st})$-canonical neighbourhood. Moreover there is an estimate 
$ \Rmin(t) \geqslant \constst(1-t)^{-1}$ for some constant $\constst>0$.
\end{prop}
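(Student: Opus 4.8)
The statement to prove is Proposition~\ref{prop:standard VC}: for every $\epsi>0$ there is $C_\mathrm{st}(\epsi)$ such that every point $(x,t)$ of the standard solution with $t>3/4$ or $x\notin B(p_0,0,\epsi^{-1})$ has an $(\epsi,C_\mathrm{st})$-canonical neighbourhood, together with the lower bound $\Rmin(t)\ge\constst(1-t)^{-1}$.

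My plan is to exploit the three structural features of the standard solution: its $SO(3)$-symmetry and rotational invariance, its asymptotics near spatial infinity (the metric $\bar g_0$ is cylindrical outside $B(0,5)$, so it stays close to a round cylinder flow under Ricci flow for a controlled time), and its behaviour as $t\to 1$. First I would recall from \cite{B3MP} (or the original arguments of Perelman, as reworked there) the basic facts: the standard solution has bounded nonnegative curvature operator, is rotationally symmetric for all $t\in[0,1)$, and has curvature blowing up precisely as $t\to 1$ in the sense that $\sup_{\Rr^3}R(\cdot,t)\to\infty$. The lower bound $\Rmin(t)\ge\constst(1-t)^{-1}$ would be established by the maximum principle applied to the scalar curvature evolution equation $\partial_t R=\Delta R+2|\Ric|^2\ge\Delta R+\tfrac{2}{3}R^2$: on a complete flow with bounded nonnegative curvature this forces any positive lower bound on $R(\cdot,0)$ to persist and, combined with the known finite-time singularity at $t=1$, gives the claimed blow-up rate after checking that $\Rmin(0)>0$ (which holds because $(B(0,5),\bar g_0)$ has strictly positive curvature and the cylindrical end has $R\equiv 1$).

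For the canonical-neighbourhood statement I would split into the two regimes. For $x\notin B(p_0,0,\epsi^{-1})$ at \emph{any} time, use that near infinity the standard solution is, after rescaling, uniformly close to the cylindrical flow on $(-1,0]$ — this follows from the fact that the initial metric is exactly a round cylinder there together with a pseudolocality/continuous-dependence argument (again available in \cite{B3MP}), so such points are centres of strong $\epsi$-necks with the required derivative estimates inherited from the cylinder, with $C_\mathrm{st}$ depending only on $\epsi$. For $t>3/4$, the argument is a compactness one: suppose not; then there is a sequence $(x_k,t_k)$ with $t_k>3/4$ admitting no $(\epsi,C_k)$-canonical neighbourhood with $C_k\to\infty$. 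Rescale by $R(x_k,t_k)$ and pass to a limit; by the no-local-collapsing for the standard solution (Proposition~\ref{prop:standard kappa}), Hamilton's compactness theorem and the curvature bounds, the limit is an ancient $\kappa$-solution, hence by Theorem~\ref{thm:kappa VC} every point of it — in particular the limit basepoint — is the centre of an $(\epsi,C_\mathrm{sol})$-canonical neighbourhood, and since being such a centre is an open condition in the $C^{[\epsi^{-1}]}$-topology (see the last Remark after Definition~\ref{def:VC}) this contradicts the assumption for $k$ large. One must check the compactness hypotheses hold: bounded curvature on parabolic balls and noncollapsing, both guaranteed on the region $t>3/4$ since there the solution has uniformly bounded geometry at scale comparable to $(1-t)^{1/2}$.

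The main obstacle is making the compactness argument for $t>3/4$ rigorous: one needs uniform (in $k$) curvature bounds on parabolic neighbourhoods of $(x_k,t_k)$ after rescaling, which requires knowing that in the time-slab $[3/4,1)$ the curvature of the standard solution does not oscillate too wildly relative to its value at a given point — i.e.\ a bounded-curvature-at-bounded-distance estimate. This is where one leans on the precise analysis of the standard solution carried out in \cite{B3MP} (the standard solution is unique, $\kappa$-noncollapsed on all scales by Proposition~\ref{prop:standard kappa}, and has canonical neighbourhoods obtained by Perelman's own compactness argument). I would therefore organise the proof to cite those facts and assemble them: the two regimes cover all required $(x,t)$ after possibly enlarging $\epsi^{-1}$ slightly, and taking $C_\mathrm{st}:=\max\{C_\mathrm{sol}(\epsi),\text{const}(\epsi)\}$ finishes it; the $\Rmin$ estimate is a separate short maximum-principle computation as above.
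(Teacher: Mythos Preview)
The paper does not prove Proposition~\ref{prop:standard VC}: in Subsection~\ref{sub:constantes} it is simply stated, alongside Theorem~\ref{thm:kappa VC} and Proposition~\ref{prop:standard kappa}, as a known property of the standard solution taken from Perelman and \cite{B3MP}. So there is no proof in the paper to compare against; your sketch is essentially the standard outline one finds in the references, and the canonical-neighbourhood half of it (cylindrical end for $x\notin B(p_0,0,\epsi^{-1})$, compactness/$\kappa$-solution limit for $t$ near $1$) is the right strategy.

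However, your argument for the $\Rmin$ estimate has a genuine gap. The maximum principle applied to $\partial_t R\ge\Delta R+\tfrac{2}{3}R^2$ yields only
\[
\Rmin(t)\ \ge\ \frac{\Rmin(0)}{1-\tfrac{2}{3}\,t\,\Rmin(0)},
\]
which blows up at $t=3/(2\Rmin(0))$. Since the cylindrical end of $\cals_0$ has $R\equiv 1$, one has $\Rmin(0)\le 1$, so this ODE bound blows up no earlier than $t=3/2$ and therefore gives no $(1-t)^{-1}$ rate on $[0,1)$. Invoking the ``known finite-time singularity at $t=1$'' does not help: that is a statement about $\Rmax$, and says nothing about $\Rmin$. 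The correct route is through the cylindrical asymptotics you already use for the neck regime: the evolving round cylinder has $R=(1-t)^{-1}$ exactly, and closeness of the standard solution to it at spatial infinity gives the bound on the end; one then shows (via the rotationally symmetric structure or the canonical-neighbourhood description of the cap region) that the scalar curvature near the tip is no smaller.

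A smaller point: in your compactness argument for $t>3/4$ you need $R(x_k,t_k)\to\infty$ to obtain an ancient $\kappa$-solution in the limit. This holds when $t_k\to 1$ (granted the $\Rmin$ estimate), but if $(x_k,t_k)$ stays in the compact region $\overline{B(p_0,0,\epsi^{-1})}\times[3/4,1-\eta]$ it need not, and the argument as written becomes circular. That residual compact region requires a direct check that each point sits in an $\epsi$-cap, with the uniform constant then coming from compactness of the region---exactly the ``precise analysis of the standard solution'' you defer to \cite{B3MP}.
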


Next we recall two technical lemmas from~\cite{B3MP} which allow to fix universal constants $\epsi_0$ and $\beta_0$.

\begin{lem}\label{lem:glueneck}
There exists $\epsi_0>0$ such that the following holds. Let $\epsi\in (0,2\epsi_0]$.
Let $(M,g)$ be a riemannian $3$-manifold. Let $y_1,y_2$ be points of $M$. Let $U_1\subset M$ be an $\epsi$-neck centred at $y_1$ with parametrisation $\psi_1:S^2\times (-\epsi^{-1},\epsi^{-1}) \to U_1$ and middle sphere $S_1$. Let $U_2\subset M$ be a $10\epsi$-neck centred at $y_2$ with middle sphere $S_2$. Call $\pi:U_1\to (-\epsi^{-1},\epsi^{-1})$ the composition of $\psi_1^{-1}$ with the projection of $S^2\times (-\epsi^{-1},\epsi^{-1})$ onto its second factor.

Assume that $y_2\in U_1$ and $|\pi(y_2)| \le (2\epsi)^{-1}$. Then the following conclusions hold:
\begin{enumerate}
\item $U_2$ is contained in $U_1$;
\item The boundary components of $\bord \overline{U_2}$ can be denoted by $S_+,S_-$ in such a way that
$$\pi(S_-) \subset [\pi(y_2) - (10\epsi)^{-1} - 10,\pi(y_2) - (10\epsi)^{-1} + 10]\,,$$
and
$$\pi(S_+) \subset [\pi(y_2) + (10\epsi)^{-1} - 10,\pi(y_2) + (10\epsi)^{-1} + 10]\,;$$
\item The spheres $S_1,S_2$ are isotopic in $U_1$.
\end{enumerate}
\end{lem}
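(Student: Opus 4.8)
The statement is an essentially local one about how a $10\epsi$-neck sits inside an $\epsi$-neck, so the plan is to transport everything to the standard round cylinder via the parametrisation $\psi_1$, and there carry out an explicit comparison. Write $g_1 := \psi_1^*(\lambda_1 g)$ for the pulled-back rescaled metric on $S^2\times(-\epsi^{-1},\epsi^{-1})$; by hypothesis $\|g_1 - g_\cyl\| < \epsi$ in the relevant $C^k$-norm. The key point is that on the cylinder, the function $\pi$ (second coordinate) has a geometric meaning: its level sets are the only essential $2$-spheres up to isotopy, their areas are all equal (to $4\pi$ after the normalisation), and the distance between two level sets is (close to) the difference of $\pi$-values. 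Since $U_2$ is a $10\epsi$-neck, after rescaling by $\lambda_2 := R(y_2)$ (or the corresponding factor) it is $10\epsi$-close to a standard cylinder of length $2(10\epsi)^{-1}$; comparing the two scales $\lambda_1,\lambda_2$ shows $\lambda_2/\lambda_1$ is close to $1$ (this is the same kind of estimate as ``for each $y \in N$, $\lambda/R(y)$ is $\delta'$-close to $1$'' used in the Metric Surgery Theorem), so areas of middle spheres of $U_1$ and $U_2$ agree up to a controlled error.

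\textbf{Step 1: $U_2 \subset U_1$.} Since $y_2 \in U_1$ and $|\pi(y_2)| \le (2\epsi)^{-1}$, the point $y_2$ is at $\pi$-distance at least $(2\epsi)^{-1}$ from each boundary sphere of $U_1$. In the metric $g_1$ (hence, up to the $\epsi$-error, in $g_\cyl$), the ball of radius roughly $(2\epsi)^{-1}$ about $y_2$ is contained in $U_1$. But $U_2$, being a $10\epsi$-neck centred at $y_2$, has diameter comparable to $2(10\epsi)^{-1} = (5\epsi)^{-1} < (2\epsi)^{-1}$ in its own rescaled metric, and since $\lambda_2/\lambda_1 \approx 1$, also in $g_1$. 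Hence $U_2$ stays within $U_1$. One must be a little careful with constants: this is why the hypothesis is $|\pi(y_2)| \le (2\epsi)^{-1}$ rather than something sharper, and why $\epsi_0$ is chosen small — I would fix $\epsi_0$ so that all the ``$\epsi$-close'' and ``$10\epsi$-close'' errors are dwarfed by the gap between $(5\epsi)^{-1}$ and $(2\epsi)^{-1}$.

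\textbf{Step 2: locating $\bord\overline{U_2}$.} Let $\psi_2 : S^2\times(-(10\epsi)^{-1},(10\epsi)^{-1}) \to U_2$ be a parametrisation. The composite map $\pi \circ \psi_2$ is, up to an error controlled by $\epsi$ (in $C^1$, using that both metrics are $C^k$-close to cylinders and $\lambda_2/\lambda_1\approx 1$), an affine function of the second coordinate of the $S^2 \times(\cdot)$ on $U_2$ with slope close to $\pm 1$, and it takes the value $\pi(y_2)$ (up to a small error) at the centre. Evaluating at the two ends gives that $\pi$ maps $S_\pm := \psi_2(S^2\times\{\pm(10\epsi)^{-1}\})$ into intervals centred at $\pi(y_2) \pm (10\epsi)^{-1}$ of half-width a few units; the factor $10$ and the additive $10$ in the claimed intervals absorb all the accumulated $\epsi$-errors. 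Here one uses that a middle $2$-sphere of an $\epsi$-neck, transported into another $\epsi$-neck, is $C^1$-close to a round level sphere, a standard fact about necks (cf. the definition of middle sphere and the closeness conventions of \cite{B3MP}).

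\textbf{Step 3: isotopy of $S_1$ and $S_2$.} Both $S_1$ and $S_2$ are embedded $2$-spheres in $U_1 \cong S^2 \times (-\epsi^{-1},\epsi^{-1})$. In such a product, any embedded $2$-sphere that is a middle sphere of a neck is isotopic to a standard fibre $S^2 \times \{c\}$: indeed $\pi$ restricted to it is close to a constant by Step 2's argument, so a straight-line homotopy in the $\Rr$-factor (pushed to an ambient isotopy using that $S^2\times\Rr$ is a product) carries it to a fibre. Applying this to both $S_1$ and $S_2$ and using that all fibres $S^2\times\{c\}$ are mutually isotopic gives conclusion (iii). \textbf{The main obstacle} is Step 2: one has to check that composing the parametrisation of the smaller neck with $\pi$ really does give a near-affine near-isometry onto its image interval with slope $\approx \pm 1$, controlling the constant in front of $\epsi$ so that everything fits inside the stated intervals — this requires the comparison $\lambda_2/\lambda_1 \approx 1$ and a careful bookkeeping of which $C^k$-norms are used, exactly the kind of estimate that \cite{B3MP} sets up its closeness conventions for. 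Everything else is soft topology of $S^2\times\Rr$.
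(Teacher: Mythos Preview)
The paper does not prove this lemma; it simply cites \cite[Lemma 1.2.1]{B3MP}. Your sketch is the standard approach to such neck-overlap lemmas and is correct in outline: pull back via $\psi_1$, use that scalar curvature is nearly constant on an $\epsi$-neck to get $\lambda_2/\lambda_1\approx 1$, then do metric comparison on the cylinder. Step~1 is fine once $\epsi_0$ is small enough that the gap $(2\epsi)^{-1}-(5\epsi)^{-1}$ swallows the errors. In Step~2 the honest statement is that on each slice $\psi_2(S^2\times\{s\})$ the function $\pi$ oscillates by at most $O(1)$ (since the slice has diameter $\approx\pi\sqrt{2}$ and $|\nabla\pi|\approx 1$), and that the mean value of $\pi$ on the slice moves with $s$ at speed $\approx 1$; this is what gives the width-$20$ intervals in (ii) and is slightly cleaner than saying $\pi\circ\psi_2$ is globally near-affine. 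Step~3 is soft: a sphere on which $\pi$ is $C^0$-close to a constant is a graph over a fibre (for $\epsi_0$ small), hence isotopic to it, and all fibres are mutually isotopic in $S^2\times(-\epsi^{-1},\epsi^{-1})$. So your argument is sound and presumably close to what \cite{B3MP} does; the only real work, as you correctly flag, is the bookkeeping in Step~2.
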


\begin{proof} \cite[Lemma 1.2.1]{B3MP}
\end{proof}

Let $K_\mathrm{st}$ be the supremum of the sectional curvatures of the standard solution on $[0,4/5]$.

\begin{lem}\label{lem:neck strengthening}
For all $\epsi\in (0,{10}^{-4})$ there exists $\beta=\beta(\epsi)\in (0,1)$ such that the following holds.

Let $a,b$ be real numbers satisfying $a<b<0$ and $|b|\le 3/4$, let $(M,g(\cdot))$ be a surgical solution defined on $(a,0]$, and $x\in M$ be a point such that:
\begin{itemize}
\item $R(x,b)=1$;
\item $(x,b)$ is centre of a strong $\beta\epsi$-neck;
\item $P(x,b,(\beta\epsi)^{-1},|b|)$ is unscathed and satisfies $|\Rm| \le 2K_\mathrm{st}$.
\end{itemize}
Then $(x,0)$ is centre of a strong $\epsi$-neck.
\end{lem}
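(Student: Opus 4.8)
\textbf{Proof plan for Lemma~\ref{lem:neck strengthening}.}

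The plan is to use the following strategy: the hypotheses say that at time $b$ the point $x$ sits in a strong $\beta\epsi$-neck (so in a fine neck \emph{both} in space and in a backward time interval of size roughly $(\beta\epsi)^{-2}$, after rescaling), and that on the much larger parabolic region $P(x,b,(\beta\epsi)^{-1},|b|)$ the flow is unscathed with curvature controlled by $2K_{\mathrm{st}}$. We want to propagate the neck structure forward in time from $b$ to $0$ and conclude that $(x,0)$ is the centre of a strong $\epsi$-neck. The natural mechanism is a contradiction/compactness argument combined with the uniqueness of the cylindrical flow. First I would argue by contradiction: suppose the statement fails for $\epsi$. Then there exist sequences $\beta_k\to 0$, surgical solutions $(M_k,g_k(\cdot))$ on $(a_k,0]$, times $b_k\in[-3/4,0)$, and points $x_k$ satisfying the three bulleted hypotheses with $\beta=\beta_k$, but such that $(x_k,0)$ is \emph{not} the centre of a strong $\epsi$-neck.

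Next I would set up the limit. Parabolically rescale each $(M_k,g_k(\cdot))$ by $R(x_k,b_k)=1$ (no rescaling needed) and translate time so that $b_k=0$; equivalently, work directly with the flows on $[b_k,0]$ based at $(x_k,b_k)$. Since $P(x_k,b_k,(\beta_k\epsi)^{-1},|b_k|)$ is unscathed with $|\Rm|\le 2K_{\mathrm{st}}$, and since $(x_k,b_k)$ is the centre of a strong $\beta_k\epsi$-neck (hence has a uniform lower bound on injectivity radius / volume ratio at scale comparable to $1$), the local compactness results for Ricci flows quoted in the appendices give a subsequence converging in the pointed $C^\infty$ sense, on a parabolic neighbourhood of $(x_\infty,0)$ of spatial radius tending to $\infty$ and time interval $[b_\infty,0]$ where $b_\infty=\lim b_k\in[-3/4,0]$, to a limit Ricci flow $(M_\infty,g_\infty(\cdot),x_\infty)$ which is complete-on-the-relevant-region with $|\Rm|\le 2K_{\mathrm{st}}$. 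Because each $(x_k,b_k)$ is the centre of a $\beta_k\epsi$-neck with $\beta_k\to 0$, the initial time slice $(M_\infty,g_\infty(b_\infty),x_\infty)$ is isometric to the round cylinder $S^2\times\Rr$ of scalar curvature $1$ (this is where $\beta_k\to0$ is used: arbitrarily fine necks force an exact cylinder in the limit).

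Then I would invoke uniqueness: a Ricci flow of bounded curvature starting from the round cylinder $S^2\times\Rr$ (scalar curvature $1$) is the standard cylindrical flow $g_\cyl(\cdot)$, by Chen--Zhu uniqueness (or directly, by symmetry and solving the ODE). Hence $g_\infty(\cdot)$ on $[b_\infty,0]$ is, up to the time shift, exactly the cylindrical flow, and in particular $(x_\infty,0)$ is the centre of a strong $\epsi'$-neck for every $\epsi'>0$ (cf.\ the Remark following the definition of strong $\epsi$-neck). By smooth convergence, for $k$ large $(x_k,0)$ is then the centre of a strong $\epsi$-neck in $(M_k,g_k(\cdot))$ --- indeed one can take the same underlying set and comparison factor $Q$ as in the limit and perturb the parametrisation, since being the centre of a strong $\epsi$-neck is an open condition in the $C^{[\epsi^{-1}]}$-topology on the metric (one of the remarks after Definition~\ref{def:VC}). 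This contradicts the choice of $(x_k,0)$ and proves the lemma.

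The main obstacle I expect is \emph{extracting the limit flow on a full backward time interval of definite length}, rather than just at the single time slice $t=b$. The strong-neck hypothesis at time $b$ only directly controls geometry near time $b$, but the curvature bound $|\Rm|\le 2K_{\mathrm{st}}$ holds on all of $P(x,b,(\beta\epsi)^{-1},|b|)$, which by the unscathed hypothesis is a genuine product region $B(x,b,(\beta\epsi)^{-1})\times[0,|b|]$ under the flow; so one has uniform curvature bounds on a spacetime region whose spatial size blows up and whose time extent is $|b|\ge$ (a definite positive number once we also note $b<0$, though we must handle $b_\infty$ possibly $=0$, in which case the conclusion is essentially the hypothesis). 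Combined with the volume non-collapsing coming from the neck at the \emph{final} time $b$ and Hamilton's compactness theorem for Ricci flows (Appendix), this yields the desired limit; the care needed is in checking the non-collapsing is propagated and in tracking basepoints through the time translation, but these are routine given the machinery already set up. A secondary, purely bookkeeping point is to make sure that when we transplant the neck parametrisation back to $(M_k,g_k)$ we land inside the region where the flow is unscathed, which is guaranteed since that region has spatial radius $(\beta_k\epsi)^{-1}\to\infty$.
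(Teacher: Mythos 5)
Your plan is correct and follows the same compactness-and-contradiction strategy that underlies the paper's proof, which is given by reference to~\cite[Lemma~2.3.5]{B3MP}: extract a limit flow on a growing parabolic region whose $t=b_\infty$ slice is the exact round cylinder because $\beta_k\to 0$, identify the limit with the cylindrical flow via uniqueness of complete bounded-curvature solutions, and transfer the strong-neck structure back to the sequence by openness of the strong-neck property. The details you flag as routine (non-collapsing and basepoint bookkeeping, $b_\infty=0$, spatial radius of the unscathed region vs.\ time-$0$ balls) are indeed the ones that need to be checked, and they go through exactly as you indicate.
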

 
\begin{proof}
We can argue exactly as in \cite[Lemma 2.3.5.]{B3MP}
\end{proof}

Fix $\epsi_0>0$ so that Lemma~\ref{lem:glueneck} holds. Let $\beta:=\beta(\epsi_0)$ be the constant given by
 Lemma~\ref{lem:neck strengthening}. Finally, define $C_0:=\max (100,2C_\mathrm{sol}(\epsi_0/2),2C_\mathrm{st}(\beta\epsi_0/2))$.

\begin{defi}
Let $r>0$. An evolving riemannian manifold $\{(M(t),g(t))\}_{t\in I}$ has property $(CN)_r$ if for all $(x,t)\in \calM$, if $R(x,t)\geq r^{-2}$, then $(x,t)$ admits an $(\epsi_0,C_0)$-canonical neighbourhood.
\end{defi}

\begin{defi}
Let $\kappa>0$.  An evolving riemannian manifold $\{(M(t),g(t))\}_{t\in I}$  has property $(NC)_\kappa$ if it is $\kappa$-noncollapsed on all scales less than $1$.
\end{defi}

\section{$(r,\delta,\kappa)$-surgical solutions}\label{sec:rdelta}

\subsection{Cutoff parameters and $(r,\delta)$-surgery}

\begin{theo}[Cutoff parameters]\label{thm:echelle de coupure}
For all $r,\delta>0$, there exist $h\in (0,\delta r)$ and $D>10$ such that
if $(M(\cdot),g(\cdot))$ is a complete surgical solution of bounded curvature defined on an interval $[a,b]$, with curvature pinched toward positive and satisfying $(CN)_r$, then the following holds:

Let  $t\in [a,b]$ and $x,y,z\in M(t)$  such that 
$R(x,t)\leq 2 /r^2$, $R(y,t)=h^{-2}$, and $R(z,t)\geq D/h^2$. Assume there is a curve  $\gamma$ connecting $x$ to $z$ via $y$, such that  each point of $\gamma$ with scalar curvature in $[2C_{0}r^{-2},{C_{0}}^{-1}Dh^{-2}]$ is centre of an 
$\epsi_{0}$-neck. Then $(y,t)$ is centre of a strong $\delta$-neck.
\end{theo}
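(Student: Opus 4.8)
The strategy is a compactness-and-contradiction argument, which is the standard way such "parameter existence" statements are proved in the Ricci flow literature (and in particular in \cite{B3MP}). First I would fix $r,\delta>0$ and suppose, for contradiction, that no pair $(h,D)$ works. Then for every $n$ I can choose $h_n\to 0$ (say $h_n\le \delta r/n$), $D_n\to\infty$, a complete surgical solution $(M_n(\cdot),g_n(\cdot))$ of bounded curvature on some interval, with curvature pinched toward positive and satisfying $(CN)_r$, a time $t_n$, and points $x_n,y_n,z_n\in M_n(t_n)$ with $R(x_n,t_n)\le 2/r^2$, $R(y_n,t_n)=h_n^{-2}$, $R(z_n,t_n)\ge D_n/h_n^2$, joined by a curve $\gamma_n$ through $y_n$ all of whose points with scalar curvature in $[2C_0 r^{-2},C_0^{-1}D_n h_n^{-2}]$ are centres of $\epsi_0$-necks, but such that $(y_n,t_n)$ is \emph{not} the centre of a strong $\delta$-neck.

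Next I would parabolically rescale $(M_n(\cdot),g_n(\cdot))$ at time $t_n$ by the factor $Q_n:=R(y_n,t_n)=h_n^{-2}$, so that in the rescaled flow $\bar g_n$ the scalar curvature at $y_n$ equals $1$. The key point is that after rescaling, the curvature scale $r$ becomes $r\sqrt{Q_n}=r/h_n\to\infty$, so property $(CN)_r$ (for the rescaled flow this is $(CN)_{r\sqrt{Q_n}}$, i.e.\ canonical neighbourhoods at all curvature scales $\gtrsim (r/h_n)^{-2}\to 0$) together with pinching forces, in the limit, a $\kappa$-solution-like behaviour near $y_n$. Concretely: since $y_n$ has bounded curvature (equal to $1$) and, by walking along the rescaled $\gamma_n$ on both sides, one reaches points of much larger and much smaller curvature all lying on $\epsi_0$-necks, one shows $y_n$ sits in a long chain of $\epsi_0$-necks (using Lemma~\ref{lem:glueneck} to patch overlapping necks). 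Standard distance estimates along necks then give uniform curvature control on a parabolic ball $P(y_n,t_n,\epsi_0^{-1}/2, -\text{const})$ (in rescaled time), because along a neck the curvature varies slowly and one does not exit into the high-curvature region $z_n$ before a definite distance; the pinching estimate (Lemma~\ref{lem:pinching}) upgrades the scalar-curvature bound to a full $|\Rm|$ bound, and $(NC)$-type arguments — or rather the volume lower bound built into the canonical-neighbourhood definition — give $\kappa$-noncollapsing. One must also check the parabolic neighbourhoods are unscathed for a uniform backward time; this uses that surgery points have very high curvature (controlled by the cutoff threshold from an earlier stage) while we are sitting on moderate-curvature necks.

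With uniform $|\Rm|$ bounds and noncollapsing on a uniform-size parabolic neighbourhood of $(y_n,t_n)$, the local compactness theorem for Ricci flows (the appendix result the paper promises) yields a subsequential smooth pointed limit $(M_\infty,g_\infty(\cdot),y_\infty)$ defined on $(-a,0]$ for some $a>0$. Passing $\epsi_0$-necks to the limit (they are preserved under convergence since being a neck is a $C^{[\epsi^{-1}]}$-open condition), and using that the chain of necks has length $\to\infty$ on both sides while the curvature is pinched toward positive (so the limit has $\Rm\ge 0$ by Lemma~\ref{lem:phi decreasing}(ii)/Proposition~\ref{prop:limite positive}), one identifies the limit as the round shrinking cylinder $S^2\times\Rr$ with its product flow, normalised so $R(y_\infty,0)=1$. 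But then, for $n$ large, $(y_n,t_n)$ in the rescaled flow is arbitrarily close — in the appropriate evolving-metric $C^{[\epsi^{-1}]}$-topology on a ball of radius $\epsi^{-1}$ and over a time interval of length $\ge 1$ — to the cylindrical flow; i.e.\ $(y_n,t_n)$ \emph{is} the centre of a strong $\delta$-neck once $n$ is large, contradicting the choice of the sequence.

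The main obstacle, and the place where care is needed, is establishing the uniform \emph{unscathed parabolic neighbourhood} of $(y_n,t_n)$ going backward a definite amount of rescaled time: one has to rule out surgeries happening too close in spacetime, and control how far backward the neck structure and curvature bounds persist. This is exactly where one invokes, in tandem, property $(CN)_r$ (to propagate canonical neighbourhoods backward via the time-derivative estimate \eqref{eq:dR dt}), the a priori curvature-pinching, and the fact — guaranteed in the eventual setup of $(r,\delta,\kappa)$-surgical solutions — that surgery necks have scalar curvature of order $h^{-2}$, vastly larger after rescaling by $h_n^{-2}$ than the $O(1)$ curvatures we are tracking near $y_n$, so the backward parabolic ball stays in the smooth, unscathed region. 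Everything else is a careful but routine deployment of Lemma~\ref{lem:glueneck}, Lemma~\ref{lem:neck strengthening} (indeed Theorem~\ref{thm:echelle de coupure} is essentially a "strong neck" statement of the same flavour), the pinching lemmas, and local compactness.
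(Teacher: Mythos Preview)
Your overall strategy is correct and matches the paper's: contradiction, parabolic rescaling at $(y_n,t_n)$ by $h_n^{-2}$, extract a limit, identify it as the cylindrical flow, and conclude that $(y_n,t_n)$ was a strong $\delta$-neck after all. The paper also uses the Curvature--Distance Theorem (Theorem~\ref{thm:courbure distance}) to obtain the spatial curvature bounds $R\le\Lambda(\rho)$ on $B(\bar y_n,\rho)$ before invoking local compactness; your ``standard distance estimates along necks'' is a workable substitute since the gradient estimate~\eqref{eq:nabla R} holds on the neck region, but you should be explicit that this is where the curve $\gamma$ and the neck hypothesis along it are used to show that the \emph{entire} ball $B(\bar y_n,\rho)$---not just $\gamma$---is covered by $\epsi_0$-necks (once $\rho$ is fixed and $n$ is large enough, low-curvature points are at rescaled distance $\gtrsim h_n^{-1}\to\infty$, and high-curvature points are excluded by the $\Lambda(\rho)$ bound; the tube of necks along $\gamma$ then swallows the ball).

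There is, however, a genuine gap in your treatment of the unscathed backward parabolic neighbourhood. You write that surgeries occur at scalar curvature of order $h^{-2}$, ``guaranteed in the eventual setup of $(r,\delta,\kappa)$-surgical solutions''. But the hypothesis of Theorem~\ref{thm:echelle de coupure} is only that $(M(\cdot),g(\cdot))$ is a surgical solution satisfying $(CN)_r$: there is no $(r,\delta)$-surgery structure assumed, and indeed that structure is \emph{defined} using the very parameters $h,D$ this theorem produces. So you cannot appeal to where surgeries happen. The correct argument, which the paper uses, is that $(CN)_r$ gives each point of $B(\bar y_n,\rho)$ a canonical neighbourhood which (by the neck hypothesis along $\gamma$) is in fact a \emph{strong} $\epsi_0$-neck; by definition a strong neck centred at a point of rescaled scalar curvature $\le\Lambda(\rho)$ is unscathed on a backward time interval of length $\gtrsim\Lambda(\rho)^{-1}$ with curvature bounded by $\sim 2\Lambda(\rho)$ there. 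This immediately yields unscathed parabolic balls $P(\bar y_n,0,\rho,-\tfrac{1}{2\Lambda(\rho)})$ with uniform curvature bounds, and local compactness applies. No separate argument ruling out nearby surgeries is needed.
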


This will be proved in Section~\ref{sec:coupure}. 
In the sequel we fix functions $(r,\delta)\mapsto h(r,\delta)$ and $(r,\delta)\mapsto D(r,\delta)$ with the above property. We set $\Theta:=2Dh^{-2}$. This number will be used as a curvature threshold for the surgery process.

\begin{defi}
We say that two real numbers $r,\delta$ are \bydef{surgery parameters} if $0<r<10^{-3}$ and $0<\delta<\min (\epsi_0,\delta_0)$. The \bydef{associated cutoff parameters} are $h:=h(r,\delta)$, $D:=D(r,\delta)$ and $\Theta:=2Dh^{-2}$.
\end{defi}

From now on, we fix a function $\delta' : (0,\delta_{0}] \longrightarrow 
(0,\epsi_0/10]$ as in the metric surgery theorem. A marked $(\delta,\delta'(\delta))$-almost standard cap
will be simply called a $\delta$-almost standard cap. An open subset $U$ of $M$ is called a $\delta$-almost standard cap if there 
exist $V$, $p$ and $y$ such that $(U,V,p,y)$ is a $\delta$-almost standard cap.

\begin{defi}
Fix surgery parameters $r,\delta$ and let $h,D,\Theta$ be
the associated cutoff parameters. Let $\{(M(t),g(t))\}_{t\in I}$ be an evolving riemannian manifold. Let $t_0\in I$ and $(M_+,g_+)$ be a (possibly empty) riemannian manifold. We say that $(M_+,g_+)$ \bydef{is obtained from $(M(\cdot),g(\cdot))$ by $(r,\delta)$-surgery at time $t_0$}  if the following conditions are satisfied:
\begin{enumerate}
\item $M_+$ is obtained from $M(t_0)$ by cutting along a locally finite collection of disjoint $2$-spheres, capping off $3$-balls, and possibly removing some components that are spherical or diffeomorphic to $\Rr^3$, $S^2\times\Rr$, $RP^3\setminus \{pt\}$, $RP^3\# RP^3$, $S^2\times S^1$. In addition, a spherical manifold $U$ can only be removed if it is contained in $M(t_0)$, and $(U,g(t_0))$ is $\epsi$-round and satisfies $R\ge 1$;
\item For all $x\in M_+\setminus M(t_0)$, there exists a $\delta$-almost standard cap $(U,V,p,y)$ in $M_+$, such that 
  \begin{enumerate}
  \item $x\in V$;
  \item $y\in M(t_0)$;
  \item $R(y,t_0)=h^{-2}$;
  \item $(y,t_0)$ is centre of a strong $\delta$-neck.
 \end{enumerate}
\item $\Rmax(g(t_0))=\Theta$, and if $M_+\neq\emptyset$, then $\Rmax(g_+)\le \Theta/2$.
\end{enumerate}
\end{defi}

\begin{defi}\label{defi:r delta solution}
Fix surgery parameters $r,\delta$ and let $h,D,\Theta$ be
the associated cutoff parameters. Let $I\subset [0,\infty)$ be an interval and $\{(M(t),g(t))\}_{t\in I}$ be a surgical solution. We say that it is an $(r,\delta)$-\bydef{surgical solution} if it has the following properties:
\begin{enumerate}
\item It has curvature pinched toward positive and satisfies $R(x,t)\le \Theta$ for all $(x,t)\in \calM$;
\item For every singular time $t_0\in I$,  $(M_+(t_0),g_+(t_0))$ is obtained from $(M(\cdot),g(\cdot))$ by $(r,\delta)$-surgery at time $t_0$;
\item Condition $(CN)_r$ holds.
\end{enumerate}

Let $\kappa>0$. An $(r,\delta)$-surgical solution which in addition satisfies
Condition~$(NC)_\kappa$ will be called an \bydef{$(r,\delta,\kappa)$-surgical solution.}
 \end{defi}

\subsection{Existence theorem for $(r,\delta,\kappa)$-surgical solutions}

Theorem~\ref{thm:existence surg quant} is implied by the
following result:

\begin{theo}\label{thm:existence 1 norm}
For every $\rho_0>0$ and $T\ge 0$, there exist $r,\delta,\kappa>0$ such that for any complete riemannian $3$-manifold $(M_0,g_0)$ with $| \Rm | \le 1$ and injectivity radius at least $\rho_0$, there exists an $(r,\delta,\kappa)$-surgical solution defined on $[0,T]$ satisfying the initial condition $(M(0),g(0))=(M_0,g_0)$.
\end{theo}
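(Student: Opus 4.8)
\textbf{Proof plan for Theorem~\ref{thm:existence 1 norm}.}

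The plan is to build the $(r,\delta,\kappa)$-surgical solution by a stepwise induction on singular times, exactly as in Perelman's compact construction and its exposition in \cite{B3MP}, but keeping track of the fact that nothing here is compact. First I would reduce the statement to three self-contained propositions A, B, C, which is precisely the structure announced in Section~\ref{sec:rdelta}: Proposition A will say that if an $(r,\delta,\kappa)$-surgical solution is defined on $[0,t_0)$ and reaches the curvature threshold $\Theta$ at $t_0$, then it is possible to perform $(r,\delta)$-surgery at $t_0$ and continue; Proposition B will assert the persistence of the canonical neighbourhood property $(CN)_r$ on a definite time interval after each surgery (with $r$ chosen small depending on $\epsi_0$, $C_0$, $T$); and Proposition C will assert the persistence of the $\kappa$-noncollapsing property $(NC)_\kappa$. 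The curvature-pinched-toward-positive property is propagated automatically by Proposition~\ref{prop:ham ivey} on smooth intervals and is preserved at surgery times because the Metric Surgery Theorem~\ref{thm:chirurgie metrique} produces caps with $\phi$-almost nonnegative curvature (applied with $\phi=\phi_{t_0}$), so no separate proposition is needed for it.

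The construction itself then runs as follows. Given $(M_0,g_0)$ with $|\Rm|\le 1$ and $\inj\ge\rho_0$, note first that $|\Rm(g(0))|\le 1$ forces curvature pinched toward positive at time $0$ (remark after the definition), and $\Rmax(g_0)\le 6<\Theta$ since we will arrange $\Theta$ large. By Shi's theorem a complete Ricci flow of bounded curvature starts; let $t_1$ be the first time $\Rmax$ hits $\Theta$ (if it never does on $[0,T]$, we are done). On $[0,t_1)$ Hamilton--Ivey pinching holds, Proposition B gives $(CN)_r$, Proposition C gives $(NC)_\kappa$, so $(M(\cdot),g(\cdot))$ is an $(r,\delta,\kappa)$-surgical solution on $[0,t_1)$. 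At $t_1$ the curvature threshold analysis (the cutoff parameters Theorem~\ref{thm:echelle de coupure}) shows that the high-curvature region decomposes into $\epsi_0$-necks, $\epsi_0$-caps, and closed components of positive curvature, and that the ``canonical'' necks of scalar curvature $h^{-2}$ lying between the $\Theta$-region and the ``thick'' region are actually strong $\delta$-necks; Proposition A then lets us cut along their middle spheres, apply Theorem~\ref{thm:chirurgie metrique} to cap off, discard the components listed in the definition of $(r,\delta)$-surgery, and obtain $(M_+(t_1),g_+(t_1))$ with $\Rmax\le\Theta/2$ and $\Rmin$ nondecreasing. Restart Shi's flow from $g_+(t_1)$ and iterate. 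Because $\Rmax$ drops by a factor $2$ at each surgery while, by the canonical-neighbourhood time-derivative estimate~\eqref{eq:dR dt}, it can grow at most at rate $CR^2$, there is a definite lower bound on the elapsed time between consecutive surgeries as long as $\Rmax$ stays comparable to $\Theta$; combined with the fact that after surgery $\Rmax$ must climb all the way back from $\Theta/2$ to $\Theta$, this guarantees finitely many surgeries on $[0,T]$ and hence a solution defined on all of $[0,T]$. One must also check the bounded-geometry conclusion: bounded curvature on each time slice comes from Shi plus the threshold bound $R\le\Theta$ together with the Pinching Lemma~\ref{lem:pinching} (turning the scalar bound into a full curvature bound), and the lower bound on the injectivity radius comes from $(NC)_\kappa$ via Proposition C.

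The order I would carry this out: (1) fix $\epsi_0,\beta,C_0$ as in Section~\ref{sub:constantes}, then choose $r=r(T)$, $\delta=\delta(T)$, $\kappa=\kappa(T)$ by a finite descending induction dictated by Propositions B and C over the finitely many ``surgery epochs''; (2) state and invoke Propositions A, B, C; (3) run the induction on singular times, using Theorem~\ref{thm:echelle de coupure} and Proposition A at each surgery; (4) prove non-accumulation of surgeries via the curvature-drop/curvature-growth estimate; (5) deduce the $Q$ and $\rho$ bounds of Theorem~\ref{thm:existence surg quant} from $R\le\Theta$, the Pinching Lemma, and $(NC)_\kappa$. The main obstacle — and the reason the paper devotes Sections~\ref{sec:coupure}--\ref{sec:proofC} to it — is the interdependence of the three propositions: the proof that $(CN)_r$ persists after surgery (Proposition B) uses a point-picking/compactness argument that relies on $\kappa$-noncollapsing, while proving $(NC)_\kappa$ persists (Proposition C) uses Perelman's $\kappa$-noncollapsing argument via reduced volume, which in turn needs canonical neighbourhoods to control the geometry near surgery caps and to rule out the collapsing scenario; so these must be proved \emph{simultaneously} by induction on the surgery count, with the constants chosen so that each step only uses the previously-established interval. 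In the open setting the extra subtlety is that none of the classical volume/compactness arguments may be applied globally — all comparison and limiting arguments have to be localised in parabolic balls of controlled size, using the local compactness results collected in the appendices, and one must be careful that ``locally finite collection of $2$-spheres'' behaves well under the surgery and capping operations (this is where Lemma~\ref{lem:glueneck} and the local finiteness bookkeeping of Proposition~\ref{prop:topo} enter).
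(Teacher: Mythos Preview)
Your broad architecture matches the paper's exactly: Theorem~\ref{thm:existence 1 norm} is derived as the special case $T_A=0$, $Q_0=1$ of Theorem~\ref{thm:existence 1 precis}, and the latter is reduced to three independent Propositions~A, B, C and then assembled by an extension argument with non-accumulation of surgeries via Lemma~\ref{lem:no acc}. The pinching is propagated by Proposition~\ref{prop:ham ivey} on smooth pieces and by Theorem~\ref{thm:chirurgie metrique} across surgeries, just as you say.

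Two points where your description diverges from how the paper actually executes this, and where your phrasing would mislead you if taken literally. First, you write that the constants $r,\delta,\kappa$ are chosen ``by a finite descending induction \ldots\ over the finitely many surgery epochs''. The paper does not do this, and your version has a circularity: the number of surgeries depends on $\Theta=\Theta(r,\delta)$, so you cannot induct over surgery epochs before fixing $r,\delta$. Instead the paper fixes the constants \emph{once} for the whole interval $[0,T]$: Proposition~C supplies $\kappa=\kappa(Q_0,\rho_0,T)$; then Proposition~B supplies $r=r(\kappa)$ and $\bar\delta_B$; then Proposition~C is applied again (now with $r$ known) to get $\bar\delta_C$; finally $\delta:=\min(\bar\delta_A,\bar\delta_B,\bar\delta_C)$.

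Second, you say that Propositions~B and~C ``must be proved simultaneously by induction on the surgery count''. In the paper they are proved \emph{independently} (Sections~\ref{sec:proofB} and~\ref{sec:proofC}); the circular dependence you correctly identify is resolved not in their proofs but in the \emph{application}, by a bootstrap inside the Zorn/maximal-element argument. After extending the solution slightly past $\bmax$ (by Ricci flow or by surgery), an elementary continuity estimate (Lemmas~\ref{lem:open Kappa} and~\ref{lem:open Kappa sing}) gives the weak property $(NC)_{\kappa/16}$ on the extension; Proposition~B, whose hypothesis requires only $(NC)_{\kappa/16}$, then yields $(CN)_r$; and Proposition~C, whose hypothesis is now met, upgrades to the full $(NC)_\kappa$. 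This $\kappa/16$-slack is the device that breaks the apparent circularity, and it is the step your outline is missing.
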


Theorem~\ref{thm:existence 1 norm} is itself a special case of
the following result, which has the advantage of being suitable for iteration:

\begin{theo}\label{thm:existence 1 precis}
For every $Q_0,\rho_0>0$ and all $0\le T_A<T_\Omega$, there exist $r,\delta,\kappa>0$ such that for any complete riemannian $3$-manifold $(M_0,g_0)$ which satisfies $| \Rm | \le Q_0$, has injectivity radius at least $\rho_0$, has curvature pinched toward positive at time $T_A$, there exists an $(r,\delta,\kappa)$-surgical solution defined on $[T_A,T_\Omega]$ satisfying the initial condition $(M(T_A),g(T_A))=(M_0,g_0)$.
\end{theo}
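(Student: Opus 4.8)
The plan is to prove Theorem~\ref{thm:existence 1 precis} by the standard bootstrap/induction-on-surgery-times scheme of Perelman, as reorganised in \cite{B3MP}, but now carried out in the noncompact setting where the curvature threshold $\Theta$ plays the role formerly played by singular times. Rescaling the initial metric so that $|\Rm|\le 1$ and applying the Hamilton--Ivey estimate (Proposition~\ref{prop:ham ivey}), one may assume $g_0$ has curvature pinched toward positive; thus it suffices to run Shi's flow and show that, with suitable choices of $r,\delta,\kappa$, one never ``gets stuck'': either the flow extends smoothly up to $T_\Omega$, or at the first time the scalar curvature reaches $\Theta$ one can perform an $(r,\delta)$-surgery producing a new complete metric of bounded geometry, curvature pinched toward positive, with $\Rmax$ dropped by a factor $2$, and iterate. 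The curvature-drop argument (together with the curvature-blowup rate estimate coming from the canonical neighbourhood derivative bounds, Estimate~\eqref{eq:dR dt}) bounds from below the elapsed time between consecutive surgeries, so only finitely many surgeries occur on $[T_A,T_\Omega]$.

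The core of the proof is the mutual-induction machinery, reduced in the excerpt to Propositions A, B and C. Roughly: Proposition~A asserts that if one has an $(r,\delta,\kappa)$-surgical solution on $[T_A,t_0)$ satisfying the a priori assumptions, and $\Rmax$ reaches $\Theta$, then one can do $(r,\delta)$-surgery and continue --- this is where the Metric Surgery Theorem~\ref{thm:chirurgie metrique} and the Cutoff Parameters Theorem~\ref{thm:echelle de coupure} are used to see that the high-curvature region is, away from caps, a union of $\epsi_0$-necks containing strong $\delta$-necks at the cutting scale $h^{-2}$, so that surgery can actually be performed in the prescribed way and the pinching condition is preserved. Proposition~B is the persistence of the canonical neighbourhood property $(CN)_r$: for an appropriate choice of $r$ (depending on $\kappa$ and the global geometry), any spacetime point with $R\ge r^{-2}$ has an $(\epsi_0,C_0)$-canonical neighbourhood; this is proved by contradiction and compactness, extracting a limit $\kappa$-solution via Hamilton--Cheeger--Gromov compactness and invoking Theorem~\ref{thm:kappa VC}, while the surgery contribution is controlled using the standard solution (Propositions~\ref{prop:standard kappa}, \ref{prop:standard VC}) together with Lemma~\ref{lem:neck strengthening}. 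Proposition~C is the $\kappa$-noncollapsing $(NC)_\kappa$, proved via Perelman's reduced-volume / $\call$-geometry monotonicity argument, adapted so that $\call$-geodesics which would pass through surgery regions are ruled out using the canonical neighbourhood structure established by Proposition~B.

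The logical skeleton is then: given $Q_0,\rho_0,T_A,T_\Omega$, one first fixes $\kappa=\kappa(\rho_0,Q_0,T_A,T_\Omega)$ from Proposition~C applied at the initial scale; then $r=r(\kappa,\dots)$ from Proposition~B; then, using $r$, one produces the cutoff parameters $h,D,\Theta$ from Theorem~\ref{thm:echelle de coupure} and chooses $\delta$ small enough (less than $\min(\epsi_0,\delta_0)$ and small enough for Propositions~A, B, C to close). One sets up the induction on the (finitely many) surgery times: the set of $T$ for which an $(r,\delta,\kappa)$-surgical solution exists on $[T_A,T]$ with the required bounds is shown to be nonempty (Shi), open (Propositions~B, C give the a priori estimates, so the flow extends a bit further, and if $\Rmax$ hits $\Theta$ Proposition~A lets one restart), and closed (left-continuity of surgical solutions plus the bounded-geometry estimates pass to the limit); hence it is all of $[T_A,T_\Omega]$. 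The bounds $Q$ on curvature and $\rho$ on injectivity radius claimed in Theorem~\ref{thm:existence surg quant} come out of $\Theta$ and $\kappa$ respectively, and the statement about spherical disappearing components being $10^{-3}$-round or diffeomorphic to $S^3$ or $RP^3$ follows from the canonical neighbourhood classification (cases C and D of Definition~\ref{def:VC}) applied to the removed components.

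The main obstacle, and the place where genuinely new work is needed compared to the compact case, is twofold and both difficulties are absorbed into Propositions~A--C: first, replacing Perelman's \emph{volume} non-accumulation-of-surgeries argument by the \emph{curvature} argument (the factor-$1/2$ drop of $\Rmax$ at each surgery, combined with the blow-up rate $|\partial_t R|<CR^2$), which is what makes finiteness of surgeries on a compact time interval work without any volume bound; and second, ensuring that all the compactness arguments in Propositions~B and~C only ever use \emph{local} geometry near the point under consideration, so that they remain valid on an open manifold of merely bounded --- not finite-volume --- geometry, and in particular that the possibility of a ``pathological'' singularity with $\Omega=M$ (an infinite string of spheres joined by necks of curvature going to infinity) is pre-empted simply because surgery is triggered at the finite threshold $\Theta$ before any such scenario can form. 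Handling the left/right limits at singular times and the local finiteness of the collection $\mathcal S$ of surgery spheres in the noncompact setting is a further technical point that must be threaded carefully through the induction.
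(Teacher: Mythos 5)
Your proposal is correct and follows essentially the same scheme as the paper: reduce to Propositions~A, B, C, fix $\kappa$ (Prop.~C), then $r,\bar\delta_B$ (Prop.~B), then the cutoff parameters and $\delta$, and run a continuation argument bounded by the factor-$1/2$ curvature drop together with $|\partial_t R| < C_0 R^2$ to rule out accumulation of surgery times. The only cosmetic difference is that you phrase the continuation as an open-closed argument whereas the paper invokes Zorn's lemma, and the paper threads the noncollapsing constant through $\kappa/16$ (Lemmas~\ref{lem:open Kappa}, \ref{lem:open Kappa sing}) before re-upgrading via Propositions~B and~C --- a technicality your sketch compresses but does not contradict.
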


Note that in the statement of Theorem~\ref{thm:existence 1 norm} the assumption of almost nonnegative curvature is not necessary since it is automatic. We shall prove Theorem~\ref{thm:existence 1 precis} directly.

Our next aim is to reduce Theorem~\ref{thm:existence 1 precis} to three results, called Propositions~A, B, C, which are independent of one another. 

\begin{propA}\label{prop:propA}
There exists a universal constant $\bar\delta_A>0$ having the following property: let $r,\delta$ be surgery parameters, $a,b$ be positive numbers with $a<b$, and $\{(M(t),g(t))\}_{t\in (a,b]}$ be an $(r,\delta)$-surgical solution.  Suppose that $\delta\le\bar\delta_A$, and $\Rmax(b)=\Theta$.

Then there exists a riemannian manifold $(M_+,g_+)$, which is obtained from $(M(\cdot),g(\cdot))$ by $(r,\delta)$-surgery at time $b$, and in addition satisfies:
\begin{enumerate}
\item $g_+$ has $\phi_b$-almost nonnegative curvature;
\item $\Rmin(g_+) \geq \Rmin(g(b))$.
\end{enumerate}
\end{propA}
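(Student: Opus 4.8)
The plan is to reduce Proposition A to the Metric Surgery Theorem (Theorem~\ref{thm:chirurgie metrique}) together with the Cutoff Parameters Theorem (Theorem~\ref{thm:echelle de coupure}), while carefully arranging the collection of necks along which surgery is performed so that the resulting metric satisfies all three clauses in the definition of $(r,\delta)$-surgery, plus the two extra conditions (i)--(ii). First I would analyse the geometry of $(M(b),g(b))$ at the threshold. Since $\Rmax(b)=\Theta=2Dh^{-2}$ and the solution satisfies $(CN)_r$, every point with scalar curvature $\ge r^{-2}$ has an $(\epsi_0,C_0)$-canonical neighbourhood; a standard chain-of-necks argument (of the kind used in Perelman's work and reproved in \cite{B3MP}) shows that in each connected component where the curvature gets large, either the whole component is covered by canonical neighbourhoods of cap/spherical type --- so it can be thrown away as one of the allowed manifolds $\Rr^3$, $S^2\times S^1$, spherical, etc. --- or else one finds points $x$ with $R(x,b)\le 2r^{-2}$, $z$ with $R(z,b)\ge \Theta/h^2$-type large curvature, joined through a point $y$ with $R(y,b)=h^{-2}$ by a curve all of whose intermediate high-curvature points are centres of $\epsi_0$-necks. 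Theorem~\ref{thm:echelle de coupure} then upgrades $(y,b)$ to the centre of a \emph{strong} $\delta$-neck. I would choose one such $\delta$-neck $N_i$ near each such ``horn'' and check, using Lemma~\ref{lem:glueneck}, that these can be taken pairwise disjoint and that the collection $\{N_i\}$ is locally finite (locally finiteness coming from the bounded-geometry/canonical-neighbourhood structure on the compact-curvature part).

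Next I would apply the Metric Surgery Theorem to this collection $\{N_i\}$. We need $\phi$-almost nonnegative curvature of $g(b)$ with a nondecreasing positive $\phi$: since the solution has curvature pinched toward positive at time $b$, inequality (\ref{eq:pinching 2}) gives $\Rm\ge -\phi_b(R)$, and by Lemma~\ref{lem:phi decreasing}(ii) the function $\phi_b$ is (up to replacing it by a nondecreasing majorant, or directly, since $s\mapsto \phi_t(s)$ is increasing) an admissible pinching function; hence $g(b)$ has $\phi_b$-almost nonnegative curvature. Theorem~\ref{thm:chirurgie metrique} then yields, provided $\delta\le\delta_0$, a metric $g_+$ on the capped-off manifold $M'$ with $g_+=g$ away from the caps, with each cap a marked $(\delta,\delta'(\delta))$-almost standard cap, and with $g_+$ still having $\phi_b$-almost nonnegative curvature --- this is exactly clause (i). Clause (ii), $\Rmin(g_+)\ge\Rmin(g(b))$, follows because on $M'\cap M$ the metric is unchanged, while on the almost standard caps the scalar curvature is comparable to $h^{-2}=R(y,b)$, which is huge (of order $\Theta/D$), hence far above $\Rmin(g(b))$ (which by the pinching estimate (\ref{eq:pinching 1}) is bounded below by $-6/(4b+1)$, a small negative quantity when the curvature scale is $h^{-2}$); one just has to make $\delta$ small enough (hence $\delta'(\delta)$ small) that the almost-standard model's scalar curvature, which is $\ge \constst$ times the model value after unrescaling, stays above $\Rmin(g(b))$. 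This also forces the choice $\bar\delta_A$.

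Finally I would define $M_+$ by discarding from $M'$ the components that are spherical, or diffeomorphic to one of $\Rr^3$, $S^2\times\Rr$, punctured $RP^3$, $RP^3\#RP^3$, $S^2\times S^1$ --- these are precisely the components entirely covered by canonical neighbourhoods, identified in the first step; a spherical component is only discarded when it lies in $M(b)$ and is $\epsi$-round with $R\ge 1$, which we can guarantee by rescaling considerations since its curvature is of order $\Theta$. Then I would verify the three conditions in the definition of ``obtained by $(r,\delta)$-surgery'': condition 1 is the construction of $M_+$ just described; condition 2 holds because each newly added point lies in the core $V$ of the $\delta$-almost standard cap built over the strong $\delta$-neck centred at $(y,b)$ with $R(y,b)=h^{-2}$, which is exactly 2(a)--(d); and condition 3, $\Rmax(g(b))=\Theta$ and $\Rmax(g_+)\le\Theta/2$, where the first equality is a hypothesis and the inequality $\Rmax(g_+)\le\Theta/2$ follows because after surgery the only regions with curvature near $\Theta$ have been cut out and replaced by caps of curvature $\sim h^{-2}=\Theta/(2D)\ll\Theta/2$ (using $D>10$), while on the uncut part the curvature was $<\Theta$ and in fact $\le\Theta/2$ outside the horns by the structure of canonical neighbourhoods --- this last point needs the geometric input that the $\delta$-necks were chosen at curvature scale $h^{-2}$, deep enough inside the horns that everything of curvature $>\Theta/2$ gets removed.

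The main obstacle I expect is the geometric/topological bookkeeping in the first step: organising the high-curvature part of $(M(b),g(b))$ into horns, caps, and doubly-capped or closed pieces via chains of canonical neighbourhoods, and proving that the chosen cutting spheres (middle spheres of the strong $\delta$-necks) can simultaneously be made pairwise disjoint, locally finite, and placed at the right depth so that the leftover pieces are exactly of the allowed topological types and so that $\Rmax$ genuinely drops below $\Theta/2$. Most of this is Perelman's horn analysis adapted to the ``surgery before singularity time'' setting, and the excerpt's Lemma~\ref{lem:glueneck} and Theorem~\ref{thm:echelle de coupure} are designed precisely to make it go through; still, assembling these pieces cleanly --- rather than the subsequent application of the Metric Surgery Theorem, which is essentially plug-and-play --- is where the real work lies.
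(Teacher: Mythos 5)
Your proposal follows the same route as the paper's proof: isolate a locally finite, pairwise disjoint family of strong $\delta$-necks at curvature scale $h^{-2}$ via Theorem~\ref{thm:echelle de coupure}, feed it into the Metric Surgery Theorem~\ref{thm:chirurgie metrique} to get $g_+$ with the pinching property, throw away the high-curvature components (which are covered by canonical neighbourhoods and hence recognised by Theorem~\ref{thm:reconnait topo}), and then read off conditions~(i)--(iii) of $(r,\delta)$-surgery together with the two extra clauses. The place where you are sketchiest is exactly the place the paper isolates as Lemma~\ref{lem:propA}: the paper does not try to locate ``horns'' and pick one neck per horn, but instead takes a \emph{maximal} collection $\{N_i\}$ of pairwise disjoint cutoff necks (Zorn) and shows by contradiction that every component of $M(b)\setminus\bigcup_i N_i$ lies either entirely in $\{R<\Theta/2\}$ or entirely in $\{R\ge 2r^{-2}\}$ --- if some component contained points of both types, a geodesic joining them would avoid $\partial X$ by the odd-intersection-number Lemma~\ref{lem:odd intersection}, would pass only through $\epsi_0$-necks (caps along $\gamma$ are ruled out by a shortcut argument), and Theorem~\ref{thm:echelle de coupure} would then produce a new cutoff neck disjoint from all the $N_i$, contradicting maximality. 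This dichotomy is what simultaneously guarantees $\Rmax(g_+)\le\Theta/2$ (kept pieces lie in $\{R<\Theta/2\}$, caps lie at scale $h^{-2}=\Theta/(2D)$) and that the discarded pieces have the right topological types. Your hands-on construction is morally the same but avoids the awkward problem of making ``horn'' precise (there are no genuine horns here since surgery is done before the singular time), and in practice the Zorn route is cleaner. Your discussion of clauses (i) and (ii) matches the paper's.
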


\rem The manifold $M_+$ may be empty. In this case, the second assertion in the conclusion follows from the convention $\Rmin(\emptyset)=+\infty$.

\begin{propB}
For all $Q_0,\rho_0,\kappa>0$ there exist $r=r(Q_0,\rho_0,\kappa)< 10^{-3}$ and $\bar\delta_B=\bar\delta_B(Q_0,\rho_0,\kappa)>0$ with the following property: let $\delta\le\bar\delta_B$, $0\le T_A<b$ and $(M(\cdot),g(\cdot))$ be a surgical solution defined on $[T_A,b]$ such that $g(T_A)$ satisfies $| \Rm | \le Q_0$ and has injectivity radius at least $\rho_0$.

 Assume that $(M(\cdot),g(\cdot))$ satisfies Condition~$(NC)_{\kappa/16}$, has curvature pin\-ched toward positive, and that for each singular time $t_0$, $(M_+(t_0),g_+(t_0))$ is obtained from $(M(\cdot),g(\cdot))$ by $(r,\delta)$-surgery at time $t_0$.

Then $(M(\cdot),g(\cdot))$ satisfies Condition~$(CN)_r$. 
 \end{propB}

\begin{propC}
For all $Q_0,\rho_0>0$ and all $0\le T_A< T_\Omega$ there exists $\kappa=\kappa(Q_0,\rho_0,T_A,T_\Omega)$ such that for all $0<r<10^{-3}$ there exists $\bar\delta_C=\bar\delta_C(Q_0,\rho_0,T_A,T_\Omega,r)>0$ such that the following holds.

For all $0<\delta\le\bar\delta_C$ and $b\in (T_A,T_\Omega]$, every $(r,\delta)$-surgical solution defined on $[T_A,b)$ such that 
$g(T_A)$ satisfies $| \Rm | \le Q_0$ and has injectivity radius at least $\rho_0$,  satisfies $(NC)_\kappa$.
\end{propC}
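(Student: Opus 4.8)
The plan is to prove $\kappa$-noncollapsing by adapting Perelman's argument via the monotonicity of the reduced volume (or, equivalently, the reduced length functional) along $\mathcal{L}$-geodesics, exactly as in the compact case, while checking that surgeries do not destroy the estimate. First I would set things up: fix $Q_0,\rho_0,T_A,T_\Omega$ and consider an $(r,\delta)$-surgical solution $(M(\cdot),g(\cdot))$ on $[T_A,b)$ with $b\le T_\Omega$, initial bounds $|\Rm|\le Q_0$, $\inj\ge\rho_0$. Since $r$ is given before $\bar\delta_C$, I am allowed to choose $\bar\delta_C$ depending on $r$ (and the other data). The base case is the starting time slice: by the standard local volume estimate (Bishop--Gromov plus the curvature and injectivity bounds at $T_A$), there is $\kappa_0=\kappa_0(Q_0,\rho_0)$ such that the solution is $\kappa_0$-noncollapsed at $(x,T_A)$ on all scales $\le 1$; moreover on a short time interval $[T_A,T_A+\eta]$ with $\eta=\eta(Q_0)$ the curvature stays controlled, so one keeps $\kappa$-noncollapsing there by continuity. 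The content is propagating this to all later times up to $T_\Omega$.

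The core step is the $\mathcal{L}$-geodesic argument. Given a point $(x_0,t_0)$ with $t_0\in(T_A,b)$ and a scale $0<\rho\le 1$ such that the parabolic ball $P(x_0,t_0,\rho,-\rho^2)$ is unscathed with $|\Rm|\le\rho^{-2}$ on it, I would run $\mathcal{L}$-geodesics backward in time from $(x_0,t_0)$ and study the reduced length $\ell$ and the reduced volume $\tilde V$. The monotonicity of $\tilde V$ in backward time, together with its limiting value $(4\pi)^{3/2}$ as the endpoint time tends to $t_0$ through small scales (using the curvature bound on the parabolic ball), forces $\tilde V$ to be bounded below all the way back to a time near $T_A$, where the base-case noncollapsing converts the reduced-volume lower bound into the desired volume lower bound $\vol B(x_0,t_0,\rho)\ge\kappa\rho^3$. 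The constant $\kappa$ that comes out depends only on $Q_0,\rho_0,T_A,T_\Omega$ (through $\kappa_0$, the elapsed time $T_\Omega-T_A$, and the Hamilton--Ivey pinching which gives a two-sided curvature control along $\mathcal{L}$-geodesics in regions of large curvature), as required; in particular it does not depend on $r$ or $\delta$.

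The main obstacle — and the only place surgery enters — is that an $\mathcal{L}$-geodesic traveling backward from $(x_0,t_0)$ may run into a surgery region (a cap added at some singular time, or the part of $M_\sing$), so that the minimizing $\mathcal{L}$-geodesic needed for the barycentre argument might not exist or might pass through scathed spacetime. The remedy, following Perelman (and its surgical implementations in the literature, e.g.\ the treatment in \cite{B3MP}), is a barrier argument: one shows that $\mathcal{L}$-geodesics that come close to the surgery caps can be discarded because the caps have large scalar curvature ($R\ge h^{-2}$ near the tip, and $R$ of order $\Theta=2Dh^{-2}$ on the cores) and controlled geometry (they are $\delta$-almost standard caps with $\phi$-almost nonnegative curvature), which makes the $\mathcal{L}$-length of any curve entering such a region too large to be competitive — provided $\delta$ is small enough, which is precisely why $\bar\delta_C$ is allowed to depend on $r$ (hence on $h,D,\Theta$). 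Concretely, I would: (a) establish a lower bound on the scalar curvature, and a bound on the geometry, in a spacetime neighbourhood of each surgery cap, using Proposition~A, Proposition~\ref{prop:standard VC} and the standard-solution estimates; (b) show that a minimizing $\mathcal{L}$-geodesic from $(x_0,t_0)$ with $\ell$ bounded by a fixed constant cannot enter these neighbourhoods, because doing so would cost $\mathcal{L}$-length $\gtrsim \int \sqrt{\tau}\,R\,d\tau$ that exceeds the competing length of a path staying in the region where $R\le$ (a controlled bound), e.g.\ a curve reaching back near $T_A$ where curvature is $\le Q_0$; (c) conclude that the reduced-volume monotonicity and its evaluation at the initial slice go through unscathed on the set of $\mathcal{L}$-geodesics that matter. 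Once this ``avoidance of surgery'' lemma is in place, the rest is a verbatim repetition of Perelman's no-local-collapsing argument; so I expect the write-up to consist of a careful statement and proof of the avoidance lemma, followed by a short reduction to the known compact-case reasoning.
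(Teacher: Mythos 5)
Your plan is essentially the paper's approach for the regime where the noncollapsing scale $\rho_0$ is bounded below (by something like $r/100$): the ``avoidance of surgery'' lemma you describe is precisely the paper's Lemma~\ref{technique1}, which shows that any scathed continuous spacetime curve from $(x_0,t_0)$ has $\mathcal{L}$-length at least $\Lambda$ (using the persistence of almost standard caps and the scalar-curvature lower bound near surgery regions), and the monotonicity argument is then run for the restricted reduced volume $\tilde V_{\reg}$ taken over unscathed minimizing $\mathcal{L}$-geodesics only. So for that range of scales your outline matches the actual proof closely, and your identification of why $\bar\delta_C$ must depend on $r$ is correct.

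However, there is a genuine gap: the barrier argument cannot be carried out uniformly in the scale. Lemma~\ref{technique1} requires $\rho_0\geq\hat r$ as a hypothesis (its proof uses $h^{-2}\gg\rho_0^{-2}$ and $\rho_0/\sqrt{\alpha}\geq\Lambda'$, both of which fail as $\rho_0\to 0$), so your step~(b) breaks down once $\rho_0$ is comparable to the surgery scale $h$. The paper therefore splits into two cases: if $\rho_0\geq r/100$ it runs the $\mathcal{L}$-geodesic argument you describe; if $\rho_0<r/100$, then $\rho_0^{-2}\geq 10^4 r^{-2}$, so by Hamilton--Ivey pinching some point of $\bar P_0$ has scalar curvature above $r^{-2}$ and hence an $(\epsi_0,C_0)$-canonical neighbourhood, and the noncollapsing at scale $\rho_0$ is read off \emph{directly} from estimate~(iii) in the definition of canonical neighbourhoods --- with a further separate argument when the canonical neighbourhood is $\epsi_0$-round (one moves back in time to a moment where the curvature scale is comparable to $r$, reducing to the first case). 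Your ``verbatim repetition of Perelman's argument'' would thus only cover part of the scales, and the $(CN)_r$ hypothesis of Proposition~C is essential precisely for the small-scale regime you have omitted. You also do not mention that an already-scathed parabolic ball $P_0$ is automatically noncollapsed by the standard-solution constant $\kappa_\mathrm{st}$ (Lemma~\ref{pas-intact}), which is used to reduce to the unscathed case before the dichotomy even starts.
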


\begin{rem} The formulation of Proposition B, and its use below, are somewhat different in \cite{B3MP}. In the compact case, it is fairly easy to prove that 
the $(CN)_{r}$ property is open with respect to time (see \cite[Lemma 3.3.2]{B3MP}). This is not the case here. 
\end{rem}

\begin{proof}[Proof of Theorem~\ref{thm:existence 1 precis} assuming Propositions A, B, C]
 We start with two easy lemmas. The first one allows to control the density of surgery times by the surgery parameters.
\begin{lem}\label{lem:no acc}
Let $r,\delta$ be surgery parameters.
Let $\{(M(t),g(t))\}_{t\in I}$ be an $(r,\delta)$-surgical solution.
Let $t_1<t_2$ be two singular times. Then $t_2-t_1$ is bounded from below by a positive number depending only on $r,\delta$.
\end{lem}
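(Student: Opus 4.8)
The plan is to exploit the fact that an $(r,\delta)$-surgery divides $\Rmax$ by at least $2$, whereas no further surgery can occur before $\Rmax$ has climbed back up to the threshold $\Theta$; since the scalar curvature cannot grow too fast between surgeries, this forces a definite waiting time. First I reduce to the case where $t_1<t_2$ are \emph{consecutive} singular times: the set of singular times is contained in the discrete set $J$, so $[t_1,t_2]$ contains only finitely many of them, say $t_1=s_0<\dots<s_k=t_2$, and $t_2-t_1\ge s_1-s_0$; hence it suffices to bound from below the gap between two consecutive singular times. So assume $(t_1,t_2)$ contains no singular time. Then $M(t)=M_+(t_1)$ for all $t\in(t_1,t_2]$, and $g(\cdot)$ is a Ricci flow on $[t_1,t_2]$ with $g(t_1)=g_+(t_1)$ (the right limit at $t_1$). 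Since $t_2$ is singular we have $\Rmax(g(t_2))=\Theta$, in particular $M(t_2)\ne\emptyset$, hence $M_+(t_1)\ne\emptyset$; the definition of $(r,\delta)$-surgery at time $t_1$ then gives $\Rmax(g_+(t_1))\le\Theta/2$.

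The next step is a uniform curvature bound on $M_+(t_1)\times[t_1,t_2]$. By hypothesis the solution has curvature pinched toward positive and satisfies $R\le\Theta$ everywhere. As $\Theta=2Dh^{-2}$ with $D>10$ and $h<\delta r$, the threshold $\Theta$ is large, so $(1+t)\Theta\ge\Theta\ge\bar s$ for every $t\ge0$, and Lemma~\ref{lem:pinching} applied with radius $\Theta^{-1/2}$ gives $|\Rm(x,t)|\le\Theta$ at every point of spacetime. Consequently each eigenvalue of $\Ric$ lies in $[-2\Theta,2\Theta]$, so $2|\Ric|^2\le 24\Theta^2$ everywhere, and the evolution equation $\partial_t R=\Delta R+2|\Ric|^2$ yields the differential inequality $\partial_t R\le\Delta R+24\Theta^2$ on $M_+(t_1)\times[t_1,t_2]$.

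Finally I apply the maximum principle for scalar curvature on complete Ricci flows of bounded curvature (\cite[Corollary~7.45]{cln}, exactly as in the proof of Proposition~\ref{prop:R explose}), comparing $R$ with the solution of $u'=24\Theta^2$: for all $x\in M_+(t_1)$ and $t\in[t_1,t_2]$ one gets $R(x,t)\le\Rmax(g_+(t_1))+24\Theta^2(t-t_1)\le\Theta/2+24\Theta^2(t-t_1)$. Taking the supremum over $x$ at $t=t_2$ gives $\Theta\le\Theta/2+24\Theta^2(t_2-t_1)$, hence $t_2-t_1\ge(48\Theta)^{-1}$. Since $\Theta=2D(r,\delta)h(r,\delta)^{-2}$ depends only on $r$ and $\delta$, this is the asserted lower bound (and, by the reduction above, it holds for arbitrary singular times $t_1<t_2$, not only consecutive ones).

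I do not anticipate a genuine obstacle. The two points that deserve a word of justification are the uniform estimate $|\Rm|\le\Theta$ — which is precisely what the curvature-pinching hypothesis of an $(r,\delta)$-surgical solution buys us, via Lemma~\ref{lem:pinching} — and the legitimacy of the maximum principle, which holds because the flow on $[t_1,t_2]$ is a complete Ricci flow whose curvature is bounded by $\Theta$, just as in Proposition~\ref{prop:R explose}.
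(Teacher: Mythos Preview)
Your argument is correct, but it follows a different route from the paper. The paper's proof is purely local: it fixes a point $x$ with $R(x,t_2)\ge\Theta-1$, finds the last time $t_+\in[t_1,t_2]$ at which $R(x,\cdot)$ equals $\Theta/2$, and observes that on $(t_+,t_2]$ the point $(x,t)$ has an $(\epsi_0,C_0)$-canonical neighbourhood (because $R(x,t)\ge\Theta/2>r^{-2}$), so the estimate $|\partial R/\partial t|<C_0R^2$ from~\eqref{eq:dR dt} applies and a direct ODE integration gives $t_2-t_+\gtrsim (C_0\Theta)^{-1}$. Your argument instead is global: you do not use $(CN)_r$ at all, but combine the pinching hypothesis with the ambient bound $R\le\Theta$ to get $|\Rm|\le\Theta$ everywhere, feed this into the evolution equation for $R$, and invoke the maximum principle for complete Ricci flows of bounded curvature. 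Both approaches yield a lower bound of order $\Theta^{-1}$; yours is perhaps more elementary in spirit since it bypasses the canonical-neighbourhood machinery, while the paper's avoids the global PDE maximum principle in favour of a one-variable integration. One small remark: your reduction step uses that $J\cap[t_1,t_2]$ is finite; this is implicit in the paper's convention that $J$ is (closed and) discrete, so that $I\setminus J$ has well-defined interval components, but it is worth being aware that ``discrete'' is being read as ``locally finite'' here.
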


\begin{proof}
We can suppose that $M(\cdot)$ is constant and $g(\cdot)$ is smooth on $(t_1,t_2]$. Since $\Rmax(t_2)=\Theta$ we can choose a point
$x \in M(t_2)$ such that $R(x,t_2)\ge \Rmax(t_2)-1$. Since $\Rmax(g_+(t_1)) 
\leq \Theta/2$, there exists $t_+ \in [t_1,t_2]$ maximal such that 
$\lim_{t\rightarrow t_+,t>t_+} R(x,t) = 
\Theta/2$. In particular, $(x,t)$ admits an 
$(\varepsilon_0,C_0)$-canonical neighbourhood for all $t\in (t_+,t_2]$. Integrating inequality \eqref{eq:dR dt} on $(t_+,t_2]$ gives a positive lower bound for $t_2-t_+$ depending only on $\Theta$, hence only on $r,\delta$.
\end{proof}

The second one says that $(NC)_\kappa$ is a closed condition:
\begin{lem}\label{lem:kappa ferme}
Let $(M(\cdot),g(\cdot))$ be a surgical solution defined on an interval $(a,b]$,  $x\in M(b)$ and $r,\kappa>0$. Suppose that for all $t\in (a,b)$, $x\in M(t)$, and $(M(\cdot),g(\cdot))$ is $\kappa$-noncollapsed at $(x,t)$ on all scales less than or equal to $r$.
Then it is $\kappa$-noncollapsed at $(x,b)$ on the scale $r$.
\end{lem}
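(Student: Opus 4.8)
The statement to prove is Lemma~\ref{lem:kappa ferme}: $\kappa$-noncollapsing at $(x,b)$ on scale $r$, given that it holds at $(x,t)$ for all $t\in(a,b)$ on all scales $\le r$. The plan is to argue by continuity of the two ingredients in the definition of $\kappa$-collapsed: the curvature bound on the parabolic ball $P(x,b,r,-r^2)$ and the volume bound on $B(x,b,r)$. I would first reduce to the case where the hypothesis of $\kappa$-collapsing at $(x,b)$ on scale $r$ holds, i.e.\ $|\Rm|\le r^{-2}$ on $P(x,b,r,-r^2)$ and $\vol B(x,b,r) < \kappa r^n$, and then derive a contradiction.

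\textbf{Key steps.} First I would observe that the hypothesis forces $P(x,b,r,-r^2)$ to be unscathed: indeed, if it contained a singular point it would fail the curvature bound (surgery regions have curvature of order $\Theta$, which is much bigger than $r^{-2}$ for the relevant parameters, or more directly, the assumption $|\Rm(x',t')|\le r^{-2}$ for all $(x',t')\in P(x,b,r,-r^2)$ already rules out passing through a surgery). So on $[b-r^2,b]$ the metric on $B(x,b,r)$ evolves smoothly by Ricci flow with $|\Rm|\le r^{-2}$. Next, for $t<b$ close to $b$, I would compare $B(x,t,r)$ with $B(x,b,r)$: by the distance-distortion estimate under Ricci flow with bounded curvature (the metrics $g(t)$ are uniformly bi-Lipschitz on the relevant region for $t$ in a small interval $[b-\eta,b]$), the ball $B(x,b,r)$ is contained in $B(x,t,r(1+o(1)))$ and vice versa, and the volumes converge: $\vol_{g(t)} B(x,t,r) \to \vol_{g(b)} B(x,b,r)$ as $t\to b$. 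Similarly $P(x,t,r',-r'^2)$ with $r'$ slightly less than $r$ is contained in $P(x,b,r,-r^2)$ for $t$ close to $b$, so the curvature bound $|\Rm|\le r^{-2} < (r')^{-2}$ holds there (with a tiny loss). Then, since $r'<r$, the hypothesis gives that $(M(\cdot),g(\cdot))$ is $\kappa$-noncollapsed at $(x,t)$ on scale $r'$, hence $\vol_{g(t)} B(x,t,r') \ge \kappa (r')^n$. Letting $t\to b$ and then $r'\to r$ yields $\vol_{g(b)} B(x,b,r) \ge \kappa r^n$, contradicting the assumed strict inequality.

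\textbf{Main obstacle.} The technical heart is controlling the distance functions $d_t$ near $t=b$: one needs that on the time interval $[b-\eta,b]$, for $\eta$ small, the balls and parabolic neighbourhoods nest in the way claimed, uniformly enough that volumes pass to the limit. This follows from the curvature bound $|\Rm|\le r^{-2}$ on the unscathed region via the standard fact that $\frac{\partial}{\partial t} d_t(x,y)$ is controlled by $\int \Ric$ along a minimizing geodesic (or more crudely $|\partial_t g| = 2|\Ric| \le C r^{-2} g$, giving uniform bi-Lipschitz comparison on a short time interval), together with the observation that a minimizing $g(b)$-geodesic from $x$ of length $<r$ stays, for $t$ slightly less than $b$, inside a region where the curvature bound is valid, so that no surgery interferes. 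I would present this as: choose $r'<r$ arbitrary; for $t$ sufficiently close to $b$, $\overline{B(x,b,r')}\subset B(x,t,r)$ and $P(x,t,r',-(r')^2)\subset P(x,b,r,-r^2)$, so $|\Rm|\le r^{-2}\le (r')^{-2}$ on $P(x,t,r',-(r')^2)$; apply the hypothesis at $(x,t)$ on scale $r'$ to get $\vol B(x,t,r')\ge\kappa(r')^n$; let $t\to b$ to get $\vol B(x,b,r')\ge\kappa(r')^n$ by continuity of volume; finally let $r'\nearrow r$. This closes the argument.
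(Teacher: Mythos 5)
The paper itself gives no proof of this lemma; it is cited as \cite[Lemma 2.1.5]{B3MP}, so a direct comparison is not possible. That said, your continuity-and-limiting argument ($t\to b$, then $r'\nearrow r$) is the natural proof and is almost certainly the one used there; the essential steps — nesting $P(x,t,r',-(r')^2)\subset P(x,b,r,-r^2)$ for $r'<r$ and $t$ close to $b$, feeding the curvature bound on the larger set into the noncollapsing hypothesis at $(x,t)$ on scale $r'$, then passing to the limit using the distance/volume distortion estimates — are correct and complete.

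One step in your write-up is, however, not right as stated and should be repaired. You claim that the standing assumption $|\Rm|\le r^{-2}$ on $P(x,b,r,-r^2)$ forces this whole parabolic neighbourhood to be unscathed, and you justify it by appealing to the surgery scale $\Theta$. This reasoning is only available for $(r,\delta)$-surgical solutions, not for the general surgical solutions to which the lemma applies: the general definition places no curvature constraint whatsoever at surgery, and the ``more direct'' variant you offer does not work either, since a point $x'\in B(x,b,r)$ that lies in an added $3$-ball at a singular time $t_0\in[b-r^2,b)$ simply has $(x',t')\notin\mathcal{M}$ for $t'<t_0$, so the curvature hypothesis says nothing about it. Fortunately you do not need unscathedness of the full $P(x,b,r,-r^2)$: what the distance- and volume-distortion estimates actually require is that $P(x,b,r,-\eta)$ be unscathed for some small $\eta>0$, and this is automatic because the set $J$ of singular times is discrete in $\Rr$; hence there is $\eta>0$ with no singular time in $(b-\eta,b)$, and $M(t)=M(b)$ on $(b-\eta,b]$ by left-constancy, so $g(\cdot)$ is a genuine Ricci flow on $B(x,b,r)\times(b-\eta,b]$ with $|\Rm|\le r^{-2}$ there. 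With that correction, your final paragraph goes through verbatim. (A cosmetic remark: of the two nestings you assert, only $B(x,t,r')\subset B(x,b,r)$ is actually needed, both for $P(x,t,r',-(r')^2)\subset P(x,b,r,-r^2)$ and for the volume inequality $\vol_{g(b)}B(x,b,r)\ge\vol_{g(b)}B(x,t,r')\ge(1-o(1))\vol_{g(t)}B(x,t,r')\ge(1-o(1))\kappa(r')^3$; the reverse inclusion $\overline{B(x,b,r')}\subset B(x,t,r)$ is harmless but superfluous.)
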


\begin{proof}\cite[Lemma 2.1.5.]{B3MP}
\end{proof}

Let $Q_0,\rho_0>0$ and $0\le T_A<T_\Omega$. Proposition~C gives a constant $\kappa=\kappa(Q_0,\rho_0,T_A,T_\Omega)$.
Proposition~B gives constants $r,\bar\delta_B$ depending
on $\kappa$. We can assume $r^{-2} > 12Q_{0}$. Then apply Proposition~C again to get
a constant $\bar\delta_C$. Set  $\delta=\min(\bar\delta_A,\bar\delta_B, \bar\delta_C)$. Without loss of generality, we assume that $\kappa\le \kappa_{st}$.

From $r,\delta$ we get the cutoff parameters $h,D,\Theta$.

Let $(M_0,g_0)$ be a riemannian manifold which has $\phi_A$-almost nonnegative curvature, satisfies $\Rmin(g_0)\ge -6/(4T_A+1)$,
$| \Rm | \le Q_0$, and has injectivity radius at least $\rho_0$.

 Let  $\mathcal X$ be
the set of ordered pairs $(b,\{(M(t),g(t))\}_{t\in [T_A,b)})$ consisting
of a number $b\in (T_A,T_\Omega]$ and an $(r,\delta,\kappa)$-surgical solution  such that $(M(T_A),g(T_A))=(M_0,g_0)$. We first show that  $\mathcal X$ is nonempty. 
By standard results on the Ricci flow (see e.g.~\cite[Lemma 6.1]{cln}) there exists a complete Ricci flow solution $g(\cdot)$ defined on $[T_{A},T_{A}+(16Q_{0})^{-1}]$, such that 
$g(T_{A})=g_{0}$ and $|\Rm| \leqslant 2Q_{0}$ on the interval. By Proposition \ref{prop:ham ivey}, $g(\cdot)$ has curvature pinched toward positive on $[T_{A},T_{A}+(16Q_{0})^{-1}]$. As 
$R \leqslant 12Q_{0} < r^{-2} < \Theta$, $g(\cdot)$ satisfies Property (i) of an $(r,\delta)$-surgical solution, and Properties~(ii), (iii) are vacuously satisfied. By Proposition C, it satisfies 
$(NC)_{\kappa}$ on the interval. Hence $g(\cdot)$ is a $(r,\delta,\kappa)$-surgical solution on $[T_{A},T_{A}+(16Q_{0})^{-1}]$. 

The set  $\mathcal X$ has a partial ordering, defined by \linebreak $(b_1,(M_1(\cdot),g_1(\cdot))) \le (b_2,(M_2(\cdot),g_2(\cdot)))$ if $b_1\le b_2$ and $(M_2(\cdot),g_2(\cdot))$ is an extension of $(M_1(\cdot),g_1(\cdot))$.

We want to use Zorn's lemma to prove existence of a maximal
element in $\mathcal X$. In order to do this, we consider
an \emph{infinite chain,} i.~e.~an infinite sequence of numbers
$T_A<b_1<b_2<\cdots b_n<\cdots <T_\Omega$ and of $(r,\delta,\kappa)$-surgical solutions defined on the intervals $[T_A,b_n)$, and which
extend one another. In this way we get an evolving manifold $\{(M(t),g(t))\}$ defined on $[T_A,b_\infty)$, where $b_\infty$ is the supremum
of the $b_n$'s. By Lemma~\ref{lem:no acc}, the set of singular times is a discrete subset of $\Rr$, so  $\{(M(t),g(t))\}_{t\in [T_A,b_\infty)}$ is an $(r,\delta,\kappa)$-surgical solution, thus 
a majorant of the increasing sequence.

Hence we can apply Zorn's lemma. Let $(\bmax,(M(\cdot),g(\cdot)))\in \mathcal X$ be
a maximal element. Its scalar curvature lies between $-6$ and
$\Theta$, so it is bounded independently of $t$. Its curvature is pinched toward
positive so the sectional curvature is also bounded  independently of $t$. Using the Shi estimates, we deduce that all derivatives of the curvature are also bounded at time $\bmax$. This allows to take a smooth limit and extend $(M(\cdot),g(\cdot))$ to a surgical solution defined on $[T_A,\bmax]$, with $\Rmax(\bmax)\le\Theta$.  Condition~$(NC)_\kappa$ is still satisfied on this closed interval by Lemma~\ref{lem:kappa ferme}. Hence we can apply Proposition~B, which implies that Property~$(CN)_r$ is satisfied on $[T_A,\bmax]$.
We thus obtain an $(r,\delta,\kappa)$-surgical solution
on the closed interval  $[T_A,\bmax]$.

To conclude, we prove by contradiction that $\bmax=T_\Omega$. Assume
that $\bmax<T_\Omega$ and consider the following two cases:

\paragraph{Case 1} $\Rmax(\bmax)<\Theta$.

Applying the short time existence theorem for Ricci flow with initial metric $g(\bmax)$, we can extend $g(\cdot)$ to a surgical solution defined on an interval $[T_A,\bmax+\alpha)$ for some $\alpha>0$. We choose
$\alpha$ sufficiently small so that we still have $\Rmax(t)<\Theta$ on $[T_A,\bmax+\alpha)$. There are no singular times in $[\bmax,\bmax+\alpha)$, and by Proposition~\ref{prop:ham ivey} the extension satisfies the hypothesis that the curvature is pinched toward positive.

\begin{lem}\label{lem:open Kappa}
There exists $\alpha'\in (0,\alpha]$ such that Condition $(NC)_{\kappa/16}$ holds for $\{g(t)\}_{t\in [T_A,\bmax+\alpha')}$.
\end{lem}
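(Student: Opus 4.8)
The plan is to argue by contradiction, exploiting the fact that $\kappa$-noncollapsing is an open condition in time in a suitably quantitative sense. Suppose no such $\alpha'$ exists. Then there is a sequence $t_n \downarrow \bmax$, points $x_n \in M(t_n)$ and scales $r_n \le 1$ such that $(M(\cdot),g(\cdot))$ is $\kappa/16$-collapsed at $(x_n,t_n)$ on the scale $r_n$; that is, $|\Rm| \le r_n^{-2}$ on the parabolic ball $P(x_n,t_n,r_n,-r_n^2)$ and $\vol B(x_n,t_n,r_n) < (\kappa/16) r_n^3$. Since the extension $g(\cdot)$ on $[\bmax,\bmax+\alpha)$ is a smooth Ricci flow with no surgeries, and $\Rmax(t)<\Theta$ there, curvature and all its derivatives are uniformly bounded on a slightly smaller interval $[\bmax,\bmax+\alpha'']$; this is where I would invoke the Shi estimates together with the curvature bound coming from $\Rmax < \Theta$ and the pinching estimate (Proposition~\ref{prop:ham ivey}).

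Next I would split into two cases according to the size of $r_n$. If $r_n$ stays bounded away from zero along a subsequence, then the parabolic ball $P(x_n,t_n,r_n,-r_n^2)$ reaches back below $\bmax$ once $t_n$ is close enough to $\bmax$, so the collapsing happens (at least partly) on the already-controlled part of the solution; combined with continuity of volume and curvature along the smooth flow on $[\bmax,\bmax+\alpha'']$, one gets that $(M(\cdot),g(\cdot))$ is $\kappa/16$-collapsed at some $(x_n,t_n')$ with $t_n' \le \bmax$ on a scale bounded away from $0$ — wait, more carefully: I would compare the volume and curvature of the ball $B(x_n,t_n,r_n)$ with $B(x_n,\bmax,r_n)$ using that on the bounded-geometry interval $[\bmax,\bmax+\alpha'']$ the metrics $g(t)$ are uniformly bi-Lipschitz to $g(\bmax)$ and the curvature changes by a controlled amount, so a collapsing at $(x_n,t_n)$ on scale $r_n$ forces a collapsing at $(x_n,\bmax)$ on scale $c\, r_n$ for a universal $c<1$, contradicting $(NC)_{\kappa/16}$ at $\bmax$ — here one uses $\kappa/16 < \kappa$, so there is room to absorb the constants. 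If instead $r_n \to 0$, I would rescale: the flows $P(x_n,t_n,r_n,-r_n^2)$ rescaled by $r_n^{-2}$ have uniformly bounded curvature, and since $t_n \to \bmax$ and $r_n \to 0$, the rescaled parabolic balls again reach back into times $\le \bmax$; passing to a pointed limit and again comparing volumes with the fixed solution at time $\bmax$ on scales $\le r_n$ — on which $(NC)_\kappa$ (hence $(NC)_{\kappa/16}$ with room to spare) holds at $\bmax$ by the closed-condition Lemma~\ref{lem:kappa ferme} — yields a contradiction.

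Concretely, the cleanest formulation avoids limits: I would show that there exist $\alpha'>0$ and a universal constant, such that for $t \in [\bmax,\bmax+\alpha')$ the metric distortion and curvature distortion between $g(t)$ and $g(\bmax)$ are small enough that: (a) any ball collapsed on scale $r \le \sqrt{\alpha'}$ at time $t$ gives a collapsed ball on a comparable scale at time $\bmax$ with constant improved from $\kappa/16$ back toward $\kappa/2$, contradicting $(NC)_\kappa$ at $\bmax$; and (b) for $r$ not small, the parabolic ball $P(x,t,r,-r^2)$ with $t$ close to $\bmax$ extends below $\bmax$, and the curvature bound $|\Rm|\le r^{-2}$ on it combined with the volume bound propagates backward to give $\kappa/16$-collapsing at time $\bmax$ on scale $\asymp r$, again contradicting $(NC)_\kappa$ there. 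The point of the factor $16$ (rather than asking for $(NC)_\kappa$ on the open interval) is precisely to leave a fixed multiplicative gap that absorbs the distortion constants over the short extension time.

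The main obstacle I expect is the bookkeeping in Case~(b): making precise how the hypothesis on the parabolic ball $P(x,t,r,-r^2)$ — which, for $t$ slightly above $\bmax$, straddles the surgical solution on $[T_A,\bmax]$ and the smooth extension — translates into a genuine collapsing statement for the surgical solution at a time $\le \bmax$, where one must be careful that the ball $B(x,t,r)$ lies in a region that was unscathed on the relevant backward interval (which it does, since the extension has no singular times and we only go back a time $r^2 \le \alpha'$ that we may shrink so as to stay above $\bmax$ when $r$ is bounded below, and otherwise use Case~(a)). Handling the transition region and tracking the precise constants relating the collapsing scale at time $t$ to the one at time $\bmax$ is routine but delicate; everything else follows from uniform curvature bounds on the short extension (Shi) and from $(NC)_\kappa$ at $\bmax$ (Lemma~\ref{lem:kappa ferme}).
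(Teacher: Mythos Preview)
Your proposal is correct in essence---the governing idea of comparing the ball at time $t$ to a ball at time $\bmax$ via metric and volume distortion, and then invoking $(NC)_\kappa$ at $\bmax$, is exactly right---but you have dressed it up in far more machinery than it needs. The contradiction setup, the sequences $t_n, x_n, r_n$, the dichotomy on whether $r_n$ is bounded away from zero, and especially the talk of rescaling and pointed limits are all unnecessary. This is a finite, direct comparison argument.

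The paper's proof is five lines. Given $(x,t)$ with $t\in(\bmax,\bmax+\alpha')$ and $\rho<1$ with $|\Rm|\le\rho^{-2}$ on $P(x,t,\rho,-\rho^2)$, choose $\alpha'$ small enough (using the uniform curvature bound on the extension) that $B(x,\bmax,\rho/2)\subset B(x,t,\rho)$ and $P(x,\bmax,\rho/2,-\rho^2/4)\subset P(x,t,\rho,-\rho^2)$; then $|\Rm|\le 4\rho^{-2}=(\rho/2)^{-2}$ on the smaller parabolic ball, so $(NC)_\kappa$ at $(x,\bmax)$ on scale $\rho/2$ gives $\vol B(x,\bmax,\rho/2)\ge\kappa(\rho/2)^3$; finally, volume distortion gives $\vol_{g(t)}B(x,\bmax,\rho/2)\ge\tfrac12\vol B(x,\bmax,\rho/2)$, and chaining the inclusions yields $\vol B(x,t,\rho)\ge\tfrac12\cdot\kappa(\rho/2)^3=(\kappa/16)\rho^3$. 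The factor $16=2\cdot 2^3$ is not an arbitrary safety margin but the exact output of halving the scale and allowing a factor $2$ in volume. What you describe as ``bookkeeping'' in your Case~(b) is precisely this half-scale trick, and once you see it there is no case analysis left.
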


\begin{proof}

Let $x\in M(\bmax)$, $t\in (\bmax,\bmax+\alpha')$ and $\rho\in (0,10^{-3})$ be such that $|\Rm| \le \rho^{-2}$ on $P(x,t,\rho,-\rho^2)$. Choosing $\alpha'$ small enough, we can ensure that 
$B(x,\bmax,\rho/2) \subset B(x,t,\rho)$ and moreover that $P(x,\bmax,\rho/2,-\rho^2/4) \subset P(x,t,\rho,-\rho^2)$. It follows that $|\Rm| \le 4 \rho^{-2}$ on $P(x,\bmax,\rho/2,-\rho^2/4)$. Since $(CN)_\kappa$ is satisfied up to time $\bmax$, we deduce that $\vol B(x,\bmax,\rho/2) \ge \kappa (\rho/2) ^3$. Again by proper choice of $\alpha'$,  $\vol_{g(t)} B(x,\bmax,\rho/2)$ is at least half  of $\vol B(x,\bmax,\rho/2)$. Hence
$$\vol B(x,t,\rho) \ge \vol_{g(t)} B(x,\bmax,\rho/2) \ge \frac12 \vol B(x,\bmax,\rho/2) \ge \frac{\kappa}{16} \rho ^3.$$ 
\end{proof}

Applying Proposition~B, we deduce that $\{(M(t),g(t))\}_{t\in [T_A,\bmax+\alpha')}$ is an $(r,\delta)$-surgical solution. By Proposition~C, it is an $(r,\delta,\kappa)$-surgical solution.
This contradicts maximality of $\bmax$.

\paragraph{Case 2}  $\Rmax(\bmax)=\Theta$.

Proposition~A yields a riemannian manifold $(M_+,g_+)$. If $M_+$ is empty, then the construction stops. Suppose $M_+\neq\emptyset$. Applying Shi's short time existence theorem for Ricci flow on $M_+$ with initial metric $g_+$, we obtain a positive number $\alpha$ and an evolving metric $\{g(t)\}_{t\in (\bmax,\bmax+\alpha)}$ on $M_+$ whose limit from the right as $t$ tends to $\bmax$ is equal to $g_+$. Since $\Rmax(g_+)\le \Theta/2$, we may also assume that $\Rmax$ remains bounded above by $\Theta$. By Proposition~\ref{prop:ham ivey} it has curvature pinched toward positive. Setting $M(t):=M_+$ for $t\in (\bmax,\bmax+\alpha)$, we obtain an evolving manifold $\{(M(t),g(t))\}_{t\in [T_A,\bmax+\alpha)}$ satisfying the first two properties of the definition of $(r,\delta)$-surgical solutions.

\begin{lem}\label{lem:open Kappa sing}
There exists $\alpha'\in (0,\alpha]$ such that Condition $(NC)_{\kappa/16}$ holds on $[T_A,\bmax+\alpha')$.
\end{lem}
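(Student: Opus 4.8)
The plan is to mimic the proof of Lemma~\ref{lem:open Kappa} from Case~1, making the adjustments needed because the manifold has changed at time $\bmax$ through surgery. The point $x$ at which we want to check noncollapsing now lies in $M_+=M(\bmax^+)$, so we must distinguish between points coming from the old manifold $M(\bmax)$ (i.e.~lying in $M_{\reg}(\bmax)=M(\bmax)\cap M_+$) and points lying in one of the added almost standard caps.

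First I would fix $x\in M_+$, a time $t\in(\bmax,\bmax+\alpha')$ and a scale $\rho\in(0,10^{-3})$ with $|\Rm|\le\rho^{-2}$ on the parabolic ball $P(x,t,\rho,-\rho^2)$, and aim to bound $\vol B(x,t,\rho)$ from below by $\tfrac{\kappa}{16}\rho^3$. By choosing $\alpha'$ small we can again arrange $B(x,\bmax,\rho/2)\subset B(x,t,\rho)$, $P(x,\bmax,\rho/2,-\rho^2/4)\subset P(x,t,\rho,-\rho^2)$ (now interpreting the parabolic ball in the post-surgery metric, which is smooth on $M_+\times[\bmax,\bmax+\alpha)$), hence $|\Rm|\le 4\rho^{-2}$ on $P(x,\bmax,\rho/2,-\rho^2/4)$, and $\vol_{g(t)}B(x,\bmax,\rho/2)\ge\tfrac12\vol_{g_+}B(x,\bmax,\rho/2)$. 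So it all reduces to a lower bound on $\vol_{g_+}B(x,\bmax,\rho/2)$, with $|\Rm_{g_+}|\le 4\rho^{-2}$ on that ball.

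The main case split is then on whether the ball $B_{g_+}(x,\bmax,\rho/2)$ is contained in $M_{\reg}(\bmax)$ or meets $M_+\setminus M(\bmax)$. If it is contained in the part where $g_+=g(\bmax)$, the argument is exactly as in Lemma~\ref{lem:open Kappa}: $(CN)_\kappa$ up to time $\bmax$ gives $\vol_{g(\bmax)}B(x,\bmax,\rho/2)\ge\kappa(\rho/2)^3$. If instead the ball meets an added cap, then, because $\rho<10^{-3}$ and the cap is a $\delta$-almost standard cap with $\delta$ small, the curvature lower bound $|\Rm|\le 4\rho^{-2}$ together with the geometry of the standard solution (which is $\kappa_{st}$-noncollapsed on all scales, Proposition~\ref{prop:standard kappa}) forces $\rho$ to be comparable to the scale $h$ of the cap, and one reads off the volume bound directly from the almost standard cap model; here one uses $\kappa\le\kappa_{st}$. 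Actually, since $x\in V\subset U$ with $R(y,\bmax)=h^{-2}$ and the scalar curvature on $U$ is comparable to $h^{-2}$, the constraint $|\Rm|\le4\rho^{-2}$ on a $\rho/2$-ball around $x$ pins $\rho$ to within a controlled factor of $h$, and the explicit metric $\bar g_0$ on $B(0,5)$ gives a definite volume-to-$\rho^3$ ratio bounded below by (a constant times) $\kappa_{st}\ge\kappa$.

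The step I expect to be the main obstacle is the cap case: making precise the claim that on the added $3$-balls the constraint $|\Rm|\le 4\rho^{-2}$ on a parabolic ball of radius $\rho/2$ cannot hold unless $\rho$ is bounded below by a constant times $h$ (otherwise the ball sees the rounded-off tip, whose curvature is $\sim h^{-2}$, contradicting the bound for small $\rho$), and then transferring the $\kappa_{st}$-noncollapsing of the standard solution model through the $\delta'(\delta)$-closeness of the almost standard cap, with $\delta'$ chosen small enough in Section~\ref{sec:metric surgery}. One must also handle balls straddling the gluing region $C[3,5]$, where $g_+$ interpolates between $\bar g_0$ and the neck metric; but there the metric is within $\delta'$ of a fixed model of bounded geometry, so the same argument applies with constants only mildly degraded, and shrinking $\delta$ (already arranged via $\delta\le\bar\delta_A$) absorbs the loss. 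Having disposed of all cases, we obtain $\vol B(x,t,\rho)\ge\tfrac{\kappa}{16}\rho^3$, which is exactly Condition~$(NC)_{\kappa/16}$ on $[T_A,\bmax+\alpha')$.
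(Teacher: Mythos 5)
Your proposal follows the paper's own (very brief) argument: split on whether the ball at time $\bmax$ stays in $M_\reg(\bmax)$ --- in which case the established $(NC)_\kappa$ up to $\bmax$ applies exactly as in Lemma~\ref{lem:open Kappa} --- or meets a freshly added cap, in which case one invokes the $\kappa_{st}$-noncollapsing of the standard solution (hence the running hypothesis $\kappa\le\kappa_{st}$) through the $\delta'(\delta)$-closeness of almost standard caps. So the approach is the same.

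One directional error to correct in the cap case. You assert that $|\Rm|\le 4\rho^{-2}$ on the parabolic ball forces $\rho$ to be bounded \emph{below} by a constant times $h$, the rationale being that otherwise the ball would see the tip where $|\Rm|\sim h^{-2}$, ``contradicting the bound for small $\rho$.'' The implication runs the other way. If the ball meets the cap then some point in it has $|\Rm|\gtrsim h^{-2}$, and combined with $|\Rm|\le 4\rho^{-2}$ this gives $\rho\lesssim h$ --- an \emph{upper} bound, not a lower one. For small $\rho$ the hypothesis $|\Rm|\le 4\rho^{-2}$ is \emph{weaker}, and there is no contradiction. Fortunately the upper bound is exactly what is needed: it ensures that after rescaling by $R(y)\approx h^{-2}$ the ball has radius $O(1)$, hence sits inside the modelled region $B(p_0,5+\delta^{-1})$, and the $\kappa_{st}$-noncollapsing of the standard solution (Proposition~\ref{prop:standard kappa}) then yields the volume lower bound at every admissible scale, small $\rho$ included. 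So the conclusion survives, but the rationale must be flipped, and the small-$\rho$ case has to be (and is) covered by the same model comparison rather than dismissed.
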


\begin{proof}
Let $x\in M(\bmax)$, $t\in (\bmax,\bmax+\alpha')$ and $\rho\in (0,10^{-1})$ be such that $|\Rm| \le \rho^{-2}$ on $P(x,t,\rho,-\rho^2)$. If $B(x,t,\rho/2)$ is unscathed and stays so until $\bmax$, then we can repeat the argument used to prove Lemma~\ref{lem:open Kappa}. Otherwise it follows from the assumption $\kappa\le \kappa_{st}$ and properties of almost standard caps.
\end{proof}

Applying Proposition~B, we deduce that $\{(M(t),g(t))\}_{t\in [T_A,\bmax+\alpha')}$ is an $(r,\delta)$-surgical solution. By Proposition~C, it is an $(r,\delta,\kappa)$-surgical solution.
Again this contradicts the assumption that $\bmax$ should be maximal.
\end{proof}

\section{Choosing cutoff parameters}\label{sec:coupure}

In this section, we give some technical results necessary  to prove Theorem~\ref{thm:echelle de coupure}. Their statements are nearly identical to those of the corresponding results of \cite[Section 4]{B3MP}, \emph{surgical solutions} replacing \emph{Ricci flow with bubbling-off}. The proofs are 
also almost identical, the minor adaptations being precised below. 

\subsection{Bounded curvature at bounded distance} \label{subsec:curvature-distance proof}

We have the following technical lemmas, as in \cite[Section 4.2]{B3MP}:

\begin{lem}[Local curvature-distance lemma] \label{lem:courbure distance}
Let $(U,g)$ be a Riemannian manifold. Let $Q\geqslant 1$, $C>0$, $x\in U$ and set $Q_{x}= \mid R(x) \mid + Q$. Suppose that there exist
$y \in U$ such that $R(y) \geqslant 2Q_{x}$, and a minimising segment $[xy]$ where 
$$\mid \nabla R\mid \leqslant CR^{3/2} \qquad (*)$$
holds at each point of scalar curvature at least $Q$.
Then $d(x,y) \geqslant \frac{1}{2C\sqrt{Q_{x}}}$. 
\end{lem}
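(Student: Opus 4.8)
The plan is to prove the Local curvature-distance lemma by integrating the gradient estimate $(*)$ along the minimising segment $[xy]$. The basic idea is that if $R$ grows from roughly $Q_x$ at $x$ to at least $2Q_x$ somewhere along the way, then because $|\nabla R|$ is controlled by $CR^{3/2}$ wherever $R\ge Q$, the quantity $R^{-1/2}$ cannot decrease too quickly, which forces the segment to be long.

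First I would parametrise $[xy]$ by arclength as $\gamma:[0,\ell]\to U$ with $\gamma(0)=x$, $\gamma(\ell)=y$ and $\ell=d(x,y)$, and set $f(s):=R(\gamma(s))$. Along the curve, $|f'(s)|\le |\nabla R|(\gamma(s))$. I would then consider the first time $s_0$ at which $f(s_0)=2Q_x$ (this exists since $f(0)=R(x)\le Q_x<2Q_x$ and $f(\ell)=R(y)\ge 2Q_x$, by the intermediate value theorem). On $[0,s_0)$ we have $Q_x\le 2Q_x$ is the range we reach, but more importantly I need a \emph{lower} bound on $f$ to keep us in the region where $(*)$ applies. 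Since $Q_x = |R(x)|+Q \ge Q$ and we want $(*)$ to hold, note on the sub-segment where $f\ge Q_x\ge Q$ the hypothesis $(*)$ is available. Let $s_1\in[0,s_0]$ be the last time before $s_0$ at which $f(s_1)=Q_x$ — if $f(s)\ge Q_x$ on all of $[0,s_0]$ take $s_1=0$ (using $f(0)=R(x)\le Q_x$, so in fact $f(0)\le Q_x$, and we can take $s_1$ to be any point where $f$ crosses up to $Q_x$; if $R(x)=Q_x$ already then $s_1=0$). On $[s_1,s_0]$ we then have $Q_x\le f(s)\le 2Q_x$ is not quite right either; rather $f(s)\ge Q_x\ge Q$ so $(*)$ holds, giving $|f'|\le C f^{3/2}$.

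Next I would run the standard ODE comparison: on $[s_1,s_0]$, $\frac{d}{ds}\left(f^{-1/2}\right) = -\tfrac12 f^{-3/2} f' $, so $\left|\frac{d}{ds} f^{-1/2}\right| \le \tfrac12 C$. Integrating from $s_1$ to $s_0$ gives $f(s_1)^{-1/2} - f(s_0)^{-1/2} \le \tfrac12 C (s_0 - s_1) \le \tfrac12 C \ell$. Since $f(s_1)=Q_x$ and $f(s_0)=2Q_x$, the left side equals $Q_x^{-1/2} - (2Q_x)^{-1/2} = Q_x^{-1/2}(1 - 2^{-1/2})$. Hence $\ell \ge \frac{2(1-2^{-1/2})}{C\sqrt{Q_x}}$, and since $2(1-2^{-1/2}) = 2 - \sqrt2 \approx 0.586 > \tfrac12$, we obtain $d(x,y)=\ell \ge \frac{1}{2C\sqrt{Q_x}}$ as claimed.

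The main obstacle — really a bookkeeping point rather than a deep difficulty — is making sure the segment of integration stays inside the region where the gradient hypothesis $(*)$ is valid, i.e.\ where $R\ge Q$. This is handled by the choice of $s_1$ as the last up-crossing of the level $Q_x\ (\ge Q)$ before reaching level $2Q_x$: on $[s_1,s_0]$ we have $f\ge Q_x\ge Q$ by construction, so $(*)$ applies throughout, and the continuity of $f$ guarantees such $s_1$ exists. One should also note $Q\ge 1$ (or at least $Q>0$) is used only to ensure $Q_x\ge Q$ so that the threshold for $(*)$ is met; the constant $\tfrac12$ in the conclusion is a convenient round-down of $2-\sqrt2$.
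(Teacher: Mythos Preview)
Your proof is correct and follows the standard approach: integrate the differential inequality $|f'|\le C f^{3/2}$ along the minimising segment on the subinterval where $f\ge Q_x\ge Q$, and compare the drop in $f^{-1/2}$ between the levels $Q_x$ and $2Q_x$. The paper does not give its own proof here but refers to \cite[Section~4.2]{B3MP}, where exactly this integration argument is carried out; your choice of $s_1$ as the last up-crossing of the level $Q_x$ before $s_0$ is the right device to guarantee $(*)$ applies throughout, and the numerical check $2-\sqrt2>\tfrac12$ gives the stated constant.
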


\begin{lem}[Local curvature-time lemma] \label{lem:courbure temps}
Let $(U,g(\cdot))$ be a Ricci flow defined on $[t_{1},t_{2}]$. Let  $Q\geqslant 1$, $C>0$, $x\in U$ and set $Q_{x}= \mid R(x,t_{2}) \mid + Q$.
Suppose that there exists $t\in [t_{1},t_{2}]$ such that $R(x,t) \geqslant 2Q_{x}$, and that 
$$\mid \frac{dR}{dt} \mid \leqslant CR^2 \qquad (**)$$ 
holds at $(x,s)$ if $R(x,s) \geqslant Q$. Then $t_{2} - t \geqslant (2CQ_{x})^{-1}$.
\end{lem}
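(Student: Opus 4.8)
The statement is the Local curvature-time lemma: we have a Ricci flow $(U,g(\cdot))$ on $[t_1,t_2]$, a point $x$, and we set $Q_x = |R(x,t_2)| + Q$; there is some $t\in[t_1,t_2]$ with $R(x,t)\geq 2Q_x$, and the bound $(**)$ holds at $(x,s)$ whenever $R(x,s)\geq Q$. We must conclude $t_2 - t \geq (2CQ_x)^{-1}$. The plan is a standard continuity/ODE-comparison argument applied to the single curve $s\mapsto R(x,s)$, mirroring exactly the proof of the Local curvature-distance lemma (Lemma~\ref{lem:courbure distance}), with the time variable playing the role of arclength.

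First I would set $\tau := t_2 - t \geq 0$ and consider the function $s\mapsto R(x,s)$ on $[t,t_2]$. The idea is to show that $R(x,\cdot)$ cannot decrease from the large value $R(x,t)\geq 2Q_x$ down to the range where $(**)$ fails (i.e.\ below $Q$) too quickly; since $Q_x \geq Q$, in fact we want to control the drop from $2Q_x$ to $Q_x$. Define $s_0 \in [t,t_2]$ to be the supremum of times $s$ in $[t,t_2]$ such that $R(x,\sigma) \geq Q_x$ for all $\sigma\in[t,s]$. By continuity of $R(x,\cdot)$ and $R(x,t)\geq 2Q_x > Q_x$, we have $s_0 > t$, and either $s_0 = t_2$ or $R(x,s_0) = Q_x$. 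On the interval $[t,s_0]$ we have $R(x,s)\geq Q_x \geq Q$, so $(**)$ applies: $\left|\frac{d}{ds}R(x,s)\right| \leq C R(x,s)^2$. Hence $\frac{d}{ds}\left(-R(x,s)^{-1}\right) = R(x,s)^{-2}\frac{d}{ds}R(x,s) \geq -C$, so the function $s\mapsto -R(x,s)^{-1} + Cs$ is nondecreasing on $[t,s_0]$; equivalently $R(x,s)^{-1} \leq R(x,t)^{-1} + C(s-t)$ on this interval.

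Now I would extract the bound on $s_0 - t$. In the case $R(x,s_0) = Q_x$, the inequality above at $s = s_0$ gives $Q_x^{-1} \leq R(x,t)^{-1} + C(s_0 - t) \leq (2Q_x)^{-1} + C(s_0 - t)$, using $R(x,t)\geq 2Q_x$; rearranging yields $C(s_0 - t) \geq (2Q_x)^{-1}$, i.e.\ $s_0 - t \geq (2CQ_x)^{-1}$, and since $s_0 \leq t_2$ we are done. In the remaining case $s_0 = t_2$, we simply want to show $t_2 - t \geq (2CQ_x)^{-1}$ directly; if this failed, then $t_2 - t < (2CQ_x)^{-1}$, and applying the same integrated inequality at $s = t_2$ would force $R(x,t_2)^{-1} \leq (2Q_x)^{-1} + C(t_2 - t) < Q_x^{-1}$, hence $R(x,t_2) > Q_x = |R(x,t_2)| + Q \geq R(x,t_2) + Q$, which is absurd since $Q > 0$. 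This contradiction finishes the argument.

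The only delicate point is the bookkeeping at the endpoint, making sure the threshold $Q_x$ (rather than $Q$) is the right level set to stop at so that the final contradiction with $Q_x = |R(x,t_2)| + Q$ goes through cleanly; this is routine but is where one must be careful, exactly as in the proof of Lemma~\ref{lem:courbure distance}. Everything else is a one-line ODE comparison for $-1/R$, and there is no essential obstacle beyond that. (One should also note the sign conventions: $(**)$ is a two-sided bound on $|dR/dt|$, so both the upper bound used to keep $R$ above $Q_x$ on $[t,s_0]$ and the lower bound used for the comparison are legitimate.)
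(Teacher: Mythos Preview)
Your proof is correct. The paper itself does not give a proof of this lemma; it is stated alongside the companion curvature--distance and curvature--control lemmas with a reference to \cite[Section~4.2]{B3MP}, and your argument is exactly the standard ODE comparison for $1/R$ that one expects there, parallel to Lemma~\ref{lem:courbure distance} with time replacing arclength.

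One small simplification: your Case~2 ($s_0=t_2$) is in fact vacuous and need not be argued by contradiction via the integrated estimate. Since $Q_x=|R(x,t_2)|+Q>R(x,t_2)$ always (whether $R(x,t_2)$ is positive or not), the condition $R(x,\sigma)\ge Q_x$ on all of $[t,t_2]$ would force $R(x,t_2)\ge Q_x>R(x,t_2)$, which is absurd. Hence one is always in Case~1, with $s_0<t_2$ and $R(x,s_0)=Q_x$, and the single integration $Q_x^{-1}\le (2Q_x)^{-1}+C(s_0-t)$ finishes the proof. What you wrote is nonetheless valid.
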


\begin{lem}[Local curvature-control lemma]\label{lem:courbure distance temps}
Let $Q>0$, $C>0$, $\epsi \in (0,2\epsi_0]$,  and $\{(M(t),g(t))\}_{t\in I}$ be a surgical solution on $M$. Let $(x_{0},t_{0}) \in M\times I$ and  set 
$Q_0=|R(x_0,t_0)|+Q$. 
Suppose that $P=P(x_0,t_0, \frac{1}{2C \sqrt{Q_0}}, -\frac{1}{8C Q_0})$ is unscathed and that each 
$(x,t)\in P$ with $R(x,t)\geqslant Q$ has an  $(\epsi,C)$-canonical neighbourhood. Then for all $(x,t) \in P$, 
$$R(x,t)\leqslant 4Q_0\,.$$
\end{lem}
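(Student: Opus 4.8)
\textbf{Proof plan for Lemma~\ref{lem:courbure distance temps} (Local curvature-control lemma).}

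The plan is to argue by contradiction. Suppose there is a point $(x,t)\in P=P(x_0,t_0,\tfrac{1}{2C\sqrt{Q_0}},-\tfrac{1}{8CQ_0})$ with $R(x,t)>4Q_0$. Since $P$ is unscathed, the metric evolves smoothly by Ricci flow on all of $B(x_0,t_0,\tfrac{1}{2C\sqrt{Q_0}})\times[t_0-\tfrac{1}{8CQ_0},t_0]$, so we may freely use the canonical-neighbourhood estimates at any spacetime point of $P$ whose scalar curvature is at least $Q$. First I would record the two differential inequalities these estimates give: from \eqref{eq:nabla R} one has $|\nabla R|<CR^{3/2}$ wherever $R\ge Q$ in a time slice of $P$, and from \eqref{eq:dR dt} one has $|\partial R/\partial t|<CR^2$ wherever $R\ge Q$ along the flow in $P$.

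Next I would apply the Local curvature-time lemma (Lemma~\ref{lem:courbure temps}) and the Local curvature-distance lemma (Lemma~\ref{lem:courbure distance}) to the offending point $(x,t)$, with the roles reversed from the hypothesis: set $Q_x:=|R(x,t)|+Q$ (so $Q_x>4Q_0\ge Q_0\ge Q$). Actually the cleanest route is to apply these to reach a contradiction with the spatial and temporal size of $P$. Consider the spacetime curve that first travels backward in time from $(x_0,t_0)$ — but more precisely: since $R(x,t)>4Q_0\ge 2Q_0$ and $Q_0=|R(x_0,t_0)|+Q$, the curvature-time lemma applied with center $x_0$ forces $t$ to be bounded away from $t_0$ by $(2CQ_0)^{-1}$ — but $t\ge t_0-\tfrac{1}{8CQ_0}$, and $\tfrac{1}{8CQ_0}<\tfrac{1}{2CQ_0}$, so the time direction alone is not yet a contradiction. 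The resolution is the standard two-step argument: look at the time slice $g(t)$. If $R(x,t)>4Q_0$ then along a minimising segment $[x_0x]$ in $g(t)$, wherever $R\ge Q$ the gradient bound holds, so Lemma~\ref{lem:courbure distance} (with center $x_0$, noting $R(x_0,t)\le 4Q_0$ by the curvature-time lemma at the \emph{spatial} center, or rather by first controlling $R(x_0,t)$ and hence $Q_{x_0}$ at time $t$) yields $d_t(x_0,x)\ge \tfrac{1}{2C\sqrt{Q_0'}}$ for the appropriate $Q_0'\approx 4Q_0$, which must be reconciled with $d_{t_0}(x_0,x)<\tfrac{1}{2C\sqrt{Q_0}}$ and the bounded distance distortion of the Ricci flow on the short time interval. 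In practice the argument of \cite[Section 4.2]{B3MP} runs: use Lemma~\ref{lem:courbure temps} to show $R(x_0,\cdot)\le 2Q_0$ (say) on $[t_0-\tfrac{1}{8CQ_0},t_0]$, since otherwise we'd need elapsed time $\ge(2CQ_0)^{-1}>\tfrac{1}{8CQ_0}$; then use the Ricci flow equation to bound distance distortion and transplant the spatial bound from $g(t_0)$, or work directly in $g(t)$, obtaining via Lemma~\ref{lem:courbure distance} that $d_t(x_0,x)\ge \tfrac{1}{2C\sqrt{2Q_0}}>\tfrac{1}{2C\sqrt{Q_0}}\cdot\tfrac{1}{\sqrt2}$, which together with a careful bookkeeping of the constants contradicts $(x,t)\in P$.

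I expect the main obstacle to be the precise bookkeeping of constants — making sure the radius $\tfrac{1}{2C\sqrt{Q_0}}$ and the time depth $\tfrac{1}{8CQ_0}$ of $P$ are exactly what is needed so that the outputs of Lemmas~\ref{lem:courbure distance} and~\ref{lem:courbure temps} (which produce lower bounds like $\tfrac{1}{2C\sqrt{Q_x}}$ and $(2CQ_x)^{-1}$ with $Q_x$ comparable to $4Q_0$) genuinely overshoot the dimensions of $P$, yielding the contradiction. A secondary technical point is handling the distance distortion under Ricci flow when one wants to compare $d_{t_0}$ and $d_t$ for the segment $[x_0x]$; one circumvents this by working entirely within a single time slice and instead using the curvature-time lemma to propagate the bound on $R(x_0,\cdot)$ backward, so that at the relevant time slice $x_0$ still has controlled curvature and Lemma~\ref{lem:courbure distance} applies. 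Since the statement and the shape of the argument are identical to \cite[Lemma 4.2.x]{B3MP} with \emph{Ricci flow with bubbling-off} replaced by \emph{surgical solution} and the only new input is that "$P$ unscathed" guarantees smooth evolution on $P$ (so the estimates apply), I would present the proof by invoking the two lemmas above and performing this constant-chase.
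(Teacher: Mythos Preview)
Your approach is correct and matches the paper's, which gives no proof here and simply defers to \cite[Section~4.2]{B3MP}: the argument is exactly the two-step combination of Lemmas~\ref{lem:courbure distance} and~\ref{lem:courbure temps} you describe. The cleanest order (which sidesteps the distance-distortion issue entirely) is to first apply the curvature-distance lemma at time $t_0$ to obtain $R(x,t_0)<2Q_0$ for every $x\in B(x_0,t_0,\tfrac{1}{2C\sqrt{Q_0}})$, and then for each such $x$ apply the curvature-time lemma (or directly integrate $|\partial_t(1/R)|\le C$ on the set where $R\ge 2Q_0$) backward from $t_0$ to conclude $R(x,t)\le 4Q_0$ on all of $P$.
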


We shall use repeatedly the following well-known consequences of curvature pinching:
\begin{prop}\label{prop:limite positive}
Let $(U_{k},g_{k}(\cdot),*_k)$ be a sequence of pointed evolving metrics defined on intervals $I_{k} \subset \RR_{+}$, and having curvature pinched toward positive. Let $(x_{k},t_{k})\in U_{k}\times I_{k}$ be a sequence such that $(1+t_{k})R(x_{k},t_{k})$ goes to $+\infty$. Then the sequence $\bar g_{k}:=R(x_{k},t_{k})g(t_{k})$ has the following properties:
\begin{enumerate}
\item The sequence $\Rmin(\bar g_{k})$ tends to $0$.
\item If $(U_{k},\bar g_{k},*_{k})$ converges in the pointed $\mathcal{C}^2$ sense, then the limit has nonnegative curvature operator. 
\end{enumerate}
\end{prop}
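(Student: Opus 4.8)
The plan is to prove the two assertions of Proposition~\ref{prop:limite positive} directly from the definition of curvature pinched toward positive, using the scaling properties of the functions $\phi_t$ recorded in Lemma~\ref{lem:phi decreasing}. Write $Q_k:=R(x_k,t_k)$, so $\bar g_k=Q_kg_k(t_k)$, and note that curvatures of $\bar g_k$ are those of $g_k(t_k)$ divided by $Q_k$; in particular $R_{\bar g_k}(x_k,t_k)=1$, and by pinching inequality~\eqref{eq:pinching 1}, $R_{g_k(t_k)}\geq -6/(4t_k+1)\geq -6$, whence $\Rmin(\bar g_k)\geq -6/Q_k$. Since $(1+t_k)Q_k\to+\infty$ and $1+t_k\geq 1$, we get $Q_k\to+\infty$, so $\Rmin(\bar g_k)\geq -6/Q_k\to 0$; combined with the fact that $\Rmin(\bar g_k)\leq R_{\bar g_k}(x_k,t_k)=1$ is not quite $0$, I should instead argue that at any point the scalar curvature of $\bar g_k$ is $\geq -6/Q_k$, so indeed $\liminf\Rmin(\bar g_k)\geq 0$, and since $\Rmin(\bar g_k)$ need not be positive we only claim it tends to $0$ from below (more precisely $\Rmin(\bar g_k)\to 0$ because it is squeezed between $-6/Q_k$ and, using that the limit metric will have a point of scalar curvature $1$, one does not actually need an upper bound — the statement (i) is exactly $\Rmin(\bar g_k)\to 0$, and the lower bound $-6/Q_k\to 0$ together with $\Rmin(\bar g_k)\leq 0$ eventually, or rather one simply observes $\Rmin$ of a rescaling can only be made to tend to $0$; I will phrase (i) cleanly as: $\Rmin(\bar g_k)\geq -6/((4t_k+1)Q_k)\to 0$ and $\Rmin(\bar g_k)$ is bounded above since curvature is pinched, giving the claim, or just cite that $\Rmin(\bar g_k)\to 0$ follows from $|\Rmin(\bar g_k)|\leq 6/Q_k$).

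Let me restate that first step more carefully, since the sign bookkeeping is the only subtle point: by~\eqref{eq:pinching 1} applied at time $t_k$, every point has $R_{g_k(t_k)}\geq -6/(4t_k+1)$, hence $R_{\bar g_k}\geq -6/((4t_k+1)Q_k)\geq -6/Q_k$. Thus $\inf R_{\bar g_k}=\Rmin(\bar g_k)\in[-6/Q_k,\,1]$ (the upper bound because $R_{\bar g_k}(x_k,t_k)=1$). Since $Q_k\to\infty$, we conclude $\Rmin(\bar g_k)\to 0$ provided we also know $\Rmin(\bar g_k)$ does not stay bounded away from $0$ on the positive side — but it cannot, because a metric with $\Rmin>0$ everywhere... actually a rescaled metric can have positive $\Rmin$, so to be safe I will only assert what is needed downstream, namely the \emph{lower} bound $\liminf_k\Rmin(\bar g_k)\geq 0$ together with $\Rmin(\bar g_k)\leq 1$; if the stated conclusion really is the two-sided limit, I note that the sequences in applications are chosen with $R$ tending to its infimum, so $\Rmin(\bar g_k)\to 0$; I will follow the phrasing of \cite[Lemma 2.4.?]{B3MP} here.

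For assertion (ii): suppose $(U_k,\bar g_k,*_k)\to(U_\infty,\bar g_\infty,*_\infty)$ in the pointed $\mathcal C^2$ topology. Fix $x\in U_\infty$ and a sequence $x_k\to x$ with $x_k\in U_k$. I must show $\Rm_{\bar g_\infty}(x)\geq 0$, i.e. $\nu_{\bar g_\infty}(x)\geq 0$. Pinching inequality~\eqref{eq:pinching 2} at time $t_k$ gives, for the unscaled metric, $\Rm_{g_k(t_k)}(x_k)\geq -\phi_{t_k}(R_{g_k(t_k)}(x_k))$; rescaling by $Q_k$,
\begin{equation}
\Rm_{\bar g_k}(x_k)\geq -\frac{\phi_{t_k}(Q_k\,R_{\bar g_k}(x_k))}{Q_k}.
\end{equation}
Now $R_{\bar g_k}(x_k)\to R_{\bar g_\infty}(x)=:\rho\geq 0$, so $Q_k R_{\bar g_k}(x_k)\to+\infty$ as soon as $\rho>0$, and in fact it $\to+\infty$ whenever $R_{\bar g_k}(x_k)$ stays bounded below by a positive constant; by Lemma~\ref{lem:phi decreasing}(ii) the ratio $\phi_{t_k}(s)/s$ decreases to $0$ as $s\to\infty$, and using Lemma~\ref{lem:phi decreasing}(i), $\phi_{t_k}(s)/s=\phi_0((1+t_k)s)/((1+t_k)s)$, so
\begin{equation}
\frac{\phi_{t_k}(Q_kR_{\bar g_k}(x_k))}{Q_k}=R_{\bar g_k}(x_k)\cdot\frac{\phi_0\big((1+t_k)Q_kR_{\bar g_k}(x_k)\big)}{(1+t_k)Q_kR_{\bar g_k}(x_k)}\longrightarrow \rho\cdot 0=0,
\end{equation}
since $(1+t_k)Q_k\to\infty$ forces the argument of $\phi_0/(\cdot)$ to $\infty$ (when $\rho>0$), while the prefactor $R_{\bar g_k}(x_k)$ is bounded. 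Hence $\Rm_{\bar g_\infty}(x)\geq 0$. The case $\rho=0$ needs a touch of care: then I use the crude bound $\phi_0(s)/s\leq 1/4$ for $s\geq\bar s$ (Lemma~\ref{lem:phi decreasing}(iii) plus monotonicity of $\phi_0(s)/s$) together with boundedness of $\phi_0$ near $0$, to still force the right-hand side to $0$; more simply, if $\rho=0$ one approximates $x$ by nearby points where $R_{\bar g_\infty}>0$ (or observes $\phi_0$ is bounded on compacts so $\phi_{t_k}(Q_kR_{\bar g_k}(x_k))/Q_k\leq (\sup_{[0,s_0]}\phi_0)/Q_k\to 0$ once $Q_kR_{\bar g_k}(x_k)$ is bounded). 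The main obstacle — really the only point requiring attention — is this book-keeping at points where the limiting scalar curvature vanishes, together with making sure the $\mathcal C^2$ convergence genuinely transfers the pointwise curvature-operator inequality to the limit (which it does, since $\Rm$ and $R$ are continuous functions of the $2$-jet of the metric). I would therefore present the argument as: (1) rescale and record the two pinching inequalities; (2) deduce (i) from~\eqref{eq:pinching 1}; (3) for (ii), pass to the limit in the rescaled form of~\eqref{eq:pinching 2}, controlling the right-hand side via Lemma~\ref{lem:phi decreasing}(i)--(iii) and the hypothesis $(1+t_k)R(x_k,t_k)\to\infty$.
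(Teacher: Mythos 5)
The paper does not give a proof of Proposition~\ref{prop:limite positive} — it is tagged a well-known consequence of the pinching set-up, and Lemma~\ref{lem:phi decreasing} exists precisely so that the argument you give goes through. Your route (rescale \eqref{eq:pinching 1}--\eqref{eq:pinching 2} by $Q_k:=R(x_k,t_k)$ and pass to the pointed $\mathcal C^2$ limit using Lemma~\ref{lem:phi decreasing}(i)--(ii)) is the intended one, and the substance of~(ii) is right.

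There is, however, a recurring arithmetical error: you deduce $Q_k\to+\infty$ from $(1+t_k)Q_k\to+\infty$ together with "$1+t_k\geq 1$." That implication is false (take $Q_k\equiv 1$, $t_k=k$). The hypothesis controls only the product $(1+t_k)Q_k$, and the identity $\phi_t(s)=\phi_0((1+t)s)/(1+t)$ is designed so that only this product ever appears. Concretely: in~(i), once you have $R_{\bar g_k}\geq -6/((4t_k+1)Q_k)$ you must not weaken to $-6/Q_k$; instead observe $(4t_k+1)Q_k\geq(1+t_k)Q_k\to\infty$. In~(ii), in the boundary case $\rho=0$ the denominator is $(1+t_k)Q_k$, not $Q_k$, and it is the divergence of that product (not of $Q_k$) that absorbs the bounded numerator $\phi_0(s_k)$. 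Finally, your hesitation about the two-sidedness of~(i) is warranted: pinching yields only the lower bound $\Rmin(\bar g_k)\geq -6/((1+t_k)Q_k)\to 0$ and says nothing from above — a sequence of round spheres rescaled to scalar curvature $1$ has $\Rmin(\bar g_k)\equiv 1$. Your closing claim "$|\Rmin(\bar g_k)|\leq 6/Q_k$" is therefore unjustified and generally false; state the one-sided estimate (which is all the paper ever uses) rather than hedging toward a two-sided limit that is not a consequence of the hypotheses.
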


We also recall:
 \begin{lem}\label{lem:odd intersection}
Let $\epsi \in (0,10^{-1}]$. Let $(M,g)$ be a riemannian $3$-manifold,
$N \subset M$ be an $\epsi$-neck, and $S$ be a middle sphere of $N$. Let $[x y]$ be a geodesic segment such that $x,y\in M\setminus N$ and $[x y]\cap S\neq\emptyset$. Then the intersection number of $[x,y]$ with $S$ is odd. In particular, if $S$ is separating in $M$, then $x,y$ lie in different components of $M\setminus S$. 
\end{lem}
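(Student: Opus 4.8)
\textbf{Proof proposal for Lemma~\ref{lem:odd intersection}.}

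The plan is to reduce everything to the model geometry of a standard neck, where the sphere $S$ is a genuine factor $S^2\times\{t_0\}$ in $S^2\times(-\epsi^{-1},\epsi^{-1})$, and to argue that a path which enters and leaves the neck through the same side of $S$ must cross $S$ an even number of times, while a path which is ``linear in the long direction'' crosses it exactly once. First I would fix a parametrisation $\psi\colon S^2\times(-\epsi^{-1},\epsi^{-1})\to N$ and let $\pi\colon N\to(-\epsi^{-1},\epsi^{-1})$ be the composition of $\psi^{-1}$ with projection to the second factor, so that $S=\pi^{-1}(0)$ is the middle sphere and $S$ separates $N$ into $N_-=\pi^{-1}((-\epsi^{-1},0))$ and $N_+=\pi^{-1}((0,\epsi^{-1}))$. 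Since $x,y\in M\setminus N$, the segment $[xy]$ enters and exits $N$; the portions of $[xy]$ lying inside $N$ form finitely many arcs (after a small perturbation we may assume $[xy]$ is transverse to $S$, so the intersection $[xy]\cap S$ is a finite set of points), and the intersection number of $[xy]$ with $S$ is the number of these points counted with sign, which for mod-$2$ purposes is just their cardinality.

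The heart of the argument is a curvature/length estimate on the neck. The function $\pi$ is $\delta$-close (with $\delta$ comparable to $\epsi$) to an honest coordinate on the round cylinder of scalar curvature $1$; in particular, along any unit-speed geodesic inside $N$ the derivative of $\pi$ has absolute value at most, say, $2$, because in the cylindrical metric $\pi$ is a distance coordinate and $\epsi$-closeness controls $|\nabla\pi|$ on $N$. Consequently, an arc of $[xy]$ inside $N$ that starts and ends on $\partial N$, i.e.\ at $\pi=\pm\epsi^{-1}$, has length at least $\epsi^{-1}$, while an arc that crosses $S$ an odd number of times must, by the intermediate value theorem applied to $\pi$, go from one side of $S$ to the other. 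The key dichotomy is this: an arc of $[xy]$ in $\overline N$ whose two endpoints lie on $\partial N$ and on the \emph{same} boundary component $\pi^{-1}(\epsi^{-1})$ (resp.\ $\pi^{-1}(-\epsi^{-1})$) has, by continuity of $\pi$, an even intersection number with $S$; an arc whose endpoints lie on \emph{opposite} boundary components has an odd intersection number. So I would show that, because $[xy]$ is \emph{minimising} and $x,y\notin N$, in fact $[xy]\cap N$ consists of a single arc crossing from one boundary sphere of $N$ to the other: if there were two distinct arcs, or a single arc returning to the same side, one could shortcut along $\partial N$ (whose intrinsic diameter is $O(1)$ in the rescaled metric, hence tiny compared with the length $\gtrsim\epsi^{-1}$ that the arc spends traversing the neck) and produce a strictly shorter path between $x$ and $y$, contradicting minimality. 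This uses only that $\epsi\le 10^{-1}$ so that the neck is long and thin. Hence $[xy]$ meets $S$ in an odd number of points.

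The main obstacle I anticipate is making the ``shortcut'' argument fully rigorous: one must rule out the possibility that a minimising geodesic re-enters the neck after leaving it, or that it oscillates in the $\pi$-coordinate; this is where the $\epsi$-closeness of $g$ to the cylindrical metric and the quantitative bound $|\nabla\pi|\le 2$ do the real work, together with the fact that the boundary spheres $\pi^{-1}(\pm\epsi^{-1})$ have small diameter relative to the neck length, so any non-monotone behaviour can be excised at negligible cost in length. Once the single-arc statement is established, the final sentence of the lemma is immediate: if $S$ is separating in $M$, then an odd intersection number forces $x$ and $y$ to lie in different components of $M\setminus S$, since a path between two points in the same component of $M\setminus S$ has even intersection number with $S$ by a standard $\Zz/2$-homology (or transversality) argument. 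I would phrase the whole thing by reference to \cite[Section~4]{B3MP}, or note that it is \cite[Lemma~1.3.3]{B3MP} or the analogous statement there, since the proof is identical.
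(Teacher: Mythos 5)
The paper offers no proof, citing \cite[Lemma 1.3.2]{B3MP}; your sketch reconstructs what is surely the argument in that reference, and the overall plan --- reduce to the cylindrical model, note that the neck is much longer than the diameter of its boundary spheres, and use minimality of $[xy]$ to force a single monotone traversal of the neck --- is the right one. One point in your write-up is in tension with itself: you say ``after a small perturbation we may assume $[xy]$ is transverse to $S$,'' but a perturbed path is no longer a geodesic, and geodesicity is exactly what your shortcut argument needs. This is easily repaired: either invoke the fact that the mod-$2$ intersection number with $S$ is a homotopy-rel-endpoints invariant and compute it on a transverse representative \emph{after} using minimality to pin down the geometric picture, or --- more directly --- observe that along a geodesic in the rescaled nearly-cylindrical metric the coordinate $\pi$ is nearly an affine function of arclength, so a geodesic traversing the neck has $\dot\pi$ bounded away from zero and is automatically transverse to $S$. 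Apart from that, your quantitative inputs do close the shortcut argument for $\epsi\le 10^{-1}$: with $|\nabla\pi|\le 2$ after rescaling each half of the neck has length at least $\epsi^{-1}/2\ge 5$, while the boundary spheres have diameter roughly $\pi\sqrt 2<5$, so a return trip to $S$ or a second traversal always admits a strictly shorter detour along a boundary sphere. You flag the ``at most one traversing arc'' step as the main obstacle, and rightly so; as written it is only asserted, and one should spell out the case analysis (arcs returning to the same boundary component cannot reach $S$; two arcs traversing in opposite senses can be shortcut through the boundary sphere they share), but each case is exactly the same length comparison and no new idea is required.
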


\begin{proof}\cite[Lemma 1.3.2.]{B3MP}
\end{proof}

We summarise the conclusion of Lemma~\ref{lem:odd intersection} by saying that $N$ is \bydef{traversed} by the segment $[x y]$.

\begin{corol}\label{cor:VC=cou}
Let $\epsi \in (0,10^{-1}]$. Let $(M,g)$ be a riemannian $3$-manifold,
$U \subset M$ be an $\epsi$-cap and $V$ be a core of $U$. Let
$x,y$ be points of $M\setminus U$ and $[x y]$ a geodesic segment connecting $x$ to $y$. Then $[xy]\cap V=\emptyset$. 
\end{corol}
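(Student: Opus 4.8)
\textbf{Proof plan for Corollary~\ref{cor:VC=cou}.}
The plan is to reduce the statement about an $\epsi$-cap $U = V \cup W$ to the neck case already handled in Lemma~\ref{lem:odd intersection}. First I would recall that in an $\epsi$-cap, $W$ is an $\epsi$-neck and $\overline{W}\cap V = \bord V$, so $\bord V$ is (isotopic to) a middle sphere $S$ of $W$; more precisely, $V$ is diffeomorphic to $B^3$ or $RP^3\setminus B^3$, and $\bord V$ separates $U$ into $V$ on one side and (a sub-collar of) $W$ on the other. The key observation is that $S:=\bord V$ is a middle sphere of the neck $W$ in the sense of Lemma~\ref{lem:odd intersection} (up to isotopy within $W$, which does not affect intersection parity), and that $V$ lies entirely on one side of $S$ inside $M$.

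Next I would argue by contradiction: suppose $[xy]\cap V \ne \emptyset$ for some geodesic segment $[xy]$ with $x,y \in M\setminus U$. Since $x,y \notin U \supset V$, and $[xy]$ meets $V$, the segment must enter and leave $V$; because $\bord V = S$ separates $V$ from its complement in $M$ (here one uses that $S$ is a separating $2$-sphere, which follows from $V$ being one of the two standard pieces $B^3$ or $RP^3\setminus B^3$ glued along its spherical boundary), the segment $[xy]$ must cross $S$. In fact, tracking the first and last intersection points of $[xy]$ with $\overline V$, we see that $[xy]$ enters $V$ through $S$ and exits through $S$, so the intersection number of $[xy]$ with $S$ is \emph{even} (and positive). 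On the other hand, the endpoints $x,y$ lie in $M\setminus U \subset M \setminus W$, so they are outside the neck $W$, and $S$ is a middle sphere of $W$; if $[xy]$ meets $S$ at all, Lemma~\ref{lem:odd intersection} forces the intersection number of $[xy]$ with $S$ to be \emph{odd}. This is the desired contradiction, so $[xy]\cap V = \emptyset$.

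The one point that needs a little care — and which I expect to be the main (minor) obstacle — is the transversality/genericity issue: a priori $[xy]$ need not be transverse to $S$, so ``intersection number'' and ``odd/even'' are not literally defined. This is handled exactly as in the proof of Lemma~\ref{lem:odd intersection}: one perturbs the endpoints slightly (or perturbs $S$ within its isotopy class in $W$, keeping it a middle sphere) to achieve transversality without changing whether $[xy]$ meets $V$, and without moving $x,y$ out of $M\setminus U$. After this reduction the parity argument above applies verbatim, and the contradiction between ``even and positive'' (from entering and leaving $V$ through $S$) and ``odd'' (from Lemma~\ref{lem:odd intersection} applied to the neck $W$) completes the proof. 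Note that the case where $[xy]$ does not meet $S$ at all is immediate: then it cannot reach $V$ since $V$ is enclosed by $S$.
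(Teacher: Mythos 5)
Your proof is correct and is precisely the parity argument the paper has in mind (the paper omits the proof as an immediate consequence of Lemma~\ref{lem:odd intersection}): the segment, starting and ending outside $U$, can only reach $V$ by crossing the middle sphere of $W$ an even number of times, contradicting the oddness forced by Lemma~\ref{lem:odd intersection}. The only cosmetic issue is that $\bord V$ is a boundary component of $\overline W$, not literally a middle sphere; it would be slightly cleaner to work directly with a genuine middle sphere $S$ of $W$ (which separates the compact region $\overline V \cup \psi(S^2\times(-\epsi^{-1},0])$ from $x$ and $y$), so that Lemma~\ref{lem:odd intersection} applies verbatim rather than via the isotopy-invariance of mod-$2$ intersection that you invoke.
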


As for Ricci flow with bubbling-off, we then have

\begin{theo}[Curvature-distance]\label{thm:courbure distance}
For all $A,C>0$ and all $\epsi\in (0,2\epsi_0]$, there exist $Q=Q(A,\epsi,C)>0$ and $\Lambda=\Lambda (A,\epsi,C)>0$ with the following property. Let $I \subset [0,+\infty)$ be an interval
and $\{(M(t),g(t))\}_{t\in I}$ be a surgical solution with curvature pinched toward positive. Let $(x_{0},t_{0})\in \calM$ be such that: 
\begin{enumerate} 
 \item $R(x_{0},t_{0}) \geq Q$;
\item For each point $y \in B(x_{0},t_{0},AR(x_{0},t_{0})^{-1/2})$, if $R(y,t) \geq 2R(x_{0},t)$, then $(y,t)$ has an
$(\epsi,C)$-canonical neighbourhood.
\end{enumerate}
Then for all $y  \in B(x_{0},t_{0},AR(x_{0},t_{0})^{-1/2})$, we have
 $$\frac{R(y,t_{0})}{R(x_{0},t_{0})} \leq  \Lambda.$$
\end{theo}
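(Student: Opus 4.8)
The statement is a ``bounded curvature at bounded distance'' estimate, and the standard route (due to Perelman, adapted in \cite[Section 4.3]{B3MP}) is a point-picking plus limiting argument by contradiction. So I would argue as follows. Suppose the conclusion fails: there is a fixed $A>0$, $C>0$, $\epsi\in(0,2\epsi_0]$, and sequences of surgical solutions $\{(M_k(t),g_k(t))\}$ with curvature pinched toward positive, points $(x_k,t_k)$ with $R(x_k,t_k)\to\infty$ (one arranges $R(x_k,t_k)\ge k$, which takes care of hypothesis (i) in the limit and, combined with pinching, of positivity of the limit via Proposition~\ref{prop:limite positive}), such that hypothesis (ii) holds for each $k$ but there exists $y_k\in B(x_k,t_k,AR(x_k,t_k)^{-1/2})$ with $R(y_k,t_k)/R(x_k,t_k)\to\infty$. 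Rescale by $Q_k:=R(x_k,t_k)$ so that at the base point the scalar curvature is $1$; write $\bar g_k:=Q_k g_k(t_k)$ and keep the base point $x_k$. In the rescaled picture the bad point $y_k$ lies in the fixed ball $B(x_k,A)$ but has $\bar R(y_k)\to\infty$.

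\textbf{Point-picking and the curvature bound along a segment.} The next step is to choose, along a minimising segment $[x_k y_k]$ in $(M_k(t_k),\bar g_k)$, a point $z_k$ which is ``the first place where curvature blows up at a controlled scale,'' following the usual point-selection lemma: one finds $z_k\in[x_k y_k]$ with large rescaled curvature $H_k:=\bar R(z_k)\to\infty$ such that on the sub-ball of $[x_k,z_k]$ (more precisely on a parabolic ball of radius $\sim A H_k^{-1/2}$ around $z_k$ after a further rescaling by $H_k$) curvature stays bounded, say by $4$. Because $H_k\to\infty$ while distances from $x_k$ stay bounded by $A$, hypothesis (ii) applies to all such high-curvature points, so every point with (rescaled) curvature $\ge$ some threshold has an $(\epsi,C)$-canonical neighbourhood. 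Now the Local curvature-control lemma (Lemma~\ref{lem:courbure distance temps}), together with the Local curvature-distance and curvature-time lemmas (Lemmas~\ref{lem:courbure distance}, \ref{lem:courbure temps}) and the scale-invariant derivative bounds \eqref{eq:nabla R}, \eqref{eq:dR dt} coming from the canonical-neighbourhood hypothesis, shows that after rescaling by $H_k$ at $z_k$ we get a uniform curvature bound on a fixed-size parabolic neighbourhood $P(z_k,t_k,\rho,-\rho^2)$ for some $\rho>0$ independent of $k$; one must also check these parabolic neighbourhoods are eventually unscathed, which follows because surgery points have very large curvature (threshold $\Theta$) relative to the controlled scale, so no surgery can intrude — this is where ``surgical solution'' behaves just like Ricci flow with bubbling-off and only the bookkeeping changes.

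\textbf{Passing to the limit and deriving the contradiction.} With uniform curvature bounds on fixed parabolic neighbourhoods and $\kappa$-noncollapsing implicitly available from the canonical-neighbourhood volume estimate \eqref{eq:vol} (applied at the controlled scale), the local compactness theorem for Ricci flows (cited in the appendices) gives a subsequential pointed $\mathcal C^2$ (indeed smooth) limit: a complete Ricci flow $(M_\infty,g_\infty(\cdot),z_\infty)$ on a backward time interval, with $R_\infty(z_\infty)=1$ at the final time and nonnegative curvature operator by Proposition~\ref{prop:limite positive}(ii). The limit is built along the rescaled segments, so it contains a line (the limit of the segments $[x_k z_k]$, whose $\bar g_k/H_k$-lengths tend to $\infty$ since $H_k\to\infty$ but the segments had bounded $\bar g_k$-length $\le A$). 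A complete nonnegatively-curved $3$-manifold containing a line splits as a product $N^2\times\RR$ by Toponogov, forcing $N^2$ to have nonnegative curvature, hence the limit is a cylindrical or flat flow; by analysing this and using the canonical-neighbourhood structure one shows the limit is in fact a $\kappa$-solution (this is where the asymptotic-soliton/standard-solution theory enters, exactly as in the Ricci-flow-with-bubbling-off case). But a $\kappa$-solution has bounded curvature on each time slice, which is incompatible with having been rescaled to make curvature blow up arbitrarily near $z_\infty$ in a neighbourhood — more precisely, tracking back, this contradicts the choice of $z_k$ as a genuine blow-up point relative to $x_k$ with $R(y_k)/R(x_k)\to\infty$. \textbf{The main obstacle} I expect is the careful point-selection and verifying that the selected parabolic neighbourhoods are unscathed and satisfy the curvature-control hypotheses uniformly in $k$ — i.e.\ checking that the surgery process never interferes with the controlled-scale geometry — since everything else is essentially a transcription of Perelman's argument via the lemmas already quoted.
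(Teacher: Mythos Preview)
Your overall strategy --- contradiction, rescaling, point-picking at a high-curvature point $z_k$, extracting a nonnegatively-curved limit via the canonical-neighbourhood derivative bounds and Proposition~\ref{prop:limite positive} --- matches the paper's approach (which defers to \cite[Theorem 4.2.1]{B3MP}). But two steps are wrong. First, the limit of the segments $[x_k z_k]$ based at $z_k$, with $x_k$ escaping to infinity, produces only a \emph{ray}, not a line; Toponogov's splitting needs a line. The \cite{B3MP} argument obtains the line differently: hypothesis (ii) forces every high-curvature point along the segment to be centre of a \emph{neck} --- caps are excluded by Corollary~\ref{cor:VC=cou} (a minimising segment joining exterior points cannot enter a core), closed neighbourhoods by the curvature-ratio bound --- so the limit is covered by $2\epsi$-necks and hence diffeomorphic to $S^2\times\RR$ by Proposition~\ref{prop:open tube}; having two ends, it then contains a line, and only then does Toponogov split it. This neck-covering is precisely the ``Step 2'' the paper flags.

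Second, your stated contradiction is not one. After rescaling at $z_k$ so that $R(z_k)=1$, the limit has bounded curvature \emph{by construction of the point-picking}; nothing blows up near $z_\infty$, so ``bounded curvature on each time slice'' cannot be ``incompatible'' with anything you have set up. Nor have you shown the limit is ancient, so calling it a $\kappa$-solution is unjustified. You must identify concretely what feature of the limit contradicts the original sequence; as written, the argument has no closing step. (Separately, your invocation of the surgery threshold $\Theta$ is illegitimate: the theorem is stated for an arbitrary surgical solution with pinching, not an $(r,\delta)$-surgical one, so no $\Theta$ is available. Whatever backward-time control is needed comes instead from the ``strong'' part of the strong necks guaranteed by hypothesis (ii).)
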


\begin{proof}
It suffices to redo the proof of \cite[Theorem 4.2.1]{B3MP}, with the following minor differences: 
\begin{itemize} 
\item In Step 1, to control the injectivity radius, one can use property iii) in the definition of $(\epsi,C)$-canonical neighbourhoods, as the canonical neighbourhood considered is not $\epsi$-round.
\item In Step 2, to prove that $[x'_{k}y'_{k}]$ is covered by strong $\epsi$-necks, one has to rule out closed canonical neighbourhoods. This is clear by the curvature ratio properties. 
Then use Corollary \ref{cor:VC=cou} instead of \cite[Lemma 1.3.2]{B3MP}
\end{itemize}
 \end{proof}

\subsection{Existence of cutoff parameters}

For the convenience of the reader, we restate Theorem~\ref{thm:echelle de coupure}.

\begin{theo}[Cutoff parameters]\label{thm:cutoff-bis}
For all $r,\delta>0$, there exist $h\in (0,\delta r)$ and $D>10$ such that
if $(M(\cdot),g(\cdot))$ is a complete surgical solution of bounded curvature defined on an interval $[a,b]$, with curvature pinched toward positive and satisfying $(CN)_r$, then the following holds:

Let  $t\in [a,b]$ and $x,y,z\in M(t)$  such that 
$R(x,t)\leq 2 /r^2$, $R(y,t)=h^{-2}$, and $R(z,t)\geq D/h^2$. Assume there is a curve  $\gamma$ connecting $x$ to $z$ and containing $y$, such that  each point of $\gamma$ with scalar curvature in $[2C_{0}r^{-2},{C_{0}}^{-1}Dh^{-2}]$ is centre of a 
$\epsi_{0}$-neck. Then $(y,t)$ is centre of a strong $\delta$-neck.
\end{theo}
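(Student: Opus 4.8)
The plan is to argue by contradiction, following the strategy of Perelman and of \cite[Section 4.3]{B3MP}. Suppose the statement fails. Then there are fixed $r,\delta>0$, sequences $h_k\to 0$, $D_k\to +\infty$, surgical solutions $(M_k(\cdot),g_k(\cdot))$ with curvature pinched toward positive and satisfying $(CN)_r$, times $t_k$, points $x_k,y_k,z_k$ with $R(x_k,t_k)\le 2r^{-2}$, $R(y_k,t_k)=h_k^{-2}$, $R(z_k,t_k)\ge D_kh_k^{-2}$, and curves $\gamma_k$ from $x_k$ through $y_k$ to $z_k$ along which every point of scalar curvature in $[2C_0r^{-2},C_0^{-1}D_kh_k^{-2}]$ is the centre of an $\epsi_0$-neck, yet $(y_k,t_k)$ is \emph{not} the centre of a strong $\delta$-neck. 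Rescale by $R(y_k,t_k)=h_k^{-2}$, i.e.\ set $\bar g_k:=h_k^{-2}g_k(t_k)$ and consider the pointed manifolds $(M_k,\bar g_k,y_k)$; the goal is to extract a limit which is a piece of the round cylinder $S^2\times\Rr$, in fact a limit of evolving metrics which is the cylindrical flow, thereby contradicting the failure of the strong $\delta$-neck property for $k$ large.

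The key steps, in order, are as follows. First, one shows the rescaled metrics have bounded geometry on balls of arbitrarily large radius around $y_k$: along $\gamma_k$ the curvature is controlled because the relevant points sit in $\epsi_0$-necks, and away from $\gamma_k$ one invokes the Curvature-distance theorem (Theorem~\ref{thm:courbure distance}) together with $(CN)_r$ and the canonical neighbourhood estimates to get bounded curvature at bounded distance; the Hamilton--Ivey pinching (Proposition~\ref{prop:ham ivey}) and Proposition~\ref{prop:limite positive} guarantee any limit has nonnegative curvature operator. Second, $(CN)_r$ together with property (iii) of canonical neighbourhoods (or $\kappa$-noncollapsing, which here one gets from the neck structure) gives a lower injectivity radius bound, so one can apply the compactness theorem for pointed Riemannian manifolds to get a smooth limit $(M_\infty,\bar g_\infty,y_\infty)$ of nonnegative curvature operator. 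Third, using that $x_k$ has bounded rescaled curvature (it is at rescaled distance going to infinity, since $R(x_k,t_k)\le 2r^{-2}\ll h_k^{-2}$) and $z_k$ has rescaled curvature going to infinity, and that $\gamma_k$ passes through both, one shows $M_\infty$ is noncompact with a point of scalar curvature $1$ (namely $y_\infty$) and that it contains necks going off to infinity on the side toward $z_k$; by the structure of nonnegatively curved noncompact $3$-manifolds that are unions of $\epsi_0$-necks, and using that there is no cap (this is where one argues the limit cannot close up, as there are $\epsi_0$-necks arbitrarily far along $\gamma_k$ on the high-curvature side), one concludes $M_\infty$ is isometric to the round cylinder $S^2\times\Rr$ of scalar curvature $1$. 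Fourth, one upgrades to a limit of evolving metrics: using the Local curvature-control lemma (Lemma~\ref{lem:courbure distance temps}) one shows the parabolic neighbourhoods $P(y_k,t_k,(\beta\epsi_0)^{-1}h_k,\,\cdot\,)$ are unscathed with controlled curvature for a definite backward time after rescaling, so the limit flow exists on $[-1,0]$ and, being the Ricci flow starting from a round cylinder, is the cylindrical flow; hence for $k$ large $(y_k,t_k)$ is the centre of a strong $\beta\epsi_0$-neck, and then Lemma~\ref{lem:neck strengthening} promotes it to a strong $\epsi_0$-neck — but in fact we want a strong $\delta$-neck, so one should run the rescaling so that the limit is \emph{exactly} the cylindrical flow, giving closeness better than any fixed $\delta$ for $k$ large, the desired contradiction.

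The main obstacle I expect is Step three: ruling out that the limit $M_\infty$ is something other than the round cylinder — in particular ruling out a cap on one side and controlling the global topology/geometry of the limit. The hypothesis that every point of $\gamma_k$ with scalar curvature in the large window $[2C_0r^{-2},C_0^{-1}D_kh_k^{-2}]$ lies in an $\epsi_0$-neck is exactly what is designed to prevent a cap from appearing in the limit on the high-curvature side (toward $z_k$), since the window widens without bound; on the low-curvature side one uses that $x_k$ has bounded rescaled curvature to see the limit extends at least in one direction, and then the nonnegative curvature plus the neck structure forces a complete cylinder both ways. Making this rigorous requires the soul-type / Toponogov arguments for nonnegatively curved $3$-manifolds together with careful bookkeeping of how far along $\gamma_k$ the neck hypothesis reaches after rescaling, essentially the content of \cite[Section 4.3]{B3MP}; the point to check here is that the adaptation from Ricci flow with bubbling-off to surgical solutions only affects the unscathedness verifications (handled by Lemma~\ref{lem:courbure distance temps} and the structure of almost standard caps), not the geometric limiting argument itself.
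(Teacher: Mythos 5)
Your proposal follows the same contradiction-by-rescaling strategy as the paper: rescale at $y_k$ by $R(y_k,t_k)=h_k^{-2}$, use the curvature-distance theorem and the neck hypothesis along $\gamma$ to control curvature on rescaled balls, extract a limit, identify it as the round cylinder (the paper gets this cleanly by integrating the gradient estimate $|\nabla R|<C_0R^{3/2}$ along $\gamma$ to show the rescaled ball $B(\bar y_k,\rho)$ has scalar curvature in the given window, hence is entirely covered by $\epsi_0$-necks since $x_k,z_k$ lie outside it, whence Proposition~\ref{prop:open tube} and Toponogov apply directly — no soul-type case analysis needed), upgrade to the cylindrical flow, and derive the contradiction. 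The brief detour through Lemma~\ref{lem:neck strengthening} in your Step 4 is a misstep, since that lemma serves a different purpose (persistence of caps); you self-correct, and indeed once the backward limit is identified as the cylindrical flow, convergence directly yields strong $\delta$-necks at $(y_k,t_k)$ for $k$ large.
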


\begin{proof}

The proof is almost the same as for \cite[Theorem 4.3.1]{B3MP}, arguing by contradiction. We only  need to adapt Step 1.

 \setcounter{etape}{0} Fix two constants
 $r>0, \delta>0$, sequences 
$h_k \to 0$, $D_k\to+\infty$, a sequence $(M_k(\cdot),g_k(\cdot))$ of surgical solutions satisfying the above hypotheses, and  sequences $t_k>0$, $x_k, y_k, z_k \in M$ such that $R(x_k, t_k) \leq 2r^{-2}$,  $R(z_k,t_k)\geq D_kh_k^{-2}$, and $R(y_k,t_k)=h_k^{-2}$. Let $\gamma_k$ be a curve from 
  $x_k$ to $z_k$ such that $y_k\in \gamma_k$, whose points of scalar curvature in $[2C_{0}r^{-2},{C_{0}}^{-1}Dh^{-2}]$ are centre of $\epsi_{0}$-neck. Finally assume that  $(y_k,t_k)$ is not centre of a strong
$\delta$-neck.

Consider the sequence  $(\bar M_k(\cdot),\bar g_k(\cdot))$ defined
by the following parabolic rescaling $$\bar g_k(t)=h_k^{-2}g_k(t_k+th_k^2)\,.$$
In order to clarify notation, we shall put a bar on points when they are involved in geometric quantities computed with respect of the metric $\bar g_{k}$. Thus for instance, we have $R(\bar y_k,0)=1$. The contradiction will come from extracting a converging subsequence
of the pointed sequence $(\bar M_k(\cdot),\bar g_k(\cdot),\bar y_k,0)$ and
showing that the limit is the cylindrical flow on $S^2\times\Rr$,
which implies that for $k$ large enough, $y_k$ is centre of
some strong $\delta$-neck, contradicting our hypothesis.

\begin{etape}
$(\bar M_k(0),\bar g_k(0),\bar y_k)$ subconverges in the pointed $\mathcal{C}^\infty$ sense to $(S^2\times \textbf{R},g_\infty)$ where $g_\infty$ is a product metric of nonnegative curvature operator and scalar curvature at most $2$.
\end{etape}

\begin{proof}
First we control the curvature on balls around
$\bar y_k$. Since $R(y_k,t_k)$ goes to $+\infty$, Theorem~\ref{thm:courbure distance} implies that for all $\rho>0$, there exists $\Lambda(\rho)>0$ and $k_0(\rho)>0$ such that $\bar g_k(0)$ has scalar curvature bounded above by $\Lambda(\rho )$ on $B(\overline y_k,\rho)$ for
$k\geq k_0(\rho)$. Moreover, by Assumption~(iii) of the definition of canonical neighbourhoods, $\bar g_{k}(\cdot)$ is ${C_{0}}^{-1}$-noncollapsed at  $(y_k,0)$. Indeed  $R(y_{k},t_{k})={h_{k}}^{-2} \in [2C_{0}r^{-2},{C_{0}}^{-1}Dh^{-2}]$, hence $y_{k}$ is centre 
of a $\epsi_{0}$-neck. Thus we can apply Gromov's compactness theorem to  extract a converging subsequence with regularity $C^{1,\alpha}$. 

Let us prove that for large $k$, the ball $B(\bar y_k,\rho )$ is covered by $\epsi_0$-necks. Recall that $g_k(t_k)$ satisfies
$$|\nabla R| < C_{0} R^{3/2},$$
at points covered by canonical neighbourhoods. 
Take a point $y$ such that $R(y,t_k)\leq  2C_{0}r^{-2}$ and integrate the previous inequality on the portion of $[y_ky]$ where $R \geq 2C_{0}r^{-2}$. An easy computation yields
\begin{equation}\label{distance}
d(\bar y,\bar y_k)\geq {1\over h_k}{2\over C_{0}}(\frac{r}{\sqrt{2C_{0}}}-h_k) \geqslant \rho,,
\end{equation}
for $k$ larger than some  $k_1(\rho) \geq k_0(\rho)$. It follows that  the scalar curvature of $g_k(t_k)$ is at least $2C_{0}r^{-2}$ on $B(\bar y_k,\rho )$
for every integer $k \geq k_1(\rho)$. It follows that $x_{k} \notin B(\bar y_k,\rho )$ and that $B(\bar y_k,\rho )$ is covered by $(\epsi_{0},C_{0})$-canonical neighbourhoods. 
On the other hand, for $k$ larger than some  $k_2(\rho)$, we have $R(\bar y,0) \leqslant \Lambda(\rho) <  {C_{0}}^{-1}Dh^{-2}$ for all $\bar y\in B(\bar y_k,\rho )$. 
It follows that  $\gamma \cap B(\bar y_k,\rho )$ is covered by $\epsi_{0}$-necks. As $z_{k} \notin B(\bar y_k,\rho )$, it follows that  $B(\bar y_k, \rho )$ is contained in the union $U_{\rho,k}$ of these necks: indeed, every segment coming from $\bar y_k$ and of length less than $\rho$ lies there.

Now by the $(CN)_{r}$ assumption, these  necks are strong $\epsi_0$-necks. The scalar curvature on $B(\bar y_k,\rho )$ being less than $\Lambda(\rho)$ for $k \geqslant k_{0}$, we deduce 
that on each strong neck, $\bar g_k(t)$ is smoothly defined on $[-\frac{1}{2\Lambda(\rho)},0]$, and has curvature bounded above by $2\Lambda(\rho)$. Hence for each $\rho >0$, the parabolic balls $P(\bar y_k,0,\rho,-\frac{1}{2\Lambda(\rho)})$ are
unscathed, with scalar curvature bounded above by  $2\Lambda(\rho)$ for all  $k \geq k_2(\rho)$. Since $g_k(\cdot)$ has curvature pinched toward positive, this implies a uniform control of the curvature operator there.

Hence we can apply the local compactness theorem \ref{thm:local compactness}. Up to extracting, $(\bar M_k(0),\bar g_k(0),\bar y_k)$ converges to some complete noncompact pointed riemannian $3$-manifold $(\overline M_\infty,\bar g_\infty,\bar y_\infty)$. By Proposition~\ref{prop:limite positive}, the limit has nonnegative curvature operator. 

Passing to the limit, we get a covering of $\overline M_\infty$ by $2\epsi_0$-necks. Then Proposition~\ref{prop:open tube} shows that $\overline M_\infty$ is diffeomorphic to $S^2\times\Rr$. In particular, it has two ends, so it contains a line, and Toponogov's theorem implies that it is the metric product of some (possibly nonround) metric on $S^2$ with $\Rr$.

As a consequence, the spherical factor of this product must be $2\epsi_0$-close to the round metric on $S^2$ with scalar curvature $1$. Hence the scalar curvature is bounded above by $2$ everywhere. This finishes the proof of Step~1.
\end{proof}

Henceforth we pass to a subsequence, so that $(\bar M_k(0),\bar g_k(0))$ satisfies the conclusion of Step~1.

The rest of the proof is the same as for  \cite[Theorem 4.3.1]{B3MP}. 
\end{proof}

\section{Proof of Proposition A}\label{sec:proofA}

\subsection{Piecing together necks and caps}

\begin{defi}
An $\epsi$-\bydef{tube} is an open subset $U\subset M$ which is equal to a union of $\epsi$-necks, and whose closure in $M$ is diffeomorphic to $S^2\times I$, $S^2\times \Rr$, or $S^2\times [0,+\infty)$.
\end{defi}

\begin{prop}\label{prop:caps tubes}
Let $\epsi\in (0,2\epsi_0]$. Let $(M,g)$ be a connected, orientable riemannian $3$-manifold. Let $X$ be a closed, connected subset of  $M$ such that every point of $X$ is the centre of an $\epsi$-neck or an $\epsi$-cap. Then there exists an open subset $U\subset M$ containing $X$ such that either
\begin{enumerate}
\item $U$ is equal to $M$ and diffeomorphic to $S^3$, $S^2\times S^1$, $RP^3$, $RP^3\# RP^3$, $\Rr^3$,  $S^2\times \Rr$, or a punctured $RP^3$, or
\item $U$ is a $10\epsi$-cap, or
\item $U$ is a $10\epsi$-tube.
\end{enumerate}
\end{prop}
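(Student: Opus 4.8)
The strategy is to build $U$ as a maximal union of $\epsi$-necks and cores of $\epsi$-caps centred at points of $X$, and then to run a covering/gluing argument on the ``chain'' of necks this produces. First I would choose, for each $x\in X$, either an $\epsi$-neck $N_x$ or an $\epsi$-cap $U_x$ (with core $V_x$) centred at $x$, and set $U:=\bigcup_{x\in X} N_x \cup \bigcup_{x\in X} U_x$; since $X$ is closed and the manifold is second countable, one can extract a locally finite subfamily still covering $X$. The key local tool is Lemma~\ref{lem:glueneck}: whenever two $\epsi$-necks (or an $\epsi$-neck and the neck part of an $\epsi$-cap) overlap substantially, one is essentially contained in a slight enlargement of the other, their middle spheres are isotopic, and one controls how the neck coordinate $\pi$ transforms. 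This lets me organise the necks of the covering into a linearly (or cyclically) ordered chain along an $\Rr$- or $S^1$-parameter, exactly as in the compact/noncompact neck-chain arguments of \cite{B3MP}.

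The proof then splits according to how many $\epsi$-caps are involved and whether the chain closes up. \emph{Case (a): no caps.} If $X$ meets only necks, the chain argument shows $U$ is a union of necks whose closure is $S^2\times I$, $S^2\times\Rr$, or $S^2\times[0,\infty)$ after possibly enlarging each neck by a factor $10$, giving a $10\epsi$-tube (conclusion 3); if the chain closes up cyclically, one gets $M=U$ diffeomorphic to $S^2\times S^1$ (conclusion 1). \emph{Case (b): exactly one cap.} The cap caps off one end of the neck-chain; if the other end is a free neck boundary, $U$ is a $10\epsi$-cap (conclusion 2); if the chain runs off to infinity, $U$ is a punctured $RP^3$ or $\Rr^3$ depending on whether the core is $RP^3\setminus B^3$ or $B^3$; if the chain closes back onto the same cap's neck, $U=M$ is $RP^3$ or $S^3$. \emph{Case (c): two caps.} The chain is finite with a cap at each end; gluing gives $M=U$ diffeomorphic to $S^3$, $RP^3$, or $RP^3\#RP^3$ depending on the two core types (using that $X$ is connected, so no third component intrudes). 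Throughout, Corollary~\ref{cor:VC=cou} and Lemma~\ref{lem:odd intersection} are used to guarantee that geodesics behave well with respect to middle spheres, so the local pieces assemble into a globally embedded $S^2\times I$ or cap.

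Two points need care. First, one must show the chain of overlapping necks does not ``double back'' on itself in a pathological way or accumulate: this is where local finiteness of the chosen cover and the quantitative control in Lemma~\ref{lem:glueneck}(ii) on the $\pi$-coordinates are essential — consecutive middle spheres advance by a definite amount $(10\epsi)^{-1}\pm 10$, so the chain is genuinely parametrised by an interval or circle. Second, the passage from ``$\epsi$-necks'' to ``$10\epsi$-tube/cap'' in the conclusion is exactly to absorb the loss incurred when one replaces a neck by a slightly larger neck to make middle spheres line up; I would keep track that all enlargements stay within factor $10$.

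\textbf{Main obstacle.} The hard part is the bookkeeping in Case~(b) and Case~(c): showing that the two ``ends'' of the neck-chain are correctly identified (a cap, a free boundary sphere, or an escape to infinity), that no extra components of caps/necks sneak in — which uses connectedness of $X$ and $M$ — and that the resulting closed manifolds are precisely the ones listed. Identifying $RP^3\#RP^3$ versus $S^3$ versus $RP^3$ from the two core types (both $B^3$, one each, or both $RP^3\setminus B^3$) is a standard but delicate gluing computation; this is essentially the content of the corresponding lemma in \cite{B3MP}, and I expect the proof here to be a near-verbatim adaptation of that argument.
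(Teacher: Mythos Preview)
Your proposal is essentially correct and follows the same neck-chaining strategy as the paper, with Lemma~\ref{lem:glueneck} as the key local tool. The paper organises the argument slightly differently: instead of starting from a global locally finite cover and extracting a chain, it builds the chain iteratively from a single basepoint, adding one $10\epsi$-neck at a time at the current boundary. For the infinite one-sided chain, the paper uses a short volume argument (compact paths between the two putative frontier components would force relative compactness, hence bounded scalar curvature, hence a definite volume increment at each step, contradicting finite volume) rather than invoking local finiteness of a cover; your advance-by-a-definite-amount observation on the $\pi$-coordinate plays the same role.

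One small correction to your case analysis: in Case~(b) the subcase ``the chain closes back onto the same cap's neck, giving $S^3$ or $RP^3$'' does not occur. A cap has a single boundary component, so with exactly one cap the chain either terminates at a free $S^2$ (conclusion~2), runs off to infinity (giving $\Rr^3$ or punctured $RP^3$), or meets a \emph{second} cap. All the closed outcomes $S^3$, $RP^3$, $RP^3\#RP^3$ arise only in your Case~(c), according to whether the two cores are $(B^3,B^3)$, $(B^3,RP^3\setminus B^3)$, or $(RP^3\setminus B^3,RP^3\setminus B^3)$.
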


\begin{proof}
First we deal with the case where $X$ is covered by $\epsi$-necks.

\begin{lem}\label{lem:onlynecks}
If every point of $X$ is centre of an $\epsi$-neck, then there exists an open set $U$ containing $X$ such that $U$ is a $10\epsi$-tube, or $U$ is equal to $M$ and diffeomorphic to $S^2\times S^1$ or $S^2\times \Rr$.
\end{lem}

\begin{proof}
Let $x_0$ be a point of $X$ and $N_0$ be a $10\epsi$-neck centred at $x_0$, contained in an $\epsi$-neck $U_0$, also centred at $x_0$.
If $X\subset N_0$ we are done. Otherwise, since $X$ is connected, we can pick a point $x_1\in X\cap N_0$ and a $10\epsi$-neck $N_1$ centred at $x_1$, with $x_1$ arbitrarily near the boundary of $N_0$. By Lemma~\ref{lem:glueneck}, an appropriate choice of $x_1$ ensures that $N_1\subset U_0$ and the middle spheres of $N_0$ and $N_1$ are isotopic. In particular, the closure of $N_0\cup N_1$ is diffeomorphic to $S^2\times I$, so $N_0\cup N_1$ is a $10\epsi$-tube.

If $X\subset N_0\cup N_1$ then we can stop. Otherwise, we pick a $10\epsi$-neck $N_2$ centred at some point $x_2$ near the boundary component of $N_1$ that does not lie in $N_0$, and iterate the construction as long as possible. Three cases may occur.

\paragraph{Case 1} The construction stops with some $10\epsi$-tube $N_0\cup\cdots\cup N_k$ containing $X$. Then we are done.

\paragraph{Case 2} The construction stops with some $10\epsi$-tube $N_0\cup\cdots\cup N_k$ such that adding another neck $N_{k+1}$ does \emph{not} produce a $10\epsi$-tube.

This can only happen if $N_{k+1}\cap N_0$ is non empty. In this case, by adjusting the centre $x_{k+1}$ of $N_{k+1}$, we can ensure that $N_0,\ldots,N_{k+1}$ cover $M$ and that the intersection of $N_{k+1}$ and $N_0$ is topologically standard. In this case, $M$ fibers over the circle with fiber $S^2$. Since $M$ is orientable, it follows that  $M$ is diffeomorphic to $S^2\times S^1$.

\paragraph{Case 3} The construction can be iterated forever.

In this case, the union $U$ of all $N_k$'s is a $10\epsi$-tube.

\begin{claim}
The frontier of $U$ is connected, equal to the boundary component of $\bar N_0$ which does not lie in $N_1$.
\end{claim}

We prove the claim by contradiction. If it is not true, then we can pick two points $x,y\in X$, each one being close to a distinct component of the frontier of $U$. Since $U\cap X$ is connected, we can find a path $\gamma$ connecting $x$ to $y$ in $U\cap X$. Now $\gamma$ is compact, so it can be covered by finitely many $10\epsi$-necks, each of which is contained in some $\epsi$-neck. We thus obtain a finite collection of $\epsi$-necks which cover $U$. Hence $U$ is relatively compact. This shows that the scalar curvature is bounded on $U$. Hence each $N_k$ has a definite size, and adding each $N_k$ to $N_0,\ldots,N_{k-1}$ adds definite volume. It follows that $\vol U$ is infinite, which is a contradiction. This proves the claim.

We continue the proof of Lemma~\ref{lem:onlynecks}.
 If $U$ contains $X$, then we are done. Otherwise, we pick a point $x_{-1}\in N_0\cap X$ close to the frontier, and choose a neck $N_{-1}$ centred at $x_{-1}$. We perform the same iterated construction as before. At each stage, we have a $10\epsi$-tube $N_{-k}\cup \cdots \cup N_{-1} \cup U$ whose frontier is connected. Hence the analogue of Case 2 above cannot occur. If the construction stops, then we have found a $10\epsi$-tube containing $X$. Otherwise the union $V$ of all $N_k$'s for $k\in\Zz$ is a $10\epsi$-tube. Repeating the argument used to prove the claim, we see that the frontier of $V$ is empty. Since $M$ is connected, it follows that $V=M\cong S^2\times \Rr$. 
\end{proof}

To complete the proof of Proposition~\ref{prop:caps tubes}, we need to deal with the case where there is a point $x_0\in X$ which is the centre of an $\epsi$-cap $C_0$. By definition of a cap, some collar neighbourhood of the boundary of $C_0$ is an $\epsi$-neck $U_0$. If $X\not\subset C_0$, pick a point $x_1$ close to the boundary of $C_0$. If $x_1$ is centre of a $10\epsi$-neck $N_1$, then we apply Lemma~\ref{lem:glueneck} again to find that $C_1:=C_0\cup N_1$ is a $10\epsi$-cap. Again we iterate this construction until one of the following things occur:

\paragraph{Case 1} The construction stops with a  $10\epsi$-cap containing $X$.

\paragraph{Case 2} The construction stops with a  $10\epsi$-cap $C_k=C_0\cup\cdots\cup N_k$ and a point $x_{k+1}$ near its frontier, such that $x_{k+1}$ is centre of  a  $10\epsi$-cap $C$ whose boundary is contained in $C_k$. Then $C_k\cup C$ equals $M$ and is diffeomorphic to $S^3$, $RP^3$, or $RP^3\# RP^3$.

\paragraph{Case 3} The construction goes on forever.
Then the same volume argument as in the proof of the above Claim shows that the union of all $C_k's$ is $M$. Thus $M$ itself is a $10\epsi$-cap, diffeomorphic to $\Rr^3$ or a punctured $RP^3$.
\end{proof}

Putting $X=M$, we obtain the following corollary:
\begin{theo}\label{thm:reconnait topo}
Let $\epsi\in (0,2\epsi_0]$. Let $(M,g)$ be a connected, orientable riemannian $3$-manifold. If every point of $M$ is the centre of an $\epsi$-neck or an $\epsi$-cap, then $M$ is diffeomorphic to $S^3$,  $S^2\times S^1$, $RP^3$, $RP^3\# RP^3$, $\Rr^3$,  $S^2\times \Rr$, or a punctured $RP^3$.
\end{theo}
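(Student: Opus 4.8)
The plan is to deduce the statement directly from Proposition~\ref{prop:caps tubes} by taking $X=M$. This is legitimate: $M$ is connected and orientable, $M$ is trivially a closed connected subset of itself, and by hypothesis every point of $M$ (hence of $X$) is the centre of an $\epsi$-neck or an $\epsi$-cap. So Proposition~\ref{prop:caps tubes} applies and yields an open subset $U\subset M$ with $X\subset U$; since $X=M$ we must have $U=M$, and one of the three alternatives of the proposition holds.

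First I would dispose of alternative (1): there $M=U$ is itself diffeomorphic to one of $S^3$, $S^2\times S^1$, $RP^3$, $RP^3\# RP^3$, $\Rr^3$, $S^2\times\Rr$, or a punctured $RP^3$, which is exactly the desired conclusion. Next, in alternative (2), $M=U$ is a $10\epsi$-cap; unwinding the definition of an $\epsi$-cap, $U$ is the union of a core $V$ diffeomorphic to $B^3$ or to $RP^3\setminus B^3$ together with a neck glued along $\bord V$, so $M$ is diffeomorphic either to $\Rr^3$ or to a punctured $RP^3$, again in the list.

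Finally, in alternative (3), $M=U$ is a $10\epsi$-tube, so by definition its closure $\overline{U}=M$ is diffeomorphic to $S^2\times I$, $S^2\times\Rr$, or $S^2\times[0,+\infty)$. Since $M$ is a manifold \emph{without} boundary, the possibilities $S^2\times I$ and $S^2\times[0,+\infty)$ are excluded, leaving $M\cong S^2\times\Rr$, which is once more in the list. Combining the three cases gives the theorem.

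There is no serious obstacle here: the entire geometric and topological content is packaged in Proposition~\ref{prop:caps tubes} (and ultimately in Lemmas~\ref{lem:glueneck}, \ref{lem:onlynecks}, and the volume argument therein). The only point deserving an explicit remark is the boundary argument that eliminates the two boundaried models in the tube case; everything else is a direct application of the proposition with $X=M$.
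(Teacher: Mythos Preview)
Your argument is correct and is exactly the approach the paper takes: the theorem is stated as an immediate corollary of Proposition~\ref{prop:caps tubes} obtained by putting $X=M$. Your additional unpacking of alternatives (ii) and (iii) is sound and simply makes explicit what the paper leaves to the reader.
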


Here is another consequence of Proposition~\ref{prop:caps tubes}:
\begin{corol}\label{cor:caps tubes}
Let $\epsi\in (0,2\epsi_0]$. Let $(M,g)$ be an orientable riemannian $3$-manifold. Let $X$ be a closed submanifold of $M$ such that every point of $X$ is the centre of an $\epsi$-neck or an $\epsi$-cap. Then one of the following conclusions holds:
\begin{enumerate}
\item $M$ is diffeomorphic to $S^3$, $S^2\times S^1$, $RP^3$, $RP^3\# RP^3$, $\Rr^3$,  $S^2\times \Rr$, or a punctured $RP^3$, or
\item There exists a locally finite collection $N_1,\ldots,N_p$ of $10\epsi$-caps and $10\epsi$-tubes with disjoint closures such that $X\subset \bigcup_i N_i$.
\end{enumerate}
\end{corol}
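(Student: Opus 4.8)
The plan is to deduce Corollary~\ref{cor:caps tubes} from Proposition~\ref{prop:caps tubes} by applying the latter to each connected component of $X$ separately, and then checking that the resulting cover can be taken locally finite with pairwise disjoint closures. First I would dispose of the case where $M$ is one of the listed "small" manifolds: if any component $C$ of $X$ yields conclusion (i) of Proposition~\ref{prop:caps tubes}, i.e.\ the associated open set $U$ equals $M$ and is one of $S^3$, $S^2\times S^1$, $RP^3$, $RP^3\#RP^3$, $\Rr^3$, $S^2\times\Rr$, or a punctured $RP^3$, then we are in case (i) of the corollary and there is nothing more to prove. So from now on assume that no component falls in that case.

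Next, for each connected component $C_\alpha$ of $X$, Proposition~\ref{prop:caps tubes} (applied with $X=C_\alpha$; note $C_\alpha$ is closed since $X$ is a closed submanifold and its components are closed because $X$ is locally connected) produces an open set $U_\alpha\supset C_\alpha$ which is a $10\epsi$-cap or a $10\epsi$-tube. The remaining work is purely topological: to arrange that the family $\{U_\alpha\}$ is locally finite and that the closures $\overline{U_\alpha}$ are pairwise disjoint. For local finiteness, I would use that $X$ is a closed submanifold of $M$, hence the collection of its components is locally finite in $M$; then by shrinking each $U_\alpha$ slightly — e.g.\ replacing it by a smaller sub-cap or sub-tube still containing $C_\alpha$, using that caps and tubes are exhausted by slightly smaller caps and tubes — one can ensure each point of $M$ has a neighbourhood meeting only finitely many $U_\alpha$. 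For disjointness of closures, observe that two components $C_\alpha$, $C_\beta$ have positive distance in $M$ when restricted to any compact region (again by the closed submanifold hypothesis plus local finiteness of components), so one may shrink $U_\alpha$ and $U_\beta$ to have disjoint closures; doing this compatibly over all pairs is where one must be slightly careful, but it is handled by a standard exhaustion-and-diagonal argument, choosing at each stage the shrinking small enough relative to the finitely many already-chosen sets nearby.

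Relabelling the (at most countably many) sets $U_\alpha$ as $N_1,N_2,\ldots$ — writing $N_1,\ldots,N_p$ when $X$ has finitely many components — gives conclusion (ii): a locally finite collection of $10\epsi$-caps and $10\epsi$-tubes with pairwise disjoint closures whose union contains $X$. The main obstacle, such as it is, is bookkeeping: making the shrinking of the $U_\alpha$'s simultaneously achieve local finiteness and disjoint closures without losing the property that each still contains its component $C_\alpha$ and is still a genuine $10\epsi$-cap or $10\epsi$-tube. This is not conceptually hard, since both "$10\epsi$-cap" and "$10\epsi$-tube" are stable under passing to suitable smaller sub-caps/sub-tubes, but it is the only step requiring care; everything geometric has already been done in Proposition~\ref{prop:caps tubes}.
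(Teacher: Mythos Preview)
Your overall strategy—apply Proposition~\ref{prop:caps tubes} to each component of $X$ and then adjust the resulting collection—matches the paper's. However, there is a genuine gap in the adjustment step: you propose to achieve disjoint closures by \emph{shrinking} each $U_\alpha$, whereas the paper achieves it by \emph{merging} overlapping ones. Shrinking cannot work in general.

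Here is the obstruction. Suppose $M$ contains a cylindrical region and $X$ consists of two parallel $2$-spheres $C_\alpha = S^2\times\{0\}$ and $C_\beta = S^2\times\{1\}$ lying well inside a single $\epsi$-neck. Every point of $C_\alpha$ is the centre of an $\epsi$-neck, and any $10\epsi$-neck centred there has length of order $(10\epsi)^{-1}R^{-1/2}$, which may be much larger than the distance between the two spheres. Hence \emph{every} $10\epsi$-tube containing $C_\alpha$ must also meet (indeed contain) $C_\beta$, and vice versa. There is no way to shrink $U_\alpha$ and $U_\beta$ to disjoint $10\epsi$-tubes each containing its component; the claim that ``caps and tubes are exhausted by slightly smaller caps and tubes still containing $C_\alpha$'' fails here, because a $10\epsi$-tube has a minimal length scale dictated by the curvature, not by the size of $C_\alpha$.

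The correct fix, and the one the paper uses, is to merge: if $\overline{U_\alpha}\cap\overline{U_\beta}\neq\emptyset$, replace them by a single $10\epsi$-tube or $10\epsi$-cap containing $C_\alpha\cup C_\beta$ (the overlap region is covered by necks, so Lemma~\ref{lem:glueneck} ensures the union is again a tube or cap, unless it closes up and we land in case~(i)). Iterating this merging over all overlaps yields a locally finite family with pairwise disjoint closures.
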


\begin{proof}
We apply Proposition~\ref{prop:caps tubes} to each connected component of $X$. If Case~(i) of the required conclusion does not hold, then we have found a locally finite collection of $10\epsi$-caps and $10\epsi$-tubes which cover $X$. By merging some of them if necessary, we can ensure that they have disjoint closures.
\end{proof}

Finally, we have a more precise result when there are just necks:

\begin{prop}\label{prop:open tube}
Let $\epsi\in (0,2\epsi_0]$. Let $(M,g)$ be an open riemannian $3$-manifold such that every point of $M$ is centre of an $\epsi$-neck. Then $M$ is diffeomorphic to $S^2\times\Rr$.
\end{prop}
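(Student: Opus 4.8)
The plan is to reduce Proposition~\ref{prop:open tube} to the essentially combinatorial statement already obtained in Lemma~\ref{lem:onlynecks}, and then rule out the remaining case by using openness of $M$. First I would apply Lemma~\ref{lem:onlynecks} with $X=M$: since every point of $M$ is the centre of an $\epsi$-neck, $X$ is trivially closed, connected (here one should note that $M$ is connected, being locally modelled on $S^2\times\Rr$; if $M$ were not assumed connected one would argue componentwise) and covered by $\epsi$-necks. The lemma then yields an open set $U\supset X=M$, hence $U=M$, such that either $M$ is a $10\epsi$-tube, or $M$ is diffeomorphic to $S^2\times S^1$, or $M$ is diffeomorphic to $S^2\times\Rr$.

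The second step is to dispose of the two unwanted alternatives. If $M$ is diffeomorphic to $S^2\times S^1$, then $M$ is compact, contradicting the hypothesis that $M$ is open. If $M$ is a $10\epsi$-tube, then by the definition of a tube the closure of $M$ in $M$ — which is $M$ itself, since $M$ is the whole manifold — is diffeomorphic to $S^2\times I$, $S^2\times\Rr$, or $S^2\times[0,+\infty)$. The first and third of these have nonempty boundary, so they are ruled out because $M$ is a manifold without boundary; hence $M$ is diffeomorphic to $S^2\times\Rr$. In all surviving cases we conclude $M\cong S^2\times\Rr$, which is the assertion.

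The only genuinely delicate point is making sure the appeal to Lemma~\ref{lem:onlynecks} is legitimate in the noncompact setting: one must check that the hypotheses of that lemma (connectedness and orientability of $M$, closedness of $X$) are satisfied with $X=M$. Orientability is part of the hypothesis of Proposition~\ref{prop:open tube} in the ambient convention of the paper (all $3$-manifolds are orientable), connectedness may be assumed or handled componentwise, and $X=M$ is automatically closed. So there is really no obstacle beyond this bookkeeping; the substance is entirely contained in Lemma~\ref{lem:onlynecks}, and the present proposition is just the extraction of the open, boundaryless case from its conclusion. I would therefore write the proof as a two-line deduction: invoke Lemma~\ref{lem:onlynecks}, then observe that of its three possible conclusions only $S^2\times\Rr$ is an open manifold without boundary.
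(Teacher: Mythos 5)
Your argument is correct and takes essentially the same route as the paper, which simply invokes (the proof of) Lemma~\ref{lem:onlynecks}: taking $X=M$ forces $U=M$ in the lemma's conclusion, and of the resulting possibilities only $S^2\times\Rr$ is a noncompact manifold without boundary. The parenthetical justification that $M$ is connected ``being locally modelled on $S^2\times\Rr$'' is a non sequitur — local structure does not imply global connectedness — but as you also note, connectedness is forced by the conclusion and is implicit in the statement, so this is a cosmetic slip rather than a gap.
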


This follows immediately from the proof of Lemma~\ref{lem:onlynecks}.

\subsection{Proof of Proposition A}

Recall the statement:
\begin{propA}
There exists a universal constant $\bar\delta_A>0$ having the following property: let $r,\delta$ be surgery parameters, $a,b$ be positive numbers with $a<b$, and $\{(M(t),g(t))\}_{t\in (a,b]}$ be an $(r,\delta)$-surgical solution.  Suppose that $\delta\le\bar\delta_A$, and $\Rmax(b)=\Theta$.

Then there exists a riemannian manifold $(M_+,g_+)$, which is obtained from $(M(\cdot),g(\cdot))$ by $(r,\delta)$-surgery at time $b$, and in addition satisfies:
\begin{enumerate}
\item $g_+$ has $\phi_b$-almost nonnegative curvature;
\item $\Rmin(g_+) \geq \Rmin(g(b))$.
\end{enumerate}
\end{propA}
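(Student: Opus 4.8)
The plan is to produce $(M_+,g_+)$ by applying the Metric Surgery Theorem (Theorem~\ref{thm:chirurgie metrique}) to a carefully chosen locally finite collection of strong $\delta$-necks in $M(b)$, then to verify that the resulting manifold satisfies all the requirements of an $(r,\delta)$-surgery together with the two extra conclusions about $\phi_b$-almost nonnegative curvature and nondecrease of $\Rmin$. First I would set $g:=g(b)$, $\Theta:=2Dh^{-2}$, and consider the closed subset
$$X := \{x\in M(b) \mid R(x,b)\ge 2C_0 r^{-2}\}.$$
By Condition $(CN)_r$, every point of $X$ is the centre of an $(\epsi_0,C_0)$-canonical neighbourhood; since $\Rmax(b)=\Theta$ is much larger than $C_0 r^{-2}$, points of very high curvature cannot sit in closed canonical neighbourhoods of type C or D without forcing $R$ to be comparable to $\Theta$ throughout that component, which is then entirely of type C or D and will simply be removed (it is spherical, and since it is $\epsi_0$-round one must rescale so that, after checking the curvature threshold, $\epsi$-roundness with $R\ge 1$ holds --- here is where the bookkeeping with $\Theta$ and the normalisation enters). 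On the remaining part, every point of sufficiently high curvature is centre of a strong $\epsi_0$-neck or an $\epsi_0$-cap, so Corollary~\ref{cor:caps tubes} applies: either a whole component is one of the special topological types (remove it) or $X$ is covered by a locally finite family of $10\epsi_0$-caps and $10\epsi_0$-tubes with disjoint closures.

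Next, inside each such tube or cap I would use the Cutoff Parameters Theorem (Theorem~\ref{thm:echelle de coupure}) to locate a middle sphere on which to cut. Concretely, in a tube running from moderate curvature (where one end meets a point with $R\le 2r^{-2}$, or a cap core, or the boundary of the ambient piece) to curvature $\ge D h^{-2}$ near the high-curvature region, there is a point $y$ with $R(y,b)=h^{-2}$, and by Theorem~\ref{thm:echelle de coupure} the hypotheses (curve $\gamma$ along the tube, all intermediate points being $\epsi_0$-necks) are met, so $(y,b)$ is the centre of a strong $\delta$-neck $N_y$. Collecting one such $\delta$-neck per tube (and handling caps: a cap abutting the high-curvature set has an adjacent neck region where the same argument produces a cutoff $\delta$-neck) gives a locally finite collection $\{N_i\}$ of pairwise disjoint strong $\delta$-necks. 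Now apply Theorem~\ref{thm:chirurgie metrique} with the pinching function $\phi=\phi_b$ (note $g(b)$ has $\phi_b$-almost nonnegative curvature since the solution has curvature pinched toward positive): cutting $M(b)$ along the middle spheres of the $N_i$'s and capping off $3$-balls yields a manifold $M'$ with metric $g_+$ that agrees with $g$ away from the caps, has $\phi_b$-almost nonnegative curvature, and whose added caps are $\delta$-almost standard caps with tips $y_i$ satisfying $R(y_i,b)=h^{-2}$ and $(y_i,b)$ centre of a strong $\delta$-neck. Finally set $M_+$ to be the union of those components of $M'$ that are not spherical, and not diffeomorphic to $\Rr^3$, $S^2\times\Rr$, $S^2\times S^1$, $RP^3\#RP^3$, or a punctured $RP^3$ --- equivalently, remove exactly the ``disappearing'' components, which by Corollary~\ref{cor:caps tubes} and the structure of the capped-off pieces are all of the allowed types.

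It remains to check the three conditions in the definition of $(r,\delta)$-surgery and the two extra conclusions. Condition (3), $\Rmax(g(b))=\Theta$ and $\Rmax(g_+)\le\Theta/2$ if $M_+\ne\emptyset$: this is the crucial curvature-drop estimate. On the parts of $M_+$ coming from $M(b)$ with $R\le 2C_0 r^{-2}$ there is nothing to do since $2C_0 r^{-2}\ll\Theta/2$; on the caps one uses that after rescaling the cap is $\delta'(\delta)$-close to the standard initial metric $\bar g_0$, whose scalar curvature is bounded, and the tip has $R(y_i,b)=h^{-2}$, giving $R\le \mathrm{const}\cdot h^{-2}$ on the cap, which is $\le \Theta/2 = Dh^{-2}$ once $D$ is large --- and any region of $M(b)$ with curvature between these scales must, by the canonical neighbourhood structure and the choice of cutoff, have been removed or lie in a piece that got cut; making this precise (that \emph{every} point of $M_+$ has $R\le\Theta/2$) is the main obstacle and is exactly the point where one needs $\delta\le\bar\delta_A$ and the detailed geometry of strong $\delta$-necks versus the threshold $D h^{-2}$. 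Condition (2) about the $\delta$-almost standard caps is immediate from the output of Theorem~\ref{thm:chirurgie metrique} and the choice of the $y_i$ as tips of strong $\delta$-necks with $R=h^{-2}$. Condition (1) on removable pieces: spherical components removed are shown $\epsi$-round with $R\ge 1$ after the rescaling normalisation, using that they are type-C or type-D canonical neighbourhoods at curvature scale comparable to $\Theta$; the other removed components are of the listed diffeomorphism types by the topological analysis. Conclusion (i), $\phi_b$-almost nonnegative curvature of $g_+$, is part of the conclusion of Theorem~\ref{thm:chirurgie metrique}. Conclusion (ii), $\Rmin(g_+)\ge\Rmin(g(b))$: away from the caps $g_+=g$ so $R$ is unchanged; on each cap, since it is $\delta'(\delta)$-close after rescaling by $R(y_i,b)=h^{-2}$ to $\bar g_0$ whose scalar curvature is everywhere positive and bounded below by a positive constant times $1$, we get $R(g_+)\ge c\,h^{-2}$ on the cap, and $h^{-2}$ is huge compared with $|\Rmin(g(b))|\le 6/(4a+1)$; hence the minimum over $M_+$ is attained on the unchanged part and equals at least $\Rmin(g(b))$ (and if $M_+=\emptyset$ the inequality holds by the convention $\Rmin(\emptyset)=+\infty$).
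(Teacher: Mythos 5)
The broad strategy is the same as the paper's --- identify a locally finite family of strong $\delta$-necks centred at points with $R=h^{-2}$ via Theorem~\ref{thm:echelle de coupure}, apply the Metric Surgery Theorem with $\phi=\phi_b$, then discard the components of high curvature using Theorem~\ref{thm:reconnait topo} --- and your verification of conclusions (i) and (ii) at the end is correct. However, the step where you actually \emph{choose} the cutoff necks is where the paper's proof and yours diverge, and yours has genuine gaps.

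The paper's key device is Lemma~\ref{lem:propA}: take a \emph{maximal} (via Zorn's lemma) collection $\{N_i\}$ of pairwise disjoint cutoff necks and prove by \emph{contradiction} that every component of $M(b)\setminus\bigcup N_i$ lies in $\mathcal G\cup\mathcal O$ or in $\mathcal R\cup\mathcal O$. The contradiction is run on a minimising geodesic $\gamma$ inside a bad component, connecting a point with $R<2r^{-2}$ to a point with $R\ge\Theta/2$; the proof then carefully verifies (using Corollary~\ref{cor:VC=cou}, Lemma~\ref{lem:glueneck}, and the structure of canonical neighbourhoods) that every point of $\gamma$ with scalar curvature in $[2C_0 r^{-2}, C_0^{-1}Dh^{-2}]$ is the centre of an $\epsi_0$-neck, so Theorem~\ref{thm:echelle de coupure} applies and produces a fresh cutoff neck disjoint from the $N_i$'s, contradicting maximality.

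Your proposal replaces this with a direct construction: decompose the set $X=\{R\ge 2C_0 r^{-2}\}$ into $10\epsi_0$-tubes and caps via Corollary~\ref{cor:caps tubes}, and place one cutoff neck per tube. This has three concrete problems. First, a tube covering part of $X$ only extends slightly past $R=2C_0 r^{-2}$ into lower curvature, but Theorem~\ref{thm:echelle de coupure} requires a curve reaching a point with $R\le 2r^{-2}$ --- since $C_0\ge 100$, there is a factor-100 gap that the tube may not bridge, and some tubes may even have both ends at a cap core rather than at a low-curvature region. Second, you assert without verification that all intermediate points of your curve are centres of $\epsi_0$-necks rather than $\epsi_0$-caps; this is not automatic from the tube decomposition of $X$ alone (it requires the kind of geodesic-intersection argument the paper gives in the Claim inside Lemma~\ref{lem:propA}). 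Third --- and you acknowledge this --- you leave the crucial estimate $\Rmax(g_+)\le\Theta/2$ on every retained component as an ``obstacle,'' whereas in the paper this falls out for free from the lemma: retained components lie in $\mathcal G\cup\mathcal O$ where $R<\Theta/2$ by definition. Because the last point is precisely what controls surgery density via Lemma~\ref{lem:no acc}, the gap is not cosmetic; I would encourage you to reorganise around the maximality argument of Lemma~\ref{lem:propA}, which makes all three difficulties disappear simultaneously.
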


Throughout this section we shall work in the riemannian manifold $(M(b),g(b))$. In particular all curvatures and distances are taken with respect to this metric.

Let $\mathcal G$ (resp.~$\mathcal O$, resp.~$\mathcal R$) be the set of points of $M(b)$ of scalar curvature less than $2 r^{-2}$, (resp.~$\in [2r^{-2}, \Theta /2)$, resp.~$\ge \Theta /2$.)

For brevity, we call \bydef{cutoff neck} a strong $\delta$-neck centred at some point of scalar curvature $h^{-2}$. Note that cutoff necks are contained in $\mathcal O$, and have diameter and volume bounded by functions of $h,\delta$-alone.

\begin{lem}\label{lem:propA}
There exists a locally finite collection $\{N_i\}$ of pairwise disjoint cutoff necks such that any connected component of $M(b)\setminus \bigcup_i N_i$ is contained in either $\mathcal G\cup \mathcal O$ or  $\mathcal R\cup \mathcal O$. 
\end{lem}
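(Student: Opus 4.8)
The plan is to separate $M(b)$ along a well-chosen family of cutoff necks so that afterwards no component meets both the low-curvature region $\mathcal G$ and the high-curvature region $\mathcal R$. The key structural input is the Cutoff Parameters Theorem~\ref{thm:cutoff-bis}: if $x,y,z$ lie on a curve $\gamma$ with $R(x,b)\le 2r^{-2}$, $R(y,b)=h^{-2}$, $R(z,b)\ge D/h^2$, and all intermediate points of $\gamma$ of scalar curvature in $[2C_0r^{-2},C_0^{-1}Dh^{-2}]$ are centres of $\epsi_0$-necks, then $(y,b)$ is centre of a strong $\delta$-neck. Since we have $(CN)_r$ and $R\le\Theta=2Dh^{-2}$, every point of scalar curvature in the range $[2C_0r^{-2},C_0^{-1}Dh^{-2}]$ admits an $(\epsi_0,C_0)$-canonical neighbourhood, which — once one rules out the closed cases $C$ and $D$ by a curvature-ratio argument (the diameter/volume bounds force these to have bounded scalar curvature, incompatible with being on a path from $\mathcal G$ to $\mathcal R$ with large curvature ratio) — must be a strong $\epsi_0$-neck or an $\epsi_0$-cap; and the caps can be excluded on such connecting curves by Corollary~\ref{cor:VC=cou}, since a cap core cannot be traversed. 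So along any minimising-type path joining $\mathcal G$ to $\mathcal R$, every point of curvature $h^{-2}$ is automatically the centre of a cutoff neck.

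First I would let $\Sigma$ be the set of all points $y\in M(b)$ with $R(y,b)=h^{-2}$ that are centres of cutoff necks and that moreover lie on some continuous path from $\mathcal G$ to $\mathcal R$ all of whose points in the intermediate curvature band are $\epsi_0$-neck centres; by the discussion above this is really just the set of $y$ with $R(y,b)=h^{-2}$ lying ``between'' $\mathcal G$ and $\mathcal R$. Each such $y$ has a preferred cutoff neck $N_y$, with a middle sphere $S_y$; by Lemma~\ref{lem:glueneck} two such necks that are close to each other have isotopic middle spheres and one is contained in the other, so the collection of middle spheres, after passing to a maximal disjoint subfamily via a standard packing/Zorn argument (using that each $N_y$ has definite size relative to $h,\delta$ and the solution has bounded geometry, so the family is locally finite), gives a locally finite family $\{S_i\}$ of disjoint embedded spheres, each the middle sphere of a cutoff neck $N_i$. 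I would choose the family so that it is ``separating enough'': every path from $\mathcal G$ to $\mathcal R$ in the bad curvature band must cross one of the $S_i$.

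Then I would cut $M(b)$ along $\bigcup_i S_i$ — equivalently remove the $N_i$ — and check that each resulting component lies in $\mathcal G\cup\mathcal O$ or in $\mathcal R\cup\mathcal O$. Suppose some component $Y$ of $M(b)\setminus\bigcup_i N_i$ contains a point $p\in\mathcal G$ and a point $q\in\mathcal R$. Join them by a path $\gamma$ in $Y$. Since $R$ is continuous and $\gamma$ goes from $R<2r^{-2}$ to $R\ge\Theta/2$, it passes through a point $y$ with $R(y,b)=h^{-2}$, and by the curvature-distance/canonical-neighbourhood analysis all points of $\gamma$ in the band $[2C_0r^{-2},C_0^{-1}Dh^{-2}]$ are $\epsi_0$-neck centres (again excluding the closed canonical neighbourhoods and, by Corollary~\ref{cor:VC=cou}, the cap cores which cannot be traversed). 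Hence $y\in\Sigma$, and $y$ is the centre of a cutoff neck; by the maximality in the construction of $\{N_i\}$, the neck $N_y$ — hence $\gamma$ near $y$ — must meet one of the chosen $S_i$, so $\gamma$ crosses $\bigcup_i N_i$ (using Lemma~\ref{lem:odd intersection}, $\gamma$ actually traverses that neck), contradicting $\gamma\subset Y$. Local finiteness of $\{N_i\}$ follows from bounded geometry and the uniform lower size bound on cutoff necks.

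The main obstacle I expect is the purely topological/covering bookkeeping of the last two paragraphs: arranging the maximal disjoint family of cutoff necks so that it genuinely disconnects $\mathcal G$ from $\mathcal R$ while staying locally finite, and making precise the claim that a path crossing from $\mathcal G$ to $\mathcal R$ must actually traverse a chosen neck rather than merely pass near one. This is handled by Lemma~\ref{lem:glueneck} (nested necks, isotopic middle spheres), Lemma~\ref{lem:odd intersection} (a path crossing a middle sphere genuinely traverses the neck), and a Zorn's-lemma maximality argument combined with the uniform volume/diameter control on cutoff necks coming from the $(CN)_r$ bounds; the analytic content — that curvature $h^{-2}$ points on such connecting paths are strong $\delta$-neck centres — is exactly Theorem~\ref{thm:cutoff-bis}.
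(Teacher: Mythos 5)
Your overall skeleton is the right one and matches the paper: take a maximal collection of pairwise disjoint cutoff necks via Zorn's lemma, observe local finiteness by a volume argument, then argue by contradiction using the Cutoff Parameters Theorem~\ref{thm:echelle de coupure} to produce a new cutoff neck disjoint from the chosen ones. However, there are two genuine gaps in the details.

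First, and most seriously, the cap--exclusion step is not as simple as you claim. You join $p\in\mathcal G$ to $q\in\mathcal R$ by ``a path $\gamma$ in $Y$'' and then invoke Corollary~\ref{cor:VC=cou} (and later Lemma~\ref{lem:odd intersection}) to exclude cap cores and to force traversal of necks. But both results apply to \emph{geodesic segments} in $(M(b),g(b))$, not to arbitrary continuous paths in a component $Y$. The paper instead takes $\gamma$ to be a \emph{minimising geodesic in the closed set} $X$ and must first prove a nontrivial preliminary claim --- that $\gamma\cap\bord X=\emptyset$ --- before any of the neck/cap intersection machinery can be used. Even after this, the cap exclusion in the paper is not a one-line application of Corollary~\ref{cor:VC=cou}: the authors argue that if $y\in\gamma$ sat in a cap core then the shortcut $[x'z']$ across the cap's neck would either stay in $X$ (contradicting minimality of $\gamma$ in $X$) or exit $X$ through the boundary sphere of some $N_i$, and in the latter case a Lemma~\ref{lem:glueneck} isotopy argument shows $\gamma$ must meet $N_i$, contradicting $\gamma\subset X$. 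Your proposal silently assumes the first alternative. You should also note that your stated reason for ruling out the closed canonical neighbourhoods (diameter/volume bounds giving ``bounded scalar curvature'') is not the right one; the paper's reason is purely topological: $U$ is disjoint from $x$ and $z$, so a closed $U$, being a whole connected component, cannot lie on a curve joining $x$ to $z$.

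Second, your construction of the family $\{N_i\}$ is more complicated than necessary and borders on circular. You define a set $\Sigma$, take preferred necks $N_y$, extract a maximal disjoint subfamily, and then stipulate that the family should be ``chosen so that it is separating enough.'' In the paper the collection $\{N_i\}$ is simply a \emph{maximal} family of pairwise disjoint cutoff necks (no pre-filtering), and the separation property is not an additional requirement but a \emph{consequence} of maximality: if some component $X$ still touched both $\mathcal G$ and $\mathcal R$, Theorem~\ref{thm:echelle de coupure} would produce a cutoff neck at a point of $\gamma$, which the traversal argument shows is disjoint from all the $N_i$, contradicting maximality. Local finiteness is also simpler than you indicate: it follows from a volume-packing argument in any compact set (cutoff necks have definite volume $\sim h^3\delta^{-1}$ and those meeting a compact $K$ lie in a bounded-volume neighbourhood of $K$), not from bounded geometry per se.
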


\begin{proof}
By Zorn's Lemma, there exists a maximal collection  $\{N_i\}$ of pairwise disjoint cutoff necks. Such a collection is automatically locally finite, e.g. because if $K$ is any compact subset, all cutoff necks that meet $K$ are contained in the $h(2\delta^{-1}+1)$-neighbourhood of $K$, which has finite volume.

Suppose that some component $X$ of $M(b)\setminus \bigcup_i N_i$ contains at least one point $x\in \mathcal G$ and one point $z\in \mathcal R$. Since $X$ is a closed subset of $M(b)$, there exists a geodesic path $\gamma$ in $X$ connecting $x$ to $z$.

\begin{claim}
The intersection of $\gamma$ and $\bord X$ is empty.
\end{claim}

\begin{proof}[Proof of the claim]
First observe that if $y$ is a point of $\bord X$, then $y$ has a canonical neighbourhood $U$. This neighbourhood cannot be a cap, because then $U$ would contain the whole of $X$, which would imply that $X\subset\mathcal O$. Hence $U$ is a neck.

If such a point $y$ belonged to $\gamma$, then by Lemma~\ref{lem:odd intersection} the neck $U$ would be traversed by $\gamma$.
This contradicts the fact that $\gamma\subset X$.
\end{proof}

In order to apply Theorem~\ref{thm:echelle de coupure}, one has to prove the 
following

\begin{claim} Each point of $\gamma$ with scalar curvature in $[2C_{0}r^{-2},{C_{0}}^{-1}Dh^{-2}]$ is centre of some $\epsi_{0}$-neck.
\end{claim}

\begin{proof}[Proof of the claim]
Let $y\in \gamma$ be such a point. By the curvature assumptions, $y$ is centre of a $(\epsi_{0},C_{0})$-canonical neighbourhood $U$, disjoint from $x$ and $z$. Hence 
$U$ cannot be a closed manifold. It remains to rule out the $(\epsi_{0},C_{0})$-cap case. We argue by contradiction. Assume that $U$ is an $(\epsi_{0},C_{0})$-cap. Then 
$U = N \cup C$, where $N$ is a $\epsi_{0}$-neck, $N\cap C = \emptyset$, $\overline N \cap C = \partial C$ and $y \in \Int {C}$. For simplicity dilate the metric by a factor such that the scalar curvature of $N$ is close to $1$.  Denote by $S$ the middle 
sphere of $N$. 
The curve $\gamma$ is clearly not minimizing in $U$. In particular if $x'$ (resp. $z'$) is an intersection  point of $\gamma$ with $S$ lying between $x$ and $y$  (resp. $y$ and $z$), 
then $d(x',z') \leq \diam(S) << 2{\epsi_{0}}^{-1} < d(x',y) + d(y,z')$. The geodesic segment $[x'z']\subset U$ is not contained in $X$, otherwise this would contradict the minimality of $\gamma$ in $X$. 
Hence there exists $p \in [x',z'] \cap \partial X$. By definition of $X$, the corresponding component of $\partial X$ is a boundary component of some neck  $N_{i}$.  Let us prove that 
$\gamma$ intersects $N_{i}$, which is a contradiction. Denote by ${S_{i}}^+$ the above boundary component of $N_{i}$, and note that $d({S_{i}}^+,S) < \diam(S)$. Let 
$N'$ be a $10\epsi_{0}$-subneck of $N_{i}$ which admits $ {S_{i}}^+$ as a boundary component, and $p' \in N'$ be its centre. Then $d(p',S) < \diam(S) + (10\epsi_{0})^{-1} < (4\epsi_{0})^{-1}$. 
It follows from Lemma \ref{lem:glueneck} that $S'$ is isotopic to $S$ in $N$. In particular $\gamma$ intersects $S'$.  
\end{proof}

 Let $y$ be a point of $\gamma$ of scalar curvature $h^{-2}$. Theorem~\ref{thm:echelle de coupure} yields a cutoff neck $N$ centred at $y$. Any $\delta$-neck meeting $N$ has to be traversed by $\gamma$, so $N$ is disjoint from the $N_i$'s. This contradicts maximality of $\{N_i\}$.
\end{proof}

Having established Lemma~\ref{lem:propA}, we prove Proposition~A. Let $\{N_i\}$ be a collection of cutoff necks given by that lemma.
Applying Theorem~\ref{thm:chirurgie metrique}, we obtain a Riemannian manifold $(M',g_+)$. By construction, the components of $M'$ fall into two types. Either they have curvature less than $\Theta/2$, or they are covered by canonical neighbourhoods. Applying Theorem~\ref{thm:reconnait topo}, we may safely throw away the components of the second type, obtaining the manifold $(M_+,g_+)$.
We remark that the operation cannot decrease $\Rmin$ (in fact $\Rmin(g_+)$ is equal to $\Rmin(g(b))$ unless $M_+$ is empty, in which case it is equal to $+\infty$). Thus the proof of Proposition~A is complete.

\section{Persistence}\label{sec:persistence}

\paragraph{Notation} 
If $(M(\cdot),g(\cdot))$ is a piecewise $\mathcal C^1$ evolving manifold defined on some interval $I\subset \RR$ and $[a,b] \subset I$, we call
\bydef{restriction of} $g$ to $[a,b]$ the evolving manifold  
\[ t \mapsto 
\left\{
\begin{array}{ccc}
  (M_+(a),g_+(a)) & \text{ if }  & t=a  \\
  (M(t),g(t)) & \text{ if }   & t \in (a,b]  
\end{array}
\right.
\]
We shall still denote by $g(\cdot)$ the restriction.  
Given $(x,t)\in \calM$, $r>0$ and $\Delta t>0$ we define the forward parabolic neighbourhood $P(x,t,r,\Delta t)$ as the set 
$$ P(x,t,r,\Delta t) = \{(x',t') \in \calM \mid x' \in B(x,t,r), t\leqslant t' \leqslant t + \Delta t\}$$
 When we consider a restriction of $g(\cdot)$ to some $[a,b]\subset I$, the parabolic neighbourhood $P(x,a,r,\Delta t)$ will be defined using the ball
$B(x,a,r)$ of radius $r$ with respect to the metric $g_+(a)$. \\
A parabolic neighbourhood $P(x,t,r,\Delta t)$ is said to be \bydef{unscathed} 
if $x' \in M_\reg(t')$ for all $x ' \in B(x,t,r)$ and $t' \in [t,t+\Delta t)$. Otherwise 
it is \bydef{scathed}. 

Given two surgical solutions $(M(\cdot),,g(\cdot))$ and $(M_0(\cdot),g_0(\cdot))$, we say that an unscathed parabolic neighbourhood $P(x,t,r,\Delta t)$ of $(M(\cdot),g(\cdot))$ is \bydef{$\epsi$-close}  to another unscathed parabolic neighbourhood $P(x_0,t,r_0,\Delta t)$ of $(M_0(\cdot),g_0(\cdot))$ if $(B(x,t,r),g(\cdot))$  is  $\epsi$-close to 
$(B(x_0,t,r_0),g_0(\cdot))$ on $[t,t+\Delta t]$ . 
We say that  $P(x,t,r,\Delta t)$ is $\epsi$-\bydef{homothetic} to  $P(x_0,t_0,r_0,\lambda \Delta t))$ if it is 
$\epsi$-close after a parabolic rescaling by $\lambda$.

The goal of this section is to prove the following technical theorem:

\begin{theo}[Persistence of almost standard caps]\label{thm:persistence-cap}
For all $A>0$, $\theta\in [0,1)$ and $\hat r>0$, there exists $\bar \delta=\bar \delta_{\rm per} (A, \theta, \hat r)$ with the following property. Let $(M(\cdot),g(\cdot))$ be a surgical solution defined on some interval $[a,b]$, which is a $(r,\delta)$-surgical solution on $[a,b)$, with 
$r \geqslant \hat r$ and $\delta \geqslant \bar \delta$.
Let $t_0 \in [a,b)$ be a singular time and consider the restriction of $(M(\cdot),g(\cdot))$ to $[t_0,b]$. Let $p\in (M(t_0),g(t_0))$ be the tip of some $\delta$-almost standard cap of scale $h$. Let $t_1\le \min(b,t_0+\theta h^2)$ be maximal such that $P(p,t_0,Ah,t_1-t_0)$ is unscathed. Then the following holds:
\begin{enumerate}
\item The parabolic neighbourhood $P(p,t_0,Ah,t_1-t_0)$ is $A^{-1}$-homothetic to $P(p_0,0,A,(t_1-t_0)h^{-2})$;
\item If  $t_1< \min(b,t_0+\theta h^2)$, then $B(p,t_0,Ah)\subset M_\sing(t_1)$ disappear at time $t_1$.
\end{enumerate}
\end{theo}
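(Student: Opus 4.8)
The plan is to compare the surgical solution near the tip $p$ with the standard solution $\cals_0$, using the fact that a $\delta$-almost standard cap of scale $h$ is, after rescaling by $h^{-2}$ (equivalently by $R(y)$ up to a factor $\delta'$-close to $1$), $\delta'(\delta)$-close to $\bar g_0$ on $B(p_0, 5+\delta^{-1})$. First I would set up the parabolic rescaling $\bar g(t) = h^{-2} g(t_0 + t h^{-2})$, so that at time $0$ the metric on $B(p, t_0, Ah)$ is $C$-close (with $C$ a function of $\delta$, going to $0$ as $\delta\to0$) to the corresponding ball in $\cals_0$, provided $A < \delta^{-1}$, which we may assume since $\bar\delta$ will be chosen small depending on $A$. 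The key mechanism is a continuity/openness argument in time combined with the canonical-neighbourhood control $(CN)_r$ and the uniqueness and smoothness of the standard solution on $[0,\theta]$ for $\theta<1$.

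The main steps I would carry out are as follows. Step one: choose $\bar\delta$ small enough (depending on $A$, $\theta$, $\hat r$) that the initial time-slice of $P(p,t_0,Ah, 0)$ after rescaling is as close as we like to the time-$0$ slice of the standard solution restricted to $B(p_0,A)$. Step two: invoke a persistence-under-Ricci-flow statement for solutions that are initially close — essentially the local stability of Ricci flow (continuous dependence on initial data, cf.\ the local compactness results in the appendices) — to propagate closeness forward in time \emph{as long as the parabolic neighbourhood remains unscathed}. On the standard-solution side, $P(p_0,0,A,\theta)$ lies in a region of bounded geometry (since $\theta<1$ and we are at bounded distance from $p_0$), so the comparison is uniform. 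This yields conclusion (i): on $[t_0,t_1]$ the rescaled flow stays $A^{-1}$-close to the standard solution's $P(p_0,0,A,(t_1-t_0)h^{-2})$. Step three: to get conclusion (ii), suppose $t_1 < \min(b, t_0+\theta h^2)$; then by maximality of $t_1$ the neighbourhood $P(p,t_0,Ah,t_1-t_0)$ fails to be unscathed just after $t_1$, i.e.\ some point of $B(p,t_0,Ah)$ leaves $M_\reg$ at time $t_1$. Using the second remark after the definition of `unscathed', such a point either disappears, or lies on $\partial M_\sing(t_1)\subset \cals$, or lies on a sphere of $\cals$, or (if it had happened at a time $<t_0$) is in an added $3$-ball — the latter is excluded since $t_0$ is the relevant starting time. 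I would then argue, using the closeness established in (i) and the geometry of the standard solution (which has positive curvature near the tip and no necks inside $B(p_0,A)$ for suitable $A$, by Proposition~\ref{prop:standard VC}), that $B(p,t_0,Ah)$ cannot meet a cutoff neck or the boundary of $M_\sing$ without contradicting closeness; hence the only possibility is that the whole ball $B(p,t_0,Ah)$ is a component of $M_\sing(t_1)$ that disappears.

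The hard part will be Step three: ruling out that the ball meets the surgery locus $\cals$ or $\partial M_\sing(t_1)$ in a way other than disappearing entirely. The issue is that a priori a cutoff neck used in the surgery at time $t_1$ could pass through $B(p,t_0,Ah)$; I would exclude this by noting that the rescaled metric on $B(p,t_0,Ah)$ is close to the standard solution, whose scalar curvature there is bounded (by a function of $\theta$), whereas a cutoff neck is centred at a point of scalar curvature $h^{-2}$, i.e.\ scalar curvature $1$ after rescaling, and the curvature control forces such necks to sit in the region $\mathcal O$ of intermediate curvature — one then checks the geometry is incompatible with being close to the (positively curved, cap-like) standard solution if $A$ is large enough and $\delta$ small enough. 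The boundary-of-$M_\sing$ case is handled similarly since $\partial M_\sing(t_1)\subset\cals$. A secondary technical point is ensuring the ball $B(p,t_0,Ah)$ computed with $g(t_0)$ (equivalently $g_+(t_0)$, since we take the restriction) is comparable throughout $[t_0,t_1]$ to the balls $B(p,t,Ah)$; this follows from the curvature bounds on the standard solution via the usual distance-distortion estimate, and is where the restriction convention from Section~\ref{sec:persistence} is used. Modulo these points, conclusions (i) and (ii) follow, completing the proof.
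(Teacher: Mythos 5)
Your high-level plan (rescale by $h^{-2}$, compare with the standard solution, propagate closeness, use maximality of $t_1$ for conclusion (ii)) is indeed the same scheme the paper follows. But two crucial technical steps are glossed over in a way that would not survive scrutiny.

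First, invoking ``local stability / continuous dependence on initial data'' (and pointing at the compactness appendices) is not the right tool for Step~2. Compactness gives subsequential limits, not a quantitative persistence estimate on a fixed solution, and naive continuous dependence would only give closeness for a short time that degrades as the standard solution's curvature grows (which it does as $t\to\theta$ near $1$). What the paper actually uses is Corollary~\ref{cor:per3} (Persistence of the Model), which is a quantitative statement whose hypotheses — curvature pinched toward positive, and the bound $|\partial R/\partial t|\le C_0 R^2$ at high-curvature points coming from $(CN)_r$ — must be verified for the rescaled flow $\bar g$; the paper checks both. Moreover, the argument is iterative: closeness is first obtained up to a time $T_{\times 2}$ on a ball of radius $\rho(\mathcal M_0, A)>A$, then the resulting curvature control is fed back in with a nested hierarchy of radii $\rho_1=\rho(\mathcal M_0,\rho_2),\ldots$ to push the time window forward in finitely many steps. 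You do not set up this bootstrap, and without it closeness does not reach $\theta h^2$.

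Second, the mechanism by which the larger ball $B_{\bar g}(p,0,\rho)$ is shown to be unscathed — and hence by which conclusion (ii) follows — is stated too vaguely. Appealing to the region $\mathcal O$ is a misfire: $\mathcal O$ is a device from Proposition~A's fixed time slice at $t=b$, not available here. The paper's mechanism is elementary and clean: from the curvature control one gets $\frac12 g(0)\le g(t)\le 2g(0)$ on the ball, so for any $x\in B_{\bar g}(p,0,\rho)$ one has $d_{\bar g(t)}(x,p)\le 2\rho$; but a $\delta$-neck centred at $(x,t)$ has rescaled length at least $\tfrac12\delta^{-1}R(x,t)^{-1/2}\ge(4\delta\Lambda_0)^{-1}>4\rho$ for $\delta$ small, so $x$ cannot be the centre of a surgery neck. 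Hence if surgery touches the ball, the ball cannot cross a surgery sphere; it must be wholly inside $M_\sing$ and disappear, which both contradicts the maximality of $T_{\max}<T_{\times 2}$ in the unscathedness step, and gives conclusion (ii) when $t_1<\min(b,t_0+\theta h^2)$. You would need to supply this precise distance/length estimate rather than a curvature-regime argument.
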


\begin{rem}
 In \cite{B3MP}, the conclusion in the last case was that $B(p,t_0,Ah) \subset M_\sing(t_1)$.\\
\end{rem}

Before giving the proof of the theorem, we summarize some technical results from~\cite{B3MP} Sections 6.1 and 6.2. 
 
Let $T_0$ be a positive real number. Let ${\mathcal M_0}=(X_0, g_0(\cdot), p_0)$ (the model) be a complete $3$-dimensional Ricci flow defined on $[0,T_0]$ such that
the quantity $$\Lambda(N) := \max_{X_{0}\times [0,T_0]} \big\{ |\nabla^p\Rm| \mid \,0\leq p\leq N\big\}
$$ is finite for all $N\in\textbf{N}$.

\begin{corol}[Persistence of the model in dimension $3$]\label{cor:per3}
Let $A>0$, there exists $\rho=\rho ({\mathcal M_0}, A)>A$ with the following property. Let $\{(M(t),g(t))\}_{t\in [0,T]}$ be a surgical solution  with $T\leq T_0$. Suppose that 
\begin{enumerate}
\item[a)] $(M(\cdot),g(\cdot))$ has curvature pinched toward positive. 
\item[b)] $|{\partial R \over \partial t}|\leq C_0 R^2$ at any $(x,t)$ with $R(x,t) \geqslant 1$. 
\end{enumerate} Let $p \in M(0)$ and $t \in (0,T]$ be such that
\begin{itemize}
\item [c)]$B(p,0,\rho )$ is $\rho^{-1}$-close to $B(p_0, 0, \rho )\subset X_0$,
\item [d)]$P(p, 0, \rho , t)$ is unscathed and $|\Rm| \leqslant \Lambda([A]+1)$ there. 
\end{itemize}
Then $P(p, 0, A, t)$ is $A^{-1}$-close to $P(p_0, 0, A, t)$.
\end{corol}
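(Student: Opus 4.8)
The plan is to argue by contradiction, combining the local compactness theorem for Ricci flows (Theorem~\ref{thm:local compactness}) with the uniqueness of complete Ricci flows of bounded curvature~\cite{Che-Zhu2}. Fix $A>0$ and the model $\mathcal M_0=(X_0,g_0(\cdot),p_0)$, and suppose that no $\rho$ has the required property. Then there exist a sequence $\rho_k\to+\infty$ and, for each $k$, a surgical solution $(M_k(\cdot),g_k(\cdot))$ defined on some interval $[0,T_k]$ with $T_k\le T_0$, a point $p_k\in M_k(0)$, and a time $t_k\in(0,T_k]$, all satisfying hypotheses a)--d) with $\rho=\rho_k$, but such that $P(p_k,0,A,t_k)$ is \emph{not} $A^{-1}$-close to $P(p_0,0,A,t_k)$. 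Since $A$ is fixed and $\rho_k\to+\infty$, for $k$ large we have $A<\rho_k$, hence $P(p_k,0,A,t_k)\subset P(p_k,0,\rho_k,t_k)$, which by d) is unscathed with $|\Rm|\le\Lambda([A]+1)$; in particular $g_k(\cdot)$ restricts to a genuine smooth Ricci flow on a large parabolic neighbourhood of $(p_k,0)$, no surgery interfering.

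Next I would check the hypotheses of the local compactness theorem for the pointed evolving manifolds $(M_k(t),g_k(t),p_k)$, $t\in[0,t_k]$. The curvature bound $|\Rm|\le\Lambda([A]+1)$ on $P(p_k,0,\rho_k,t_k)$ is uniform in $k$. Hypothesis c) is closeness in the $C^{[\rho_k]+1}$-topology, so at time $0$ it yields, on balls around $p_k$ of radius tending to $+\infty$, a uniform lower bound for $\inj(p_k,0)$ (since $B(p_0,0,\rho)\subset X_0$ is a fixed ball in a smooth complete manifold) and uniform bounds on all covariant derivatives of the curvature; these derivative bounds then propagate forward in time by Shi's local estimates, using the room given by the fact that the curvature bound holds on the \emph{larger} neighbourhood $P(p_k,0,\rho_k,t_k)$. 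Passing to a subsequence so that $t_k\to t_\infty\in[0,T_0]$, the sequence subconverges in the pointed $C^\infty$ sense on $[0,t_\infty)$, and then on the closed interval $[0,t_\infty]$ using the curvature bound up to time $t_k$, to a complete pointed Ricci flow $(M_\infty,g_\infty(\cdot),p_\infty)$ of bounded curvature. Since $X_0$ is complete and c) holds with $\rho_k\to+\infty$, the time-$0$ slice of the limit is isometric to $(X_0,g_0(0))$, with $p_\infty$ corresponding to $p_0$. (If $t_\infty=0$, the desired closeness for small $t_k$ follows directly from c) and continuity of the flows in time, so one may assume $t_\infty>0$.)

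Then $g_\infty(\cdot)$ and $g_0(\cdot)$ are both complete Ricci flows of bounded curvature on $[0,t_\infty]$ with the same initial metric, so by~\cite{Che-Zhu2} they coincide as pointed flows: $g_\infty(t)=g_0(t)$ for all $t\in[0,t_\infty]$. Hence for $k$ large $P(p_k,0,A,t_k)$ is as close as we wish to $P(p_0,0,A,t_\infty)$, and since $g_0$ is smooth, $P(p_0,0,A,t_k)$ depends continuously on $t_k$; therefore $P(p_k,0,A,t_k)$ is $A^{-1}$-close to $P(p_0,0,A,t_k)$ for all large $k$, contradicting the choice of the sequence. This produces the sought $\rho=\rho(\mathcal M_0,A)$, and after enlarging it we may assume $\rho>A$. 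I expect the main obstacle to be the verification of the hypotheses of the local compactness theorem --- in particular extracting higher-derivative bounds at time $0$ from the $C^{[\rho_k]+1}$-closeness and propagating them forward by Shi's estimates --- together with the care needed to pass to the closed time interval $[0,t_\infty]$ and to compare parabolic neighbourhoods whose time-length $t_k$ varies with $k$.
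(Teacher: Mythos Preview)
Your argument is correct and is exactly the compactness-plus-uniqueness argument that underlies the proofs of \cite[Corollaries 6.2.4 and 6.2.6]{B3MP} to which the paper defers; the only adaptation needed here is the observation that on the unscathed set $P(p_k,0,\rho_k,t_k)$ a surgical solution is an ordinary Ricci flow, which you make explicitly. Note that hypotheses a) and b) play no role once d) is assumed (they are used in the proof of Theorem~\ref{thm:persistence-cap} to \emph{obtain} d), not in the present corollary), so your not invoking them is fine.
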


\begin{proof}
The proofs of \cite[Corollary 6.2.4 and 6.2.6]{B3MP} work for surgical solutions.\\
\end{proof}

\begin{proof}[Proof of Theorem~ \ref{thm:persistence-cap}]

We consider as model the standard solution ${\mathcal X_0}:=({\mathcal S_0},g_0(\cdot),p_0)$ restricted to $[0,\theta]$. 
Let us assume for simplicity that $T\geq t_{0}+\theta h^2$, so that $t_{1}=t_{0}+\theta h^2$. For any 
nonnegative integer $N$, recall that 
$$\Lambda(N)=\max_{\mathcal{S}_{0}\times [0, \theta ]} \big\{ |\nabla^p\Rm|, |R|;\,0\leq p\leq N\big\}.$$ 

In the sequel we consider the restriction of $(M(\cdot),g(\cdot))$ to $[t_0,b]$ and we define:
$$
\bar g(t) := h^{-2}g(t_{0}+th^2) \textrm{ for }t\in [0, \min\{\theta,(b-t_0)h^{-2}\}].
$$
Note that $\bar g(\cdot)$ satisfies Assumptions a) and b) of Corollary~\ref{cor:per3}. Indeed, it is readily checked that the curvature pinched toward positive property 
is preserved by the parabolic rescaling, since $t_0\geqslant 0$ and $h^{-2}\geqslant 1$. On the other hand, if $g(\cdot)$ satisfies 
$(CN)_r$ on $[0,b)$, it follows easily by a continuity argument that any $(x,b)$ with $R(x,b) \geqslant 2r(b)^{-2}$ satisfies 
the estimate $\left| \frac{\partial R}{\partial t} \right| \leqslant C_0R^2$ at $(x,b)$. After rescaling by $h(b)^{-2} >> 2r^{-2}$, this property 
holds at points with scalar curvature above $1$. 

Fix $A>0$ and set $\rho:=\rho({\mathcal M_0},A)$.
By the definition of an $\delta$-almost standard cap, the ball $B_{\bar g}(p,0,5+\delta^{-1})$ is $\delta '$-close to $B(p_{0}, 0, 5+\delta^{-1})\subset \mathcal S_{0}$. Let $T_{\max}\in [0, \theta ]$ be the maximal time such that $P_{\bar g}(p, 0, A, T_{\max})$ is unscathed. By closeness at time $0$ one has $|R_{\bar g}|\leq 2\Lambda_{0}$ on $B_{\bar g}(p,0,\delta^{-1})$.

Now for $t\in [0, \min ((4\Lambda C_0)^{-1}, \theta )]$ such that $P_{\bar g}(p, 0, \rho , t)$ is unscathed, we have $|R_{\bar g}|\leq 4\Lambda_{0}$ on  $P_{\bar g}(p, 0, \rho , t)$ by the time derivative estimate on the scalar curvature. Using the pinching assumptions, we deduce
$|\Rm_{\bar g}|\leq \Lambda'_{0}$ on the same neighbourhood. 

Set $T_{\times 2}:=\min (\theta, (4\Lambda_{0}C_0)^{-1}, (4\Lambda'_{0})^{-1})$. The above curvature bound gives, for $t\leq \min (\theta , T_{\times 2})$,
$$1/2\leq \bar g(t)/\bar g(0)\leq 2$$
on $B_{\bar g}(p,0,\delta^{-1})$. In particular, for all $x\in B_{\bar g}(p,0,\rho)$ and all $0<t\leq \min (\theta, T_{\times 2})$, $(x,t)$ is not centre of a $\delta$-neck because $d_{\bar g(t)}(x,p)\leq 2 d_{\bar g(0)}(x,p)\leq 2\rho $ and the length of a $\delta$-neck at time $t$ is larger than $1/2\delta^{-1}R(x,t)^{-1/2}\geq (4\delta \Lambda_{0})^{-1}>4\rho$, if $\delta$ is small enough.

This implies that $P_{\bar g}(p, 0, \rho , t)$ is unscathed if $t\leq \min (T_{\times 2}, T_{\max})$. Indeed, if not, there exists $t'<t$ such that $P_{\bar g}(p, 0, \rho , t')$ is unscathed but $B_{\bar g}(p, 0, \rho )\cap M_\sing(t')\ne \emptyset$. If $B_{\bar g}(p, 0, \rho )$ is not contained in $M_\sing(t')$, then it must intersect a surgery sphere of $\cals(t')$, which is the middle sphere of a $\delta$-neck centred at $(x,t')$. The above estimate rules out this possibility. Hence $B_{\bar g}(p, 0, A)\subset B_{\bar g}(p, 0, \rho )\subset M_\sing(t')$ for $t'<T_{\max}$. This is impossible by assumption. Remark that if $B(p,0,\rho) \cap M_\sing(t) \ne \emptyset$ the same arguments shows that $B(p,0,\rho)$ is contained in $M_\sing(t)$ and disappears at time $t$.

We can now apply Corollary~\ref{cor:per3}. We get that $P_{\bar g}(p, 0, \rho_{1} , t)$ is $\rho_{1}^{-1}$-close to $P(p_{0}, 0, \rho_{1} , t)$ for all $t\leq\min (T_{\times 2}, T_{\max})$. If $T_{\times 2}\geq T_{\max}$ we are done. Otherwise by closeness we have that $|\Rm_{\bar g}|$ and $|R_{\bar g}|$ are no greater than 
$2\Lambda_{0}$ on $P_{\bar g}(p, 0, \rho_{1} , T_{\times 2})$. Then $|R_{\bar g}|\leq 4\Lambda_{0}$ on $P_{\bar g}(p, 0, \rho_{1} , \min (2T_{\times 2}, T_{\max}))$ where $\rho_{1}=\rho({\mathcal M_0},\rho_{2})$ and $\rho_{2}>0$. We then iterate the above argument, which terminates in finitely many steps, as in \cite[Corollary 6.2.4]{B3MP}.
If $T_{\max} < \theta$, then Conclusion~(ii) follows from the previous remark. This finishes the proof of Theorem~\ref{thm:persistence-cap}.
\end{proof}

\section{Proposition B}\label{sec:proofB}

\setcounter{etape}{0}

Recall the statement of Proposition B:

\begin{propB}
For all $Q_0,\rho_0,\kappa>0$ there exist $r=r(Q_0,\rho_0,\kappa)< 10^{-3}$ and $\bar\delta_B=\bar\delta_B(Q_0,\rho_0,\kappa)>0$ with the following property: let $\delta\le\bar\delta_B$, $0\le T_A<b$ and $(M(\cdot),g(\cdot))$ be a surgical solution defined on $[T_A,b]$ such that $g(T_A)$ satisfies $| \Rm | \le Q_0$ and has injectivity radius at least $\rho_0$.

 Assume that $(M(\cdot),g(\cdot))$ satisfies Condition~$(NC)_{\kappa/16}$, has curvature pin\-ched toward positive, and that for each singular time $t_0$, $(M_+(t_0),g_+(t_0))$ is obtained from $(M(\cdot),g(\cdot))$ by $(r,\delta)$-surgery at time $t_0$.

Then $(M(\cdot),g(\cdot))$ satisfies Condition~$(CN)_r$. 
 \end{propB}

We recall a lemma from~\cite{B3MP}.
 
\begin{lem}[distance distorsion]\label{lem:distance-distorsion} Let $\{g(t)\}$ be a Ricci flow solution on a $n$-dimensional manifold $U$, defined 
for $t\in [t_{1},t_{2}]$. Suppose that $\mid \Rm \mid \leq \Lambda$ on $U \times [t_{1},t_{2}]$. Then 
$$  e^{-2(n-1)\Lambda(t_{2}-t_{1})} \leq \frac{g(t_{2})}{g(t_{1})} \leq   e^{2(n-1)\Lambda(t_{2}-t_{1})} $$
\end{lem}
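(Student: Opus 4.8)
The plan is to fix a nonzero tangent vector and track the evolution of its squared length under the flow, reducing the statement to a scalar ODE inequality. Concretely, fix $p\in U$ and $v\in T_pU$ with $v\ne 0$, and set $\varphi(t):=g(t)_p(v,v)>0$ for $t\in[t_1,t_2]$. Since $v$ is a fixed vector (independent of $t$), the Ricci flow equation~\eqref{eq:fdr} gives
$$\frac{d}{dt}\varphi(t)=-2\,\Ric_{g(t)}(v,v).$$

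Next I would establish the pointwise bound $|\Ric_{g(t)}(v,v)|\le (n-1)\Lambda\,\varphi(t)$. This is where the hypothesis $|\Rm|\le\Lambda$ enters: at the point $p$ and time $t$, choose a $g(t)$-orthonormal basis $e_1,\dots,e_n$ with $v=|v|_{g(t)}e_1$; then $\Ric_{g(t)}(e_1,e_1)=\sum_{i=2}^n K(e_1,e_i)$ where $K$ denotes the sectional curvatures of $g(t)$, and each $|K(e_1,e_i)|\le\Lambda$ by the convention that $|\Rm|$ is the maximum of the absolute values of the sectional curvatures (see Section~\ref{sec:courbure 3}). Summing $n-1$ terms yields $|\Ric_{g(t)}(v,v)|\le(n-1)\Lambda\,|v|_{g(t)}^2=(n-1)\Lambda\,\varphi(t)$.

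Combining the two displays gives $\bigl|\frac{d}{dt}\ln\varphi(t)\bigr|\le 2(n-1)\Lambda$ on $[t_1,t_2]$, so integrating from $t_1$ to $t_2$ produces
$$-2(n-1)\Lambda(t_2-t_1)\le \ln\frac{\varphi(t_2)}{\varphi(t_1)}\le 2(n-1)\Lambda(t_2-t_1),$$
that is, $e^{-2(n-1)\Lambda(t_2-t_1)}\,g(t_1)_p(v,v)\le g(t_2)_p(v,v)\le e^{2(n-1)\Lambda(t_2-t_1)}\,g(t_1)_p(v,v)$. Since $p$ and $v$ were arbitrary, this is precisely the asserted inequality $e^{-2(n-1)\Lambda(t_2-t_1)}\le g(t_2)/g(t_1)\le e^{2(n-1)\Lambda(t_2-t_1)}$ between the two metrics as quadratic forms.

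There is no serious obstacle here: the argument is entirely elementary once one has the evolution equation and the bound on Ricci by the curvature operator. The only point requiring a little care is matching the factor $n-1$ to the paper's normalisation of $|\Rm|$; with the convention fixed in Section~\ref{sec:courbure 3} the estimate $|\Ric(v,v)|\le(n-1)\Lambda|v|^2$ is immediate, as explained above.
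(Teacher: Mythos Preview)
Your argument is correct and is the standard proof of this estimate. The paper itself gives no proof but merely cites \cite[Lemma 0.6.6]{B3MP}; your argument is presumably what appears there.
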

\begin{proof} 
\cite[ Lemma 0.6.6.]{B3MP}.
\end{proof}

In order to prove Proposition~B, we argue by contradiction. Suppose that some fixed numbers $Q_0,\rho_0,\kappa >0$ have the property that for all $r \in (0,10^{-3})$ and $\bar\delta_B>0$ there exist counterexamples.
 Then we can consider sequences  $r_k\to 0$, $\delta_k\to 0$,   and a sequence of $(r_k,\delta_k, \kappa)$-surgical solutions $(M_k(\cdot),g_k(\cdot))$ on $[0,b)$ which satisfy Condition~$(NC)_{\kappa/16}$, have curvature pinched toward positive, and such that for each singular time $t_0$, $(M_{k,+}(t_0),g_{k,+}(t_0))$ is obtained from $(M(\cdot),g(\cdot))$ by $(r,\delta)$-surgery at time $t_0$, but
$(CN)_{r_k}$ fails for some $t_k$. This last assertion means that there exists $x_k\in M_k(t_k)$ such that 
$$Q_k:=R(x_k,t_k)\geq r_k^{-2}\,,$$
and yet $(x_k,t_k)$ does not have a $(\epsi_0, C_0)$-canonical neighbourhood.

By a standard point-picking argument (see \cite[Lemma 52.5]{Kle-Lot:topology}), we may choose the sequence of bad points $(x_k,t_k)$ and  $H_k\to +\infty$ such that for all $t\in [t_k-H_kQ_k^{-1},t_k]$ and $x\in M_k(t)$, if $R(x,t)\ge 2Q_k$ then $(x,t)$ has a $(\epsi_0, C_0)$-canonical neighbourhood.

Without loss of generality, we assume that
$$\delta_{k} \leq \bar \delta(k,1-\frac{1}{k},r_{k})$$ (the right-hand side being the parameter given by  the Persistence Theorem~\ref{thm:persistence-cap}). From now on, the proof follows the lines of 
\cite[Section 7.2]{B3MP}.

We need a preliminary lemma.

\begin{lem}[Parabolic balls of bounded curvature are unscathed]\label{lem:Pintact}
For all $K >0, \rho >0$ and $\tau >0$ there exists an integer $k_0=k_0(K,\rho ,\tau )$ such that  for all $k\geqslant k_0$, if $|\Rm |\leqslant K$ on $B_{\bar g_k}(x_k, 0, \rho )\times (-\tau , 0]$ then $P_{\bar g_k}(x_k, 0, \rho, -\tau )$ is unscathed.
\end{lem}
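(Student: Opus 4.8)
The plan is to argue by contradiction, following the scheme of \cite[Section~7.2]{B3MP} (here $\bar g_k(\cdot)$ denotes the parabolic rescaling by $Q_k:=R(x_k,t_k)$ at $(x_k,t_k)$, so that $R_{\bar g_k}(x_k,0)=1$). Suppose the conclusion fails: then for some $K,\rho,\tau>0$ there is, after passing to a subsequence, a sequence $k\to\infty$ such that $|\Rm_{\bar g_k}|\le K$ on $B_{\bar g_k}(x_k,0,\rho)\times(-\tau,0]$ while $P_{\bar g_k}(x_k,0,\rho,-\tau)$ is scathed. I would write $s_k^{\mathrm{real}}:=t_k+tQ_k^{-1}$ for the real time corresponding to the rescaled time $t$, let $s_k\in[-\tau,0)$ be the scathing time closest to $0$ (it exists because singular times are discrete), and set $B_k:=B_{\bar g_k}(x_k,0,\rho)$; then $B_k$ persists and evolves smoothly on $(s_k,0]$. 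By the Remark following the definition of ``unscathed'' (the case $t<t_0$, here $t_0=0$), at time $s_k$ a surgery occurs and there is a point $\xi_k\in B_k$ which, for the right limit $\bar g_{k,+}(s_k)$, lies in the core $V_k$ of a $\delta_k$-almost standard cap of scale $h_k$ with tip $p_k$.

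The first quantitative step is to bound the ratio of scales $h_k^{-2}/Q_k$. Since $B_k$ is unscathed with $|\Rm_{\bar g_k}|\le K$ on $(s_k,0]$, Lemma~\ref{lem:distance-distorsion} controls the distortion of $\bar g_k(\cdot)$ on $B_k$ over this interval by a factor $C_1=C_1(K,\tau)$; in particular $d_{\bar g_{k,+}(s_k)}(x_k,\xi_k)\le C_1\rho$. On the other hand $(\xi_k,t)\in B_k\times(s_k,0]$ for all $t\in(s_k,0]$, so letting $t\downarrow s_k$ and using right‑continuity gives $R_{\bar g_{k,+}(s_k)}(\xi_k)\le 6K$; but the structure of the standard-cap initial data forces $R_{\bar g_{k,+}(s_k)}\ge \tfrac12 c_1\,h_k^{-2}Q_k^{-1}$ on $V_k$ for a universal $c_1>0$ (once $\delta_k$ is small). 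Hence $h_k^{-2}/Q_k\le\Lambda_K:=12K/c_1$, so $Q_k^{-1/2}\le\Lambda_K^{1/2}h_k$; consequently $B_k$ has $g_{k,+}(s_k^{\mathrm{real}})$‑radius $O(h_k)$, and since $V_k$ has $g_{k,+}(s_k^{\mathrm{real}})$‑diameter $O(h_k)$ the point $x_k$ lies within $g_{k,+}(s_k^{\mathrm{real}})$‑distance $A_0h_k$ of $p_k$, for some $A_0=A_0(K,\rho,\tau)$.

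Next I would apply the Persistence Theorem~\ref{thm:persistence-cap} at the singular time $s_k^{\mathrm{real}}$ with $A:=A_0+1$ and a fixed $\theta\in(3/4,1)$; this is legitimate because $\delta_k\le\bar\delta_{\mathrm{per}}(k,1-\tfrac1k,r_k)$, so its hypotheses hold once $k\ge A$ and $1-\tfrac1k\ge\theta$. Since $A>A_0$ the point $x_k$ lies in $B(p_k,s_k^{\mathrm{real}},Ah_k)$, and a short case analysis on whether the persistence window of this ball terminates early (conclusion~(ii) of the theorem) or not (conclusion~(i)) leads, in the first case, to $x_k$ disappearing at some time before $t_k$, contradicting $x_k\in M_k(t_k)$; and in the second case to the ball around $x_k$ remaining, after rescaling, uniformly close to the standard solution up to the relevant time. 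In the latter situation the normalisation $R_{\bar g_k}(x_k,0)=1$ pins both the homothety factor $h_k^{-2}/Q_k$ and the comparison time in the standard solution into a controlled range; combined with Proposition~\ref{prop:standard VC} and with the fact that being the centre of an $(\epsi_0,C_0)$‑canonical neighbourhood is an open property, this produces such a neighbourhood at $(x_k,t_k)$, contradicting the choice of $x_k$. Either way we reach a contradiction, which proves the lemma.

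I expect the last step to be the main point: turning ``$B_k$ stays close to the standard solution'' into an honest $(\epsi_0,C_0)$‑canonical neighbourhood at $(x_k,t_k)$. This requires careful bookkeeping of the two scales $h_k$ and $Q_k^{-1/2}$, and in particular checking that the persistence window $[s_k^{\mathrm{real}},s_k^{\mathrm{real}}+\theta h_k^2]$ reaches the part of the standard solution where Proposition~\ref{prop:standard VC} applies (past time $3/4$, or outside $B(p_0,0,\epsi_0^{-1})$), even though $x_k$ may sit near the cap's tip. This is exactly the computation carried out in \cite[Section~7.2]{B3MP}, which I would reproduce, the only modifications being the replacement of ``Ricci flow with bubbling-off'' by ``surgical solution'' and the use of the present Persistence Theorem~\ref{thm:persistence-cap} in place of its analogue there.
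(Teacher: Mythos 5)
The overall scheme you propose — pick the latest scathing time $s_k$, locate the surgery-cap tip $p_k$, show it lies within bounded $\tilde g$-distance of $x_k$, invoke the Persistence Theorem, and transfer a canonical neighbourhood from the standard solution to $(x_k,t_k)$ to contradict the choice of $x_k$ — is indeed the scheme of the paper's proof. But your parameter choices leave a genuine gap.

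The main issue is your fixed $\theta\in(3/4,1)$. When you apply Theorem~\ref{thm:persistence-cap} at the surgery time $s_k$ with scale $h_k$, it gives control on $P(p_k,s_k,Ah_k,t_1-s_k)$ where $t_1=\min\bigl(t_k,\ s_k+\theta h_k^2\bigr)$. You correctly dispose of the case where conclusion~(ii) holds (the ball would disappear before $t_k$, absurd). But within conclusion~(i) you do not distinguish the sub-cases $t_1=t_k$ (the window reaches the time where you want a canonical neighbourhood) and $t_1=s_k+\theta h_k^2<t_k$ (the window falls short of $t_k$, and you then know nothing about $(x_k,t_k)$). The paper rules out the second sub-case by a separate \emph{Claim}: in $\bar g_k$-time, $s_k+\theta R_{\bar g_k}(y,s_k)^{-1}>0$, which is exactly ``the window reaches time $0$''. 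The proof of this Claim is where the choice of $\theta$ is forced. If $R_{\bar g_k}(y,s_k)\ge 1/(2\tau)$, one applies persistence up to $s_1:=s_k+\theta R_{\bar g_k}(y,s_k)^{-1}$ (assumed $\le 0$ for contradiction) and uses $\Rmin(t)\ge\constst(1-t)^{-1}$ on the standard solution to get
$$R_{\bar g_k}(x_k,s_1)\ \ge\ \tfrac{\constst}{4\tau(1-\theta)}\,,$$
against $R_{\bar g_k}(x_k,s_1)\le 6K$ from your curvature hypothesis. To derive a contradiction one needs $1-\theta<\constst/(24K\tau)$. Your $\theta$ is chosen independently of $K$ and $\tau$, so this inequality fails whenever $K\tau$ is moderately large; the statement you are proving is supposed to hold for \emph{all} $K,\rho,\tau$. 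The paper therefore takes $\theta=1-1/k$ (and lets $k_0$ depend on $K,\tau$), which is not a cosmetic choice but the crux of the argument. (Also note that what you flag as the ``main point to check'' — that the window reaches the part of the standard solution where Proposition~\ref{prop:standard VC} applies — is a different and secondary concern, handled afterwards by the Case~1/Case~2 dichotomy; the Claim is about the window reaching $t_k$ at all.)

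A secondary gap: fixing $A:=A_0+1$ with $A_0=A_0(K,\rho,\tau)$ only gives $(A_0+1)^{-1}$-closeness to the standard solution, and there is no reason this is small enough (say smaller than $\epsi_0\beta/2$) to transfer an $(\epsi_0,C_0)$-canonical neighbourhood, or to feed the neck strengthening Lemma~\ref{lem:neck strengthening}. The paper again avoids this by taking $A=k$, which also matches the blanket hypothesis $\delta_k\le\bar\delta_{\rm per}(k,1-1/k,r_k)$ imposed at the start of Section~\ref{sec:proofB}; with your fixed $A,\theta$ you would have to justify separately that the hypotheses of the Persistence Theorem are still met (e.g.\ by a monotonicity property of $\bar\delta_{\rm per}$ that the paper does not state).
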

\begin{proof}
Arguing by contradiction, fix $k$ and assume that there exist $z_k\in B_{\bar g_k}(x_k, 0, \rho)$ and $s_k\in [-\tau , 0)$ such that 
$z_k \notin M_\reg(s_k)$. As $z_k$ exists after $s_k$, there is an added cap 
$V$ in $M_+(s_k)$ such that $z_k \in V$. We can take $s_k$ to be maximal satisfying this property, i.e.~the set $B_{\bar g_k}(x_k, 0, \rho )\times (s_k, 0]$ is unscathed. In the sequel, we consider the restriction of $\bar g_k(\cdot)$ to $[s_k,0]$, as explained in the 
beginning of Section \ref{sec:persistence}. We drop indices for  simplicity. 

By definition, there exists a marked $\delta$-almost standard cap $(U,V,p,y)$  such that $(U, R_{\bar g}(y , s)\bar g(y,s))$ is $\delta '$-close to $B(p_0, 5+\delta^{-1})\subset \mathcal S_0$. Set $\tilde g(y,s):=R_{\bar g}(y , s)\bar g(y,s))$.  We shall now show that $d_{\tilde g}(x, p)$ is bounded independently of $k$ (if $k$ is large enough.)

\vspace{0,5cm}
\input{propB_lem_Pintact.pstex_t} 
\vspace{0,5cm}

Since $B_{\bar g}(x, 0, \rho )\times (s, 0]$ is unscathed with $|\Rm |\leq K$, by the distance-distorsion Lemma \ref{lem:distance-distorsion} we have
$$e^{-4K\tau }\leq \bar g(0)/ \bar g(s)\leq e^{4K\tau }\,,$$
which implies $d_{\bar g(s)}(x, z)\leq e^{2K\tau }\rho$. Since $y \in U$,
$$R_{\bar g}(y,s)\leq 2 R_{\bar g}(z,s)\leq 12K\,.$$
Notice that since $y\in \partial V$, $R_{\bar g}(y,s)= R_{\bar g}(y,s)$.
We now get
$$d_{\tilde g}(x,y)\leq \sqrt{12K}e^{2K\tau}=:K'(K,\rho, \tau ).$$
Finally,
$$d_{\tilde g}(x,p)\leq K'+5\,.$$

Let $A>2(K'+5)$ and $\theta <1$. The
persistence theorem \ref{thm:persistence-cap} implies that the set $P_{\bar g}(p,s,AR_{\bar g}(y,s)^{-1/2}, \min(\theta R_{\bar g}(y,s)^{-1},|s|)$
is, after parabolic rescaling at time $s$, $A^{-1}$-close to 
$P(p_0, 0, A, \min (\theta, |s|\bar R_k(y,s)))\,.$
Indeed, the second alternative of the persistence theorem does not occur since in this case,  $x\in B_{\tilde g}(p,A)\subset M_\sing(t_+)$ for some $t_+\in (s, 0)$, 
and $x$ disappears at time $t_+$. \\

We now choose $A:=k$ and $\theta := 1-1/k$.
\begin{claim}
We have $s+\theta  R_{\bar g}(y,s)^{-1} > 0$ for large $k$.
\end{claim}
\begin{proof}
If $R_{\bar g}(y,s)<1/2\tau$ then 
$$s+ \theta R_{\bar g}(y,s)^{-1}\geqslant -\tau+1/2 R_{\bar g}(y,s)^{-1}>0\,,$$
since $\theta >1/2$.

Suppose that $R_{\bar g}(y,s)\geqslant 1/2\tau$. Seeking a contradiction, assume that $s_1:=s+\theta R_{\bar g}(y,s)^{-1}\leq 0$ and apply the persistence theorem up to this time. By Proposition \ref{prop:standard VC}, $\Rmin(t) \geqslant \constst (1-t)^{-1}$, hence we have
$$ R_{\bar g}(x,s_1)\geqslant {1\over 2}R_{\bar g}(y,s)\constst(1-\theta)^{-1}\geqslant \constst(4\tau (1-\theta ))^{-1} =\frac{k.\constst}{4\tau}\,.$$
On the other hand, $R_{\bar g}(x,s_1)\leqslant 6K$, which  gives a contradiction for sufficiently large $k$. This proves the claim.
\end{proof}

Denote by $\tilde g(\cdot)$ the parabolic rescaling of $\bar g$ by $R_{\bar g}(y,s)$ at time $s$: 
$$ \tilde g(t) = R_{\bar g}(y,s)g\left(s+tR_{\bar g}(y,s)^{-1}\right).$$
By the conclusion of the persistence theorem, there exists a diffeomorphism $\psi : B(p_0,0,A)\longrightarrow B_{\tilde g}(p,0,A)$ such that 
$\psi^* \tilde g(\cdot)$ is $A^{-1}$-close to $g_0(\cdot)$ on $B(p_0,0,A)\times [0,\min\{\theta,|s|R_{\bar g}(y,s)\}]$. By the above claim, 
the minimum is $|s|R_{\bar g}(y,s):=s'$. Set $x':=\psi^{-1}(x)$. Proposition~\ref{prop:standard VC} implies that for every $\epsilon>0$, there exists $C_{st}(\epsi)$ such that any point $(x',t)$ of the standard solution has an $(\epsi, C_{st}(\epsi))$-canonical neighbourhood unless $t<3/4$ and $x'\notin B(p_0, 0, \epsi^{-1})$. Let us choose $\epsi:=\epsi_0\beta /2<<\epsi$ and $C_{st}:=C_{st}(\epsi)$. There are again two possibilities.

\paragraph{Case 1} The point $(x',s')$ has an $(\epsi,C_{st})$-canonical neighbourhood
$U'\subset B(x', s', 2C_{st}R(x',s')^{-1/2})$, where $R(x',s')$ is the scalar curvature of the standard solution at $(x',s')$.
The $A^{-1}$-closeness between $g_0(s')$ and  $\psi^*\tilde g(s')$ gives
$$R(x',s') \simeq R_{\tilde g}(x,s')=R_{\bar g}(y,s)^{-1} R_{\bar g}(x, 0) =R_{\bar g}(y,s)^{-1}\,.$$
On the other hand,
$$U'\subset B(x',s', \frac{\rho}{2} R_{\bar g}(y,s)^{1/2})\subset \psi^{-1}(B_{\bar g}(x, 0, \rho ))$$
since 
$$\frac{\rho}{2} R_{\bar g}(y,s)^{1/2}\simeq \frac{\rho}{2} R(x',s')^{-1/2}>2C_{st}R(x',s')^{-1/2}\,.$$
Therefore $\psi (U')$ is a  $(2\epsi, 2C_{st})$-canonical neighbourhood for $(x,0)$, hence an $(\epsi_0, C_0)$-canonical neighbourhood for this point.

\paragraph{Case 2} The point $(x',s')$ has no $(\epsi, C_{st})$-canonical neighbourhood. Then $s'\leq 3/4$ and  $x'\notin B(p_0, 0, \epsilon_{st}^{-1})$. Hence we have 
$$d_{\tilde g}(x,s',p)\geqslant 9/10 \epsi_{st}^{-1}\geqslant 3/2 (\epsi\beta )^{-1}>(\epsi\beta )^{-1}+5.$$
We infer that $(x,-s')$ is centre of an $(\epsi_0\beta)$-neck, coming from the strong $\delta$-neck there at the singular time. 
 We now apply the neck strengthening lemma \ref{lem:neck strengthening} which asserts that $(x,0)$ is centre of a strong $\epsi$-neck. Indeed the closeness with the standard solution ensures that $P_{\bar g}(x,-s', (\epsi_0\beta )^{-1}, 0) \subset P_{\bar g}(x, -s', A, 0)$ is unscathed and has $|\Rm |\leqslant 2 K_{st}$. This proves Lemma~\ref{lem:Pintact}.
\end{proof}

Now we begin the proof of Proposition~B proper. We consider parabolic rescalings.

\begin{etape} 
The sequence $(\bar M_{k}(0), \bar g_k(0),\bar x_k)$ subconverges  to some complete pointed riemannian manifold $(M_\infty, g_\infty, x_\infty)$ 
of nonnegative curvature operator. 
\end{etape}

\begin{proof}
We have to show that the sequence satisfies the hypothesis of  the local compactness theorem for flows (Theorem~\ref{thm:local compactness}.)
By choice of the basepoint and curvature pinching, we can apply Theorem~\ref{thm:courbure distance}. Hence for every $\rho$, the scalar curvature of $\bar g_{k}(0)$ 
is bounded above on $B(\bar x_k,0,\rho)$  by some constant $\Lambda(\rho)$ if $k\ge k(\rho)$.

Next we wish to obtain similar bounds on parabolic balls  
$P(\bar x_k,0,\rho,-\tau(\rho))$ for some $\tau(\rho)>0$, and show that they are unscathed. Set  $C(\rho):=\Lambda(\rho )+2$. Let $k_{1}(\rho) := k_0(K(\rho ),\rho, (2C_0C(\rho )^{-1})$ be the parameter given by Lemma \ref{lem:Pintact}.

\begin{claim}
If $k\geq k_1(\rho )$, then $P_{\bar g_k}(\bar x_k, 0, \rho, -(2C_0C(\rho ))^{-1})$ is unscathed and satisfies  $|\Rm |\leqslant 2C(\rho )$. 
\end{claim}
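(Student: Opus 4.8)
The plan is to prove the claim by a continuity/contradiction argument within the time interval, using Lemma~\ref{lem:Pintact} to guarantee the parabolic ball stays unscathed as long as the curvature bound survives, and using the time-derivative estimate on scalar curvature (together with Hamilton-Ivey pinching) to propagate the curvature bound backwards in time. First I would fix $\rho>0$, set $C(\rho):=\Lambda(\rho)+2$ where $\Lambda(\rho)$ is the scalar-curvature bound on $B(\bar x_k,0,\rho)$ coming from Step~1 (Theorem~\ref{thm:courbure distance}), and let $\tau(\rho):=(2C_0C(\rho))^{-1}$. The goal is: for $k\ge k_1(\rho)$, the set $P_{\bar g_k}(\bar x_k,0,\rho,-\tau(\rho))$ is unscathed and $|\Rm|\le 2C(\rho)$ there.

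The argument runs as follows. Let $\tau^*\in[0,\tau(\rho)]$ be the supremum of times $\sigma$ such that $P_{\bar g_k}(\bar x_k,0,\rho,-\sigma)$ is unscathed and $|\Rm_{\bar g_k}|\le 2C(\rho)$ on it. At time $0$ the bound holds on $B(\bar x_k,0,\rho)$ with room to spare, since $|\Rm_{\bar g_k}(\cdot,0)|$ is controlled by $\Lambda(\rho)<C(\rho)$ via curvature pinching (after rescaling $R\ge r_k^{-2}\to\infty$, so Lemma~\ref{lem:pinching} forces $|\Rm|\le R$, up to the pinching constant, and more precisely the rescaled scalar curvature bound transfers to a curvature-operator bound); hence $\tau^*>0$. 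On $[-\tau^*,0]$ the bound $|\Rm|\le 2C(\rho)$ holds by definition; I would then argue that the \emph{scalar} curvature is in fact bounded by $C(\rho)$ on a slightly smaller parabolic ball for all $t\in[-\tau^*,0]$: integrating the estimate $|\partial R/\partial t|\le C_0 R^2$ (valid at points of rescaled scalar curvature $\ge 1$, by the $(CN)_{r_k}$ property of the surgical solution after rescaling by $h_k^{-2}\gg r_k^{-2}$, using the point-picking that makes $(\bar x_k,t)$ and nearby points have canonical neighbourhoods whenever $R\ge 2$) gives, from $R(\cdot,0)\le\Lambda(\rho)$, that $R(\cdot,t)\le \Lambda(\rho)/(1-C_0\Lambda(\rho)|t|)\le\Lambda(\rho)+1<C(\rho)$ for $|t|\le\tau(\rho)=(2C_0C(\rho))^{-1}$, as long as the ball stays unscathed. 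Feeding this scalar bound back through curvature pinching (Lemma~\ref{lem:pinching}, since the rescaled curvature is huge) upgrades it to $|\Rm|\le C(\rho)<2C(\rho)$ on the parabolic ball, a strict improvement of the standing bound. Finally, Lemma~\ref{lem:Pintact} applied with $K=2C(\rho)$, $\tau=\tau(\rho)$ (for $k\ge k_1(\rho)=k_0(2C(\rho),\rho,\tau(\rho))$, after possibly enlarging $\rho$ slightly to absorb a factor) guarantees that the bound $|\Rm|\le 2C(\rho)$ on $B(\bar x_k,0,\rho)\times(-\tau^*,0]$ forces $P_{\bar g_k}(\bar x_k,0,\rho,-\tau^*)$ to be unscathed. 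A standard open-closed argument then shows $\tau^*=\tau(\rho)$: the set of valid $\sigma$ is closed (limits of unscathed neighbourhoods with the curvature bound), and it is open because at $\tau^*<\tau(\rho)$ the curvature bound is strict ($\le C(\rho)<2C(\rho)$) and the unscathedness persists slightly longer by Lemma~\ref{lem:Pintact}. Hence the claim.

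The main obstacle I anticipate is the interplay between \emph{unscathedness} and the curvature bound: neither can be propagated backwards alone, so the continuity argument must bootstrap them simultaneously, and one must be careful that Lemma~\ref{lem:Pintact} is being invoked on exactly the interval on which the curvature bound has already been established, with matching constants. A secondary subtlety is justifying that the time-derivative estimate $|\partial R/\partial t|\le C_0 R^2$ is available at the relevant spacetime points: this needs the $(CN)_{r_k}$ hypothesis together with the point-picking step (which ensures points of large scalar curvature near $(\bar x_k,t_k)$ and slightly earlier have $(\epsi_0,C_0)$-canonical neighbourhoods, hence estimate~\eqref{eq:dR dt}), and the fact that after the rescaling by $Q_k=R(x_k,t_k)\ge r_k^{-2}$ the threshold $r_k^{-2}$ becomes $1$, so the estimate applies wherever the rescaled scalar curvature exceeds $2$ — which is precisely the regime where curvature pinching (Lemma~\ref{lem:pinching}) is also effective. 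Once these two estimates are lined up on the same parabolic ball, the rest is the routine open-closed continuation already carried out in \cite[Section~7.2]{B3MP}, and I would simply refer to that for the bookkeeping.
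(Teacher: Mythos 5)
Your overall strategy is the same as the paper's: propagate a scalar curvature bound backward in time via the canonical-neighbourhood time-derivative estimate, upgrade it to a sectional curvature bound via the Pinching Lemma~\ref{lem:pinching}, and use Lemma~\ref{lem:Pintact} to bootstrap unscathedness of the parabolic ball. The paper packages this more tersely: it takes $s$ \emph{minimal} with $B_{\bar g_k}(\bar x_k,0,\rho)\times(s,0]$ unscathed, applies the Local curvature-time Lemma~\ref{lem:courbure temps} (with $Q=2$, $Q_x\le C(\rho)$) to get $R\le 2C(\rho)$ and hence $|\Rm|\le 2C(\rho)$, invokes Lemma~\ref{lem:Pintact} to show the parabolic ball through time $s$ is unscathed, and concludes by minimality that $s$ must be the full time-depth. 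This avoids your open/closed bookkeeping and, crucially, does \emph{not} require any strict improvement of the curvature bound.

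That matters because the specific quantitative step in your write-up is wrong. You integrate $|\partial R/\partial t|\le C_0R^2$ from $R(\cdot,0)\le\Lambda(\rho)$ and claim
\[
R(\cdot,t)\le\frac{\Lambda(\rho)}{1-C_0\Lambda(\rho)|t|}\le \Lambda(\rho)+1< C(\rho)
\quad\text{for }|t|\le\tau(\rho)=(2C_0C(\rho))^{-1}.
\]
With $C(\rho)=\Lambda(\rho)+2$, the left factor at $|t|=\tau(\rho)$ equals $\tfrac{2\Lambda(\rho)(\Lambda(\rho)+2)}{\Lambda(\rho)+4}$, which exceeds $\Lambda(\rho)+1$ as soon as $\Lambda(\rho)$ is moderately large (roughly $\Lambda(\rho)>2.6$). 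The estimate one actually gets over this time-depth is only $R< 2\Lambda(\rho)<2C(\rho)$, i.e.\ $R$ roughly \emph{doubles}; it is precisely why the constant $2C(\rho)$ and the factor $1/2$ appear in $\tau(\rho)$. This does not kill your open/closed argument --- $R<2C(\rho)$ is still strict so openness can be salvaged --- but the bound you assert is false and the argument should be rewritten to track $R<2C(\rho)$ rather than $R<C(\rho)$. The paper's minimal-$s$ formulation sidesteps the issue entirely, since Lemma~\ref{lem:Pintact} only needs $|\Rm|\le 2C(\rho)$ as a hypothesis, not any slack.
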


\begin{proof}
 Choose $s=s_k \in [-(4C_0C(\rho ))^{-1}, 0]$ minimal such that $\bar B_k(x_k, 0, \rho )\times (s, 0]$ is unscathed. By the curvature-time lemma \ref{lem:courbure temps} we have $R\leqslant 2C(\rho )$ on this set, which implies $|\Rm |\leqslant 2C(\rho )$ by the Pinching Lemma \ref{lem:pinching}. By Lemma~\ref{lem:Pintact}, $P_{\bar g_k}(x_k, 0, \rho, s)$ is unscathed. By minimality of $s$ we then have $s=-(2C_0C(\rho ))^{-1}$.
\end{proof}

By hypothesis,  the solutions $g_{k}(\cdot)$ are $\kappa$-noncollapsed on scales less than $r_0$. Hence $\bar g_{k}(0)$ is
$\kappa$-noncollapsed on scales less than $\sqrt{Q_k}r_0$.
This, together with the curvature bound implies a positive lower bound for the injectivity radius at $(\bar x_{k},0)$. Hence Theorem \ref{thm:local compactness} applies to the sequence $(\bar M_{k},\bar g_{k}(\cdot),x_{k})$. It implies that the sequence subconverges to $(M_{\infty},g_\infty(\cdot),x_{\infty})$, where $M_{\infty}$ is a smooth manifold, $g_{\infty}(0)$ is complete and  $g_\infty(.)$ is defined  on $B(x_\infty, 0, \rho )\times (-(2C_0C(\rho ))^{-1}, 0]$ for each $\rho >0$.

Lastly, since the metrics $g_{k}(0)$ are normalised and the scaling factor $Q_k$ goes to $+\infty$, the limit metric $g_{\infty}(t)$ has nonnegative curvature operator by curvature pinching. This argument completes the proof of Step~1.
\end{proof} 

Since $R(x_\infty)=1$, $M_\infty$ is a complete, nonflat, nonnegatively curved riemannian $3$-manifold. By the Cheeger-Gromoll-Hamilton classification of such manifolds,  $M$ is diffeomorphic to $\Rr^3$, $S^1\times \Rr^2$, $S^1\times S^2$, a line bundle over a closed surface, or a spherical space form. In particular, if $M_\infty$ is noncompact, then every smoothly embedded $2$-sphere in $M_\infty$ is separating.

\begin{etape} 
The riemannian manifold $(M_\infty, g_\infty(0))$ has bounded curvature.
\end{etape}

\begin{proof}
Of course we may assume that $M_\infty$ is noncompact.
By passing to the limit, we see that every point $p_0 \in M_\infty$ of  scalar curvature at least $3$ is centre of a $(2\epsi_0,2C_0)$-cap or a (not necessarily strong) $2\epsi_0$-neck. In the sequel, we refer to this fact by saying that the limiting partial flow $g_{\infty}(\cdot)$ satisfies the \bydef{weak canonical neighbourhood property.}

Let $(p_k)$ be a sequence of points of $M_\infty$ such that $R_\infty (p_k,0)\longrightarrow +\infty$; in particular, $d_\infty (p_k,x_\infty )\longrightarrow +\infty$ as $k\to +\infty$. Consider segments $[x_\infty p_k]$ which, after passing to a subsequence, converge to a geodesic ray $c$ starting at $x_\infty$.

On $[x_\infty p_k]$ we pick a point $q_k$ such that $R_\infty (q_k )={R_\infty (p_k )\over 3C_0}$.
For sufficiently large $k$, the point $(q_{k},0)$ is centre of a weak $(2\epsi_0,2C_{0})$-canonical neighbourhood $U_{k}$. Now the curvatures on $U_{k}$ belong to 
$[(2C_{0})^{-1}R_{\infty}(q_{k},0),2C_{0}R_{\infty}(q_{k},0)]$.
As a consequence, for large $k$, $x_{\infty}$ and 
$p_{k}$ do not belong to $U_{k}$. By Corollary~\ref{cor:VC=cou} this neighbourhood is a $2\epsi_0$-neck.

\vspace{0,5cm}
\centerline{\input{convergence-segment.pstex_t}}
\vspace{0,5cm}

\begin{lem}\label{lem:traverse}
For $k$ large enough, $c$ traverses $U_k$.
\end{lem}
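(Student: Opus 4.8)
The plan is to first prove that each minimizing segment $[x_\infty p_k]$ traverses $U_k$, and then to transfer this conclusion to the limit ray $c$. For the first part: since $(q_k,0)$ is the centre of the $2\epsi_0$-neck $U_k$, the point $q_k$ lies on a middle sphere $S_k$ of $U_k$. On $U_k$ the scalar curvature is at least $(2C_0)^{-1}R_\infty(q_k,0)$, and as $R_\infty(q_k,0)=R_\infty(p_k,0)/3C_0\to+\infty$ the diameter of $U_k$ tends to $0$. As already noted, $M_\infty$ being noncompact, $S_k$ separates it; write $M_\infty\setminus S_k=\Omega_k^-\sqcup\Omega_k^+$ with $x_\infty\in\Omega_k^-$. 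Since $q_k\in[x_\infty p_k]\cap S_k$ while $x_\infty,p_k\notin U_k$, Lemma~\ref{lem:odd intersection} gives that $[x_\infty p_k]$ meets $S_k$ in an odd number of points, so $p_k\in\Omega_k^+$ and $[x_\infty p_k]$ traverses $U_k$; moreover, by minimality and the thinness of $U_k$, the arc of $[x_\infty p_k]$ through $q_k$ runs nearly along the axis of the tube, crossing $S_k$ transversally and entering (resp.\ leaving) through the end on the $\Omega_k^-$ (resp.\ $\Omega_k^+$) side.

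To transfer this to $c$, fix $k$ large and put $r_k:=d_\infty(x_\infty,q_k)$; since $R_\infty$ is bounded on compacts, $r_k\to+\infty$, and $S_k$ is contained in the shell $\{\,r_k-\diam U_k\le d_\infty(x_\infty,\cdot)\le r_k+\diam U_k\,\}$. One first checks that $p_j\in\Omega_k^+$ for all $j$ large. Then for such $j$ the minimizing segment $[x_\infty p_j]$ runs from $\Omega_k^-$ to $\Omega_k^+$, hence meets $S_k$; being unit speed and minimizing, it does so at a parameter lying in the fixed compact interval $[r_k-\diam U_k,\,r_k+\diam U_k]$. Letting $j\to+\infty$ and using $C^1$-convergence of $[x_\infty p_j]$ to $c$ on that interval, the crossing points subconverge to a point of $S_k$ lying on $c$; the crossings being transversal (the segments enter $U_k$ nearly along the axis), so is the limiting one, and $c$ therefore passes from $\Omega_k^-$ into $\Omega_k^+$. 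Since $S_k$ separates, $x_\infty$ and the end of $c$ then lie in different components of $M_\infty\setminus S_k$, i.e.\ $c$ traverses $U_k$.

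The main obstacle is this transfer step, and within it the fact that $q_k$ sits at parameter $r_k\to+\infty$ along $[x_\infty p_k]$, so the convergence $[x_\infty p_k]\to c$ on compact intervals yields no direct control of $c$ near $q_k$; one must argue through the fixed compact separating sphere $S_k$ instead. Establishing that $p_j$ and $x_\infty$ are separated by $S_k$ for all large $j$ — which should follow from the ordering of the high-curvature necks $U_j$ along the segments $[x_\infty p_j]$ together with the fact that $c$ is an infinite ray and eventually exits $\overline{B(x_\infty,0,r_k+\diam U_k)}$ — and verifying that the limiting intersection of $c$ with $S_k$ is a genuine change of side rather than a tangency, are the two points that require care.
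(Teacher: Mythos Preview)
Your proposal is an outline with two self-acknowledged gaps, and those gaps are genuine.  The paper's proof takes a different, more direct route that sidesteps them entirely.

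The paper argues by contradiction using Toponogov comparison.  Suppose $c$ does not traverse $U_k$.  Since $[x_\infty p_k]\to c$, for $k$ large the angle at $x_\infty$ between the direction of $[x_\infty q_k]$ (which is the direction of $[x_\infty p_k]$) and the direction of $c$ is at most $\pi/100$.  In the geodesic triangle $x_\infty\, q_k\, c(t)$, with $d_\infty(x_\infty,q_k)$ fixed and $d_\infty(x_\infty,c(t))=t\to+\infty$, nonnegative curvature and Euclidean comparison force the angle at $q_k$ to be at least $98\pi/100$ for $t$ large.  Hence the minimising segment $[q_k\, c(t)]$ leaves $q_k$ in nearly the same direction as $[q_k p_k]$ and therefore exits $U_k$ through the far boundary sphere $S_+$.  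The closed loop $[x_\infty q_k]\cup[q_k c(t)]\cup c|_{[0,t]}^{-1}$ then has odd intersection number with $S_+$, contradicting the fact that $S_+$ separates $M_\infty$.

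Your limiting strategy runs into real trouble at the step ``$p_j\in\Omega_k^+$ for all large $j$''.  This is not automatic: $\Omega_k^-$ may well be unbounded (think of $M_\infty\cong S^2\times\Rr$), so the mere facts that $d_\infty(x_\infty,p_j)\to+\infty$ and that the diameter of $S_k$ is small do not place $p_j$ on any particular side of $S_k$.  To force $p_j$ to the far side you would need to control where the segments $[x_\infty p_j]$ go near $S_k$, and the natural tool for that is precisely the Toponogov angle comparison the paper uses directly; at that point you have essentially reproduced the paper's argument inside a longer one.  The transversality issue in the limit is a second genuine gap: a $C^1$ limit of transverse crossings need not be transverse, and you have not explained how to rule out a tangential contact of $c$ with $S_k$.

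In short, the paper's contradiction-by-angle argument is both shorter and avoids exactly the two difficulties you flag; your approach does not bypass the comparison-geometric input, it only postpones it.
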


\begin{proof}
Assume it does not. Recall that $g_\infty(0)$ is nonnegatively curved. Consider a geodesic triangle $x_\infty q_k c(t)$ for $t\geq 10C_0$. Choose $k$ large enough so that $\measuredangle (p_k x_\infty c(t) )\leq \pi/100$. Let $S_-$ (resp. $S_+$ ) be the component of $\bord U_k$ which is closest to (resp. farthest from) $x_\infty$. By comparison with a Euclidean triangle, we see that $\liminf_{t\to \infty} \measuredangle (x_\infty q_k c(t)) \ge\pi$.

\vspace{0,5cm}
\centerline{\input{propB_dessin3.pstex_t}}
\vspace{0,5cm}

Fix $t$ large enough so that this angle is greater than $98/100 \pi$. Then $q_kc(t)$ intersects $S_+$. The loop $\gamma$ obtained by concatenating $x_\infty q_k$, $q_kc(t)$ and $c(t)x_\infty$ has odd intersection number with $S_+$. This implies that $S_+$ is nonseparating. This contradiction proves Lemma~\ref{lem:traverse}.
\end{proof}

We proceed with the proof of Step~2.
Pick $k_0$ large enough so that $U_k$ is traversed by $c$, and let $S_0$ be the middle sphere of $U_{k_0}$. Let $a_0,a'_0\in S_0$ be two points maximally distant from each other.  Call $a_k,a'_k$ the respective intersections of the segments $[a_0c(t)]$ and $[b_0c(t)]$ with the middle sphere $S_k$ of $U_k$. 

\vspace{0,5cm}
\centerline{\input{propB_dessin4.pstex_t}}
\vspace{0,5cm}

By comparison with Euclidian triangles, when $t$ is large enough, the distance between $a_k$ and $a'_k$ is greater than $1/2d_\infty (a_0, a'_0)$. Thus we have
$$\diam (S_k)\geq d_\infty (a_k, a'_k)\geq 1/2 d_\infty (a_0, a'_0)=1/2\diam (S_0)\,.$$
Now the diameter of $S_k$ is close to $\pi\sqrt{2} R_\infty(q_k,0)^{-1}$ and tends to $0$ by hypothesis, which gives a contradiction.
This completes the proof of Step~2.
\end{proof} 

Applying again Lemma~\ref{lem:Pintact} and the distance-curvature Lemma, there exists $\tau>0$ such that $(\overline M_k, \bar g_k(t), (\bar x_k,0))$ converges to some Ricci flow on $M_\infty\times [-\tau , 0]$. Define
$$\begin{array}{rl}
\tau_0:= & \sup\{\tau \geq 0, \exists K(\tau ), \forall\rho >0, \exists k(\rho, \tau ) \textrm{ s.t. } B(\bar x_k, 0, \rho )\times [-\tau, 0] \\
& \textrm{is unscathed and has curvature bounded by }K(\tau ) \textrm{ for }k\geq k(\rho, \tau )\}\,.
\end{array}$$
We already know that $\tau_0>0$. The compactness theorem~\ref{thm:hamilton compactness} enables us to construct a flow $g_\infty(\cdot)$ on $M_\infty \times (-\tau_0, 0]$, which is a pointed limit of the flows $(\bar M_k, \bar g_k(\cdot))$. Moreover, passing to the limit, we see that the scalar curvature of $g_\infty$ is bounded by $K(\tau)$ on $(-\tau , 0]$ for all $\tau \in [0,\tau_{0})$.

\begin{etape} 
There exists $Q>0$ such that the curvature of $g_\infty (t)$ is bounded above by $Q$ for all $t\in (-\tau_0, 0]$.
\end{etape}

\begin{proof}
We know that $g_\infty (t)$ is nonnegatively curved and has the above-mentioned `weak canonical neighbourhood property'.
We show that the conclusion of the curvature-distance theorem holds on  $M_{\infty}$, at points of scalar curvature $>1$. For this, we let $p \in M_{\infty}$ and  
$t\in (-\tau_{0},0]$ be such that $R_{\infty}(p,t) >1$. Then there exists a sequence
$(\bar p_{k},t_{k})$, where $\bar p_{k} \in \overline M_{k}$, converging 
to $(p,t)$ and such that $R(\bar p_{k},t_{k}) \geq 1$ for $k$ large enough. As a consequence, they satisfy the hypotheses of the curvature-distance theorem as explained in the proof of Step~1. Passing to the limit, we deduce that for every $A>0$, there exists $\Lambda(A)>0$ such that for all $q \in M_{\infty}$, 
\begin{equation}
\frac{R_\infty(q,t)}{R_\infty(p,t)} \leq \Lambda\left(d_\infty(p,q,t)R_\infty(p,t)^{-1}\right).
\label{eq:courbure distance}
\end{equation} 

Let us estimate the variation of curvatures and distances on $M_\infty\times (-\tau_0, 0]$. We recall Hamilton's Harnack inequality for the scalar curvature (cf.~\cite[Appendix F]{Kle-Lot:topology})
$${\partial R_\infty\over\partial t}+{R_\infty\over t+\tau_0}\geq 0\,,$$
which implies
$$R_\infty(., t)\leq K(0){\tau_0\over t+\tau_0}\,.$$

Ricci curvature, which is positive, satisfies a similar estimate, which implies
$$const({\tau_0\over t+\tau_0})g_\infty\leq {\partial g_\infty\over \partial t}\leq 0\,,$$
thus
$$const	 \sqrt{{\tau_0\over t+\tau_0}}\leq {\partial d_\infty\over \partial t}(x,y,t)\leq 0\,.$$
By integration we obtain
$$|d_\infty (x,y,t)-d_\infty (x,y,0)|\leq C\,.$$

Since $M_\infty$ is nonnegatively curved, there exists $D>0$ such that for all $y\in M_{\infty}$, if  $d_\infty (x_\infty, y, 0)>D$, then there exists $z\in M_{\infty}$ such that 
\begin{equation}
 d_\infty (y,z,0)=d_\infty (x_\infty, y, 0)\textrm{ and }d_\infty (x_\infty,z,0)\geq 1.99 d_\infty (x, y, 0) \, \label{eq:presque alignes}
 \end{equation}
i.e.~the points $x_\infty,y,z$ almost lie on a line.
(Note that this is true even if $M_\infty$ is compact, because then it is vacuous!)

\vspace{0,5cm}
\centerline{\input{propB_dessin5.pstex_t}}
\vspace{0,5cm}

By comparison with Euclidean space, if $D$ is large enough we have $\pi-1/100\leq \measuredangle (x_{\infty}yz)\leq \pi$ for any such $y$ and $z$. Observe that since $|d_\infty (.,.,t)-d_\infty (.,.,0)|<C$ uniformly in $t$, we can choose $D>>C$ large enough so that, for all $t\in (-\tau_0, 0]$, we have
$$|d_\infty (y,z,t)-d_\infty (x_\infty, y, t)|<2C\textrm{  and  }d_\infty (x_{\infty},z,t)\geq 1.98 d_\infty (x_{\infty},y,t)\,,$$
thus $\measuredangle_t (x_{\infty}yz)\geq \pi-1/50$.

Let us show that the scalar curvature of $g_{\infty}(t)$ is uniformly bounded above on $M_{\infty} \backslash B_{\infty}(x_{\infty},0,2D)$. We argue by contradiction. 
Suppose that there exists $(y_i,t_i)$ such that $d_\infty(x_\infty,y_i,0) > 2D$ and such that $R_\infty (y_i,t_i)\underset{i\to +\infty}{\longrightarrow}+\infty$.  Each $(y_i,t_i)$ has a weak $(3\epsi_0,C_{0})$-canonical neighbourhood. If this neighbourhood is a $(3\epsi_0,C_{0})$-cap, then we let $y'_{i}$ be the centre of its $3\epsi_0$-neck. Since the diameter for $g(t_{i})$ of the 
cap is small (less than $4C_{0}R_{\infty}(y_{i},t_{i})^{-1} < C$ for large $i$), we still have (for large $i$):
\begin{eqnarray*}
d_\infty(x_\infty,y'_i,0) & \geq &
d_\infty(x_\infty,y'_i,t_{i}) - C \\
& \geq & d_\infty(x_\infty,y_i,t_{i}) -2C \\
& \geq & d_\infty(x_\infty,y_i,0)-3C >D.
\end{eqnarray*}

Furthermore, the curvature of $R_{\infty}(y'_{i},t_{i}) \geq \frac{1}{3C_{0}}R_{\infty}(y_{i},t_{i})$ tends to $+\infty$. 
Up to replacing $y_{i}$ by $y'_{i}$,  we may assume that there exists a sequence $(y_i,t_i)$ such that $d_\infty(x_\infty,y_i,0) > D$,  $R_\infty (y_i,t_i)\underset{i\to +\infty}{\longrightarrow}+\infty$ and 
$(y_{i},t_{i})$ is centre of a $3\epsi_0$-neck $U_{i}$. For each  
$i \in \NN$, pick $z_{i} \in M_{\infty}$ satisfying (\ref{eq:presque alignes}). By the above remark, 
$\measuredangle_{t_i}(x_\infty y_iz_i)\geq \pi-\frac{1}{50}$.

The points  $x_\infty$ and $z_i$ being outside $U_{i}$, we deduce that $x_{\infty}y_{i}$ and $y_{i}z_{i}$ each intersect some component $\bord U_{i}$. Let $S_{i}$ be the middle sphere of $U_i$. This sphere separates $x_\infty$ from $z_i$ in the sense that any curve connecting $x_\infty$ to $z_i$ passes through $S_i$. Indeed, otherwise the loop obtained by concatenating $[x_{\infty}y_{i}]$, $[y_{i}z_{i}]$ and $[z_{i}x_{\infty}]$ would have odd intersection number with $S_{i}$. As before this leads to a contradiction.

Now $\diam (S_i,t_i)\longrightarrow 0$ since $R_\infty (y_i,t_i)\longrightarrow +\infty$ as $i\to+\infty$. Since $g_\infty (t)$ is nonnegatively curved, distances are nonincreasing in $t$.
As a consequence, 
$$\diam (S_i, 0)\underset{i\to+\infty}{\longrightarrow 0}\,.$$
At time $0$ the curvature bounds and the hypothesis of $\kappa$-noncollapsing imply a uniform lower bound on the injectivity radius. Then for large $i$, the diameter of $S_i$ is less than the injectivity radius of $g_\infty (0)$. This implies that $S_i$ bounds a $3$-ball which contains neither $x_\infty$ nor $z_i$. Hence we can connect $x_\infty$ to $z_i$ by an arc which avoids $S_i$.

This implies that  the curvature is bounded outside the $g_\infty(0)$-ball of radius $2D$ around $x_\infty$. We deduce a uniform curvature bound on $(-\tau_0,0]$ in the ball using Equation (\ref{eq:courbure distance}). 
\end{proof}

\begin{etape} 
We have $\tau_0=+\infty$.
\end{etape}

\begin{proof}
Consider a subsequence $(\overline M_k \times (-\tau_0,0],\bar g_k(t),(x_k,0))$ that converges to $(M_\infty \times (-\tau_{0},0],g_\infty(t),(x_\infty,0))$. Since the limit has scalar curvature bounded above by $Q$ we deduce that (up to replacing 
$Q$ by $Q+1$), for all $0< \tau < \tau_0$, for all $\rho>0$, there exists 
$k'(\tau,\rho)\in \NN$ such that for all $k\geq k'(\tau,\rho)$, the parabolic neighbourhood $P_{\bar g_k}(x_k,0,\rho,-\tau)$ is unscathed and has scalar curvature $\leq Q$. 

Suppose that $\tau_0 < +\infty$ and let $0<\sigma<(4C_0(Q+2))^{-1}$. Then up to extracting a subsequence, for every $K>0$, there exists $\rho=\rho(\sigma,K)$ such that $P_{k}:=P_{\bar g_k}(x_k,0,\rho,-(\tau_0+\sigma))$ is scathed or does not have curvature bounded above by $K$. 

Set $K:=2(Q+2)$ and $\rho:=\rho(\sigma,K)$.
If  $k \geq k'(\tau_0-\sigma,\rho)$, then
 $P_{k}$ is scathed. Indeed, if $k \geq k'(\tau_0-\sigma,\rho)$, we have $R \leq Q$ on $P_{\bar g_k}(x_k,0,\rho,-\tau_0+\sigma))$. If 
$P_{k}$ is unscathed, the curvature-time lemma (\ref{lem:courbure temps}) applied between 
$-\tau_0+\sigma$ and $-\tau_0-\sigma$ (since $2\sigma \leq (2C_0(Q+2))^{-1}$) implies that $R \leq 2(Q+2)$ on $P_{k}$, which excludes the second alternative.

Thus there exists $ x'_k \in \bar B_k(x_k, 0, \rho )$, and $t_k \in [-\tau_0-\sigma,-\tau_0+\sigma]$, assumed to be maximal, such that $\bar g_k(t_k)\ne \bar {g_k}_+(t_k )$ at $x'_k$. 
Since $B(x_k, 0, \rho) \times (t_k,0]$ is unscathed, the above argument shows that $R \leq 2(Q+2)$ on this set, for all sufficiently large $k$. This implies an upper bound on the Riemann tensor on this set and hence by Lemma \ref{lem:Pintact} the parabolic neighbourhood $P_{\bar g_k}(x_k, 0, \rho, t_k)$ is unscathed. This contradicts the definition of $t_k$.
\end{proof}

We can now finish the proof of Proposition~B: since 
$\tau_0=+\infty$, the flow $(M_\infty, g_\infty(\cdot))$ is 
defined on $(-\infty,0]$, and has bounded, nonnegative curvature operator. Moreover, the rescaled evolving metric
 $\bar g_{k}(\cdot)$ is
$\kappa$-noncollapsed on scales less than $\sqrt{R(x_{k},b_{k})}r_0$,
so passing to the limit we see that $g_\infty(\cdot)$ is $\kappa$-noncollapsed on all scales.  The metric $g_\infty(\cdot)$ is not flat since it has scalar curvature $1$ at the point $(x_\infty,0)$. 

This shows that $(M_\infty, g_\infty(\cdot))$ is a $\kappa$-solution. By Theorem~\ref{thm:kappa VC} and the choice of constants in Subsection~\ref{sub:constantes}, every point of $M_\infty$ has an $(\frac{\epsi_0}{2},\frac{C_0}{2})$-canonical neighbourhood. Hence for sufficiently large $k$,  $(\bar x_k,0)$ has an $(\epsi_0,C_0)$-canonical neighbourhood. This contractiction finishes the proof of Proposition~B.

\section{Proof of Proposition C}\label{sec:proofC}

We recall the statement:

\begin{propC}
For all $Q_0,\rho_0>0$ and all $0\le T_A< T_\Omega$ there exists $\kappa=\kappa(Q_0,\rho_0,T_A,T_\Omega)$ such that for all $0<r<10^{-3}$ there exists $\bar\delta_C=\bar\delta_C(Q_0,\rho_0,T_A,T_\Omega,r)>0$ such that the following holds.

Let $0<\delta\leqslant\bar\delta_C$ and $b\in (T_A,T_\Omega]$, and $(M(\cdot),g(\cdot))$ be a $(r,\delta)$-surgical solution defined on $[T_A,b)$ such that 
$g(T_A)$ satisfies $| \Rm | \leqslant Q_0$, has injectivity radius at least $\rho_0$,  $\phi_A$-almost nonnegative curvature and satisfies $\Rmin(g_0)\ge -6/(4T_A+1)$. 
Then $g(\cdot)$ satisfies $(NC)_\kappa$.
\end{propC}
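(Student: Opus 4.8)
The plan is to prove Proposition C by adapting Perelman's $\kappa$-noncollapsing argument, in the form it takes for Ricci flow with surgery, to surgical solutions; the structure follows closely the treatment in \cite{B3MP}. Suppose by contradiction that no $\kappa$ works: then there are sequences $\kappa_k\to 0$, $\delta_k\to 0$ (to be determined in terms of $\kappa_k$ and the other data), $b_k\in(T_A,T_\Omega]$, and $(r,\delta_k)$-surgical solutions $(M_k(\cdot),g_k(\cdot))$ on $[T_A,b_k)$ satisfying the curvature and injectivity-radius hypotheses at time $T_A$, together with points $(x_k,t_k)$ and scales $\rho_k\le r$ at which the solution is $\kappa_k$-collapsed, i.e. $|\Rm|\le \rho_k^{-2}$ on $P(x_k,t_k,\rho_k,-\rho_k^2)$ but $\vol B(x_k,t_k,\rho_k)<\kappa_k\rho_k^n$. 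One must first reduce to the case where $\rho_k$ is comparable to $1$ (or bounded below): on very small scales the curvature pinching and the Hamilton--Ivey estimate force the geometry to look nearly nonnegatively curved, while the initial metric at time $T_A$ has bounded geometry, and a standard argument (interpolating between the a priori noncollapsing at $T_A$ coming from $\rho_0$ and the scale $\rho_k$, using Lemma~\ref{lem:distance-distorsion}) shows that collapsing can only occur on a scale bounded below by a constant depending only on $Q_0,\rho_0,T_A,T_\Omega$. Hence one may assume $\rho_k\ge \rho_*$ for a fixed $\rho_*>0$, and after passing to a subsequence that $\rho_k\to\rho_\infty>0$ and $t_k\to t_\infty\le T_\Omega$.

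The core is then Perelman's monotonicity argument via the reduced volume $\rvol$ (or, equivalently, the reduced length / $\mathcal L$-geometry), run backward from $(x_k,t_k)$. The key subtlety in the surgical setting, exactly as in Perelman's original construction, is that $\mathcal L$-geodesics emanating backward from $(x_k,t_k)$ must be prevented from running into surgery regions; here is where $\delta$ must be chosen small in terms of $r$ (and $Q_0,\rho_0,T_A,T_\Omega$). One fixes a point $(y,T')$ at a controlled earlier time $T'\in[T_A, t_\infty)$ — for instance slightly after $T_A$, where the geometry is controlled by Shi's estimates — and uses the lower bound on $\rvol$ near $(y,T')$ (coming from the bounded geometry there) together with the monotonicity of $\rvol$ along $\mathcal L$-geodesics to propagate a lower bound on $\rvol$ down to the collapsing scale at $(x_k,t_k)$; this lower bound on $\rvol$ contradicts the collapsing $\vol B(x_k,t_k,\rho_k)<\kappa_k\rho_k^n$ with $\kappa_k\to 0$. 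To make this work one shows that for $\delta$ small enough, a definite proportion of the $\mathcal L$-geodesics of bounded $\mathcal L$-length avoid all surgery necks and caps: this uses that surgery is performed on strong $\delta$-necks of scale $h\le \delta r$, that the curvature on such necks is $\sim h^{-2}\gg r^{-2}$, and the fact that the points where collapsing occurs have $|\Rm|\le \rho_k^{-2}\le r^{-2}$, so $\mathcal L$-geodesics of controlled length and action simply cannot reach into the high-curvature surgery regions (a barrier argument on the $\mathcal L$-length, using the curvature pinching lower bound on $R$ and the estimate $\Rmax\le\Theta$, and the persistence of almost standard caps, Theorem~\ref{thm:persistence-cap}, to handle those caps a geodesic might otherwise graze).

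I would organize the proof into the following steps. Step 0: set up the contradiction and normalize. Step 1: the small-scale reduction described above, producing $\rho_k\ge\rho_*$. Step 2: fix a basepoint $(y_k,T'_k)$ at a controlled early time with controlled geometry, and a lower bound on the reduced volume based at $(x_k,t_k)$ evaluated near time $T'_k$, valid once we know enough $\mathcal L$-geodesics are admissible. Step 3 (the heart): the barrier/avoidance estimate showing that for $\delta\le\bar\delta_C(Q_0,\rho_0,T_A,T_\Omega,r)$, $\mathcal L$-geodesics of $\mathcal L$-length below a fixed threshold stay in the region $\{R<\Theta/2\}$, hence are unscathed, so Perelman's $\mathcal L$-geometry applies verbatim; here one also needs the Hamilton--Ivey pinching (Proposition~\ref{prop:ham ivey}) to get the curvature control that enters Perelman's differential inequalities. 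Step 4: run the monotonicity of $\rvol$ and derive the contradiction with $\kappa_k\to 0$. The main obstacle is Step 3: controlling the interaction between the backward $\mathcal L$-geodesics and the surgeries, i.e. quantifying how small $\delta$ must be so that the regions modified by surgery are ``invisible'' to the relevant $\mathcal L$-geodesics — this is precisely the point where Perelman's argument is delicate, and where the surgical-solution framework (surgery before the singularity, with the curvature threshold $\Theta=2Dh^{-2}$ and the neck scale $h\le\delta r$) must be exploited carefully, together with the persistence theorem to handle the caps.
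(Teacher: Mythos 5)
Your proposal captures the correct overall philosophy for what turns out to be the \emph{large-scale} case, but Step~1 contains a genuine gap that cannot be repaired as stated.

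In Step~1 you claim that ``a standard argument (interpolating between the a priori noncollapsing at $T_A$ coming from $\rho_0$ and the scale $\rho_k$, using Lemma~\ref{lem:distance-distorsion}) shows that collapsing can only occur on a scale bounded below by a constant depending only on $Q_0,\rho_0,T_A,T_\Omega$.'' This is false. Over $[T_A,t_k]$ the only available curvature bound for the surgical solution is the threshold $\Theta\sim D h^{-2}$, which diverges as $\delta\to 0$; Lemma~\ref{lem:distance-distorsion} applied with that constant gives no lower bound on the collapsing scale that is uniform in $r,\delta$. And the scales genuinely can be small: near an almost standard cap the geometry lives at scale $h\le\delta r$, and since $(NC)_\kappa$ must be verified at every point on every scale $\rho\le 1$ on which $|\Rm|\le\rho^{-2}$ on the parabolic ball, these small scales are not exempt. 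Your appeal to Hamilton--Ivey pinching shows the geometry is close to nonnegatively curved at high-curvature points, but that by itself gives no volume lower bound and thus does not effect the reduction.

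The paper resolves this by a dichotomy in the scale $\rho_0$ (Subsections~\ref{subsec:grand rayon} and~\ref{grande-courbure}), not by your Step~1 reduction. If $\rho_0\geq r/100$, the modified reduced volume $\widetilde V_{\mathrm{reg}}$ (integrating only over $Y(\tau)$, the unscathed minimizing $\mathcal L$-geodesics) is set up essentially as you describe, with Lemma~\ref{technique1} playing the role of your Step~3 and Lemma~\ref{existence-geodesique} providing the basepoint as in your Step~2; here your outline is in line with the paper, with one caveat: a scathed curve need not leave $\{R<\Theta/2\}$ (the surgery caps themselves have $R\sim h^{-2}<\Theta/2$), so the content of Lemma~\ref{technique1} is really that scathed curves have $\mathcal L$-length $\geq\Lambda$, proved by a case analysis involving the persistence theorem, not by a barrier on $R$. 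If on the other hand $\rho_0\leq r/100$, some $(y,t)\in\bar P_0$ has $R(y,t)\geq\rho_0^{-2}\geq 10^4 r^{-2}$, hence an $(\epsi_0,C_0)$-canonical neighbourhood; the volume estimate~\eqref{eq:vol} built into the definition of canonical neighbourhoods gives noncollapsing directly with a universal constant $C_0^{-1}e^{-18}$ --- except in the $\epsi_0$-round subcase where~\eqref{eq:vol} need not hold, which the paper handles by running time backward to a moment $t_0'$ at which the curvature scale is $\approx r$, invoking the first case there, and transferring the conclusion to $(x_0,t_0)$ by $\epsi_0$-homothety. Your sketch offers no substitute for this small-scale analysis, which is an essential half of the proof.
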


Note that by Cheeger's theorem and standard estimates on Ricci flow, there exists a constant $\kappa_\mathrm{norm}$ depending only on the normalisation of the initial condition, i.e.~$Q_0,\rho_0$, such that $(M(\cdot),g(\cdot))$ satisfies $(NC)_{\kappa_\mathrm{norm}}$ on $[T_A,T_A+2^{-4}Q_0^{-1}]$.

We set $\kappa_0 := \min(\kappa_\mathrm{norm},\kappa_\mathrm{sol}/2,\kappa_\mathrm{st}/2)$.

\subsection{Preliminaries}
Let $v_k(\rho )$ denote the volume of a ball of radius $\rho$ in the model space of constant sectional curvature $k$ and dimension $n$.

Let $\kappa>0$. One says that a Riemannian ball $B(x,\rho)$ is $\kappa$-noncollapsed if $\vert \Rm \vert \leqslant \rho^{-2}$ on $B(x,\rho)$ and if $\vol B(x,\rho) \geqslant 
\kappa \rho^{3}$. Similarly, a parabolic ball $P(x,t,\rho,-\rho^{-2})$ is $\kappa$-noncollapsed if $\vert \Rm \vert \leqslant \rho^{-2}$ on $P(x,t,\rho,-\rho^{-2})$ and  $\vol B(x,t,\rho) \geqslant 
\kappa \rho^{3}$.

We recall the following elementary lemma from \cite[Section 8.1]{B3MP}.

\begin{lem}\label{lem-preliminaire} \label{pas-intact}
 \begin{enumerate}
  \item If $B(x,\rho)$  is $\kappa$-noncollapsed, then for every $\rho' \in (0,\rho)$, $B(x,\rho')$ is $C\kappa$-noncollapsed, where $C:={v_0(1)\over v_{-1}(1)}$.
 The same property holds for $P(x,t,\rho',-\rho'^2)\subset P(x,t,\rho,-\rho^2)$.
\item Let $r,\delta$ be surgery parameters and $g(\cdot)$ be an $(r,\delta)$-surgical solution. 
Assume that $P_0=P(x_0,t_0,\rho_0,-\rho_0^2)$ is a scathed parabolic neighbourhood such that 
$|\Rm| \leqslant \rho_0^{-2}$ on $P_0$. Then $P_0$ is $e^{-12}\kappa_{st}/2$-noncollapsed.
\end{enumerate}
\end{lem}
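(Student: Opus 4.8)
Both assertions are local; (1) is pure Bishop--Gromov comparison, and (2) rests on the persistence of almost standard caps together with the non-collapsing of the standard solution. Here is how I would organise the argument.

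\emph{Proof of (1).} The curvature part is immediate: $\rho'<\rho$ gives $B(x,\rho')\subset B(x,\rho)$ and $(\rho')^{-2}\ge\rho^{-2}\ge|\Rm|$ on $B(x,\rho')$, and likewise $P(x,t,\rho',-(\rho')^2)\subset P(x,t,\rho,-\rho^2)$. For the volume bound, $|\Rm|\le\rho^{-2}$ on $B(x,\rho)$ gives $\Ric\ge-(n-1)\rho^{-2}$ there, so Bishop--Gromov comparison shows that $s\mapsto\vol B(x,s)/v_{-\rho^{-2}}(s)$ is non-increasing on $(0,\rho]$. Using the scaling identity $v_{-\rho^{-2}}(s)=\rho^{n}v_{-1}(s/\rho)$ together with $\vol B(x,\rho)\ge\kappa\rho^{n}$, we get
$$\vol B(x,\rho')\ \ge\ \kappa\,\rho^{n}\,\frac{v_{-1}(\rho'/\rho)}{v_{-1}(1)}.$$
Since a ball of radius $t$ in hyperbolic space is at least as large as the Euclidean one, $v_{-1}(t)\ge v_{0}(t)=v_{0}(1)t^{n}$ for $t\in(0,1]$; with $t=\rho'/\rho$ this yields $\vol B(x,\rho')\ge\big(v_{0}(1)/v_{-1}(1)\big)\kappa(\rho')^{n}=C\kappa(\rho')^{n}$. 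Running the same Bishop--Gromov estimate inside the fixed Riemannian manifold $(M(t),g(t))$ disposes of the parabolic assertion.

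\emph{Proof of (2), setup.} After a parabolic rescaling assume $\rho_{0}=1$, so $|\Rm|\le1$ on $P_{0}=P(x_{0},t_{0},1,-1)$, which is scathed; we must show $\vol B(x_{0},t_{0},1)\ge e^{-12}\kappa_{st}/2$. As $P_{0}$ is scathed, there is a largest singular time $t_{1}\in[t_{0}-1,t_{0})$ at which the backward trajectory of $B(x_{0},t_{0},1)$ is affected. Disappearing components are removed, and cut spheres and boundary spheres cannot meet that trajectory (it still exists at $t_{0}$), so the only possibility is that some point $x_{1}\in B(x_{0},t_{0},1)$ lies, at time $t_{1}^{+}$, in the core $V$ of a $\delta$-almost standard cap $(U,V,p,y)$ of scale $h$ produced by the $(r,\delta)$-surgery at $t_{1}$, and $B(x_{0},t_{0},1)$ evolves unscathed with $|\Rm|\le1$ on $(t_{1},t_{0}]$. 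Since $x_{1}$ sits in the core of a cap of scale $h$, $|\Rm(x_{1},t_{0})|\gtrsim h^{-2}$ (using Proposition~\ref{prop:standard VC}), so the bound $|\Rm(x_{1},t_{0})|\le1$ forces $h\gtrsim1$, and the piece is not yet singular at $t_{0}$, so $t_{0}-t_{1}<h^{2}$, i.e.\ $(t_{0}-t_{1})h^{-2}<1$ lies in the model's lifespan.

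\emph{Proof of (2), conclusion.} Now apply the Persistence Theorem~\ref{thm:persistence-cap}, choosing $\bar\delta_{C}$ small enough that $\delta\le\bar\delta_{\rm per}(A,\theta,r)$ for a large $A$ and $\theta$ close to $1$: the parabolic neighbourhood $P(p,t_{1},Ah,t_{0}-t_{1})$ is $A^{-1}$-homothetic to $P(p_{0},0,A,(t_{0}-t_{1})h^{-2})$ in the standard solution. Because $d_{t_{0}}(x_{0},p)\lesssim h$ and the radius $1$ of $B(x_{0},t_{0},1)$ satisfies $1\lesssim h\ll Ah$, the ball $B(x_{0},t_{0},1)$ is contained in $B(p,t_{0},Ah)$; under the homothety it corresponds to a ball of the standard solution on which $|\Rm|$ is bounded by the reciprocal square of its radius, so Proposition~\ref{prop:standard kappa} gives the volume lower bound $\kappa_{st}$ for that standard ball. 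Transporting this back through the $A^{-1}$-closeness costs a factor $2$, and comparing the two balls via the distance-distortion Lemma~\ref{lem:distance-distorsion} over a time interval of length at most $\rho_{0}^{2}$ on which $|\Rm|\le\rho_{0}^{-2}$ — which in dimension $3$ distorts volumes by at most $e^{\pm12}$ — yields $\vol B(x_{0},t_{0},1)\ge e^{-12}\kappa_{st}/2$, as claimed.

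\emph{Main obstacle.} Part (1) is a one-line Bishop--Gromov estimate plus the elementary model-space inequality. The substance is in (2), and the delicate points are: extracting from the bare definition of \emph{scathed} that a $\delta$-almost standard cap, of the correct position and scale, must occur inside $P_{0}$; checking that the elapsed time $t_{0}-t_{1}$ is compatible with the lifespan $\theta h^{2}$ of the model so that the Persistence Theorem applies; and tracking the two sources of loss — the $A^{-1}$-closeness to the standard solution (the factor $1/2$) and the distance distortion over the unit time interval (the factor $e^{-12}$) — so as to land on exactly the stated constant.
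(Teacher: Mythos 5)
Part (1) of your proposal is fine: the Bishop--Gromov argument with the scaling identity $v_{-\rho^{-2}}(s)=\rho^{n}v_{-1}(s/\rho)$ and $v_{-1}(t)\ge v_{0}(1)t^{n}$ is exactly the standard proof (the paper itself does not reprove this lemma but refers to \cite[Section 8.1]{B3MP}). The problem is part (2). Your argument routes through the Persistence Theorem~\ref{thm:persistence-cap} with $A$ large and $\theta$ close to $1$, which requires $\delta\le\bar\delta_{\rm per}(A,\theta,r)$; but the lemma is stated for \emph{arbitrary} surgery parameters $r,\delta$ (only $\delta<\min(\epsi_0,\delta_0)$, with no coupling of $\delta$ to $r$) and asserts a universal constant, so what you prove is a strictly weaker statement. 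The intended argument needs no persistence at all: take $t_1$ maximal such that the ball is scathed at $t_1$; then some $x_1\in B(x_0,t_0,\rho_0)$ lies at time $t_1^{+}$ in an added $3$-ball, which by Theorem~\ref{thm:chirurgie metrique} is, after rescaling by $h^{-2}$, $\delta'$-close to the standard \emph{initial} metric. Right-continuity of the bound $|\Rm|\le\rho_0^{-2}$ on $P_0$ at $(x_1,t_1^{+})$ forces $h$ to be at least a universal multiple of $\rho_0$, so (using $\delta<\delta_0$ universal) the ball $B(x_0,t_1^{+},e^{-2}\rho_0)$ is entirely contained in the almost standard cap and in $P_0$; the $\kappa_{\mathrm{st}}$-noncollapsing of the standard solution transferred through the $\delta'$-closeness gives $\vol_{g_+(t_1)}B(x_0,t_1^{+},e^{-2}\rho_0)\ge\frac{\kappa_{\mathrm{st}}}{2}(e^{-2}\rho_0)^{3}$, and Lemma~\ref{lem:distance-distorsion} over $(t_1,t_0]$ converts this into $\vol_{g(t_0)}B(x_0,t_0,\rho_0)\ge e^{-6}\cdot\frac{\kappa_{\mathrm{st}}}{2}e^{-6}\rho_0^{3}$, which is exactly the stated constant.

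Even granting the extra smallness of $\delta$, two steps in your write-up do not hold as written. First, the bookkeeping: Lemma~\ref{lem:distance-distorsion} with $n=3$, $\Lambda=\rho_0^{-2}$, $\Delta t\le\rho_0^{2}$ gives $e^{\pm4}$ on the metric, hence $e^{\pm6}$ on volumes, not $e^{\pm12}$; the missing $e^{-6}$ must come from shrinking the comparison ball's radius by $e^{-2}$, a step you never perform. This shrinking is forced on you anyway: to invoke Proposition~\ref{prop:standard kappa} you must verify the curvature hypothesis ($|\Rm|$ bounded by the inverse square of the scale) on the corresponding parabolic ball of the standard solution, and the only curvature information available is $|\Rm|\le\rho_0^{-2}$ on $P_0$, so the comparison ball has to be kept inside $P_0$ -- your ball $B(x_0,t_0,1)\subset B(p,t_0,Ah)$ is not checked against this, and the claimed ``volume lower bound $\kappa_{\mathrm{st}}$ for that standard ball'' is unjustified. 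Second, the step ``$|\Rm(x_1,t_0)|\gtrsim h^{-2}$, so $h\gtrsim1$, and the piece is not yet singular at $t_0$, so $t_0-t_1<h^{2}$'' is circular: a curvature lower bound at time $t_0$ presupposes closeness to the standard solution up to $t_0$, i.e.\ precisely that $t_0-t_1\le\theta h^{2}$. The correct route is either to get $h\gtrsim\rho_0$ directly at $t_1^{+}$ from the cap's initial metric (no persistence), or, if you use persistence, to exclude $t_0-t_1>\theta h^{2}$ by playing the blow-up $\Rmin\ge\constst(1-t)^{-1}$ of Proposition~\ref{prop:standard VC} at time $t_1+\theta h^{2}$ against $R\le 6\rho_0^{-2}$ on $P_0$, with $\theta$ chosen close to $1$.
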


\begin{rem}
\begin{itemize}
 \item From (i), we deduce that in order to establish noncollapsing at some point  $(x,t)$ on all scales $\leq 1$, it suffices to do it on the maximal scale $\rho \leq 1$ such that
$|\Rm| \leq \rho^{-2} $ on $P(x,t,\rho,\rho^{-2})$. This observation will be useful later.
\item If some metric ball $B(y,\rho)$ is contained in a $(\epsi,C_0)$-canonical neighbourhood \emph{ which is not $\epsi_{0}$-round} and 
satisfies $|\Rm| \leqslant \rho^{-2}$, then $B(y,\rho)$ is $C_0^{-1}$-noncollapsed on the scale $\rho$ by~\eqref{eq:vol}.
\end{itemize}
\end{rem}

\subsection{The proof}

We turn to the proof of Proposition C. 

Let $M_{\textrm{reg}}$ be the set of regular points in spacetime. This is an open, arcwise connected $4$-manifold. Likewise we let $M_{\textrm{sing}}$ be the set of singular points in spacetime.
Let $\gamma : [t_{0},t_{1}]\to \bigcup_t M(t)$ be a map such that $\gamma(t)\in M(t)$ for every $t$. Let $\bar t \in [t_0,t_{1}]$. Here we adopt the convention that 
$M_+(t)=M(t)$ if $t$ is regular.

\begin{defi}
One says that $\gamma$ is \bydef{continuous at} $\bar t$ if there is $\sigma>0$ such that\\
 1) $t \to \gamma(t) \in M(\bar t)$ on $[\bar t-\sigma,\bar t)$ and is left continuous at $\bar t$\\
 2) $t \to \gamma(t)\in M_+(\bar t)$ on $]\bar t,\bar t+\sigma]$ and has a right limit at $\bar t$ 
denoted $\gamma_+(\bar t)$\\
 3)Assume $\bar t<t_1$. If $\gamma(\bar t) \in M_\reg(\bar t)$, then $\gamma(\bar t)=\gamma_+(\bar t)$ under the identification of $M_\reg(\bar t)$ and 
$M(\bar t) \cap M_+(\bar t)$; if $\gamma(\bar t) \in S \subset \cals$, then $\gamma(\bar t)=\gamma_+(\bar t)$ under the identification of $S$ and 
the corresponding component of $\partial M\cap M_+(\bar t)$. 
\end{defi}
In particular, if $\gamma(\bar t) \in M_\sing(\bar t) \setminus \cals$ for $\bar t<t_1$, it is not continuous at $\bar t$. Indeed, $\gamma(\bar t)$ disappears at time $\bar t$.

We say that $\gamma$ is \bydef{unscathed} if $\gamma(t) \in M_\reg(t) $ for all 
$t \in [t_0,t_1)$. Otherwise $\gamma$ is \bydef{scathed.} 
  
 We adapt the arguments from the smooth case, replacing Perelman's reduced volume $\widetilde V$ (\cite[Section 7]{Per1}) by: 

$$\widetilde V_{\textrm{reg}}(\tau) := \int_{Y(\tau)} \tau^{-3/2} e^{-\ell(\mathcal{L}\exp(v),\tau)} J(v,\tau) dv,$$
where $\tau = t_{0}-t$ and
$$Y(\tau) := \{v \in T_{x_0}M \mid \mathcal{L}\exp(v):[0,\tau] \rightarrow M \text{ is minimal and unscathed }\}\,.$$

Let $(x_0,t_0)$ be a point. By Lemma~\ref{lem-preliminaire} and the remark following this lemma, we restrict attention to the scale $\rho_0\leq 1$ which is maximal such that $|\Rm|\leq \rho_0^{-2}$ on $P_0:=P(x_0,t_0,\rho_0,-\rho_0^2)$ and assume that $P_0$ is unscathed.

As before we set $B_0:=B(x_0,t_0,\rho_0)$.

\subsection{The case $\rho_0 \geqslant \frac{r}{100}$ }\label{subsec:grand rayon}

\begin{lem}\label{technique1}
Let $\hat r,\Delta,\Lambda$ be positive numbers. Then there exists $\bar \delta=\bar \delta (\hat r, \Delta, \Lambda )>0$ with the following property. Let $(M(\cdot),g(\cdot))$ be an $(r,\delta)$-surgical solution  on an interval $I=[a, a+\Delta ]$ with $\delta \leqslant \bar\delta$  and $r\geqslant \hat r$ on $I$. Let $(x_0,t_0)\in M\times I$ and $\rho_{0}\geqslant \hat r$ be such that $P_0:=P(x_0,t_0,\rho_0,-\rho_0^2)\subset M\times I$ is unscathed and $|\Rm |\leq \rho_0^{-2}$ on $P_0$.

Let $\gamma$ be a continuous spacetime curve defined on $[t_1,t_0]$ with $t_1\in [0,t_0]$ and such that $\gamma (t_0)=x_0$ and $\gamma$ is scathed. Then ${\mathcal L}_{t_0-t_1}(\gamma )\geq \Lambda$.
\end{lem}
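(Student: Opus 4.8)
\textbf{Proof plan for Lemma~\ref{technique1}.}

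The plan is to argue by contradiction along the lines of the corresponding estimate in \cite{B3MP}, exploiting the fact that a scathed curve must, at the scathing time, either run into a disappearing component or pass through a surgery cap/sphere, and in both cases the $\call$-length of the offending portion is forced to be large. First I would fix $\hat r,\Delta,\Lambda$ and suppose the conclusion fails, so there is a sequence $\delta_k\to 0$, $(r_k,\delta_k)$-surgical solutions with $r_k\ge\hat r$ on an interval of length $\Delta$, base points $(x_{0,k},t_{0,k})$ with the good unscathed bounded-curvature parabolic neighbourhood $P_{0,k}$, and continuous scathed curves $\gamma_k$ on $[t_{1,k},t_{0,k}]$ with $\gamma_k(t_{0,k})=x_{0,k}$ and $\call_{t_{0,k}-t_{1,k}}(\gamma_k)<\Lambda$. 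Let $\bar t_k\in[t_{1,k},t_{0,k})$ be the \emph{last} time at which $\gamma_k$ is scathed; so $\gamma_k$ is unscathed on $(\bar t_k,t_{0,k}]$, $\gamma_k(\bar t_k)\in M_\sing(\bar t_k)$, and by continuity of $\gamma_k$ at $\bar t_k$ the point $\gamma_k(\bar t_k)$ cannot disappear at $\bar t_k$ (a disappearing point makes $\gamma$ discontinuous there), hence $\gamma_k(\bar t_k)$ lies on a surgery sphere of $\cals(\bar t_k)$ or in a capped-off $3$-ball, i.e.\ it is in (or adjacent to) a $\delta_k$-almost standard cap of scale $h_k$, with $h_k\le\delta_k r_k$ small but also $h_k^{-2}$ comparable to the surgery curvature $h^{-2}$.

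Next I would use the persistence of almost standard caps, Theorem~\ref{thm:persistence-cap}: since $\delta_k\to 0$, for suitable $A$ and $\theta<1$ the forward parabolic neighbourhood $P(p_k,\bar t_k,Ah_k,\cdot)$ of the tip $p_k$ of that cap stays $A^{-1}$-homothetic to the corresponding neighbourhood in the standard solution for as long as it remains unscathed; and if it becomes scathed first, the whole ball $B(p_k,\bar t_k,Ah_k)$ disappears. Combining this with the hypothesis that $P_{0,k}$ is unscathed and has curvature $\le\rho_{0,k}^{-2}\le(\hat r/?)^{-2}$ — more precisely with the curvature-distance estimate (Theorem~\ref{thm:courbure distance}) and the curvature bound along the portion of $\gamma_k$ near $t_{0,k}$ — one sees that $x_{0,k}$ cannot lie in $B(p_k,\bar t_k,Ah_k)$ once $k$ is large, because the scalar curvature near the cap blows up like $h_k^{-2}\to\infty$ while it is bounded near $x_{0,k}$. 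Therefore $\gamma_k$ must travel, between time $\bar t_k$ and time $t_{0,k}$, from a region of scalar curvature $\gtrsim h_k^{-2}$ out to the boundary of (or out of) an almost standard cap, crossing an $\epsi_0$-neck region of high curvature. Equivalently, $\gamma_k$ spends a definite amount of rescaled ($h_k^{-2}$-rescaled) time and length in a region where $R$ is of order $h_k^{-2}$; quantitatively, either $\gamma_k$ lingers in the cap region long enough, or it must move a spatial distance of order $h_k$ across it — and in the standard-solution model this forces a lower bound on the integrand $\tau^{-1/2}\bigl(R(\gamma)+|\dot\gamma|^2\bigr)$ over an interval of $\tau$-length bounded below, giving $\call_{t_{0,k}-t_{1,k}}(\gamma_k)\to\infty$.

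The main obstacle, as usual in these $\call$-length estimates, is bookkeeping the contribution to $\call(\gamma_k)=\int\sqrt{\tau}\bigl(R(\gamma_k)+|\dot\gamma_k|^2\bigr)\,d\tau$ from the part of the curve inside the high-curvature surgery region, and ruling out that this contribution could be small because $\tau=t_{0,k}-t$ itself is small there (i.e.\ the scathing happens very close to $t_{0,k}$). This is handled by the persistence theorem together with the observation that between $\bar t_k$ and $t_{0,k}$ the curve must exit a neighbourhood of size $\sim Ah_k$ around the tip: if $t_{0,k}-\bar t_k$ is comparable to $h_k^2$ then $R(\gamma_k)\sim h_k^{-2}$ over a $\tau$-interval of length $\sim h_k^2$, contributing $\gtrsim$ a constant after accounting for $\sqrt\tau\sim h_k$, and iterating/using that actually $\tau$ ranges up to $\rho_{0,k}^2\gtrsim\hat r^2$ while $R\gtrsim h_k^{-2}$ on a definite $\tau$-range makes the integral $\to\infty$; if instead $t_{0,k}-\bar t_k\gg h_k^2$, then one uses that $\gamma_k$ must still traverse a spatial distance $\sim h_k$ with curvature bounds forcing $\int|\dot\gamma_k|^2$ or $\int R$ to be large over the relevant interval. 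Everything else — left/right continuity conventions, that the scathing point is on a surgery sphere and not a disappearing point, the identification of $M_\reg$ across a surgery time — is exactly as in \cite[Section~8]{B3MP}, and I would cite it, recording only the two differences: the replacement of Ricci flow with bubbling-off by surgical solutions, and the use of the sharpened persistence conclusion (Theorem~\ref{thm:persistence-cap}(ii)) that the ball around the tip genuinely disappears in the scathed case.
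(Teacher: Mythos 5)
Your plan has the right ingredients in broad strokes (identifying the last scathing time $\bar t$, using continuity of $\gamma$ to place $\gamma(\bar t)$ on a surgery sphere, invoking the persistence theorem, and noting that the cap region cannot intersect $P_0$), but the central quantitative step as you describe it would fail, and the structure omits the dichotomy the paper uses to make it work.

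The paper does not argue by contradiction: it first observes that it suffices to bound from below either $\int\sqrt{\tau}\,R\,d\tau$ or $\int\sqrt{\tau}\,|\dot\gamma|^2\,d\tau$, then splits into two cases according to whether there is a time $t'\in[t_0-\alpha\rho_0^2,t_0]$ with $\gamma(t')\notin B_0$. In Case~1 (fast exit) a direct Cauchy--Schwarz computation with the metric-equivalence on the unscathed $P_0$ gives $\int\sqrt{\tau}\,|\dot\gamma|^2\geq \hat r/(4\sqrt{\alpha})\geq\Lambda'$, with \emph{no constraint on $\delta$}; this case is absent from your plan. In Case~2 ($\gamma$ stays in $B_0$ on $[t_0-\alpha\rho_0^2,t_0]$), since $P_0$ is unscathed one gets $\bar t < t_0-\alpha\rho_0^2$, and since the forward cap neighbourhood $P$ has $R\geq\tfrac12 h^{-2}>6\rho_0^{-2}$ it is disjoint from $P_0$, so while $\gamma$ is in the cap region one has $\tau=t_0-t\geq\alpha\rho_0^2\geq\alpha\hat r^2$. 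It is precisely this \emph{definite} lower bound on $\sqrt{\tau}$, together with $\Rmin\geq\constst(1-t)^{-1}$ for the standard solution, which produces $-\constst\sqrt{\alpha}\,\hat r\,\ln(1-\theta)/2\geq\Lambda'$ by choosing $\theta$ close to $1$.

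Your proposed handling of the hard subcase — ``if $t_{0}-\bar t\sim h^2$ then $\sqrt{\tau}\sim h$, $R\sim h^{-2}$ over a $\tau$-interval of length $\sim h^2$, contributing $\gtrsim$ a constant, then iterate'' — is quantitatively wrong: under those scalings $\int\sqrt{\tau}\,R\,d\tau\sim h\cdot h^{-2}\cdot h^2=h\to 0$, so the contribution actually vanishes, and ``iterating'' does not fix this. The scenario where the scathing time is within $O(h^2)$ of $t_0$ cannot be repaired by a contribution estimate near the cap; it is exactly what the Case~1 dichotomy eliminates (under $P_0$ unscathed, such a scathing time forces the curve either to have already exited $B_0$ quickly or be impossible), so that in the remaining case $\tau$ stays bounded below by $\alpha\hat r^2$, not by $h^2$. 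Without this dichotomy your argument has a genuine gap.
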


Here ${\mathcal L}_{t_0-t_1}$ denotes the ${\mathcal L}$-length based at $(x_{0},t_{0})$, that is 
$$ {\mathcal L}_{t_0-t_1}(\gamma ) =\int_{t_1}^{t_0} \sqrt{t_0-t} \left(R(\gamma(t),t) + |\dot\gamma(t) |_{g(t)}^2\right)\ dt.$$
\begin{proof} To prove the lemma, it suffices to obtain one of the two inequalities:

\begin{equation}\label{ineq1}
\int_{t_1}^{t_0}\sqrt{t_0-t}\ R(\gamma(t),t)\ dt\geq \Lambda\,,
\end{equation}
\begin{equation}\label{ineq2}
\int_{t_1}^{t_0}\sqrt{t_0-t_1}|\dot\gamma(t) |_{g(t)}^2\ dt\geq \Lambda+4\Delta^{3/2}=:\Lambda'\,.
\end{equation} 

Indeed ${\mathcal L}_{t_0-t_1}(\gamma ) \geqslant \int_{t_1}^{t_0}\sqrt{t_0-t}\ R(\gamma(t),t)\ dt$, hence  \eqref{ineq1} implies the lemma.  For  \eqref{ineq2}, this comes from the fact that $R\geq -6$, hence $$\int_{t_1}^{t_0}\sqrt{t_0-t}\ R\ dt\geq -4[\tau^{3/2}]^{t_{0}-t_{1}}_0\geq -4\Delta^{3/2}\,.$$

Intuitively, those two conditions mean that a curve has large $\mathcal L$-length if it has large energy (which is the case if it moves very fast or goes a very long way), or if it stays long enough in an area of large scalar curvature.

Since $\gamma$ is scathed, it cannot remain in $P_0$. We shall make a first dichotomy according to whether $\gamma$ goes out very fast or not.

Set $$\alpha:=\min \left\{ \left({\hat r\over 4\Lambda'}\right)^2, 10^{-2}\right\} \in (0,10^{-2})\,.$$

\textbf{Case 1:} There exists $t'\in[t_0-\alpha \rho_0^2, t_0]$ such that $\gamma (t')\not\in B_0$. 

\vspace{0,5cm}
\centerline{\input collapse4.pstex_t}
\vspace{0,5cm}

Choose $t'$ maximal with this property. We then have
$$\int_0^{t_0-t'}|\dot\gamma |\ d\tau\leq\Big (\int \sqrt{\tau}|\dot\gamma|^2 \Big )^{1/2}\Big ( \int{1\over\sqrt{\tau}}  \Big )^{1/2}\,,$$
so
$$\int_0^{\tau_1}\sqrt{\tau} |\dot\gamma |^2\ d\tau\geq \int_0^{t_0-t'}\sqrt{\tau}|\dot\gamma |^2\ d\tau
\geq \Big (\int_0^{t_0-t'}|\dot\gamma |\ d\tau \Big )^2
\Big (\int_0^{t_0-t'}{1\over \sqrt{\tau}}\ d\tau\Big )^{-1}\,.$$
On $(t',t_0]$, we have $\gamma\subset P_0$. Since
$P_0$ is unscathed, we have 
$$g(t_0)e^{-4\rho_0^{-2}(t_0-t')}\leq g(t)\leq g(t_0)e^{4\rho_0^{-2}(t_0-t')}\,,$$
hence
$${1\over 2}g(t_0)\leq e^{-4\alpha }g(t_0)\leq g(t)\leq  e^{4\alpha }g(t_0)\leq 2 g(t_0)\,.$$

Since $\gamma (t' )\not\in B_0$,
$$\int_0^{t_0-t'}|\dot\gamma|_{g(t_0-\tau )}\geq {1\over \sqrt{2}}\int_0^{t_0-t'}|\dot\gamma |_{g(t_0)}\geq {\rho_0\over \sqrt{2}}\,,$$
so
$$\int_0^{t_0-t'}|\dot\gamma |^2\sqrt{\tau}d\tau \geq {\rho_0^2\over 2}\big ( [2\sqrt{\tau}]_0^{t_0-t'}  \big )^{-1}\geq {\rho_0\over 4\sqrt{\alpha}}\geq {\hat r\over 4\sqrt{\alpha}}\,.$$
By choice of $\alpha$, this last quantity is bounded below by $\Lambda'$. This shows that $\gamma$ satisfies Inequality \eqref{ineq2}.

\begin{rem}
In this case, there is no constraint on $\delta$. 
\end{rem}

\textbf{Case 2:} For all $t\in [t_0-\alpha \rho_0^2, t_0]$, $\gamma (t)\in B_0$.

Since $\gamma$ is scathed, there exists $(\bar x, \bar t)$ such that $\gamma (\bar t) \notin M_\reg(\bar t)$. Since $P_0$ is unscathed, we have $\bar t < 
t_0-\alpha \rho_0^2$. Since $\gamma$ is continuous and defined after 
$\bar t$, we have $\gamma(\bar t) \in \partial M_\sing(t) \subset \cals(t)$. Assume that $\bar t$ is maximal for this property. We have $R(\bar x, \bar t) \approx h^{-2}$, where, for the sake of simplicity we set $h:=h(\bar t)$. We may choose $\bar \delta$ small enough (depending on $\hat r$) so as to make $h$ so small that $R(\bar x, \bar t)$ is strictly greater than $12\hat r^{-2} \geqslant 12\rho_0^{-2}$.

For constants $\theta\in [0,1)$ and $A>>1$ to be chosen later, we set 
$$P:=P_+(\bar x,\bar t,Ah,\theta h^2) $$ and take $\bar \delta\leqslant \bar \delta_{\rm per}(A,\theta,r)$, so that Theorem~\ref{thm:persistence-cap} applies. In particular, we have $R\geq {1\over 2}h^{-2}>6\rho_0^{-2}$ on $P$.
This implies that $P_0\cap P=\emptyset$.

We distinguish two subcases.

\textbf{Subcase i:}  $\gamma ([\bar t, \bar t+\theta h^2])\subset  B(\bar x, \bar t, Ah)$.

Then by Theorem~\ref{thm:persistence-cap},  $P$ is unscathed. Indeed, otherwise  $B_{g_+(\bar t)}(\bar x,Ah) \subset\Sigma_{t'}$ for some $t' \in (\bar t,\bar t+\theta h^2)$. Hence $\gamma(t') \in \Sigma_{t'}$, which contradicts our choice of $\bar t$.

\vspace{0,5cm}
\centerline{\input collapse7.pstex_t}
\vspace{0,5cm}

Moreover, $\bar t+\theta h^2\leq t_0-\alpha \rho_0^{2}$,
so $t_0-t\geq \alpha \rho_0^2$ for all $t\in [\bar t, \bar t+\theta h^2]$. Closeness with the standard solution implies
\begin{eqnarray*}
\int\limits_{\bar t}^{\bar t+\theta h^2}\sqrt{t_0-t}R\ dt & \geqslant &  {\constst\over 2}\int\limits_{\bar t}^{\bar t+\theta h^2}\sqrt{t_0-t}{h^{-2}\over 1-(t-\bar t)h^{-2}}\ dt\\
& \geqslant & \constst {\sqrt{\alpha \rho_0^2}\over 2}\int\limits_{\bar t}^{\bar t+\theta h^2}{h^{-2}\over 1-(t-\bar t)h^{-2}}\ dt\\
& = & \constst{\sqrt{\alpha \rho_0^2}\over 2}\int\limits_0^\theta {1\over 1-u}du\\
& \geqslant & - \constst{\sqrt{\alpha} \hat r \over 2}\ln (1-\theta ) \\
& \geqslant & \Lambda'\,,
\end{eqnarray*}
for $\theta$ close enough to $1$, depending only on $\hat r,\Delta, \Lambda$. 

We deduce
\begin{eqnarray*}
\int_{t_1}^{t_0} \sqrt{t_0-t}R\ dt & = & \int_{\bar t}^{\bar t + \theta h^2} 
\sqrt{t_0-t}R\ dt + \int_{t_1}^{\bar t} \sqrt{t_0-t}R\ dt + 
\int_{\bar t + \theta h^2}^{t_0} \sqrt{t_0-t}R\ dt\\
& \geqslant & \Lambda'- 4\Delta^{3/2} = \Lambda.
\end{eqnarray*}

Hence Inequality \eqref{ineq1} holds. Fix $\theta :=\theta (\hat r, \Delta, \Lambda)$ such that this condition is satisfied.

\textbf{Subcase ii: } There exists $t'\in [\bar t, \bar t+\theta h^2]$ such that $\gamma (t')\not\in B(\bar x, \bar t, Ah)$.

We assume that $t'$ is minimal with this property.
By the same argument as before, $P':=P+(\bar x,\bar t,Ah, t')$ is unscathed, and $A^{-1}$-close to the standard solution by the persistence theorem. As before, this implies that for all $s,s'\in (\bar t,t']$,
$$ g(s)\leq e^{C\over 1-\theta}g(s')\,,$$
where $C$ is a universal constant. Thus we have
$$\int_{\bar t}^{t'}\sqrt{\tau}|\dot\gamma |_{g(t)}^2\geq e^{-C\over 1-\theta}\Big (\int |\dot\gamma |_{g(\bar t)}\Big )^2\Big (\int {1\over \sqrt{\tau}}\Big )^{-1}\,.$$

Since $t'\leq t_0-\alpha \rho_0^2$ we bound $\tau$ from below by $\alpha \rho_0^2$ on $[\bar t, t']$, so
$$\int_{\bar t}^{t'}\sqrt{\tau}|\dot\gamma |^2\geq e^{-C\over 1-\theta }(Ah)^2\sqrt{\alpha \rho_0^2}(t'-\bar t)^{-1}\geq e^{-C\over 1-\theta }(Ah)^2
{\hat r\over 10}\sqrt{\alpha} {1\over \theta h^2}= e^{-C\over 1-\theta }\sqrt{\alpha}{\hat r A^2\over 10\theta}\,.$$
Fixing $A$ large enough, Inequality \eqref{ineq2} holds.
\end{proof}

A consequence of the previous lemma is the following result (see~\cite[Lemmas 78.3 and 78.6]{Kle-Lot:topology} or \cite{B3MP} for more details). 

\begin{lem}\label{existence-geodesique}
Let $\hat r,\Delta,\Lambda$ be positive numbers. There exists $\bar \delta:=\bar\delta (\hat r, \Delta, \Lambda )$ with the following property.
Let $g(\cdot)$ be a $(r,\delta)$-surgical solution defined on $[a,a+\Delta]$ such that $r\geqslant \hat r$ and $\delta\leq \bar\delta$. Let $(x_0,t_0)$ and $\rho_0\geqslant \hat r$ be such that $P_0:=P(x_0,t_0,\rho_0,-\rho_0^2)$ is unscathed and $|\Rm |\leq \rho_0^{-2}$ on $P_0$. Then
\begin{enumerate}
\item $\forall (q,t)\in M\times [a,a+\Delta]$, if $\ell (q,t_0-t)<\Lambda$, then there is an unscathed minimising $\mathcal L$-geodesic $\gamma$ connecting $x_0$ to $q$.
\item $\forall \tau>0,\,\,\min_q\ell (q,\tau )\leq 3/2$ and is attained.
\end{enumerate}
\end{lem}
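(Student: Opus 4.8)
The plan is to follow the proofs of Perelman's existence result for minimizing $\mathcal{L}$-geodesics and of his bound $\min_q\ell\le n/2$ (see \cite[Lemmas~78.3 and~78.6]{Kle-Lot:topology}, and \cite{B3MP} in the bubbling-off setting) and to graft onto them the only genuinely new ingredient, namely Lemma~\ref{technique1}: a continuous spacetime curve based at $(x_0,t_0)$ which is \emph{scathed} has $\mathcal{L}$-length at least a prescribed threshold, once $\delta$ is small enough. Write $\tau=t_0-t$, and let $L(q,\tau)=\inf_\gamma\mathcal{L}_\tau(\gamma)$ be the infimum of the $\mathcal{L}$-length over curves from $x_0$ at time $t_0$ to $q$ at time $t_0-\tau$, so that $\ell(q,\tau)=\tfrac{1}{2\sqrt\tau}L(q,\tau)$. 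Fix a constant $\Lambda'>2\sqrt{\Delta}\,\max(\Lambda,2)$, and let $\bar\delta:=\bar\delta(\hat r,\Delta,\Lambda')$ be the constant provided by Lemma~\ref{technique1}; note this makes $\bar\delta$ a function of $(\hat r,\Delta,\Lambda)$, as claimed.

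\emph{Proof of (i).} If $\ell(q,\tau)<\Lambda$ then $L(q,\tau)<2\sqrt{\tau}\,\Lambda\le 2\sqrt{\Delta}\,\Lambda<\Lambda'$, so any $\mathcal{L}$-minimizing sequence $(\gamma_n)$ from $(x_0,t_0)$ to $(q,t_0-\tau)$ eventually has $\mathcal{L}_\tau(\gamma_n)<\Lambda'$; by Lemma~\ref{technique1} (whose proof, resting only on Cauchy--Schwarz and the lower bound $R\ge-6$, applies equally to $W^{1,2}$ curves, which in one time variable are automatically continuous) each such $\gamma_n$ is unscathed, hence stays in the regular part of spacetime. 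Since $R\ge-6$ gives $\int_0^\tau\sqrt{s}\,R\,ds\ge-4\Delta^{3/2}$, the bound on $\mathcal{L}_\tau(\gamma_n)$ controls the energy $\int_0^\tau\sqrt{s}\,|\dot\gamma_n|^2\,ds$; together with completeness of the time slices this confines the $\gamma_n$ to a fixed compact region on which $g(\cdot)$ is a smooth Ricci flow of bounded geometry. The usual Arzel\`a--Ascoli plus lower-semicontinuity argument produces a limit $\bar\gamma$ with $\mathcal{L}_\tau(\bar\gamma)=L(q,\tau)<\Lambda'$; applying Lemma~\ref{technique1} once more shows $\bar\gamma$ is unscathed, hence lies in the smooth locus, and the first variation formula shows it is a smooth minimizing $\mathcal{L}$-geodesic.

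\emph{Proof of (ii).} Fix $\tau\in(0,t_0-a]$. Using $R\ge-6$ together with the energy term, $q\mapsto\ell(q,\tau)$ is proper and bounded below, so $\bar\ell(\tau):=\min_q\ell(q,\tau)$ is attained; and $\bar\ell(\tau)\to0$ as $\tau\to0^+$ exactly as in the smooth case. Whenever $\bar\ell(\tau)\le 2$ one has $L(q_\tau,\tau)=2\sqrt{\tau}\,\bar\ell(\tau)<\Lambda'$ at a minimizer $q_\tau$, so by~(i) the minimum is realized along an unscathed minimizing $\mathcal{L}$-geodesic lying entirely in the smooth locus of spacetime; consequently Perelman's differential (in)equalities for $\ell$, and the monotonicity of the regular reduced volume $\widetilde V_{\mathrm{reg}}$, hold along it, and the barrier argument of \cite[Lemma~78.6]{Kle-Lot:topology} gives $\tfrac{d}{d\tau}\bar\ell(\tau)\le\tfrac{1}{2\tau}\bigl(\tfrac{3}{2}-\bar\ell(\tau)\bigr)$ in the barrier sense for all such $\tau$. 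Since $\sqrt{\tau}\,(\bar\ell(\tau)-\tfrac32)$ is then non-increasing where this inequality holds and tends to $0$ as $\tau\to0^+$, a continuity argument shows $\bar\ell$ cannot cross the value $3/2$ from below, so $\bar\ell(\tau)\le3/2$ throughout.

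The main obstacle is exactly the surgical interference handled by Lemma~\ref{technique1}: one must prevent the $\mathcal{L}$-minimizing sequence, or its limit, from running through a cutoff neck or into a disappearing component, which would both destroy the compactness/existence argument in~(i) and invalidate Perelman's pointwise inequalities in~(ii). This is arranged by shrinking $\delta$: traversing a surgery region of scale $h\ll r$ costs either large energy (if the curve exits a ball of controlled radius fast) or a long sojourn in scalar curvature comparable to $h^{-2}$, in either case pushing the $\mathcal{L}$-length above $\Lambda'$.
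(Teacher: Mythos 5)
Your proposal is correct and follows exactly the route the paper takes: the paper's proof consists only of the remark that the lemma is a consequence of Lemma~\ref{technique1} together with a citation to \cite[Lemmas 78.3 and 78.6]{Kle-Lot:topology} and \cite{B3MP}, and your write-up supplies precisely the details that reference leaves implicit (choosing $\Lambda'$ large enough that $\ell<\Lambda$ or $\bar\ell\le 2$ forces $\mathcal L$-length below the Lemma~\ref{technique1} threshold, hence unscathedness, so the standard compactness and barrier arguments apply verbatim). The only nit is the factor of $2$ in the differential inequality for $\bar\ell$ — the form coming from $\partial_\tau\bar L_{\min}\le 2n$ is $\frac{d\bar\ell}{d\tau}\le\frac{1}{\tau}(\frac32-\bar\ell)$ rather than $\frac{1}{2\tau}(\frac32-\bar\ell)$ — but this does not affect the barrier conclusion.
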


We come back to the proof of Proposition C.
Recall that $P_0$ is unscathed, and satisfies $|\Rm |\leq \rho_0^{-2}$. 
The arguments of \cite[\S 7]{Per1} apply to unscathed minimising $\mathcal L$-geodesics. In particular, if $\gamma (\tau )=\mathcal L_\tau\exp_{(x_0,t_0)} (v)$ is minimising and unscathed on $[0,\tau_0]$, then
$ \tau^{-3/2}e^{-\ell(v,\tau )}J(v,\tau )$
is nonincreasing on $[0,\tau_0]$.

Define
$$Y(\tau ):=\{ v\in T_{x_0}M\,;\,\, \mathcal L\exp (v) : [0,\tau ]\longrightarrow M\textrm{ is minimising and unscathed }\}\,.$$
It is easy to check that $\tau\leq\tau'\Rightarrow Y(\tau )\supset Y(\tau' )$. Then we set
$${\widetilde V}_{\textrm{reg}}(\tau ):=\int_{Y(\tau )}\tau^{-3/2}e^{-\ell (v,\tau )}J(v,\tau )dv\,.$$
This function is nondecreasing on $[0,t_0]$. We shall adapt the proof of $\kappa$-noncollapsing in the smooth case, replacing $\widetilde V$ by ${\widetilde V}_{\textrm{reg}}$. Set $$\kappa :={\vol_{g(t_0)} B_0 \over \rho_0^3},\quad \tau_0=\kappa^{1/3}\rho_0^2\,.$$

\paragraph{Upper bound on ${\widetilde V}_{\textrm{reg}}(\tau_0)$:}
As in the smooth case, we get
$$\mathcal L_\tau\exp(\{ v\in Y(\tau )\,;\,\, |v|\leq {1\over 10}\kappa^{-1/6}\})
\subset B_0 $$
thus
$$I:=\int_{\{ v\in Y(\tau )\,;\,\, |v|\leq {1\over 10}\kappa^{-1/6}\}}\tau^{-3/2}e^{-\ell(v,\tau )}J(v,\tau )dv\leq e^{C\kappa^{1/6}}\sqrt{\kappa}$$
and
$$
\begin{array}{rl}
I' & :=\int\limits_{\{ v\in Y(\tau )\,;\,\, |v|\geq {1\over 10}\kappa^{-1/6}\}}\tau^{-3/2}e^{-\ell(v,\tau )}J(v,\tau )dv \\
\\
& \leq\int\limits_{\{ v\in Y(\tau )\,;\,\, |v|\geq {1\over 10}\kappa^{-1/6}\}}\lim_{\tau\to 0}(\tau^{-3/2}e^{-\ell(v,\tau )}J(v,\tau ))dv\\
\\
& \leq e^{-{1\over 10}\kappa^{-1/6}}\,.
\end{array}
$$
In conclusion,
$$\begin{array}{rl}
I+I' & \leq  e^{C\kappa^{1/3}}\sqrt{\kappa} + e^{-{1\over 10}\kappa^{-1/6}}\\
\\
& \leq C\sqrt{\kappa}
\end{array}
$$
for $\kappa\leq \kappa (3)$ and some universal $C$.

\paragraph{Lower bound for ${\widetilde V}_{\textrm{reg}}(t_0)$}
Monotonicity of ${\widetilde V}_{\textrm{reg}}(\tau_0 )$ implies
$${\widetilde V}_{\textrm{reg}}(\tau_0 )\geq {\widetilde V}_{\textrm{reg}}(t_0 )\,.$$
We are going to bound  ${\widetilde V}_{\textrm{reg}}(t_0 )$ from below as a function of $\vol B(q_0,0,1)$, where $q_0 \in M$.

Set $\Lambda:=21$ and apply Lemma~\ref{existence-geodesique} with parameter $\bar \delta (\hat r,T,\Lambda )$. There exists $q_0\in M$ such that $\ell (q_0, t_0-{1\over 16})\leq {3\over 2}$ and the first part of Lemma~\ref{existence-geodesique} gives us a minimising curve $\gamma$ connecting $x_0$ to $q_0$ realising the minimum $\ell (q_0,t_0-1/16)$. Let $q\in B(q_0, 0,1)$. Consider a curve $\gamma$ obtained by concatenating some $g(0)$-geodesic from $(q, 0)$ to $ (q_0,1/16)$ with some minimising $\mathcal L$-geodesic between $(q_0,1/16)$ and $(x_0,t_0)$. We have
\begin{multline}
\ell (q,t_0)={L(q,t_0)\over 2\sqrt{t_0}}\leq \\
{1\over2\sqrt{t_0}}L(q_0,t_0-1/16)+{1\over2\sqrt{t_0}}\int_{t_0-1/16}^{t_0}\sqrt{\tau}(R_{g(t_0-\tau )}+|\dot{\gamma} (\tau )|^2_{g(t_0-\tau)})d\tau\,,
\end{multline}
which leads to
$$\ell (q,t_0)\leq {3\over 2}+{1\over 2}\int_{t_0-1/16}^{t_0}(12+e^{1/2}|\dot\gamma (\tau )|^2_{g(0)})d\tau$$
since for $s\in [0,1/16]$ (i.e.~$\tau\in [t_0-1/16,t_0]$) the metrics $g(s)$ satisfy $\Rm \le 2$, hence  $R\le 12$, and are  $1/2$-Lipschitz equivalent. We obtain
$$\ell (q,t_0)\leq {3\over 2}+{1\over 2}({1\over 16}12+16e^{1/2}d^2_{g(0)}(q,q_0))\leq 20\,.$$
Hence $\ell(q,t_0) < \Lambda$, and Lemma~\ref{existence-geodesique} i) re-gives an unscathed minimising $\mathcal L$-geodesic  $\tilde\gamma$ connecting $x_0$ to $q$. Hence $q=\tilde\gamma (t_0)=\mathcal L_{t_0}\exp (\tilde v )$ for $\tilde v\in Y(t_0)$. This shows that $\mathcal L_{t_0}\exp (Y(t_0))\supset B(q,0,1)$.

Moreover, we have $\{ \tilde v\in Y(t_0)\,;\,\, \mathcal L_{t_0}\exp (\tilde v)\in B(q,0,1)\}$, $\ell\leq 10$, which implies
$$\begin{array}{rl}
{\widetilde V}_{\textrm{reg}}(t_0) & =\int_{Y(t_0)}t_0^{-3/2}e^{-\ell(v,t_0)}J(v,t_0)dv\geq
\int_{B(q_0,0,1)}t_0^{-3/2}e^{-20}dv_{g(0)}\\
\\
& \geq T^{-3/2}e^{-20}\vol_{g(0)} B(q_0,0,1)\,.
\end{array}
$$

\subsection{The case $\rho_0\leqslant \frac{r}{100}$} \label{grande-courbure}
In this case $\rho_{0}$ is below the scale of the canonical neighbourhood for all $t$ in the interval and hence any point with scalar curvature greater than $\rho_{0}^{-2}$ has a canonical neighbourhood. At such a point the noncollapsing is given by this neighbourhood if it is not $\epsi$-round.  

Since $\rho_0<1$, there exists $(y,t)\in \bar P_0$ such that $|\Rm (y,t)|=\rho_0^{-2}$. 
Hence we have 
$$|\Rm (y,t)| \geqslant  r^{-2} \geqslant 10^6.$$
Since $\{g(t)\}$ has curvature pinched toward positive,
$$R(y,t) \geqslant |\Rm (y,t)| = \rho_0^{-2} \geqslant 10000r(t)^{-2}.$$ 
Hence $(y,t)$ has an $(\epsi_0,C_0)$-canonical neighbourhood $U$. 

\paragraph{Case 1: $U$ is not $\epsi_0$-round}

Let us show that $B(x_0,t,e^{-2}\rho_0) \subset U$. This is clear if $U$ is closed, so we only have to deal with the cases of necks and caps.

By the curvature bounds on $\bar P_0$ we have $d_t(x_0,y)\leqslant e^2 \rho_0$ and
$B(x_0,t,e^{-2}\rho_0) \subset B(x_0,t_0,\rho_0)$. 
If $U$ is an $\epsi_0$-neck, then  $d_t(y,\bord \overline{U})  \geqslant (2\epsi_0)^{-1}R(y,t)^{-1/2}$. Since $R(y,t) \leqslant 6\rho_0^{-2}$, we get  $$d_t(y,\partial \overline{U}) \geqslant (2\sqrt{6}\epsi_{0})^{-1}\rho_0 \geqslant (e^2+e^{-2})\rho_0\,,$$ hence $B(x_0,t,e^{-2}\rho_0) \subset U$. 

If $U$  is an $(\epsi_0,C_0)$-cap, write it $U=V \cap W$ where $V$ is a core. Let $\gamma : [0,1] \rightarrow 
\bar B_0$ be a minimising $g(t_0)$-geodesic connecting $y$ to $x_0$. If $x_0 \notin V$,  let $s \in [0,1]$ be maximal such that $\gamma(s) \in \partial V$. Since
$\gamma(s) \in B_0$, we have $R(\gamma(s),t) \geqslant 6\rho_0^{-2}$ and we deduce that  $d(\gamma(s),\partial \overline{U}) \ge (\sqrt{6}\epsi_{0})^{-1}\rho_0$.  As $d_t(\gamma(s),x_0) \leqslant e^2\rho_0$ we get $B(x_0,t,e^{-2}\rho_0) \subset U$. 

Comparing this to Equation~\eqref{eq:vol}, we see that
$$\vol_{g(t)} B(x_0,t,e^{-2}\rho_0)\geqslant C_{0}^{-1} (e^{-2}\rho_0)^3\,.$$
By estimates on distortion of distances and volume as in the proof of  Lemma~\ref{lem-preliminaire}, we conclude that
$$\vol_{g(t_0)} B_0\geqslant C_{0}^{-1}e^{-18} \rho_0^3\,.$$

\paragraph{Case 2: $U$ is $\epsi_0$-round}

Note that the method of Case 1 applies equally well if $U$ is homeomorphic to $S^3$ or $RP^3$, so we assume it is not the case.

The only thing we have to do is to prove that there are only finitely many possible topologies for $U$. For simplicity of notation we assume $(x_0,t_0)=(y,t)$, i.e.~the point $(x_0,t_0)$ has an $\epsi_0$-round canonical neighbourhood $U$, and $|\Rm(x_0,t_0)| \ge 1000r^{-2}$.

\begin{lem}
There exists $t'_0<t_0$ such that
\begin{itemize}
\item $U$ is unscathed on $[t'_0,t_0]$;
\item for every $t\in [t'_0,t_0]$ $(U,g(t))$ is $\epsi_0$-round;
\item letting $\rho'_0$ be defined at $(x_0,t'_0)$ in the obvious way, we have $2r\ge \rho'_0\ge \frac{r}{2}$.
\end{itemize}
\end{lem}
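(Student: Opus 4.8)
The plan is to follow the strategy one would use to "grow back in time" an $\epsi_0$-round canonical neighbourhood, exploiting the fact that $\epsi_0$-round metrics (being close to round ones) contract under the Ricci flow in an essentially controlled way, and that the curvature-control machinery of Section~\ref{sec:coupure} keeps us away from singularities as long as the scalar curvature stays large.

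\medskip

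\emph{Step 1: Set up the rescaled neighbourhood.} Since $(x_0,t_0)$ has an $\epsi_0$-round canonical neighbourhood $U=(U,g(t_0))$ with $R(x_0,t_0)\asymp |\Rm(x_0,t_0)| \ge 1000 r^{-2}$, the estimates (i)--(ii) of Definition~\ref{def:VC} give that the scalar curvature on $U$ lies in a compact subinterval of $(C_0^{-1}R(x_0,t_0),C_0 R(x_0,t_0))$ and that $\diam U \lesssim R(x_0,t_0)^{-1/2}$. Being $\epsi_0$-round, $(U,g(t_0))$ is $\epsi_0$-homothetic to a genuine quotient $S^3/\Gamma$ of constant curvature; rescale so that we may think of it as nearly the round metric on such a quotient. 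Because $U$ is neither $S^3$ nor $RP^3$, $\Gamma$ is nontrivial and $|\Gamma|\ge 3$, so $\vol U \asymp |\Gamma|^{-1} R(x_0,t_0)^{-3/2}$ cannot be bounded below a priori — this is exactly the difficulty we must resolve, and we resolve it by producing a \emph{definite time interval} on which $U$ persists and stays round, then appealing to Cheeger's finiteness theorem once the time and curvature scales are pinned down.

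\medskip

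\emph{Step 2: Flow backwards and keep roundness.} Define $t'_0<t_0$ as the infimum of times $t$ such that, on $[t,t_0]$, the neighbourhood $U$ remains unscathed and $(U,g(s))$ stays $\epsi_0$-round for all $s\in[t,t_0]$. Roundness is an open condition, so this set is relatively open near $t_0$, hence $t'_0<t_0$. On any such backward interval the Ricci flow restricted to (a metric close to) a space form has explicitly controlled behaviour: the normalized metric converges to constant curvature and, crucially, the scalar curvature \emph{increases} as $t\downarrow$ only up to a factor governed by the ODE $\frac{dR}{dt}=-\frac{2}{3}R^2$ for a round metric, plus the $(CN)$-type estimate (v)/\eqref{eq:dR dt} $|\partial_t R|<C_0 R^2$ at points of large curvature. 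Integrating $|\partial_t R|\le C_0 R^2$ backwards gives that $R$ at $(x_0,\cdot)$ can at most double over a time of order $R(x_0,t_0)^{-1}$; conversely it cannot drop by more than a definite factor over such a time. Hence, choosing $t_0-t'_0$ comparable to $r^2$ appropriately, the scale $\rho'_0$ recovered at $(x_0,t'_0)$ — i.e.\ the maximal $\rho'_0\le 1$ with $|\Rm|\le\rho_0'^{-2}$ on $P(x_0,t'_0,\rho'_0,-\rho_0'^2)$ — satisfies $\frac r2 \le \rho'_0 \le 2r$. The remaining point is that we do not exit $U$ through a surgery or a loss of roundness before that: roundness persists by the quantitative stability of the round-metric Ricci flow (Toponogov/compactness arguments, exactly as used in the proof of Step~1 of Theorem~\ref{thm:cutoff-bis} and in Proposition~\ref{prop:caps tubes}), and we cannot hit a surgery because, by Lemma~\ref{lem-preliminaire}(ii) and the construction of $(r,\delta)$-surgery, surgeries occur only on cutoff necks, whereas a closed $\epsi_0$-round component with $R\ge 1$ is only \emph{removed}, not cut — and a removal at some time $\bar t\in(t'_0,t_0)$ would contradict $U$ persisting up to $t_0$.

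\medskip

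\emph{Main obstacle.} The delicate step is Step~2: showing that $\epsi_0$-roundness is genuinely preserved backwards over a time interval of definite (rescaled) length, and simultaneously that no surgery or disappearance touches $U$ in that interval. The curvature control is routine (it is just \eqref{eq:dR dt} integrated, together with the Pinching Lemma~\ref{lem:pinching} to pass from $R$ to $|\Rm|$), but the propagation of roundness requires the same kind of stability-under-Ricci-flow argument used elsewhere in the paper — one extracts a limit flow, identifies it with the shrinking round solution on $S^3/\Gamma$ by Toponogov's theorem and the classification of nonnegatively curved $3$-manifolds, and concludes $\epsi_0$-closeness for $\delta$ small. Once $t'_0$ and $\rho'_0$ are in place with $\rho'_0\asymp r$, the bounds $|\Rm|\le \rho_0'^{-2}\asymp r^{-2}$ on the parabolic ball, the injectivity radius lower bound from noncollapsing at the earlier times already established, and the diameter bound $\diam U\lesssim r$ put us in the hypotheses of Cheeger's finiteness theorem, yielding finitely many diffeomorphism types for $U$ and hence the desired $\kappa$-noncollapsing (the volume being then bounded below in terms of $r$ and the finite list of topologies).
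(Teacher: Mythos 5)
The mechanism you propose for propagating $\epsi_0$-roundness backward in time is the wrong one, and this is a genuine gap. You appeal to ``quantitative stability of the round-metric Ricci flow'' and to compactness arguments like those in the cutoff-parameter theorem, but round solutions are \emph{repelling} backward in time: a slightly non-round metric flowed backward generically moves \emph{away} from the round one. The paper avoids this entirely and instead uses the $(CN)_r$ hypothesis as the engine. The observation is that as long as $R\ge r^{-2}$ on $U$, the point $(x_0,t)$ must have an $(\epsi_0,C_0)$-canonical neighbourhood $V$; since $U$ is closed, connected, and $(U,g(t))$ is $2\epsi_0$-round by continuity, $V$ must equal $U$, and since the case $S^3$ or $RP^3$ has been excluded, $V$ falls under case D of Definition~\ref{def:VC}, i.e.\ it is $\epsi_0$-round. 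So roundness is not a dynamical statement to be proved by stability; it is handed to you by the canonical neighbourhood hypothesis as long as the scalar curvature exceeds the threshold. That is why the paper defines $t''_0$ as the first backward time where $R$ drops to $r^{-2}$ and proves $\Rmin=r^{-2}$ at $t''_0$, rather than defining $t'_0$ directly as an infimum of roundness-preserving times.

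Your handling of the scale $\rho'_0$ is also circular. You define $t'_0$ as an infimum and then separately ``choose $t_0-t'_0\sim r^2$,'' which is not a choice you are free to make; and the time-derivative estimate $|\partial_t R|\le C_0 R^2$ over one curvature time only changes $R$ by a bounded factor, whereas you need $R$ to drop from $\gtrsim 1000 r^{-2}$ down to roughly $6 r^{-2}$ so that $|\Rm|\approx r^{-2}$. The paper makes this precise by flowing back to $t''_0$ where $R\approx r^{-2}$ (so $|\Rm|\approx r^{-2}/6$, a scale of roughly $\sqrt{6}\,r$, too large), then flowing \emph{forward} by about $\frac{5}{4}r^2$, computed by integrating the ODE for the round solution, to land at $t'_0$ where $|\Rm|\approx r^{-2}$ and $|\Rm|\le r^{-2}$ on the parabolic ball, giving $\rho'_0\approx r$. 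Finally, the discussion of Cheeger finiteness belongs to the step \emph{after} this lemma (reducing Case~2 to the already-treated case $\rho_0\ge r/100$), not to the lemma's proof itself.
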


\begin{proof}
Let $t''_0<t_0$ be minimal such that $U$ is unscathed and  for every $t\in [t''_0,t_0]$,  $(U,g(t))$ is $\epsi_0$-round and $R\ge r^{-2}$ on $(U,g(t))$. We claim that 
$\Rmin = r^{-2}$ on $(U,g(t''_0))$. Indeed by continuity $R \ge r^{-2}$ on $(U,g(t''_0))$. Hence $(x_{0},t''_0)$ has a canonical neighbourhood $V$. By continuity, $(U,g(t''_0))$ is $2\epsi_0$-round, so $V=U$; since we have excluded $S^3$ and $RP^3$, we deduce that $V$ is in fact $\epsi_0$-round. Since $\epsi_{0}$-roundness is an open property, it follows  that if $\Rmin >r{-2}$ on $(U,g(t''_0))$ then $t"_{0}$ is not minimal. This proves the claim. 

By $\epsi_{0}$-roundness, $R(\cdot,t''_0) \approx r^{-2}$ on $U$ and $|\Rm (\cdot,t''_0)| \approx r^{-2}/6$. Therefore we can find $t'_0\in (t''_0,t_0)$ such that $|\Rm (\cdot,t'_0)| \approx r^{-2}$ on $U$ and $|\Rm| \le r^{-2}$ on $P(x_{0},t'_{0},r,-r^{-2})$ (comparing with the evolving round metric one can find $t'_{0}$ close to $t"_{0}+\frac{5}{4}r^2$). It follows that the maximal radius 
$\rho'_{0}$ such that $|\Rm|\le {\rho'_{0}}^{-2}$ on $P(x_{0},t'_{0}, \rho'_{0}, -{\rho'_{0}}^{2})=:P_{0}$ with  $P_{0}$ unscathed, is close to $r$. 
\end{proof}

Since $\rho'_{0} \ge r/2$ we can argue as in subsection \ref{subsec:grand rayon} to get uniform noncollapsing at $(x_0,t'_0)$ on the unit scale. As $(U,g(t'_0))$ is $\epsi_0$-homothetic to $(U,g(t_0))$ and $\rho'_{0} \le 2r <1$, we also have uniform noncollapsing at $(x_0,t_0)$ on the unit scale.

\section{Generalisations and open questions}\label{sec:general}
\subsection{Consequences and generalisations}

First we state a finiteness result which follows immediately from Theorem~\ref{thm:existence 1 norm} and Corollary~\ref{corol:extinction}.
\begin{corol}\label{corol:finiteness}
Let $R_0,Q,\rho$ be positive numbers. Then the class of prime $3$-manifolds admitting complete riemannian metrics of scalar curvature greater than $R_0$, sectional curvature bounded in absolute value by $Q$, and injectivity radius greater than $\rho$ is finite up to diffeomorphism.
\end{corol}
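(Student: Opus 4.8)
The plan is to derive Corollary~\ref{corol:finiteness} by combining Theorem~\ref{thm:existence 1 norm} (equivalently Theorem~\ref{thm:existence surg quant}) with Corollary~\ref{corol:extinction}, exactly along the lines of the deduction of Theorem~\ref{thm:positive scalar general} carried out in Section~\ref{sec:surgical}, but keeping track of which prime $3$-manifolds can occur. First I would reduce to the normalised situation: let $P$ be a prime $3$-manifold carrying a complete metric $g_0$ with $R\ge R_0$, $|\Rm|\le Q$ and $\inj\ge\rho$. After rescaling by $Q$ we may assume $|\Rm|\le 1$, $\inj\ge\rho_0:=\rho\sqrt Q$, and $\Rmin(g_0)\ge R_0/Q>0$; note $\rho_0$ depends only on $Q,\rho$. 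By Corollary~\ref{corol:extinction} there is $T=T(R_0/Q)$, depending only on $R_0,Q$, so that any complete surgical solution of bounded sectional curvature starting from $g_0$ and defined on $[0,T]$ is extinct.

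Next I would apply Theorem~\ref{thm:existence surg quant} with parameters $\rho_0$ and this $T$: it produces constants $Q',\rho'>0$ depending only on $\rho_0$ and $T$ — hence only on $R_0,Q,\rho$ — and a complete surgical solution $(M(\cdot),g(\cdot))$ on $[0,T]$ with initial condition $(P,g_0)$, with sectional curvature bounded by $Q'$ and injectivity radius at least $\rho'$, all of whose spherical disappearing components have scalar curvature at least $1$ and are either $10^{-3}$-round or diffeomorphic to $S^3$ or $RP^3$. Since the solution is extinct, Proposition~\ref{prop:topo} expresses $P$ as a connected sum of the prime factors of the disappearing components. As $P$ is prime, it is diffeomorphic to one such prime factor (or, in the degenerate case where the only ``summand'' is $P$ itself, $P$ is one of $S^3, S^2\times S^1, RP^3$, which is harmless). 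Thus it suffices to bound, up to diffeomorphism, the number of prime summands of disappearing components.

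Then I would invoke the finiteness argument already given in the paragraph following Theorem~\ref{thm:existence surg quant}: nonspherical disappearing components are, by the definition of a surgical solution, diffeomorphic to one of $\Rr^3$, $S^2\times S^1$, $S^2\times\Rr$, $RP^3\#RP^3$, or a punctured $RP^3$, whose prime factors lie in the fixed finite list $\{S^2\times S^1, RP^3\}$. The spherical disappearing components are, by the conclusion of Theorem~\ref{thm:existence surg quant}, either diffeomorphic to $S^3$ or $RP^3$, or else $10^{-3}$-round with $R\ge 1$; by Bonnet--Myers the latter have diameter bounded above by a universal constant, and they sit inside a solution with sectional curvature bounded by $Q'$ and injectivity radius at least $\rho'$, so Cheeger's finiteness theorem bounds the number of their diffeomorphism types by a quantity depending only on $Q',\rho'$, i.e.\ only on $R_0,Q,\rho$. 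Combining, the prime summands of all disappearing components — and therefore all prime $P$ satisfying the hypotheses — range over a finite set of diffeomorphism types depending only on $R_0,Q,\rho$, which is the claim.

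The only mildly delicate point — and the step I would treat most carefully — is bookkeeping the dependence of constants: one must check that $\rho_0$ depends only on $Q,\rho$, that $T$ depends only on $R_0/Q$ (hence on $R_0,Q$), and hence that the output $Q',\rho'$ of Theorem~\ref{thm:existence surg quant}, and thus the Cheeger bound, depend only on $R_0,Q,\rho$ and not on the particular manifold $P$. Everything else is a direct citation of results already established (Theorem~\ref{thm:existence surg quant}, Corollary~\ref{corol:extinction}, Proposition~\ref{prop:topo}, Bonnet--Myers, Cheeger's finiteness theorem), so no new technical obstacle arises.
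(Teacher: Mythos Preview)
Your proposal is correct and follows essentially the same approach as the paper: the paper simply states that the corollary ``follows immediately from Theorem~\ref{thm:existence 1 norm} and Corollary~\ref{corol:extinction}'', relying on the finiteness argument (Bonnet--Myers plus Cheeger) already spelled out in the paragraph after Theorem~\ref{thm:existence surg quant}. You have carefully unpacked exactly this chain of implications, including the bookkeeping of constants, so there is nothing to add.
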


Remark that the primeness hypothesis is necessary: otherwise, one could have, say, a connected sum of arbitrarily many copies of the same manifold. The key point is that the geometric bounds considered here do \emph{not} imply any diameter bound (nor compactness of the manifold for that matter.) Hence Corollary~\ref{corol:finiteness} is not a purely geometric finiteness theorem, but rather a mixed geometrico-topological finiteness theorem.

Next we discuss an equivariant version of our main technical theorem.

\begin{defi}
Let  $(M(\cdot),g(\cdot))$ be a surgical solution defined on some interval $I$. Let $\Gamma$ be a group endowed with an action on each $M(t)$ for $t\in I$, which is constant in between singular times. We say that $(M(\cdot),g(\cdot))$ is $\Gamma$-\bydef{equivariant} if for each $t$, the action of $\Gamma$ on $M(t)$ is isometric, and for each singular time $t$, the union of all $2$-spheres along which surgery is performed is $\Gamma$-invariant.
\end{defi}

\begin{theo}\label{thm:existence surg equi}
Let $M$ be an orientable $3$-manifold. Let $g_0$ be a complete riemannian metric on $M$
which has bounded geometry. Let $\Gamma$ be a group acting properly discontinuously on $M$ by isometries for $g_0$. Then there exists a complete
surgical solution $(M(\cdot),g(\cdot))$ of bounded geometry defined on $[0,+\infty)$, with initial condition $(M(0),g(0))=(M,g_0)$, and such that there is for each $t$ a properly discontinuous action of $\Gamma$ on $M(t)$ such that $(M(\cdot),g(\cdot))$ is $\Gamma$-equivariant, and such that if $t$ is a singular time and $x$ a point belonging to some disappearing component, then $(x,t)$ has an $(\epsi_0,C_0)$-canonical neighbourhood. Furthermore, if the action of $\Gamma$ on $M$ is free, then one can ensure that for each $t$, the action of $\Gamma$ on $M(t)$ is also free.
\end{theo}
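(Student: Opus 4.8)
The plan is to adapt the proof of Theorem~\ref{thm:existence surg} (equivalently, the iterated version of Theorem~\ref{thm:existence surg quant}, i.e.\ Theorem~\ref{thm:existence 1 precis}) so that the entire construction is carried out $\Gamma$-equivariantly. Since the action of $\Gamma$ on $(M,g_0)$ is by isometries and properly discontinuous, the Ricci flow of Shi preserves the $\Gamma$-action: uniqueness of complete bounded-curvature Ricci flow (Chen--Zhu) forces $g(t)$ to be $\Gamma$-invariant for all $t$, because $\gamma^*g(\cdot)$ is another such flow with the same initial data. Hence between surgery times equivariance is automatic; the only genuine issue is the surgery step. First I would revisit the Cutoff Parameters Theorem~\ref{thm:echelle de coupure} and Proposition~A: at a singular time $b$, working in the $\Gamma$-invariant metric $g(b)$, one must choose the locally finite family of cutoff necks $\{N_i\}$ in a $\Gamma$-invariant way. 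The key observation is that $\Gamma$ acts on the set $\mathcal O$ of points of curvature in a controlled range, and permutes cutoff necks; by Zorn's lemma one can choose a maximal $\Gamma$-invariant collection of pairwise disjoint cutoff necks (replace ``pairwise disjoint'' by ``$\Gamma\cdot N_i$ pairwise disjoint or equal''), and the argument of Lemma~\ref{lem:propA} still produces, for any component $X$ meeting both $\mathcal G$ and $\mathcal R$, a cutoff neck centred on $X$; its $\Gamma$-orbit can then be added, contradicting maximality. Thus the middle spheres $\mathcal S$ are $\Gamma$-invariant, and the Metric Surgery Theorem~\ref{thm:chirurgie metrique} is applied orbit by orbit, so that $g_+$ is again $\Gamma$-invariant and the $\Gamma$-action descends to $M_+$.

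Next I would check that the $\Gamma$-action on $M(t)$ remains properly discontinuous after surgery. On the uncapped part $M(t)\setminus\mathcal S$ this is inherited from $M(t)$. On an added $3$-ball $B$: by Theorem~\ref{thm:chirurgie metrique}(ii), $B$ together with its adjacent neck half is an almost standard cap, so it has bounded geometry, and the persistence theorem (Theorem~\ref{thm:persistence-cap}) gives control on a definite time interval. Properness then follows because the $\Gamma$-orbit of $B$ is either contained in (a neighbourhood of) $B$ or moved entirely off it, by the locally finite disjointness of the chosen necks; more precisely, $\gamma B\cap B\ne\emptyset$ with $\gamma\ne 1$ would force $\gamma$ to carry the corresponding middle sphere $S$ to a sphere of $\mathcal S$ meeting a fixed compact set, and local finiteness of $\mathcal S$ together with properness of the original action bounds the number of such $\gamma$. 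The canonical-neighbourhood statement for points of disappearing components is exactly the conclusion already extracted in the proof of Proposition~A (components of $M'$ thrown away are covered by $(\epsi_0,C_0)$-canonical neighbourhoods, via Theorem~\ref{thm:reconnait topo}); this is $\Gamma$-equivariant for free.

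The freeness assertion is the one additional point. If $\Gamma$ acts freely on $M=M(0)$, I would show inductively that it acts freely on each $M(t)$. On $M(t)\setminus\mathcal S$ this is immediate. On an added $3$-ball $B$, suppose $\gamma\ne 1$ fixes a point $x\in B$. Since $\gamma$ preserves the $\Gamma$-invariant collection $\{N_i\}$ and its middle spheres, and since $B$ is glued to a specific half of a specific neck $N$ whose middle sphere $S$ is $\gamma$-invariant if $\gamma$ fixes a point near it, one sees $\gamma$ preserves $N$ and hence acts on the neck $S^2\times(-\epsi^{-1},\epsi^{-1})$; a fixed point in the cap forces $\gamma$ to fix the core, hence (after the surgery metric identification) to have a fixed point on the original neck $N\subset M(t)$, contradicting freeness of the action on $M(t)$ by the induction hypothesis. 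One must also handle the capping-off diffeomorphisms: the standard-cap metric $\bar g_0$ has an $O(3)$ symmetry, so any isometry of $\partial B$ extends over $B$, which guarantees the surgery can be performed compatibly with a free (finite, since proper and fixing a sphere) subgroup. I expect this last freeness bookkeeping — tracking that the gluing diffeomorphisms and the $SO(3)$-symmetric cap metric can be chosen simultaneously equivariantly for all orbits — to be the main technical obstacle, though it is of the same flavour as, and simpler than, the finite-group equivariant Ricci flow with surgery of Dinkelbach--Leeb~\cite{dl:equi}, which we follow in part; the simplification here is precisely that we care mainly about the free case, where stabilisers are trivial and all the delicate orbifold-point analysis disappears.
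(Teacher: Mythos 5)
Your high-level plan — Chen--Zhu uniqueness to get equivariance between surgery times, then make the surgery step $\Gamma$-equivariant — is the same as the paper's. For the equivariant surgery step you propose a direct Zorn-type modification of Lemma~\ref{lem:propA} (choose a maximal $\Gamma$-invariant collection of pairwise disjoint cutoff necks); the paper instead invokes \cite[Lemma~3.9]{dl:equi} as a black box, with a remark that the constant $\epsilon$ there may be smaller than $\epsi_0$, forcing a replacement of $\epsi_0,\beta_0,C_0$ by smaller $\epsi'_0,\beta'_0,C'_0$ throughout. Your direct route is plausible, though you would have to supply the details the paper delegates to \cite{dl:equi}, including the constant-adjustment caveat.

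The real divergence is the freeness addendum, and there your argument has a gap. You claim that if $\gamma\ne 1$ fixes a point in an added cap $B$, then ``a fixed point in the cap forces $\gamma$ to fix the core, hence \ldots to have a fixed point on the original neck $N\subset M(t)$.'' That implication does not follow: a priori the fixed point sits at the tip of $B$, $\gamma$ preserves $\partial B\cong S^2$, and $\gamma|_{\partial B}$ could be a \emph{free} involution (the antipodal map), giving no fixed point on $\partial B=S_i\subset M(t)$. To rule this out you would need the further observation that $\gamma$ preserving the orientation of $M$ and the collar direction at $\partial B$ forces $\gamma|_{\partial B}$ to be orientation-preserving on $S^2$, hence (Lefschetz) to have a fixed point on $S_i$ — but you do not state this, and your invocation of the $O(3)$-symmetry of $\bar g_0$ in the next sentence actually cuts the other way: an $O(3)$-extension of $\gamma|_{\partial B}$ over $B$ always fixes the centre, so producing such an extension is not by itself a guarantee of freeness. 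The paper avoids the whole issue by a cleaner device: pass to the quotient $Y=M/\Gamma$, do metric surgery in $Y$, and lift. If the $\Gamma$-invariant family $\{N_i\}$ has $N_i\cap\gamma N_i=\emptyset$ for all $\gamma\ne 1$, the $N_i$ project to genuine $\delta$-necks in $Y$ and one lifts the capped manifold, so $\Gamma$ automatically acts freely on $M_+$. If instead some nontrivial $\gamma$ preserves $N_i$, then $\gamma$ acts on $N_i$ by a free involution and $N_i$ projects to a punctured-$RP^3$ cap $C\subset Y$; inside $C$ one picks a $4\delta$-neck whose preimage in $M$ is two disjoint $4\delta$-necks interchanged by $\gamma$, reducing to the first case at the cost of replacing $\delta$ by $4\delta$. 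After that reduction \emph{no} nontrivial $\gamma$ preserves any surgery cap, so freeness holds with no case analysis on fixed points in $B$ at all. You should adopt this quotient-and-shrink argument, or at minimum supply the missing Lefschetz/orientation step to close your direct argument.
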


\begin{proof}
We repeat the proof of Theorem~\ref{thm:existence 1 precis}, paying attention to equivariance with respect to the group $\Gamma$.
By the Chen-Zhu uniqueness theorem~\cite{Che-Zhu2}, Ricci flow automatically preserves the symmetries of the original metric, so the only thing to check is that surgery can be done equivariantly. For this we can apply~\cite[Lemma 3.9]{dl:equi}. Note that the constant $\epsilon$ appearing in that paper is \emph{a priori} smaller than our $\epsi_0$. However, it is easy to check that if we replace $\epsi_0$ by some smaller positive number $\epsi'_0$ in the proof of Theorem~\ref{thm:existence 1 precis} and subsequently the constants $\beta_0$ and $C_0$ by the appropriate constants $\beta'_0$ and $C'_0$, then the proof goes through without changes.

For the addendum where it is assumed that the action of $\Gamma$ is free, there is an additional point to check: that surgery can be done so that the action of $\Gamma$ on the post-surgery manifold is still free. For simplicity, we are going to explain this in a riemannian setting, ignoring the issue of strong necks, which is irrelevant here.

Let $(X,\tilde {g})$ be a $3$-manifold with an isometric, free, properly discontinuous action of $\Gamma$ and $\{N_i\}$ be a $\Gamma$-invariant, locally finite collection of pairwise disjoint $\delta$-necks in $X$. Let $(Y,g)$ be the quotient riemannian manifold $X/\Gamma$.

Suppose first that for each $N_i$ and each nontrivial element $\gamma\in\Gamma$ we have $N_i\cap \gamma N_i=\emptyset$. Then the collection $\{N_i\}$ projects to a  locally finite collection of pairwise disjoint $\delta$-necks in $Y$. Hence we can do metric surgery on $Y$, obtaining a riemannian manifold $(Y_+,g_+)$. We then lift the construction, getting a riemannian manifold $(X_+,\tilde{g_+})$ which on the one hand is obtained from $(X,\tilde{g})$ by metric surgery on $\{N_i\}$, and on the other hand inherits a free, properly discontinuous, isometric action of $\Gamma$.

Thus we are done unless there exists $i$ and $\gamma$ such that $N_i\cap \gamma N_i\neq\emptyset$. In this case, $N_i$ is invariant by $\gamma$. Since $\gamma$ acts freely, it must act on $N_i$ by an involution, so that $N_i$ projects to a cap $C\subset Y$ diffeomorphic to a punctured $RP^3$. In this case, $C$ contains, say a $4\delta$-neck whose preimage in $X$ contains two $4\delta$-necks interchanged by $\gamma$. Thus, up to replacing $\delta$ by $4\delta$, we can apply the construction of the first paragraph.
\end{proof}

\begin{corol}\label{corol:positive scalar universal}
Let $M$ be a connected, orientable $3$-manifold which carries a complete metric $g$ of uniformly positive scalar curvature. Assume that the riemannian universal cover of $(M,g)$ has bounded geometry. Then $M$ is a connected sum of spherical manifolds and copies of $S^2\times S^1$.
\end{corol}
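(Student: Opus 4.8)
The plan is to deduce Corollary~\ref{corol:positive scalar universal} from the equivariant surgical solution given by Theorem~\ref{thm:existence surg equi}, applied to the universal cover, together with the topological extinction argument already used to derive Theorem~\ref{thm:positive scalar general}. First I would let $\pi\colon\widetilde M\to M$ be the universal covering, write $\Gamma=\pi_1(M)$ acting freely and properly discontinuously by deck transformations, and let $\tilde g$ be the pullback of $g$; since the riemannian universal cover is assumed to have bounded geometry, $(\widetilde M,\tilde g)$ is a complete metric of bounded geometry and, by hypothesis, uniformly positive scalar curvature (the scalar curvature of $\tilde g$ at $\tilde x$ equals that of $g$ at $\pi(\tilde x)$, so $\Rmin(\tilde g)=\Rmin(g)>0$). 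After rescaling we may assume $|\Rm(\tilde g)|\le 1$, which also keeps the $\Gamma$-action isometric.

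Next I would apply Theorem~\ref{thm:existence surg equi} to $(\widetilde M,\tilde g)$ with the group $\Gamma$ and the free, properly discontinuous, isometric action: this yields a complete surgical solution $(\widetilde M(\cdot),\tilde g(\cdot))$ of bounded geometry on $[0,+\infty)$ with $(\widetilde M(0),\tilde g(0))=(\widetilde M,\tilde g)$, carrying for each $t$ a free properly discontinuous $\Gamma$-action making the solution $\Gamma$-equivariant, and with the property that every point of a disappearing component has an $(\epsi_0,C_0)$-canonical neighbourhood. Now I would invoke the maximum-principle estimate: Proposition~\ref{prop:R explose} (via Corollary~\ref{corol:extinction}) applied to this surgical solution shows that, since $\Rmin(\tilde g(0))>0$ is a definite positive number, the solution becomes extinct at some finite time $T$. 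Hence there is a finite sequence $\widetilde M=\widetilde M_0,\widetilde M_1,\dots,\widetilde M_p=\emptyset$ obtained by splitting along locally finite families of disjoint $2$-spheres, capping off $3$-balls, and removing components that are spherical or diffeomorphic to $\Rr^3$, $S^2\times S^1$, $S^2\times\Rr$, $RP^3\# RP^3$, or a punctured $RP^3$ — each of which is itself a (possibly infinite) connected sum of spherical manifolds and copies of $S^2\times S^1$. Applying Proposition~\ref{prop:topo} with $\mathcal X$ the class of spherical manifolds and $S^2\times S^1$, we conclude that $\widetilde M$ is a connected sum of spherical manifolds and copies of $S^2\times S^1$.

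The remaining step is to transfer this decomposition from $\widetilde M$ down to $M$. Here I would use the equivariance: at each singular time the family $\cals$ of surgery spheres is $\Gamma$-invariant, so it descends to a locally finite family of disjoint embedded $2$-spheres in the quotient, and the caps and removed components descend as well (using that $\Gamma$ acts freely and properly discontinuously, so the quotient is again a manifold). Thus the equivariant surgical solution on $\widetilde M$ projects to a surgical solution (in the sense of the paper) with initial condition $(M,g)$, which is likewise extinct at time $T$; its disappearing components are the $\Gamma$-quotients of the disappearing components upstairs. Since every point of a disappearing component downstairs has an $(\epsi_0,C_0)$-canonical neighbourhood, Theorem~\ref{thm:reconnait topo} (more precisely the classification of manifolds covered by $\epsi$-necks and $\epsi$-caps) shows each such component is diffeomorphic to $S^3$, $S^2\times S^1$, $RP^3$, $RP^3\#RP^3$, $\Rr^3$, $S^2\times\Rr$, or a punctured $RP^3$, hence a connected sum of spherical manifolds and copies of $S^2\times S^1$. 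Applying Proposition~\ref{prop:topo} once more, this time to $M$, finishes the proof.

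I expect the main obstacle to be the bookkeeping in the final descent step — checking carefully that the $\Gamma$-equivariant surgical solution on $\widetilde M$ genuinely projects to a valid surgical solution on $M$ with the properties needed for Proposition~\ref{prop:topo}, in particular that the canonical-neighbourhood conclusion on disappearing components passes to the quotient so that one can identify their diffeomorphism types via Theorem~\ref{thm:reconnait topo}. Everything else (extinction, the structure of the removed pieces, the inductive topological lemma) is already available from the earlier sections; the only genuinely new input is the equivariant surgery of Theorem~\ref{thm:existence surg equi}, whose proof has been arranged precisely so that surgery spheres form $\Gamma$-invariant families and the $\Gamma$-action remains free, which is exactly what makes the descent legitimate.
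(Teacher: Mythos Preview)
Your overall strategy—apply the equivariant surgery theorem upstairs, use extinction, then analyze the quotients of the disappearing components—matches the paper's. The gap is in the final descent step, and it is more than bookkeeping.

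You assert that every point of a disappearing component downstairs has an $(\epsi_0,C_0)$-canonical neighbourhood, so that Theorem~\ref{thm:reconnait topo} applies. But canonical neighbourhoods do not descend automatically. Take a disappearing component $X$ upstairs whose canonical neighbourhood is of type~C in Definition~\ref{def:VC} (sectional curvature $>C_0^{-1}R>0$, diffeomorphic to $S^3$, but not $\epsi_0$-round). A free isometric action of a cyclic group of order $p>2$ yields a lens space $L(p,q)$ downstairs. This is closed with positive curvature, yet it fits none of the canonical neighbourhood types: it is neither $S^3$ nor $RP^3$ (so not type~C), not $\epsi_0$-round (so not type~D), and no point is the centre of an $\epsi_0$-neck or $\epsi_0$-cap. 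Theorem~\ref{thm:reconnait topo} is simply inapplicable to the quotient; one needs a separate input (Geometrisation, or Hamilton's positive-curvature theorem) to conclude the quotient is spherical. Note also that your claim ``the equivariant surgical solution projects to a surgical solution in the sense of the paper'' already presupposes that the removed components downstairs lie on the allowed list, which is precisely what is at issue.

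The paper therefore works upstairs instead of pushing canonical neighbourhoods down. Since surgery along $2$-spheres preserves simple connectivity (van Kampen), every disappearing component $X$ upstairs is simply connected, hence diffeomorphic to $S^3$, $\Rr^3$, or $S^2\times\Rr$. One then classifies the free isometric quotients case by case: for $S^3$ one invokes Geometrisation to obtain a spherical space form; for $S^2\times\Rr$ the possible free quotients are enumerated by an elementary topological argument; for $\Rr^3$ the key geometric observation is that the set $T$ of $\epsi_0$-neck centres is invariant under every isometry, so its complement (the core of the unique cap, diffeomorphic to $B^3$) is $\Gamma$-invariant, and Brouwer's fixed-point theorem forces any free action to be trivial. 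These three arguments are the substantive content your proposed descent would have to replace.
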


\begin{proof}
We apply Theorem~\ref{thm:existence surg equi} to the universal cover of $(M,g)$ endowed with the action of $\Gamma:=\pi_1(M)$.
Let  $(\tilde{M}(\cdot),\tilde{g}(\cdot))$ be a surgical solution satisfying the conclusion of that theorem. By Corollary~\ref{corol:extinction}, this surgical solution must be extinct. Thus $M$ is a connected sum of  metric quotients of the disappearing components of  $(\tilde{M}(\cdot),\tilde{g}(\cdot))$. There remains to check that such quotients are themselves connected sums of spherical manifolds and copies of $S^2\times S^1$.

We use the fact that the disappearing components are covered by canonical neighbourhoods, and the action of $\Gamma$ on them is isometric. Let $X$ be such a component. Remark that $X$ is simply-connected, since the van Kampen theorem implies that  surgery along $2$-spheres on a simply-connected $3$-manifold produces simply-connected $3$-manifolds. If $X$ is compact, then by Perelman's Geometrisation Theorem, $X$ is diffeomorphic to the $3$-sphere, and its quotients are spherical manifolds.

If $X$ is noncompact, then it is diffeomorphic to $S^2\times\Rr$ or $\Rr^3$. In the former case, it is an exercise in topology (cf.~\cite{Sco}) to show that the quotient can only be $S^2\times\Rr$ itself, a punctured $RP^3$, $S^2\times S^1$, or a connected sum of two copies of $RP^3$.

In the latter case, we obviously need to use the geometry. As in~\cite[Section 3]{dl:equi}, we consider the open subset $T$ consisting of all points that are centres of $\epsi_0$-necks. Since $X$ is diffeomorphic to $\Rr^3$, $T$ is an $\epsi_0$-tube, and its complement $C$ is the core of an $\epsi_0$-cap and diffeomorphic to the $3$-ball. By definition, $T$ is automatically invariant by any isometry. Hence $C$ is also invariant by any isometry. Thus by the Brouwer fixed point theorem, $X$ does not admit any nontrivial free isometric group action.
\end{proof}

Here is a more precise theorem which may be useful for subsequent applications:
\begin{theo}\label{thm:existence 2 precis}
There exist sequences $r_k,\delta_k,\kappa_k>0$ such that for any complete normalised riemannian $3$-manifold $(M_0,g_0)$, there exists a surgical solution $(M(\cdot),g(\cdot))$ defined on $[0,+\infty)$, satisfying the initial condition \linebreak $(M(0),g(0))=(M_0,g_0)$, and such that for every nonnegative integer $k$, the restriction of  $(M(\cdot),g(\cdot))$ to $[k,k+1]$ is an $(r_k,\delta_k,\kappa_k)$-surgical solution.

Moreover, if $(M_0,g_0)$ is endowed with a properly discontinuous isometric action of some group $\Gamma$, then the surgical solution can be made $\Gamma$-equivariant. In addition, if the action of $\Gamma$ on $M_0$ is free, then the action on each $M(t)$ can be chosen to be free.
\end{theo}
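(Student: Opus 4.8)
The plan is to deduce Theorem~\ref{thm:existence 2 precis} from the iteration of Theorem~\ref{thm:existence 1 precis} (in its equivariant version, Theorem~\ref{thm:existence surg equi}), in the same spirit as the deduction of Theorem~\ref{thm:existence surg} from Theorem~\ref{thm:existence surg quant} indicated at the end of Section~\ref{sec:surgical}. The key point is that Theorem~\ref{thm:existence 1 precis} produces, for each compact time interval, not only a surgical solution but one which is \emph{pinched toward positive}, so that its final time slice can serve as the initial data for the next step; and since the initial metric for the $k$-th step need not be normalised but only have bounded geometry and curvature pinched toward positive at time $k$, no rescaling between steps is required. Concretely, I would proceed as follows.

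First, fix a complete normalised riemannian $3$-manifold $(M_0,g_0)$, so $|\Rm|\le 1$ and $\inj\ge\rho_0$ for the normalisation constant $\rho_0$; in particular $g_0$ has curvature pinched toward positive at time $0$ (by the remark following the definition of that notion). Inductively, suppose a surgical solution $(M(\cdot),g(\cdot))$ has been constructed on $[0,k]$, is an $(r_j,\delta_j,\kappa_j)$-surgical solution on $[j,j+1]$ for $j<k$, and satisfies $|\Rm(g(k))|\le Q_k$ for some $Q_k$ and has curvature pinched toward positive at time $k$. (The curvature bound at integer times comes from Proposition~\ref{prop:ham ivey}, which propagates pinching toward positive, together with the a~priori bound $R\le\Theta$ from the $(r_j,\delta_j)$-surgical solution property and the resulting sectional curvature bound via the Pinching Lemma~\ref{lem:pinching}; and from the Shi estimates at $t=k$ as in the proof of Theorem~\ref{thm:existence 1 precis}.) Apply Theorem~\ref{thm:existence 1 precis} with $T_A=k$, $T_\Omega=k+1$, initial data $(M(k),g(k))$, $Q_0:=Q_k$, $\rho_0$ replaced by whatever lower bound on injectivity radius is available at time $k$ (again controlled via bounded geometry of surgical solutions, or more simply one uses the version where one only needs the normalisation at time $0$ together with the a~priori estimates; alternatively observe that by Theorem~\ref{thm:existence 1 precis} applied directly on $[0,k+1]$ one gets injectivity radius bounds). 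This yields parameters $r_k,\delta_k,\kappa_k$ and an $(r_k,\delta_k,\kappa_k)$-surgical solution on $[k,k+1]$ with the prescribed initial condition, which we glue to the previously constructed solution. This produces a surgical solution on $[0,\infty)$ whose restriction to each $[k,k+1]$ has the desired property.

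For the equivariant statement, I would instead run this induction using Theorem~\ref{thm:existence surg equi} (more precisely its proof, which is the proof of Theorem~\ref{thm:existence 1 precis} with attention to $\Gamma$-equivariance via \cite[Lemma 3.9]{dl:equi}): Ricci flow preserves isometries by the Chen–Zhu uniqueness theorem~\cite{Che-Zhu2}, and at each singular time the collection $\mathcal S$ of surgery spheres, the cutoff necks, and the capped-in almost standard caps can all be chosen $\Gamma$-invariantly, so the $\Gamma$-action descends to each $M(t)$ and remains properly discontinuous and isometric. The freeness addendum is handled exactly as in the proof of Theorem~\ref{thm:existence surg equi}: the only obstruction is a cutoff neck $N_i$ invariant under some nontrivial $\gamma$, which must then act as an involution on $N_i$; replacing $\delta$ by $4\delta$ (a change already absorbed by the flexibility in choosing $\delta_k\le\bar\delta_C$ small) one separates the two halves equivariantly and the quotient argument goes through, so the action on each $M(t)$ stays free.

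The main obstacle, such as it is, is bookkeeping rather than a new idea: one must verify that at each integer time the inductive hypotheses (a curvature bound $Q_k$, an injectivity radius lower bound, and curvature pinched toward positive) are genuinely available, so that Theorem~\ref{thm:existence 1 precis} applies at the next step. Pinching toward positive is automatic from Proposition~\ref{prop:ham ivey}; the curvature bound follows from the threshold $\Theta_k$ controlling $R$ together with the Pinching Lemma; and the injectivity radius lower bound at time $k$ follows from Condition~$(NC)_{\kappa_{k-1}}$ on $[k-1,k]$ combined with the curvature bound. One should also note that the constants $r_k,\delta_k,\kappa_k$ genuinely depend only on the data available at step $k$ (themselves ultimately depending only on the normalisation of $g_0$ through the cumulative bounds $Q_k$), so that the sequences $r_k,\delta_k,\kappa_k$ can be chosen uniformly over all normalised $(M_0,g_0)$; this is exactly the content of the quantified dependence in Theorem~\ref{thm:existence 1 precis}. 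Once these points are checked, concatenating the pieces and invoking Lemma~\ref{lem:no acc} for local finiteness of singular times completes the proof.
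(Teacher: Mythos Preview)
Your proposal is correct and follows essentially the same approach as the paper: iterate Theorem~\ref{thm:existence 1 precis}, using at each step that $\Theta_k=\Theta(r_k,\delta_k)$ bounds the curvature $Q_k$, that $(NC)_{\kappa_k}$ together with this curvature bound yields an injectivity radius lower bound at the next integer time, and that pinching toward positive persists; the equivariant addendum is handled exactly as in the proof of Theorem~\ref{thm:existence surg equi}. Your write-up is in fact more detailed than the paper's own sketch, which dispatches the argument in a single short paragraph.
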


This follows from iteration of Theorem~\ref{thm:existence 1 precis}.
Indeed, assuming the parameters $r_k,\delta_k,\kappa_k>0$ are known,
we deduce from $\Theta_k:=\Theta(r_k,\delta_k)$ a bound for the sectional curvature $Q_k$. From this and the $\kappa_k$-noncollapsing property, we deduce a lower bound for volumes of balls of radius at most $Q_k^{-1/2}$. This gives a lower bound $\rho_k$ for the injectivity radius of every metric, in particular the metric $g(k+1)$.

The addendum about equivariance follows as explained in the proof of Theorem~\ref{thm:existence surg equi}.

\subsection{Open questions}

The first question asks whether the hypotheses of
Corollary~\ref{corol:positive scalar universal} are necessary.

\begin{question}
Let $M$ be a connected, orientable $3$-manifold which admits a complete riemannian metric of uniformly positive scalar curvature. Is $M$ a connected sum of spherical manifolds and copies of $S^2\times S^1$ ?
\end{question}

Next we consider what happens when we relax the hypothesis on the scalar curvature from uniform positivity to positivity. This class is significantly wider, e.g.~it includes $S^1\times \Rr^2$. One could even relax the condition further to nonnegativity.

\begin{question}[Problem 27 in \cite{Yau}]
Classify $3$-manifolds admitting complete riemannian metrics of positive (resp.~nonnegative) scalar curvature up to diffeomorphism.
\end{question}

\appendix

\section{Hamilton's compactness theorem}
A \bydef{pointed evolving metric} is a triple $(M,\{g(t)\}_{t\in I},(x_0,t_0))$ where $M$ is a manifold, $g(\cdot)$ is an evolving metric on $M$, and $(x_0,t_0)$ belongs to $M\times I$. 
We say that a sequence of pointed evolving metrics $(M_k,\{g_k(t)\}_{t\in I},(x_k,t_0))$ \bydef{converges smoothly} to a pointed evolving metric $(M_\infty,\{g_\infty(t)\}_{t\in I},(x_\infty,t_0))$ if there 
exists an exhaustion of $M$ by open sets $U_{k}$, such that $x \in U_{k}$ for all $k$, and smooth embeddings $\psi_{k} : U_{k} \rightarrow M_{k}$ 
sending $x$ to $x_{k}$ and
such that $\psi_{k}^*g_{k}(\cdot)-g(\cdot)$ and all its derivatives converge to zero uniformly on compact subsets of $M\times I$.

\begin{theo}[Hamilton's compactness]\label{thm:hamilton compactness}
Let $(M_k , \{g_k (t)\}_{t\in (a,b]},(x_k,t_0))$ be a sequence of complete
pointed Ricci flows of the same dimension. Assume:
\begin{enumerate}
\item For all $\rho>0$ ,
$$\sup_{k\in \NN} \sup_{B(x_k,t_0,\rho)\times (a,b]} | \Rm| < + \infty$$
\item $$\inf_{k\in \NN}  inj(M_{k},g_{k}(t_{0}),x_{k}) > 0.$$
\end{enumerate}
Then $(M_k , \{g_k (t)\}_{t\in (a,b]},(x_k,t_0))$ converges smoothly  to a complete Ricci flow of the same
 dimension, defined on $(a,b]$.
\end{theo}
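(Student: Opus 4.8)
The final statement to prove is Theorem~\ref{thm:hamilton compactness}, Hamilton's compactness theorem for Ricci flows.

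\medskip

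The plan is to deduce this from the standard Cheeger--Gromov compactness theorem for sequences of pointed Riemannian manifolds, together with the interior regularity estimates for Ricci flow due to W.-X.~Shi. First I would observe that hypotheses (1) and (2) give, at the fixed time $t_0$, a uniform bound on $|\Rm|$ on each ball $B(x_k,t_0,\rho)$ and a uniform positive lower bound on the injectivity radius at $x_k$. The key point to establish before invoking Cheeger--Gromov compactness of the time slices is that one also controls \emph{all} covariant derivatives of the curvature of $g_k(t_0)$ on balls of controlled radius: this is exactly what Shi's local derivative estimates provide. Concretely, the curvature bound $|\Rm|\le K(\rho)$ on $B(x_k,t_0,\rho)\times(a,b]$ allows one to apply Shi's estimates on a slightly smaller ball, using that the flow exists on a definite backward time interval (any subinterval of $(a,b]$ ending at $t_0$), to get bounds $|\nabla^m\Rm|\le C_m(\rho,K,\text{dist to }a)$ at time $t_0$. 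With these bounds in hand, the time slices $(M_k,g_k(t_0),x_k)$ satisfy the hypotheses of Cheeger--Gromov compactness, so after passing to a subsequence there is an exhaustion $U_k$ of a limit manifold $M_\infty$ and embeddings $\psi_k:U_k\to M_k$ with $\psi_k(x_\infty)=x_k$ such that $\psi_k^*g_k(t_0)\to g_\infty(t_0)$ in $C^\infty_{\mathrm{loc}}$, and $(M_\infty,g_\infty(t_0))$ is complete.

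\medskip

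Next I would propagate the convergence in the time direction. Using the pulled-back flows $\psi_k^*g_k(t)$ on $U_k\times(a,b]$, one notes that by hypothesis (1) these have uniformly bounded curvature on compact subsets, and by Shi's estimates (now applied on parabolic regions, since the flow extends backward to times approaching $a$) they have all space-time derivatives of curvature uniformly bounded on compact subsets of $U_k\times(a,b]$. The Ricci flow equation $\partial_t g=-2\Ric$ then converts these curvature-derivative bounds into uniform $C^\infty$ bounds on the $t$-derivatives of the metrics themselves; combined with the already-established convergence at $t_0$, the Arzel\`a--Ascoli theorem gives, after a diagonal subsequence argument, that $\psi_k^*g_k(\cdot)\to g_\infty(\cdot)$ together with all derivatives, uniformly on compact subsets of $M_\infty\times(a,b]$. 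One checks that the limit $g_\infty(\cdot)$ again satisfies the Ricci flow equation by passing to the limit in the equation, and that $g_\infty(t)$ is complete for every $t$ because on any compact time interval $[t',b]\subset(a,b]$ the metrics are uniformly bi-Lipschitz to $g_\infty(t_0)$ (the curvature bound yields, via Lemma~\ref{lem:distance-distorsion}, a two-sided exponential control of $g_\infty(t)/g_\infty(t_0)$), hence completeness at $t_0$ forces completeness at every $t$. This establishes smooth convergence to a complete Ricci flow on $(a,b]$ of the same dimension.

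\medskip

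The main obstacle I would anticipate is not any single deep step but the careful bookkeeping needed to upgrade a curvature bound that is merely $L^\infty$ in space-time to the full family of derivative bounds required both for Cheeger--Gromov at the fixed slice and for Arzel\`a--Ascoli in the time direction; this is precisely where one must invoke Shi's local estimates and be attentive to the fact that the backward time interval of existence shrinks as one approaches the left endpoint $a$, so the derivative bounds degenerate near $t=a$ but remain locally uniform on $(a,b]$, which is exactly what the conclusion asks for. A secondary technical point is ensuring the basepoints and exhausting sets are handled consistently so that the same sequence of embeddings $\psi_k$ works simultaneously for all times; this is standard and follows the usual diagonal-subsequence pattern.
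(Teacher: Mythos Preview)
The paper does not actually provide a proof of Theorem~\ref{thm:hamilton compactness}: it is stated in the appendix as Hamilton's compactness theorem, followed only by a remark about the case $t_0=a$, and is treated as a known background result rather than something to be proved. Your outline is the standard argument (Shi's local derivative estimates to upgrade the curvature bound, Cheeger--Gromov compactness at the reference time slice, then Arzel\`a--Ascoli in the time direction using the Ricci flow equation), and it is correct; there is nothing in the paper to compare it against beyond the statement itself.
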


\rem If $g(t)$ is defined on $[a,b]$, one can take  $t_{0}=a$ if one has also uniform bounds on the derivatives of the curvature operator at time  $t_{0}$, i.e.~if for any $\rho >0$, for any integer $p$, $\sup_{k\in \NN} \sup_{B(x_k,t_0,\rho)\times \{t_{0}\}} | \nabla^p \Rm| < + \infty$.\\

\section{Partial Ricci flows}\label{sec:partial Ricci}

\begin{defi} Let $(a,b]$ a time interval. A  \emph{partial Ricci flow}\footnote{This definition differs from that of \emph{local Ricci flow} introduced by 
D. Yang \cite{Yan}.} 
on $U\times (a,b]$ is a pair $(\mathcal P,g(\cdot,\cdot))$, where $\mathcal{P} \subset U\times (a,b]$  is an open subset which contains 
$U\times\{b\}$ and  $(x,t) \mapsto g(x,t)$ is a smooth map defined on $\mathcal{P}$ such that the 
restriction of $g$ to any subset $V\times I \subset \mathcal{P}$ is a Ricci flow on $V\times I$. 
\end{defi}

Inspection of the proof of Theorem \ref{thm:hamilton compactness} shows that the following natural extension holds. 

\begin{theo}[Local compactness for flows]\label{thm:local compactness}
Let $(U_k , \{g_k (t)\}_{t\in (a,0]},(x_k,0))$ be a sequence of pointed Ricci flows of the same dimension. 
Suppose that for some $\rho_{0} \in (0,+\infty]$, all the balls $B(x_{k},0,\rho)$ of radius $\rho < \rho_{0}$ are relatively compact 
in $U_{k}$ and that the following holds : 
\begin{enumerate}
\item For any $\rho\in (0,\rho_{0})$, there exists $\Lambda(\rho) <+\infty$ and $\tau(\rho)>0$ such that  
$\vert \Rm \vert < \Lambda(\rho)$ on all $P(x_{k},0,\rho,-\tau(\rho))$.
\item $$\inf_{k\in \NN}  inj(U_{k},g_{k}(0),x_{k}) > 0.$$
\end{enumerate}
Then there is a riemannian ball $B(x_{\infty},\rho_{0})$ of the same dimension
such that the pointed sequence $(B(x_k,0,\rho_{0}), g_k (\cdot), x_{k})$ subconverges smoothly to a partial Ricci flow $g_{\infty}(\cdot)$ defined on 
$\bigcup_{\rho < \rho_{0}} (B(x_{\infty},\rho) \times (-\tau(\rho),0])$. 
Moreover, if $\rho_{0}=+\infty$ then for any $t\in [\sup_{\rho} -\tau(\rho),0]$, $g_{\infty}(t)$ is complete.
\end{theo}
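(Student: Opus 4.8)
The final statement is Theorem~\ref{thm:local compactness} (Local compactness for flows), which asserts that a sequence of pointed Ricci flows with locally bounded curvature on small backward time intervals and injectivity radius bounded below at the basepoint subconverges to a partial Ricci flow on $\bigcup_\rho B(x_\infty,\rho)\times(-\tau(\rho),0]$, complete at each time if $\rho_0=+\infty$.

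\begin{proof}[Proof sketch]
The plan is to reduce to Hamilton's compactness theorem (Theorem~\ref{thm:hamilton compactness}) by a diagonal argument over an exhaustion of the limit by balls on which uniform curvature and injectivity radius bounds do hold. First I would fix an increasing sequence $\rho_j\uparrow\rho_0$ and, for each $j$, consider the restriction of the flows $g_k(\cdot)$ to the parabolic regions $P(x_k,0,\rho_j,-\tau(\rho_j))$. On each such region we have by hypothesis (i) the uniform bound $|\Rm|<\Lambda(\rho_j)$, and distance-distortion (Lemma~\ref{lem:distance-distorsion}) shows that $B(x_k,0,\rho_j)$ is, up to a controlled factor, comparable to the time-$t$ ball for $t\in(-\tau(\rho_j),0]$, so one gets honest metric balls on which the flow is defined for a definite amount of backward time. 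The injectivity-radius hypothesis (ii) at $(x_k,0)$, combined with the curvature bound and the monotonicity estimates for injectivity radius under Ricci flow, gives a uniform lower bound on the injectivity radius at $x_k$ for the metric $g_k(0)$ restricted to these balls; by the curvature bound this propagates to a (smaller) definite lower bound at nearby points and at slightly earlier times. Thus the hypotheses of Theorem~\ref{thm:hamilton compactness} are met on each fixed $P(x_k,0,\rho_j,-\tau(\rho_j))$ — this is the content of the remark following that theorem that one may take $t_0$ to be the initial time provided one controls derivatives of curvature there, which follows from Shi's local derivative estimates applied inside the slightly larger ball $B(x_k,0,\rho_{j+1})$ where the curvature is still bounded.

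Next, for each $j$ I would apply Theorem~\ref{thm:hamilton compactness} (or rather its local version, extracting a subsequence) to obtain a smooth limit $(B_j,g_\infty^{(j)}(\cdot))$ defined on $B(x_\infty,\rho_j)\times(-\tau(\rho_j),0]$ together with the embeddings $\psi_k^{(j)}$. Passing to a diagonal subsequence in $k$ and using the uniqueness of smooth limits, the pieces $g_\infty^{(j)}$ are compatible on overlaps: $g_\infty^{(j+1)}$ restricts to $g_\infty^{(j)}$ on $B(x_\infty,\rho_j)\times(-\tau(\rho_{j+1})\vee\cdots,0]$ up to the canonical identification, because the convergence is smooth with a fixed basepoint and the limiting metrics are complete Ricci flows where defined. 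Gluing these yields a smooth metric $g_\infty$ on the set $\mathcal P_\infty:=\bigcup_{\rho<\rho_0}B(x_\infty,\rho)\times(-\tau(\rho),0]$; since $\tau$ need not be monotone one takes the union literally, and $\mathcal P_\infty$ is open and contains $B(x_\infty,\rho_0)\times\{0\}$, so $(\mathcal P_\infty,g_\infty)$ is a partial Ricci flow in the sense of the definition just above the theorem. The Ricci flow equation holds on each piece by smooth convergence, hence on every $V\times I\subset\mathcal P_\infty$.

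For the final completeness assertion when $\rho_0=+\infty$: fix $t\in[\sup_\rho(-\tau(\rho)),0]$, so $t>-\tau(\rho)$ for every $\rho$, meaning the slice $B(x_\infty,\rho)\times\{t\}$ lies in $\mathcal P_\infty$ for all $\rho$, and thus $g_\infty(t)$ is defined on all of $B(x_\infty,\infty)=M_\infty$. Completeness at time $t$ then follows because each $g_k(t)$ is complete, the distance functions $d_{g_k(t)}$ converge uniformly on compacts to $d_{g_\infty(t)}$ (again by smooth convergence plus the distance-distortion bound that controls $d_{g_k(t)}$ by $d_{g_k(0)}$ on the relevant regions), and a pointed limit of complete length spaces with uniformly non-degenerating geometry is complete; concretely, a Cauchy sequence in $(M_\infty,g_\infty(t))$ stays in a bounded ball, which is relatively compact by the curvature and injectivity radius control, so it converges.

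The main obstacle is the bookkeeping in the diagonal/gluing step: because $\tau(\rho)$ is only assumed to be some positive number for each $\rho$ (not monotone and possibly tending to $0$ as $\rho\to\rho_0$), the regions $P(x_k,0,\rho,-\tau(\rho))$ are not nested in the naive way, so one must be careful to extract the subsequence so that \emph{all} the limits $g_\infty^{(j)}$ survive simultaneously and patch consistently, and to verify that the resulting union is genuinely open and that the Ricci flow equation is inherited locally rather than just on the individual pieces. This is exactly the point where "inspection of the proof of Theorem~\ref{thm:hamilton compactness}" is invoked: the Arzelà–Ascoli / Shi-estimate machinery there is local in nature and goes through verbatim once one has, on each fixed parabolic region, the uniform curvature bound, the derivative bounds from Shi, and the injectivity radius lower bound — none of which required global completeness of the $g_k$, only the relative compactness of the balls $B(x_k,0,\rho)$ assumed in the hypothesis.
\end{proof}
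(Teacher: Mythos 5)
The paper offers essentially no proof of this theorem: it states only that ``inspection of the proof of Theorem~\ref{thm:hamilton compactness} shows that the following natural extension holds.'' Your proposal fills in what that inspection would look like, and it is a faithful rendering of the intended argument: exhaust $B(x_\infty,\rho_0)$ by balls of radii $\rho_j\uparrow\rho_0$, apply the Hamilton/Cheeger--Gromov--Shi machinery on each fixed parabolic region $P(x_k,0,\rho_j,-\tau(\rho_j))$ (which is where the relative compactness of the balls $B(x_k,0,\rho)$ stands in for completeness), extract a diagonal subsequence, and patch the limits using uniqueness of $C^\infty_{loc}$ Cheeger--Gromov limits with a common basepoint. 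The completeness remark when $\rho_0=+\infty$ is handled correctly.

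One small caveat on phrasing: there is no general ``monotonicity of injectivity radius under Ricci flow'' — what you actually use is the Cheeger--Gromov--Taylor-type propagation, where the curvature bound plus the injectivity radius lower bound at the basepoint at time $0$ yields a volume lower bound, which in turn controls the injectivity radius at nearby points and (via distance distortion and the time-derivative of volume) at slightly earlier times. That is already built into the proof of Theorem~\ref{thm:hamilton compactness}, and indeed its statement only requires the injectivity-radius bound at $(x_k,t_0)$, so you need not re-derive it. With that clarification, the proposal is correct and is the same argument the paper alludes to.
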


\bibliographystyle{alpha}
\bibliography{ricci}

\end{document}